\date{}
\tikzset{join/.code=\tikzset{after node path={%
\ifx\tikzchainprevious\pgfutil@empty\else(\tikzchainprevious)%
edge[every join]#1(\tikzchaincurrent)\fi}}}
\tikzset{>=stealth',every on chain/.append style={join},
         every join/.style={->}}
\tikzstyle{labeled}=[execute at begin node=$\scriptstyle,
\providecommand{\boldsymbol}[1]{\mbox{\boldmath $#1$}}
\newtheorem{theo}{Theorem}[section]
\newtheorem{defi}[theo]{Definition}
\newtheorem{coro}[theo]{Corollary}
\newtheorem{lemm}[theo]{Lemma}
\newtheorem*{theom}{Theorem}
\numberwithin{equation}{section}
\newcommand{\twolines}[2][c]{
  \begin{tabular}[#1]{@{}c@{}}#2\end{tabular}}  
\begin{document}
\title{Motivic unipotent fundamental groupoid of $\mathbb{G}_{m} \setminus \mu_{N}$ for $N=2,3,4,6,8$ and Galois descents.}
\author{Claire \textsc{Glanois}}

\maketitle

\begin{abstract}

We study Galois descents for categories of mixed Tate motives over $\mathcal{O}_{N}[1/N]$, for $N\in \left\{2, 3, 4, 8\right\}$ or  $\mathcal{O}_{N}$ for $N=6$, with $\mathcal{O}_{N}$ the ring of integers of the $N^{\text{th}}$ cyclotomic field, and construct families of motivic iterated integrals with prescribed properties. In particular this gives a basis of multiple zeta values via multiple zeta values at roots of unity $\mu_{N}$. It also gives a new proof, via Goncharov's coproduct, of Deligne's results ($\cite{De}$): the category of mixed Tate motives over $\mathcal{O}_{k_{N}}[1/N]$, for $N\in \left\{2, 3, 4,8\right\}$ is spanned by the motivic fundamental groupoid of $\mathbb{P}^{1}\setminus\left\{0,\mu_{N},\infty \right\}$ with an explicit basis. By applying the period map, we obtain a generating family for multiple zeta values relative to $\mu_{N}$.

\end{abstract}

\tableofcontents

\section{\textsc{Introduction}}

The goal of this paper is to study the Galois action on the motivic fundamental groupoid of $\mathbb{P}^{1}\setminus\left\{0,\mu_{N},\infty \right\}$ for some particular values of $N$: $N\in \left\{2^{a}3^{b}, a+2b \leq 3 \right\}= \left\{1,2, 3, 4, \textquoteleft 6 \textquoteright, 8\right\}$.\footnote{The quotation marks underline that we consider the unramified category for $N=6$.}\\
For such a fixed $N$, let $k_{N}=\mathbb{Q}(\xi_{N})$, where $\xi_{N}\in\mu_{N}$ is a primitive $N^{\text{th}}$ root of unity, and $\mathcal{O}_{N}$ is the ring of integers of $k_{N}$. The subscript or exponent $N$ will be omitted when it is not ambiguous.\\ 

Recall that \textit{multiple zeta values relative to $\mu_{N}$} (periods of the corresponding motivic multiple zeta values) are given by the coefficients of a version of Drinfeld's associator, which are explicitly:
\begin{equation}\label{eq:mzv} \text{     }  \zeta\left(x_{1}, \cdots , x_{p} \atop  \epsilon_{1} , \cdots ,\epsilon_{p} \right)\mathrel{\mathop:}= \sum_{0<n_{1}<n_{2} \cdots <n_{p}} \frac{\epsilon_{1}^{n_{1}} \cdots \epsilon_{p}^{n_{p}}}{n_{1}^{x_{1}} \cdots n_{p}^{x_{p}}} \text{, } \epsilon_{i}\in \mu_{N} \text{, } (x_{p},\epsilon_{p})\neq (1,1).
\end{equation}
The weight is $\omega=\sum x_{i}$ and the depth is $p$. Denote by $\mathcal{Z}^{N}$ the $\mathbb{Q}$-vector space spanned by these multiple zeta values at arguments $x_{i}\in \mathbb{N}$, $\epsilon_{i}\in\mu_{N}$. We will consider the motivic versions of those multiple zeta values (MMZV), denoted $\zeta^{\mathfrak{m}}$ which span the $\mathbb{Q}$-vector space of motivic multiple zetas relative to $\mu_{N}$, denoted $\mathcal{H}^{N}$. There is a surjective homomorphism called the \textit{period map}, conjectured to be an isomorphism:
\begin{equation}\label{eq:per} per :\mathcal{H}^{N} \rightarrow \mathcal{Z}^{N} \text{ ,  } \zeta^{\mathfrak{m}} (\cdot) \mapsto \zeta ( \cdot ).
\end{equation}
The period map will induce for each result for a basis for MMZV$_{\mu_{N}}$ a corresponding result for a generating family for MZV relative to $\mu_{N}$.\\  
  
  Furthermore, $\mathcal{H}^{N}$ is an Hopf comodule with an explicit coaction $\Delta$ given by Goncharov ($\cite{Go}$) and extended by F. Brown ($\cite{Br2}$). And for each $N, N'$ with $N' | N$ there are Galois groups $\mathcal{G}_{N} $ acting on $\mathcal{H}^{N} $ and Galois descents determined by this coaction:

   \begin{figure}[H]
\centering
\begin{equation}\label{eq:descent} \xymatrix{
\mathcal{H}^{N}  \\
\mathcal{F}_{i}\mathcal{H}^{N} \ar[u]^{\mathcal{G}_{i}}\\
\mathcal{F}_{0}\mathcal{H}^{N} =\mathcal{H}^{N'} \ar@/^2pc/[uu]^{\mathcal{G}_{0}=\mathcal{G}^{N/N'}}  \ar[u] \\
\mathbb{Q} \ar[u]^{\mathcal{G}^{N'}} \ar@/_2pc/[uuu]_{\mathcal{G}^{N}} }  \quad \quad 
\begin{array}{l}
 (\mathcal{H}^{N})^{\mathcal{G}_{i}}=\mathcal{F}_{i}\mathcal{H}^{N} \\
\\
 \begin{array}{llll}
 \mathcal{G}^{N/N'}=\mathcal{G}_{0} & \supset \mathcal{G}_{1} & \supset \cdots  &\supset \mathcal{G}_{i} \cdots\\
 & & & \\
 \mathcal{H}^{N'}= \mathcal{F}_{0}\mathcal{H}^{N} &  \subset  \mathcal{F}_{1 }\mathcal{H}^{N} & \subset \cdots  & \subset \mathcal{F}_{i}\mathcal{H}^{N}  \cdots.
 \end{array}
\end{array}
\end{equation}
\caption{Representation of a Galois descent.}\label{fig:descent}
\end{figure}

\paragraph{Outlines of the Results.}

Consider the Tannakian category of mixed Tate motives over $\mathcal{O}_{N}[1/N]$ (cf. $\cite{Go2}$, $\cite{DG}$):
\begin{equation}\label{eq:mt} \mathcal{MT}_{N}\mathrel{\mathop:}=\left\lbrace \begin{array}{ll} \mathcal{MT}(\mathcal{O}_{N}[1/N]) & \text{ for } N \text{ fixed in } \lbrace 2,3,4,8\rbrace.\\
\mathcal{MT}(\mathcal{O}_{6}) & \text{ for } N=6. 
\end{array}
\right. 
\end{equation}
Denote by $\mathcal{G}^{\mathcal{MT}}= \mathbb{G}_{m} \ltimes \mathcal{U}^{\mathcal{MT}}$ its Tannaka group (cf. $\cite{Mi}$) with respect to the canonical fiber functor which is defined over $\mathbb{Q}$ and by $\mathcal{A}^{\mathcal{MT}}=\mathcal{O}(\mathcal{U}^{\mathcal{MT}}) $ its fundamental Hopf algebra and by $\mathcal{H}^{\mathcal{MT}}$ the free $\mathcal{A}^{\mathcal{MT}}$-comodule:
\begin{equation}\label{eq:hmt} \mathcal{H}^{\mathcal{MT}} = 
\left\{
\begin{array}{l}
  \mathcal{A}^{\mathcal{MT}} \otimes_{\mathbb{Q}} \mathbb{Q}[ t] \text{ for } N>2 .\\
   \mathcal{A}^{\mathcal{MT}}\otimes_{\mathbb{Q}} \mathbb{Q} [t^{2}] \text{ for } N=1,2
  \end{array} \subset \mathcal{O}(\mathcal{G}^{\mathcal{MT}})=\mathcal{A}^{\mathcal{MT}}\otimes_{\mathbb{Q}} \mathbb{Q}[ t,t^{-1}].
  \right .
  \end{equation}
There is a notion of \textbf{\textit{motivic multiple zeta values}} which form an algebra $\mathcal{H}^{N}$ which embeds non canonically into $\mathcal{H}^{\mathcal{MT}_{N}}$ with $(2i\pi)^{\mathfrak{m}}\rightarrow t$. For those specific values of $N$, it is an isomorphism (by F. Brown for $N=1$ and for $N=2,3,4,\textquoteleft 6 \textquoteright,8$ by Deligne $\cite{De}$ or by the Corollary $1.2$ which follow). For the sake of the introduction, we will sometimes write $\mathcal{H}$ and forget the distinction.\\

We define recursively on $i$ increasing motivic filtrations on $\mathcal{H}^{N}$, one for each descent $(\mathcal{d})=(k_{N}/k_{N'}, M/M')$, called \textbf{\textit{motivic levels}}, $\mathcal{F}^{\mathcal{d}}_{i}$, stable under the action of $\mathcal{G}^{\mathcal{MT}_{N}}$, by sub-$\mathbb{Q}$-vector spaces. The exponent $k_{N}/k_{N'}$ indicates the change of cyclotomic field and $M/M'$ the change of ramification. The $0^{\text{th}}$ level $\mathcal{F}^{\mathcal{d}}_{0}$, corresponds to invariants under the group $\mathcal{G}_{N/N'}$ as above and the $i^{\text{th}}$ level $\mathcal{F}^{\mathcal{d}}_{i}$, can be seen as the $i^{\text{th}}$ ramification space corresponding to generalised Galois descents. The associated quotients are denoted: 
\begin{equation}\label{eq:quotient} \boldsymbol{\mathcal{H}^{\geq i}} \mathrel{\mathop:}=  \mathcal{H}/ \mathcal{F}_{i-1}\mathcal{H}\text{  ,  } \mathcal{H}^{\geq 0}=\mathcal{H}.
\end{equation}
The exponent $k_{N}/k_{N'}, M/M'$ if not ambiguous will be omitted when we look at a specific descent.\\
\\
\texttt{Example, for $N=2$:} The action of the Lie algebra of the Galois group factors through certain operators $D_{2r+1}$ which are obtained from the formula for the coaction $\Delta$ by restricting the left hand side to weight $2r+1$. The Galois descent between $\mathcal{H}^{2}$ and $\mathcal{H}^{1}$ is then measured by $D_{1}$, that is to say:
$\mathcal{F}_{-1}\mathcal{H}^{2}=0$ and $\mathcal{F}_{i}\mathcal{H}^{2}$ is the largest sub-module such that $\mathcal{F}_{i}/ \mathcal{F}_{i-1}$ is killed by $D_{1}$.\\
Motivic Euler sums belonging to the $0^{\text{th}}$-level of this filtration are sometimes called \textit{unramified}  or \textit{honorary} motivic multiple zeta values and are in $\mathcal{H}^{1}$. Some of these periods have been studied notably by D. Broadhurst (cf. $\cite{BBV}$) among others.\\

Part of our results (if we restrict to the $0^{\text{th}}$ level of the filtrations) are illustrated by the following diagrams:\\

 \begin{figure}[H]
\centering
$$\xymatrixcolsep{5pc}\xymatrix{
\mathcal{H}^{\mathcal{MT}(\mathcal{O}_{8}\left[ \frac{1}{2}\right] )} &   \\
\mathcal{H}^{\mathcal{MT}(\mathcal{O}_{4}\left[ \frac{1}{2}\right] )} \ar[u]^{\mathcal{F}^{k_{8}/k_{4},2/2}_{0}}   &  \text{\framebox[1.1\width]{$\mathcal{H}^{\mathcal{MT}(\mathcal{O}_{4})}$}} \ar[l]_{\mathcal{F}^{k_{4}/k_{4},2/1}_{0}} \\
\mathcal{H}^{\mathcal{MT}\left( \mathbb{Z}\left[ \frac{1}{2}\right] \right) } \ar[u]^{\mathcal{F}^{k_{4}/\mathbb{Q},2/2}_{0}} &  \mathcal{H}^{\mathcal{MT}(\mathbb{Z}),} \ar[l]^{\mathcal{F}^{\mathbb{Q}/\mathbb{Q},2/1}_{0}} \ar[lu]_{\mathcal{F}^{k_{4}/\mathbb{Q},2/1}_{0}} \ar@{.>}@/_1pc/[u] \ar@/_7pc/[uul] ^{\mathcal{F}^{k_{8}/\mathbb{Q},2/1}_{0}}
}
$$
\caption{\textsc{The cases $N=1,2,4,8$}. }\label{fig:d248}
\end{figure}

\begin{figure}[H]
$$\xymatrixcolsep{5pc}\xymatrix{
& \mathcal{H}^{\mathcal{MT}(\mathcal{O}_{6})}   \\
\mathcal{H}^{\mathcal{MT}\left( \mathcal{O}_{3}\left[ \frac{1}{3}\right] \right) } & \text{\framebox[1.1\width]{$\mathcal{H}^{\mathcal{MT}(\mathcal{O}_{3})}$}} \ar[l]_{\mathcal{F}^{k_{3}/k_{3},3/1}_{0}} \\
\text{\framebox[1.1\width]{$\mathcal{H}^{\mathcal{MT}\left( \mathbb{Z}\left[ \frac{1}{3}\right] \right) }$}}  \ar[u]^{\mathcal{F}^{k_{3}/\mathbb{Q},3/3}_{0}} &  \mathcal{H}^{\mathcal{MT}(\mathbb{Z})}  \ar[lu]_{\mathcal{F}^{k_{3}/\mathbb{Q},3/1}_{0}} \ar@{.>}@/_1pc/[u] \ar@/_3pc/[uu]_{\mathcal{F}^{k_{6}/\mathbb{Q},1/1}_{0}}
}
$$
\caption{\textsc{The cases $N=1,3,\textquoteleft 6 \textquoteright$}. }\label{fig:d36}
\end{figure}

\textsc{Remarks:}
\begin{enumerate}
\item[$\cdot$] The vertical arrows represent the change of field and the horizontal arrows the change of ramification. The full arrows are the descents made explicit in this paper.\\
More precisely, for each arrow $A \stackrel{\mathcal{F}_{0}}{\leftarrow}B$ in the above diagrams, we give a basis $\mathcal{B}^{A}_{n}$ of $\mathcal{H}_{n}^{A}$, and a basis of $\mathcal{H}_{n}^{B}= \mathcal{F}_{0} \mathcal{H}_{n}^{A}$ in terms of the elements of $\mathcal{B}_{n}^{A}$.
 \item[$\cdot$] The framed spaces $\mathcal{H}^{\cdots}$ appearing in these diagrams are not known to be associated to a fundamental group and there is presently no other known way to construct those periods. For instance, we obtain by descent, a basis for $\mathcal{H}_{n}^{\mathcal{MT}(\mathbb{Z}[\frac{1}{3}])}$ in terms of the basis of $\mathcal{H}_{n}^{\mathcal{MT}(\mathcal{O}_{3}[\frac{1}{3}])}$.
\end{enumerate}

More precisely, for $N\in \left\{2, 3, 4,\textquoteleft 6 \textquoteright, 8\right\}$, we define a particular family $\mathcal{B}^{N}$ of motivic multiple zeta values relative to $\mu_{N}$ with different notions of \textbf{\textit{level}} on the basis elements, one for each Galois descent considered above:
\begin{equation}\label{eq:base} \mathcal{B}^{N}\mathrel{\mathop:}=\left\{ \zeta^{\mathfrak{m}}\left(x_{1}, \cdots x_{p-1}, x_{p} \atop \epsilon_{1}, \cdots, \epsilon_{p-1}, \epsilon_{p}\xi_{N}\right) (2\pi i)^{s ,\mathfrak{m}} \text{ , }  x_{i}\in\mathbb{N}^{\ast} , s\geq 0 ,  \left\{
\begin{array}{l}
 x_{i} \geq 1 \text{ odd ,  } \epsilon_{i}=1 \text{ and } s \text{ even if } N=2 \\
 x_{i} \geq 1  \text{ ,  } \epsilon_{i}=1 \text{  if } N=3,4\\
   x_{i} >1 \text{ ,  } \epsilon_{i}=1 \text{  if } N=\textquoteleft 6 \textquoteright\\
   x_{i}\geq 1  \text{ ,  } \epsilon_{i}=\pm 1\text{  if } N=8
   \end{array}
\right. . \right\}
\end{equation}
  Denote by $\mathcal{B}_{n,p,i}$ the subset of elements with weight $n$, depth $p$ and level $i$. \\
  \texttt{Examples}: 
\begin{itemize}
  \item[$\cdot$]  For $N=2$, the basis for the motivic Euler sums:
  $$\mathcal{B}^{2}\mathrel{\mathop:}=\left\{ \zeta^{\mathfrak{m}}\left(2y_{1}+1, \cdots , 2 y_{p}+1 \atop 1, 1, \cdots, 1, -1\right) \zeta^{\mathfrak{m}} (2)^{s}, y_{i} \geq 0, s\geq 0 \right\}. $$
  The level is defined to be the number of $y_{i}'s$ equal to $0$.
  \item[$\cdot$]  For $N=4$,
  $\mathcal{B}^{4}\mathrel{\mathop:}= \left\{   \zeta^{\mathfrak{m}}\left(x_{1}, \cdots , x_{p} \atop 1,1, \cdots, 1, \sqrt{-1}\right)  (2\pi i)^{s ,\mathfrak{m}}, s\geq 0, x_{i} >0 \right\} $. Here, the level is the number of even $x_{i}'s$ if we focus on the descent from $\mathcal{H}^{4}$ to $\mathcal{H}^{2}$, or the number of even $x_{i}'s$ plus the number of $x_{i}'s$ equal to 1 if we focus on the Galois descent from $\mathcal{H}^{4}$ to $\mathcal{H}^{1}$.
  \item[$\cdot$]   For $N=8$ the level includes the number of $\epsilon_{i}'s$ equal to $-1$, etc.\\
  \end{itemize}  
Fix a descent $(\mathcal{d})=(k_{N}/k_{N'}, M/M')$ among those considered above and let:
\begin{equation}\label{eq:z1p} \mathbb{Z}_{1[P]} \mathrel{\mathop:}= \frac{\mathbb{Z}}{1+ P\mathbb{Z}}=\left\{ \frac{a}{1+b P}, a,b\in\mathbb{Z} \right\} \text{ with  } P=2 \text{ for } N=4,8 \text{ and }P=3 \text{ for } N=3,6.
\end{equation}
The previous quotients $\mathcal{H}^{\geq i}$, respectively filtrations $\mathcal{F}_{i}$ associated to the descent $\mathcal{d}$, will match with the sub-families restricted to the level (associated to $\mathcal{d}$) $\mathcal{B}_{n,p, \geq i}$, respectively $\mathcal{B}_{n,p, \leq i}$. Indeed, we prove the following (cf. Theorem $4.3$ slightly more precise):
\begin{theom}
\begin{itemize}
\item[$(i)$] $\mathcal{B}_{n,\leq p, \geq i}$ is a basis of $\mathcal{F}_{p}^{D} \mathcal{H}_{n}^{\geq i}$.
\item[$(ii)$] $\mathcal{B}_{n,\cdot, \geq i} $ is a basis of $\mathcal{H}^{\geq i}_{n}$ and $\mathcal{B}_{n, p, \geq i}$ is a basis of $gr_{p}^{D} \mathcal{H}_{n}^{\geq i}$ on which it defines a $\mathbb{Z}_{1[P]}$-structure:\\
Each $\zeta^{\mathfrak{m}}\left( z_{1}, \cdots , z_{p} \atop \epsilon_{1}, \cdots, \epsilon_{p}\right)$ decomposes in $gr_{p}^{D} \mathcal{H}_{n}^{\geq i}$ as a $\mathbb{Z}_{1[P]}$-linear combination of $\mathcal{B}_{n, p, \geq i}$ elements.\\
	\item[$(iii)$] We have the two split exact sequences in bijection:
$$ 0\longrightarrow \mathcal{F}_{i}\mathcal{H}_{n} \longrightarrow \mathcal{H}_{n} \stackrel{\pi_{0,i+1}} {\rightarrow}\mathcal{H}_{n}^{\geq i+1} \longrightarrow 0$$
$$ 0 \rightarrow \langle \mathcal{B}_{n, \cdot, \leq i} \rangle_{\mathbb{Q}} \rightarrow \langle\mathcal{B}_{n} \rangle_{\mathbb{Q}} \rightarrow \langle (\mathcal{B}_{n, \cdot, \geq i+1} \rangle_{\mathbb{Q}} \rightarrow 0 .$$
		\item[$(iv)$] A basis for the filtration spaces $\mathcal{F}_{i} \mathcal{H}_{n}$:
$$\cup_{p} \left\{ x+ cl_{n, \leq p, \geq i+1}(x), x\in \mathcal{B}_{n, p, \leq i} \right\},$$
	$$\text{ where } cl_{n,\leq p,\geq i}: \langle\mathcal{B}_{n, p, \leq i-1}\rangle_{\mathbb{Q}} \rightarrow \langle\mathcal{B}_{n, \leq p, \geq i}\rangle_{\mathbb{Q}} \text{ such as } x+cl_{n,\leq p,\geq i}(x)\in \mathcal{F}_{i-1}\mathcal{H}_{n}.$$
\item[$(v)$] A basis for the graded space $gr_{i} \mathcal{H}_{n}$:
$$\cup_{p} \left\{ x+ cl_{n, \leq p, \geq i+1}(x), x\in \mathcal{B}_{n, p, i} \right\}.$$
\end{itemize}
\end{theom}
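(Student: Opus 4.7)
The plan is to prove \textit{(ii)} first, since \textit{(i)}, \textit{(iii)}, \textit{(iv)} and \textit{(v)} are formal consequences: \textit{(i)} follows from \textit{(ii)} by the definition of the depth filtration $\mathcal{F}^{D}$ and passage to associated graded; \textit{(iii)} is automatic once $\mathcal{B}_{n,\cdot,\geq i}$ is a basis of $\mathcal{H}_{n}^{\geq i}$ (the projection $\pi_{0,i+1}$ splits via the sub-family indexed by level $\leq i$); and \textit{(iv)}, \textit{(v)} are then obtained by lifting the level-$\leq i$ part of $\mathcal{B}_{n}$ into $\mathcal{F}_{i}\mathcal{H}_{n}$ via a correction $cl_{n,\leq p,\geq i+1}$, whose existence and uniqueness come from the splittings in \textit{(iii)}.

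To prove \textit{(ii)}, I would run a double induction, outer on the depth $p$ and inner on the level $i$ (increasing), using as inputs the coaction $\Delta$ of Goncharov--Brown and the fact that the motivic level filtration $\mathcal{F}^{\mathcal{d}}_{i}$ is by definition stable under $\mathcal{G}^{\mathcal{MT}_{N}}$. The Lie-algebra derivations $D_{r}\colon \mathcal{H}_{n}\to \mathcal{L}_{r}\otimes \mathcal{H}_{n-r}$ obtained from $\Delta$ then descend to the quotients $\mathcal{H}^{\geq i}$ and to the depth-graded pieces $gr^{D}_{p}$, and they strictly decrease the depth. Spanning is proven by showing that any motivic iterated integral of depth $p$ can be reduced, modulo terms of depth $<p$ and modulo $\mathcal{F}_{i-1}$, to a $\mathbb{Q}$-linear combination of $\mathcal{B}_{n,p,\geq i}$ elements: concretely, one writes an arbitrary $\zeta^{\mathfrak{m}}\bigl({z_{1},\ldots,z_{p}\atop \epsilon_{1},\ldots,\epsilon_{p}}\bigr)$ as such a combination via explicit motivic relations (shuffle, stuffle, distribution at roots of $1$, and for $N=2,3,4,6$ the parity constraints built into $\mathcal{B}^{N}$), the remaining ambiguity living in strictly lower depth by induction.

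For linear independence I would use Brown's criterion: compute $D_{<n}$ on each basis element and show that the resulting matrix, indexed by $\mathcal{B}_{n,p,\geq i}$ on one side and by the corresponding depth-$(p-1)$, level-$\geq i$ family on the other, is upper triangular with invertible diagonal in a suitable order on multi-indices $(x_{1},\ldots,x_{p};\epsilon_{1},\ldots,\epsilon_{p})$. Combined with the dimension count $\dim gr_{p}^{D}\mathcal{H}_{n}^{\geq i}$ predicted by Deligne's determination of the Hilbert series of $\mathcal{H}^{\mathcal{MT}_{N}}$ and the dimensions of the ramification quotients (which exactly match $\#\mathcal{B}_{n,p,\geq i}$ for the five values of $N$ treated), triangularity forces both spanning and independence at the given $(n,p,i)$, closing the induction.

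The main obstacle is establishing the $\mathbb{Z}_{1[P]}$-structure in \textit{(ii)}: a priori the decomposition coefficients live in $\mathbb{Q}$, and one must show their denominators are of the form $1+bP$. This requires exploiting Galois descent from $k_{N}$ to $k_{N'}$ at the level of the coaction: cyclotomic distribution and conjugation relations express sums of $\xi_{N}^{k}$-weighted terms in terms of traces that, after inverting $P$ where necessary, have denominators only in $\{1+bP\}$; this amounts to tracking that only Frobenius-invariant rational combinations survive the projection to $\mathcal{H}^{\geq i}$. I would carry this out by induction alongside the induction for \textit{(ii)}, checking at each step that the reduction towards the $\mathcal{B}$-family introduces only such coefficients, which should follow from the explicit form of the $D_{r}$ on iterated integrals and the special arithmetic of $\mu_{N}$ for $N\in\{2,3,4,6,8\}$.
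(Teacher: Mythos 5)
Your skeleton (induction using the depth-decreasing derivations $D_{r}$, a triangularity claim for the coaction matrix, and a dimension count against $\mathcal{H}^{\mathcal{MT}_{N}}$ to convert freeness into spanning) is close to the paper's, but two of your key steps have genuine gaps. First, the spanning step you describe concretely --- reducing an arbitrary $\zeta^{\mathfrak{m}}$ of depth $p$ to the family $\mathcal{B}_{n,p,\geq i}$ \emph{via explicit motivic relations} (shuffle, stuffle, distribution, parity) --- is not available: such an explicit reduction is precisely what is not known how to do, and the paper never attempts it. Spanning is obtained only indirectly: once $\mathcal{B}_{n,p,\geq i}$ is shown to be linearly free, its cardinal equals the dimension of the corresponding quotient of $\mathcal{H}^{\mathcal{MT}}$ (known from K-theory/Borel, transported through the isomorphism with the cofree coalgebra $H$ and the degree in the complementary $f^{j}_{r}$'s), and the a priori inclusion $\mathcal{H}\subseteq\mathcal{H}^{\mathcal{MT}}$ does the rest. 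Since you do invoke the dimension count at the end, the explicit-relations reduction should simply be discarded; as stated it is a step that would fail, not an alternative route.

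Second, and more seriously, your independence argument asserts that the matrix of $D_{<n}$ on $\mathcal{B}_{n,p,\geq i}$ is upper triangular with invertible diagonal in a suitable order. Over $\mathbb{Q}$ this is false: the terms of types (a), (b), (c) in Lemma 3.5 give nonzero entries on the wrong side of the diagonal. The heart of the paper's proof (Lemma 4.2) is a $P$-adic argument ($P=2$ or $3$): after rewriting the depth-one left factors via the Deligne--Goncharov depth-one relations (e.g. $\zeta^{\mathfrak{a}}(\overline{2r+1})=(2^{-2r}-1)\,\zeta^{\mathfrak{a}}(2r+1)$ for $N=2$), all non-deconcatenation terms acquire strictly positive $P$-adic valuation, and after re-expressing the lower-level terms by the induction hypothesis \emph{in strictly smaller weight} (an induction parameter your double induction on depth and level omits, yet it is the one that makes the re-expression legitimate) without lowering the $P$-adic valuation, the matrix becomes triangular with $1$'s on the diagonal only modulo $P$; invertibility follows because its determinant is $\equiv 1 \pmod{P}$. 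This same congruence --- integral entries together with a determinant which is a $P$-adic unit --- is exactly what yields the $\mathbb{Z}_{1[P]}$-structure in (ii); your proposed mechanism via Frobenius-invariant traces and distribution relations is vague and does not control the denominators. Finally, deducing (i) from (ii) is essentially fine, but note that the paper also uses the bijectivity of $\partial^{i,\mathcal{d}}_{n,p}$ to exclude linear relations between elements of different depths inside $\mathcal{F}^{\mathfrak{D}}_{p}\mathcal{H}^{\geq i}_{n}$ before concluding by the dimension count.
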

The linear independence is obtained first in the depth graded, and the proof relies on the bijectivity of the following map $\partial^{i, \mathcal{d}}_{n,p}$ by an argument involving $2$ or $3$ adic properties, where:
\begin{equation}\label{eq:derivintro}
\partial^{i, \mathcal{d}}_{n,p}: gr_{p}^{D} \mathcal{H}_{n}^{\geq i} \rightarrow  \oplus_{r<n} \left( gr_{p-1}^{D} \mathcal{H}_{n-r}^{\geq i-1}\right) ^{\oplus c ^{\mathcal{d}}_{r}} \oplus_{r<n} \left( gr_{p-1}^{D} \mathcal{H}_{n-r}^{\geq i}\right) ^{\oplus c^{\backslash\mathcal{d}}_{r}} \text{, } c ^{\mathcal{d}}_{r}, c^{\backslash\mathcal{d}}_{r}\in\mathbb{N}.
\end{equation}
is obtained from the depth and weight graded part of the coaction, followed by a projection for the left side (by depth $1$ results of Deligne and Goncharov, cf. $§ 3.1$), and by passing to the level quotients. Once the freedom obtained, the generating property is obtained from counting dimensions, since the K-theory would give an upper bound for the dimensions.\\

This main theorem generalizes in particular (with $i=0$) a result of P. Deligne ($\cite{De}$) \footnote{The basis $\mathcal{B}$, in the case $\left\{3, 4,8\right\}$ is identical to P. Deligne's in $\cite{De}$, and for $N=2$ is a linear basis analogous to his algebraic basis which is formed by Lyndon words in the odd positive integers (with $\ldots 5 \leq3 \leq 1$).}:
\begin{coro} The map $\mathcal{G}^{\mathcal{MT}} \rightarrow \mathcal{G}^{\mathcal{MT}'}$ is an isomorphism.\\
Elements of $\mathcal{B}_{n}$ form a basis of $ \mathcal{H}_{n}$, the space of motivic multiple zeta values relative to $\mu_{N}$.
\end{coro}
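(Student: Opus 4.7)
The Corollary is the $i=0$ specialisation of the main Theorem, coupled with a dimension matching against the K-theoretic upper bound for $\mathcal{H}^{\mathcal{MT}_{N}}$. I would treat the two assertions in turn, deducing the basis statement first and then bootstrapping it into the isomorphism of Galois groups.

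For the basis statement, I would simply apply the Theorem at $i=0$. By (1.4) one has $\mathcal{H}^{\geq 0}_{n} = \mathcal{H}_{n}$, and under this identification the family $\mathcal{B}_{n,\cdot,\geq 0}$ coincides with $\mathcal{B}_{n}$. Part $(ii)$ of the Theorem then directly asserts that $\mathcal{B}_{n}$ is a basis of $\mathcal{H}_{n}$.

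For the Galois-group statement, I would invoke Tannakian duality. The sub-Hopf-algebra $\mathcal{H} \hookrightarrow \mathcal{H}^{\mathcal{MT}_{N}}$ generated by the periods of the motivic fundamental groupoid of $\mathbb{P}^{1}\setminus\{0,\mu_{N},\infty\}$ is dual to the canonical surjection $\mathcal{G}^{\mathcal{MT}} \twoheadrightarrow \mathcal{G}^{\mathcal{MT}'}$, so this surjection is an isomorphism precisely when the inclusion is an equality in every weight, i.e.\ when $\dim_{\mathbb{Q}} \mathcal{H}_{n} = \dim_{\mathbb{Q}} \mathcal{H}^{\mathcal{MT}_{N}}_{n}$ for all $n$. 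The left side now equals $|\mathcal{B}_{n}|$ by the basis statement; the right side is the Hilbert series determined by Borel's computation of $K_{\ast}(\mathcal{O}_{N}[1/N])$ (together with $N=6$ without inverting $6$), using that $\mathcal{U}^{\mathcal{MT}}$ has free graded Lie algebra and $\mathcal{H}^{\mathcal{MT}_{N}}$ is a free $\mathcal{A}^{\mathcal{MT}}$-comodule. A case-by-case generating-series computation shows equality: the constraints defining $\mathcal{B}^{N}$ in (1.7) are engineered exactly so that $\sum_{n} |\mathcal{B}_{n}| \, t^{n}$ reproduces the K-theoretic Hilbert series (yielding, for instance, $1/(1-t^{2}-t^{3})$ for $N=1$ and $1/(1-t-t^{2})$ for $N=2$). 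Coupled with $\mathcal{B}_{n} \subset \mathcal{H}_{n} \subset \mathcal{H}^{\mathcal{MT}_{N}}_{n}$ and the linear independence already established, this forces $\mathcal{H}_{n} = \mathcal{H}^{\mathcal{MT}_{N}}_{n}$.

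The heavy lifting is entirely inside the Theorem: linear independence of $\mathcal{B}_{n}$ via bijectivity of $\partial^{i,\mathcal{d}}_{n,p}$ combined with the K-theoretic upper bound converting freeness into a basis. The only ingredient specific to the Corollary is the combinatorial identity $|\mathcal{B}_{n}| = \dim \mathcal{H}^{\mathcal{MT}_{N}}_{n}$ in each of the five cases $N \in \{2,3,4,6,8\}$; the delicate bookkeeping is the unramified case $N = 6$ (where the constraint $x_{i} > 1$ must reproduce Borel's rank for $\mathcal{O}_{6}$ without $1/6$) and the case $N=8$ (where the sign freedom $\epsilon_{i} = \pm 1$ must combine with the ramification at $2$ to produce the right count), but both reduce to a formal manipulation of rational generating series and present no conceptual obstacle beyond what is already handled in the Theorem.
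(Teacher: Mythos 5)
Your proposal is correct and follows essentially the paper's own route: the corollary is obtained by specialising the Theorem to $i=0$, where the basis statement is part $(ii)$ and the Galois-group statement amounts to the equality $\mathcal{H}_{n}=\mathcal{H}_{n}^{\mathcal{MT}_{N}}$ (dual, by Tannakian duality, to the surjection $\mathcal{G}^{\mathcal{MT}}\twoheadrightarrow\mathcal{G}^{\mathcal{MT}'}$ being an isomorphism). The dimension comparison $\operatorname{card}\mathcal{B}_{n}=\dim\mathcal{H}^{\mathcal{MT}_{N}}_{n}$ via the K-theoretic Hilbert series that you spell out case by case is exactly the counting argument already used inside the Theorem's proof, so nothing genuinely new is added or missing.
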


The period map, $per: \mathcal{H} \rightarrow \mathbb{C}$, induces the following result for $N=2,3,4,8$:
\begin{center}
Each multiple zeta value relative to $N^{\text{th}}$ roots of unity is a $\mathbb{Q}$-linear combination of multiple zeta values of the same weight of type $\mathcal{B}^{N}$.
\end{center}
\textsc{Remark: } For $N=6$ the result remains true if we restrict to iterated integrals relative not to all $6^{\text{th}}$ roots of unity but only to those relative to primitive roots.\\
The previous theorem (with $i=0$), will also give us the Galois descent from $\mathcal{H}^{\mathcal{MT}_{N}}$ to $\mathcal{H}^{\mathcal{MT}_{N'}}$, according to the level filtration considered, with $N'| N$:
\begin{coro}
A basis for the space $\mathcal{H}^{N'}_{n}$, of motivic multiple zeta values relative to $\mu_{N'}$ is formed by motivic multiple zeta values relative to $\mu_{N}$ $\in \mathcal{B}^{N}$ of level $0$ each corrected by a $\mathbb{Q}$-linear combination of motivic multiple zeta values relative to $\mu_{N}$ of level greater than or equal to $1$:  
$$\left\{ x+ cl_{n,\cdot, \geq 1}(x), x\in \mathcal{B}^{N}_{n, \cdot, 0} \right\}.$$
\end{coro}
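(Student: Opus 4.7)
The plan is to derive this corollary directly from Theorem~4.3(iii)--(iv) specialised to $i=0$, combined with the Galois-theoretic identification
\[
\mathcal{F}^{\mathcal{d}}_{0}\,\mathcal{H}^{N}_{n} \;=\; \bigl(\mathcal{H}^{N}_{n}\bigr)^{\mathcal{G}_{N/N'}} \;=\; \mathcal{H}^{N'}_{n},
\]
which is built into the definition of the descent filtration recalled in diagram~(1.3): by construction $\mathcal{F}^{\mathcal{d}}_{0}$ is the subspace of $\mathcal{G}_{N/N'}$-invariants, and the image of this invariant part inside $\mathcal{H}^{\mathcal{MT}_{N}}$ coincides with the embedded copy of $\mathcal{H}^{\mathcal{MT}_{N'}}$.

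Once this identification is in place, I would simply set $i=0$ in Theorem~4.3(iv). Since $\mathcal{B}_{n,p,\leq 0}=\mathcal{B}_{n,p,0}$, the theorem produces
\[
\bigcup_{p}\,\bigl\{\, x + cl_{n,\leq p,\geq 1}(x)\ :\ x\in\mathcal{B}^{N}_{n,p,0}\,\bigr\}
\]
as a basis of $\mathcal{F}_{0}\mathcal{H}_{n}$, which is $\mathcal{H}^{N'}_{n}$ by the previous identification. Aggregating the depth strata into the single notation $cl_{n,\cdot,\geq 1}$ yields exactly the family stated in the corollary; this aggregation preserves linear independence, because each correction lives in $\langle\mathcal{B}_{n,\cdot,\geq 1}\rangle_{\mathbb{Q}}$ while the seeds $x$ lie in $\langle\mathcal{B}_{n,\cdot,0}\rangle_{\mathbb{Q}}$, and these two subspaces are put in direct sum by the split exact sequence of Theorem~4.3(iii). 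Independence within each depth stratum is given by part~(ii).

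Since Theorem~4.3 does all the heavy lifting, there is no serious remaining obstacle. The only point that would be genuinely delicate, were it not already packaged inside the main theorem, is the construction of the correction map $cl_{n,\cdot,\geq 1}$ itself: a level-$0$ basis element $x\in\mathcal{B}^{N}_{n,\cdot,0}$ is in general not Galois-invariant under $\mathcal{G}_{N/N'}$, so one must subtract its image in $\mathcal{H}^{\geq 1}$ by an explicit $\mathbb{Q}$-linear combination of level-$\geq 1$ basis elements (the existence of such a decomposition is precisely part~(ii) of the theorem, where the $\mathbb{Z}_{1[P]}$-structure is controlled by the bijectivity of $\partial^{i,\mathcal{d}}_{n,p}$ via $P$-adic arguments). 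Thus all the combinatorial and arithmetic difficulty has been pushed into Theorem~4.3; the present corollary is then a direct reading of that result through the dictionary $\mathcal{F}^{\mathcal{d}}_{0}\mathcal{H}^{N}=\mathcal{H}^{N'}$.
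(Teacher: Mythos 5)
Your overall route is the paper's: specialise Theorem 4.3 to $i=0$ and read off the corrected level-$0$ family as a basis. The genuine gap is the sentence where you declare the identification $\mathcal{F}^{\mathcal{d}}_{0}\mathcal{H}^{N}_{n}=(\mathcal{H}^{N}_{n})^{\mathcal{G}_{N/N'}}=\mathcal{H}^{N'}_{n}$ to be ``built into the definition of the descent filtration''. It is not. The filtration entering Theorem 4.3 is Definition 4.1: $\mathcal{F}^{\mathcal{d}}_{0}$ is the kernel of the chosen derivations $\mathscr{D}^{\mathcal{d}}$ (which depends on a choice of basis of $gr_{1}\mathcal{L}_{r}$), and Figure 1.3 is only a schematic picture of what the theorems establish, not a definition one may quote. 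What is essentially formal is one inclusion: $\mathcal{H}^{N'}\subseteq\mathcal{H}^{N}$ (an iterated integral relative to $\mu_{N'}$ is one relative to $\mu_{N}$) and such elements are killed by $\mathscr{D}^{\mathcal{d}}$, hence lie in $\mathcal{F}^{\mathcal{d}}_{0}\mathcal{H}^{N}_{n}$. The converse inclusion is not formal, and the paper does not get it ``by construction'': it compares dimensions at fixed weight, using the $f$-alphabet description (4.2) of $\mathcal{F}^{\mathcal{d}}_{i}\mathcal{H}^{\mathcal{MT}_{N}}$ (equivalently, the cardinality of the corrected level-$0$ family produced by Theorem 4.3) against $\dim\mathcal{H}^{\mathcal{MT}_{N'}}_{n}$ coming from $K$-theory, and then invokes the equality $\mathcal{H}^{N'}=\mathcal{H}^{\mathcal{MT}_{N'}}$, which is Brown's theorem for $N'=1$ and Deligne's result (or Corollary 1.2 of this paper applied to $N'$) otherwise. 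Without these ingredients the corollary would be contentless; with them, your argument becomes the paper's.

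A concrete symptom that the identification cannot be definitional: for descents with $N'\in\{1,2\}$ and $N>2$ (say $\mathcal{d}=(k_{4}/\mathbb{Q},2/2)$), every power $(2i\pi)^{s,\mathfrak{m}}$ is annihilated by all $D_{r}$, so odd powers of $(2i\pi)^{\mathfrak{m}}$ lie in $\mathcal{F}^{\mathcal{d}}_{0}\mathcal{H}^{N}$ although they do not lie in $\mathcal{H}^{N'}=\mathcal{A}^{N'}\otimes\mathbb{Q}\left[(2i\pi)^{2,\mathfrak{m}}\right]$. This is why the precise statements in the body (Corollaries 4.6, 4.10, 4.12, 4.14) write the basis with $\zeta^{\mathfrak{m}}(2)^{s}$, i.e.\ with the parity of the $\pi$-power restricted. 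So passing from ``basis of $\mathcal{F}_{0}\mathcal{H}_{n}$'' (which is what Theorem 4.3(iv) at $i=0$ gives) to ``basis of $\mathcal{H}^{N'}_{n}$'' requires both the non-formal equality argument above and this bookkeeping; your proposal addresses neither.
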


\texttt{Example, $N=2$:} A basis for motivic multiple zeta values is formed by:
$$ \left\lbrace  \zeta^{\mathfrak{m}}(2x_{1}+1, \cdots, \overline{2x_{p}+1})\zeta^{\mathfrak{m}}(2)^{s} + \sum_{\exists i, y_{i}=0 \atop q\leq p} \alpha_{\textbf{y}}^{\textbf{x}} \zeta^{\mathfrak{m}}(2y_{1}+1, \cdots, \overline{2y_{q}+1})\zeta^{\mathfrak{m}}(2)^{s} \text{  ,  } x_{i}>0, \alpha^{\textbf{x}}_{\textbf{y}}\in\mathbb{Q} \right\rbrace .$$
\textsc{Remarks}:
\begin{itemize}
		\item[$\cdot$] Recall that each basis for motivic multiple zeta values at roots of unity gives a generating family for (simple) multiple zeta values at roots of unity, by the period map.
	\item[$\cdot$] Descent can be calculated explicitly in small depth, less than or equal to $3$, as we will explain in the appendice.
For instance, for $N=2$, the following linear combination is a motivic MZV:
$$\zeta^{\mathfrak{m}}(3,3,\overline{3})+ \frac{774}{191} \zeta^{\mathfrak{m}}(1,5, \overline{3})  - \frac{804}{191} \zeta^{\mathfrak{m}}(1,3, \overline{5})  + \frac{450}{191}\zeta^{\mathfrak{m}}(1,1, \overline{7})  -6 \zeta^{\mathfrak{m}}(3,1,\overline{5}).$$
	
In the general case, we could make the part of maximal depth of $cl(x)$ explicit (by inverting a matrix with binomial coefficients) but the motivic methods do not enable us to describe the other coefficients for terms of lower depth.
\end{itemize}

\paragraph{\textsc{Contents}}
The second section points out some generalities and definitions about motivic multiple zeta values at roots of unity, and motivic iterated integrals to set up the background of this paper.
The third section deals with both the coaction and the filtration by depth, which are essentials to the results here and presents general results on Galois descents, and useful criteria.
The fourth section states and proves the main result announced in the introduction, for descents considered in Figures $\ref{fig:d248}, \ref{fig:descent}$, with specifications for each case.
The appendice provides some explicit examples in small depths ($2$ and $3$).

\paragraph{\textsc{Aknowledgements}}
The author thanks Francis Brown for many discussions and corrections on this work, and Pierre Cartier for a careful reading and helpful comments. \\
This work was supported by ERC Grant 257638. 

\section{\textsc{Motivic multiple zeta values at roots of unity}}

\paragraph{Tannakian category of Mixed Tate Motives.}
Recall that $\mathcal{MT}_{N}$ is a Tannakian category equipped with a weight filtration $W_{r}$ indexed by even integers such that $gr_{-2r}^{W}(M)$ is a sum of copies of $\mathbb{Q}(r)$ for $M\in \mathcal{MT}_{N}$. This defines a canonical fiber functor:
\begin{equation} \omega: \mathcal{MT}_{N} \rightarrow Vec_{\mathbb{Q}},  M \mapsto \oplus \omega_{r}(M)
\end{equation}
$$\omega_{r}(M)\mathrel{\mathop:}= Hom_{\mathcal{MT}(k)}(\mathbb{Q}(r), gr_{-2r}^{W}(M)) \text{ , } gr_{-2r}^{W}(M)= \mathbb{Q}(r)\otimes \omega_{r}(M) .$$
The de Rham functor $\omega_{dR}$ here is not defined over $\mathbb{Q}$ but on $k_{N}$ and $\omega_{dR}=\omega \otimes_{\mathbb{Q}} k_{N}$, so the de Rham realisation of an object $M$ is $M_{dR}=\omega(M)\otimes_{\mathbb{Q}} k_{N}$.\\
The Betti fiber functor depends on the embedding $\sigma: k_{N}\hookrightarrow \mathbb{C}$ (fixed here):
\begin{equation}\omega_{B,\sigma}: \mathcal{MT}_{N} \rightarrow Vec_{\mathbb{Q}}.\end{equation}
There are canonical comparison isomorphisms between those functors:
$$comp_{B,dR}:\omega_{dR}(M)\otimes_{k_{N}} \mathbb{C}  \rightarrow \omega_{B}(M)\otimes_{\mathbb{Q}} \mathbb{C} \text{ and } comp_{dR,B} : \omega_{B}(M)\otimes_{\mathbb{Q}} \mathbb{C}  \rightarrow \omega_{dR}(M)\otimes_{k_{N}} \mathbb{C} $$

The \textit{\textbf{motivic Galois groups}} are defined by $\mathcal{G}_{B}\mathrel{\mathop:}=Aut^{\otimes}(\omega_{B})$ and $\mathcal{G}^{\mathcal{MT}}\mathrel{\mathop:}= Aut^{\otimes} \omega $ and $P_{\omega, B}\mathrel{\mathop:}=Isom(\omega_{B},\omega)$ resp. $P_{B,\omega}\mathrel{\mathop:}=Isom(\omega,\omega_{B})$ are $(\mathcal{G}, \mathcal{G}_{B})$ resp. $(\mathcal{G}_{B}, \mathcal{G})$ bitorsors.\\
\\
By the Tannakian dictionnary, $\mathcal{MT}_{N}$ is equivalent to the category of representations of $\mathcal{G}^{\mathcal{MT}} $, which decomposes as $\mathcal{G}^{\mathcal{MT}}= \mathbb{G}_{m} \ltimes \mathcal{U}^{\mathcal{MT}}$, where $\mathcal{U}^{\mathcal{MT}}$ is a pro-unipotent group scheme defined over $\mathbb{Q}$.
\paragraph{Dimensions.}
The algebraic $K$-theory will provide an upper bound for the dimensions of motivic periods. Indeed, it is proved (with the results of Beilinson and Borel, cf. $\cite{DG}$, and Levine, $\cite{Le}$) that: 
\begin{equation}
\begin{array}{ll}
  Ext_{\mathcal{MT}_{N}}^{1} (\mathbb{Q}(0), \mathbb{Q}(1)) = K_{1}(\mathcal{O}_{k_{N}}[\frac{1}{M}]) \otimes \mathbb{Q} =  (\mathcal{O}_{k_{N}}[\frac{1}{M}])^{\ast} \otimes \mathbb{Q}& \\
  Ext_{\mathcal{MT}_{N}}^{1} (\mathbb{Q}(0), \mathbb{Q}(n)) = K_{2n-1}(\mathcal{O}_{k_{N}}[\frac{1}{M}]) \otimes \mathbb{Q} = K_{2n-1}(k_{N}) \otimes \mathbb{Q}  & \text{ for } n >1 .\\
  Ext_{\mathcal{MT}_{N}}^{i} (\mathbb{Q}(0), \mathbb{Q}(n)) =0 & \text{ for } i>1 \text{ or } n\leq 0 . 
  \end{array}
\end{equation}  
In the cases studied here, $M$ is equal to $N$ if $N=2,3,4,8$ or equal to 1, for $N=6$ since we consider the unramified category.\\
Let $\mathfrak{u}$ denote the completion of the pro-nilpotent graded Lie algebra of the pro-unipotent group $\mathcal{U}^{\mathcal{MT}}$, defined by a limit; $\mathfrak{u}$ is free since $Ext^{2}=0$ and graded with positive degrees from the $\mathbb{G}_{m}$-action. Furthermore:
\begin{equation}
\label{LieCyclo}
\mathfrak{u}^{ab}= \bigoplus (Ext^{1}_{\mathcal{MT}_{N}} (\mathbb{Q}(0), \mathbb{Q}(n))^{\vee} \text{ in degree } n)= \bigoplus (K_{2n-1}(\mathcal{O}_{k_{N}}[1/M])^{\vee} \text{ in degree } n).
\end{equation}
Hence the fundamental Hopf algebra is:
\begin{equation}
\label{IsomCyclo}
\mathcal{A}^{\mathcal{MT}}\mathrel{\mathop:}=\mathcal{O}(\mathcal{U}^{\mathcal{MT}})\cong (U^{\wedge} (\mathfrak{u}))^{\vee} \cong T(\oplus_{n\geq 1} K_{2n-1}(\mathcal{O}_{k_{N}}[1/M]) \otimes \mathbb{Q}).
\end{equation}

 Let $p(N)$ denote the number of prime factors of $N$ and $\varphi$ Euler's indicator function. For $M|N$ (cf.$\cite{Bo}$) such as all prime dividing $M $ are inert, using Dirichlet $S$-unit theorem when $n=1$:
\begin{equation}\label{dimensionk}\dim K_{2n-1}(\mathcal{O}_{k_{N}} [1/M]) \otimes \mathbb{Q} = \left\{
\begin{array}{ll}
  1 & \text{ if } N =1 \text{ or } 2 , \text{ and } n \text{ odd }, (n,N) \neq (1,1) .\\
  0 & \text{ if } N =1 \text{ or } 2 , \text{ and } n \text{ even } .\\
  a_{N}\mathrel{\mathop:}=\frac{\varphi(N)}{2}+ p(M)-1& \text{ if } N >2, n=1 . \\
  b_{N}\mathrel{\mathop:}=\frac{\varphi(N)}{2} & \text{ if } N >2 , n>1  .
\end{array}
\right.
\end{equation}
Hence the dimensions for the fundamental Hopf algebra and the free $\mathcal{A}^{\mathcal{MT}}$-comodule $\mathcal{H}^{\mathcal{MT}}$:
\begin{lemm}
For $N>2$, $M \mid N$ such as all primes dividing $M$ are inert:
\begin{flushleft}
$\mathcal{A}^{\mathcal{MT}}$ is a cofree commutative graded Hopf algebra cogenerated by $a_{N}$ elements $f^{\bullet}_{1}$ in degree 1, and $b_{N}$ elements $f^{\bullet}_{r}$ in degree $r>1$, and we have a non canonical isomorphism of comodules:
\end{flushleft}
$$\mathcal{H}^{\mathcal{MT}}=\mathcal{A}^{\mathcal{MT}} \otimes \mathbb{Q}\left[ t \right] \xrightarrow[\sim]{\phi} H \mathrel{\mathop:}= \mathbb{Q} \left\langle \left(f^{j}_{1}\right) _{1\leq j \leq a_{N}}, \left( f^{j}_{r}\right)_{r>1\atop 1\leq j \leq b_{N}} \right\rangle  \otimes \mathbb{Q}\left[ f'_{1} \right]. $$
There is a recursive formula for $d^{N}_{n}$, the dimension of $\mathcal{H}^{\mathcal{MT}_{N}}_{n}$:
$$d^{N}_{n}=1 + a_{N} d_{n-1}+ b_{N}\sum_{i=2}^{n} d_{n -i}= (a_{N}+1)d_{n-1}+ (b_{N}-a_{N})d_{n -2} \text{  ,  } d_{0}=1 \text{  ,  } d_{1}=a_{N}+1.$$
Hence the Hilbert series for the dimensions of $\mathcal{H}^{\mathcal{MT}_{N}}$ is:
$$h_{N}(t)\mathrel{\mathop:}=\sum_{k} d_{k}^{N} t^{k}=\frac{1}{1-(a_{N}+1)t+ (a_{N}-b_{N})t^{2}}. $$
\end{lemm}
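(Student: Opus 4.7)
The plan is to exploit the vanishing $\mathrm{Ext}^2_{\mathcal{MT}_N}(\mathbb{Q}(0),\mathbb{Q}(n))=0$ recorded in the K-theory computation above: it translates into $H^2(\mathfrak{u})=0$ for the pro-nilpotent graded Lie algebra $\mathfrak{u}$, which forces $\mathfrak{u}$ to be free on any graded lift of a basis of its abelianization $\mathfrak{u}^{ab}$. Combined with (\ref{LieCyclo}) and the explicit dimensions (\ref{dimensionk}), $\mathfrak{u}$ is the free graded pro-nilpotent Lie algebra on $a_N$ generators in degree $1$ and $b_N$ generators in each degree $r>1$.

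To pass from $\mathfrak{u}$ to $\mathcal{A}^{\mathcal{MT}}$ I would take graded duals. The completed universal enveloping algebra of a free graded Lie algebra is canonically the tensor algebra on the same generators, with concatenation product and shuffle coproduct; dualizing gives $\mathcal{A}^{\mathcal{MT}}\cong U^{\wedge}(\mathfrak{u})^{\vee}$ as the cofree graded commutative Hopf algebra on dual generators $f^{j}_{r}$ (one for each Lie generator, in the same degree), concretely the shuffle algebra on these symbols with deconcatenation coproduct. Choosing such a family of cogenerators yields, after tensoring with the $\mathbb{Q}[t]$-factor (identified with $\mathbb{Q}[f'_1]$, with $f'_1$ of degree $1$), the desired non canonical comodule isomorphism $\phi:\mathcal{H}^{\mathcal{MT}}\to H$.

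The dimension statements then follow from reading off the Hilbert series of this shuffle description. Let $g(t)=a_N t+b_N\sum_{r\ge 2}t^{r}=a_N t+\dfrac{b_N t^{2}}{1-t}$ count cogenerators by degree; the tensor/shuffle algebra on these cogenerators has Hilbert series $\dfrac{1}{1-g(t)}$, and tensoring with $\mathbb{Q}[t]$ (series $\dfrac{1}{1-t}$) gives, after clearing denominators,
\[
h_N(t)\;=\;\frac{1}{(1-t)(1-g(t))}\;=\;\frac{1}{1-(a_N+1)t+(a_N-b_N)t^{2}}.
\]
Multiplying both sides by the denominator yields the second-order recursion $d_n^{N}=(a_N+1)d_{n-1}^{N}+(b_N-a_N)d_{n-2}^{N}$; the equivalent form $d_n^{N}=1+a_N d_{n-1}^{N}+b_N\sum_{i=2}^{n}d_{n-i}^{N}$ is obtained by partitioning an element of $\mathcal{H}^{\mathcal{MT}}_n$ according to whether it lies in the $\mathbb{Q}[t]$-factor alone (the $1$) or by the degree of the first cogenerator of the underlying shuffle word, with the remaining word contributing $d_{n-i}^{N}$.

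The main obstacle is the first step: justifying carefully, in the pro-nilpotent graded setting, the implication \textquotedblleft $\mathrm{Ext}^{2}=0 \Rightarrow \mathfrak{u}$ free\textquotedblright{} and checking that a choice of generators dualizes to a genuine isomorphism of graded Hopf algebras (and thus of $\mathcal{A}^{\mathcal{MT}}$-comodules after tensoring with $\mathbb{Q}[t]$); once that is in place, the Hilbert series computation and both forms of the recursion are immediate.
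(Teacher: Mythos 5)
Your proposal is correct and follows essentially the same route as the paper: the paper records $\mathrm{Ext}^2=0$ hence $\mathfrak{u}$ free, identifies $\mathfrak{u}^{ab}$ with the K-theory groups whose dimensions give $a_N$ and $b_N$, quotes $\mathcal{A}^{\mathcal{MT}}\cong (U^{\wedge}(\mathfrak{u}))^{\vee}\cong T(\oplus_n K_{2n-1}\otimes\mathbb{Q})$, and then reads off the recursion and Hilbert series exactly as you do. The only difference is that the step you flag as the "main obstacle" (freeness from $\mathrm{Ext}^2=0$ and the cofree dual description) is simply taken as established in the displayed equations preceding the lemma (following Deligne--Goncharov), so no further argument is given there.
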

\textsc{Remarks:}
\begin{itemize}
\item[$\cdot$] In the general case of the category $\mathcal{MT}(\mathcal{O}_{k_{N}} \left[ \frac{1}{M}\right] )$ where all prime dividing $M$ are not inert, the formula above remains true, with $a_{N,M}\mathrel{\mathop:}= \frac{\varphi(N)}{2} + n_{\mathfrak{p}_{M}}-1$, where $ n_{\mathfrak{p}_{M}}$ is the number of different primes above the primes dividing $M$. However, in the cases considered here, $ n_{\mathfrak{p}_{M}}$ is simply $p(N)$.
\item[$\cdot$] In particular, those dimensions (for $\mathcal{H}^{\mathcal{MT}_{\Gamma_{N}}}$ with $a_{N,N}$) are an upper bound for the dimensions of motivic MZV$_{\mu_{N}}$ (i.e. of $\mathcal{H}^{N}$), and hence of MZV$_{\mu_{N}}$ by the period map.
\item[$\cdot$] The generators $\sigma_{r}=f_{r}^{\vee}$ of the graded Lie algebra $\mathfrak{u}$ are indeed non canonical, only their classes in the abelianization are.\\
\end{itemize}
\texttt{Examples:}
\begin{itemize}
\item[$\cdot$] For the unramified category $\mathcal{MT}(\mathcal{O}_{N})$:
$$d_{n}= \frac{\varphi(N)}{2}d_{n-1}+ d_{n-2}.$$
This suggests to look for a basis with $1$ (with $\frac{\varphi(N)}{2}$ choices of roots of unity) and $2$ (with 1 choice of roots of unity), in the \textit{Hoffman way}.\footnote{As for Hoffman basis, with $\zeta(\lbrace 2,3\rbrace^{\times})$ for $N=1$ where dimensions satisfy $d_{n}=d_{n-2}+d_{n-3}$, cf. $\cite{Br2}$.}
\item[$\cdot$]  For $M \mid N$ such that all primes dividing $M$ are inert, $ n_{\mathfrak{p}_{M}}=\nu(N)$. In particular, it is the case if $N=p^{r}$:
$$\text{ For } \mathcal{MT}\left( \mathcal{O}_{p^{r}}\left[  \frac{1}{p} \right] \right)  \text{  ,  } \quad d_{n}= \left( \frac{\varphi(N)}{2}+1\right) ^{n}.$$
Let us detail the cases $N=2,3,4,\textquoteleft 6 \textquoteright,8$:\\
\end{itemize}

\begin{tabular}{|c|c|c|c|}
    \hline
    & & & \\
   $N \backslash$ $d_{n}^{N}$& $A$ & Dimension relation $d_{n}^{N}$ & Hilbert series \\
  \hline
  $N=1$ & \twolines{$1$ generator in each odd degree $>1$\\
  $\mathbb{Q} \langle f_{3}, f_{5}, f_{7}, \cdots \rangle$ }  &\twolines{$d_{n}=d_{n-3} +d_{n-2}$,\\ $d_{2}=1$, $d_{1}=0$} & $ \frac{1}{1-t^{2}-t^{3}}$  \\
      \hline
   $N=2$\footnotemark[2] & \twolines{$1$ generator in each odd degree $\geq 1$\\
   $\mathbb{Q} \langle f_{1}, f_{3}, f_{5}, \cdots \rangle$ } &  \twolines{$d_{n}=d_{n-1} +d_{n-2}$\\ $d_{0}=d_{1}=1$}  & $ \frac{1}{1-t-t^{2}}$  \\
       \hline
  $N=3,4$ & \twolines{$1$ generator in each degree $\geq 1$\\
  $\mathbb{Q} \langle f_{1}, f_{2}, f_{3}, \cdots \rangle$ } & $d_{k}=2d_{k-1} = 2^{k}$ & $\frac{1}{1-2t}$  \\
      \hline
  $N=8$ & \twolines{$2$ generators in each degree $\geq 1$ \\ $\mathbb{Q} \langle f^{1}_{1}, f^{2}_{1}, f^{1}_{2}, f^{2}_{2}, \cdots \rangle$ } & $d_{k}= 3 d_{k -1}=3^{k}$ & $\frac{1}{1-3t}$  \\
      \hline
   \twolines{$N=6$ \\ $\mathcal{MT}(\mathcal{O}_{6}\left[\frac{1}{6}\right])$} & \twolines{$1$ in each degree $> 1$, $2$ in degree $1$\\
   $\mathbb{Q} \langle f^{1}_{1}, f^{2}_{1}, f_{2}, f_{3}, \cdots \rangle$ } & \twolines{$d_{k}= 3 d_{k -1} -d_{k-2}$, \\$d_{1}=3$} & $\frac{1}{1-3t+t^{2}}$ \\
       \hline
   \twolines{$N=6$ \\ $\mathcal{MT}(\mathcal{O}_{6})$} & \twolines{$1$ generator in each degree $>1$\\
   $\mathbb{Q} \langle f_{2}, f_{3}, f_{4}, \cdots \rangle$} & \twolines{$d_{k}= 1+ \sum_{i\geq 2} d_{k-i}$\\$=d_{k -1} +d_{k-2}$} & $ \frac{1}{1-t-t^{2}}$ \\
   \hline
\end{tabular}
\footnotetext[2]{For $N=2$, the dimensions are Fibonacci numbers.}

\paragraph{Fundamental groupoid.}
Let $\Pi_{0,1}\mathrel{\mathop:}= \pi^{dR}_{1}(\mathbb{P}^{1} \backslash \lbrace 0,\mu_{N},\infty \rbrace, \overrightarrow{1}_{0}, \overrightarrow{-1}_{1} )$ denote the de Rham realisation of the motivic torsor of paths from $0$ to $1$ on $\mathbb{P}^{1} -\left\{0,\mu_{N},\infty\right\}$, with tangential basepoints given by the tangent vectors $1$ at $0$ and $-1$ at $1$. It is the following functor:
\begin{equation}\label{eq:pi}\Pi_{0,1}: R \text{ a } \mathbb{Q}-\text{algebra } \mapsto \left\{S\in R<<e_{0}, (e_{\eta})_{\eta\in\mu_{N}}>>^{\times} | \Delta S= S\otimes S \right\} ,\end{equation}
i.e. the set of non-commutative formal series with $N+1$ generators which are group-like for the completed coproduct for which $e_{i}$ are primitive. It is dual to the shuffle $\shuffle$ relation between the coefficients of the series $S$. Its affine ring of regular functions is the graded algebra for the shuffle product:
\begin{equation}\label{eq:opi}
\mathcal{O}(\Pi_{0,1})\cong \mathbb{Q} \left\langle e^{0}, (e^{\eta})_{\eta\in\mu_{N}} \right\rangle .
\end{equation}

Denote by $\mathcal{MT}'_{N}$ the full Tannakian subcategory of $\mathcal{MT}_{N}$ generated by the motivic fundamental groupoid of $\mathbb{P}^{1}\setminus \left\{0,\mu_{N}, \infty\right\}$. Denote also by $\mathcal{G}=\mathbb{G}_{m} \ltimes \mathcal{U} $ its Galois group defined over $\mathbb{Q}$, $\mathcal{A}=\mathcal{O}(\mathcal{U})$ its fundamental Hopf algebra and $\mathcal{L}\mathrel{\mathop:}= \mathcal{A}_{>0} / \mathcal{A}_{>0} \cdot\mathcal{A}_{>0}$ the Lie coalgebra of indecomposable elements.

\paragraph{Motivic periods.}
A \textbf{\textit{motivic period}} in a tannakian category of mixed Tate motives $\mathcal{M}$ is (cf. $\cite{Del}$) a triplet $\left[M,v,\sigma \right]$, where $M\in Ind(\mathcal{M})$, $v\in\omega(M)$, $\sigma\in\omega_{B}(M)^{\vee}$.\\
Such a motivic period $p^{\mathfrak{m},M}_{v,\sigma}$ can be seen as a function on $P_{B,\omega}$:
\begin{equation}\label{eq:mper} p^{\mathfrak{m},M}_{v,\sigma}: P_{B,\omega} (\mathbb{Q}) \rightarrow \mathbb{Q}\text{  ,  }  p \mapsto <v, p(\sigma)>  \text{   }\in \mathcal{O}(P_{B,\omega}).
\end{equation}
Its period is obtained by evaluation $p^{\mathfrak{m},M}_{v,\sigma}$ on the complex point $comp_{B, dR}$:
\begin{equation}\label{eq:perm} per (p^{\mathfrak{m},M}_{v,\sigma})\mathrel{\mathop:}= p_{v,\sigma}= < comp_{B,dR} (v\otimes 1), \sigma> \in\mathbb{C}.
\end{equation}

From now, $M=\mathcal{O}(\pi^{\mathfrak{m}}_{1}(\mathbb{P}^{1}-\lbrace 0,\mu_{N},\infty\rbrace),\overrightarrow{1_{0}},\overrightarrow{-1_{1}} )$. A \textit{\textbf{motivic iterated integral}} is $I^{\mathfrak{m}}(w)=\left[M,w,dch^{B}\right]$ where $dch^{B}$ is the image of the straight path (droit chemin) from $0$ to $1$ in $\omega_{B}(M)^{\vee}$. Its period is :
\begin{equation}\label{eq:peri} per(I^{\mathfrak{m}}(w))= I(w) =\int_{0}^{1}w= <comp_{B,dR}(w\otimes 1),dch^{B}>\in\mathbb{C}.
\end{equation}
To a word $w$ in $\left\{0, \eta_{\in\mu_{N}}\right\}$, we associate its image $I^{\mathfrak{m}}(0; w ; 1)$, with the correspondance:
$$(a_{1}, \cdots a_{n})\in \left\{0, \eta_{\in\mu_{N}}\right\}^{n} \leftrightarrow\omega_{a_{1}} \cdots  \omega_{a_{n}} \text{ where } \omega_{\alpha}(t)= \frac{dt}{t-\alpha}.$$
\begin{defi}
The \textbf{motivic multiple zeta values} relative to $\mu_{N}$ are defined by:
$$\zeta_{k}^{\mathfrak{m}} \left({ x_{1}, \cdots , x_{p} \atop \epsilon_{1} , \cdots ,\epsilon_{p} }\right) \mathrel{\mathop:}= (-1)^{p} I^{\mathfrak{m}} \left(0;0^{k} (\epsilon_{1}\cdots \epsilon_{p})^{-1}, 0^{x_{1}-1} ,\cdots, (\epsilon_{i}\cdots \epsilon_{p})^{-1}, 0^{x_{i}-1} ,\cdots, \epsilon_{p}^{-1}, 0^{x_{p}-1} ;1 \right) , $$
for $\epsilon_{i}\in\mu_{N}, k\geq 0, x_{i}>0$ and $(x_{p},\epsilon_{p})\neq (1,1)$.
\end{defi}
 We denote by $\mathcal{H}^{N}$ the $\mathbb{Q}$-vector space of motivic multiple zeta values relative to $\mu_{N}$, which is a quotient of $\mathcal{O}(\Pi_{0,1})$. Moreover, $\mathcal{H}^{N}$ is a comodule of $\mathcal{A}^{N}$:
 \begin{equation}\label{eq:hn}\mathcal{H}^{N} = \mathcal{A}^{N} \otimes \left\{
\begin{array}{ll} 
\mathbb{Q}\left[  (2i\pi)^{2,\mathfrak{m}} \right] & \text{ for } N=1,2  \\
\mathbb{Q}\left[ (2i\pi)^{\mathfrak{m}} \right] & \text{ for } N>2 .
\end{array}
\right.
\end{equation}
Let $I^{\mathfrak{a}}, \zeta^{\mathfrak{a}}$ respectively $I^{\mathfrak{l}},\zeta^{\mathfrak{a}}$ denote the image in $\mathcal{A}$, resp. in the coalgebra of indecomposables $\mathcal{L}$. \\

There is a surjective homomorphism called the \textbf{\textit{period map}}, conjectured to be isomorphism:
 \begin{equation}\label{eq:period}per:\mathcal{H} \rightarrow \mathcal{Z} \text{ ,  } \zeta^{\mathfrak{m}} \left(x_{1}, \cdots , x_{p} \atop \epsilon_{1} , \cdots ,\epsilon_{p} \right)\mapsto  \zeta\left(x_{1}, \cdots , x_{p} \atop \epsilon_{1} , \cdots ,\epsilon_{p} \right).
 \end{equation}
Each identity between motivic multiple zeta values at roots of unity is then true for multiple zeta values at roots of unity and in particular each result about a basis with motivic MZVs implies the corresponding result about a generating family of MZVs by application of the period map.\\
Conversely, we can almost lift an identity between MZVs at roots of unity to an identity between motivic ones up to one rational coefficient at each step thanks to the coaction (via an analogue of $\cite{Br1}$, Theorem $3.3$ for roots of unity).\\

The comodule $\mathcal{H}^{N}\subseteq \mathcal{O}(\Pi_{0,1})$ embeds, non canonically, into $\mathcal{H}^{\mathcal{MT}_{N}}$.\\
We will work in the subcategories $\mathcal{MT}'_{N}$, which are equivalent to $\mathcal{MT}_{N}$ since $\mathcal{H}=\mathcal{H}^{\mathcal{MT}}$ for $N=1,2,3,4,6,8$ (by F. Brown $\cite{Br1}$ for $N=1$, by Deligne $\cite{De}$ for the other cases, or Corollary $1.2$). For each $N, N'$ with $N' | N$, the motivic Galois descent has a parallel for the motivic fundamental group:
\begin{figure}[H]
$$\xymatrixcolsep{5pc}\xymatrix{
\mathcal{H}^{N} \ar@{^{(}->}[r]
^{\sim}_{n.c} & \mathcal{H}^{\mathcal{MT}_{N}}  \\
\mathcal{H}^{N'}\ar[u]_{\mathcal{G}^{N/N'}} \ar@{^{(}->}[r]
_{n.c}^{\sim} &\mathcal{H}^{\mathcal{MT}_{N'}} \ar[u]^{\mathcal{G}^{\mathcal{MT}}_{N/N'}}    \\
\mathbb{Q}[\pi^{\mathfrak{m}}] \ar[u]_{\mathcal{U}^{N'}} \ar@{^{(}->}[r]^{\sim} & \mathbb{Q}[\pi^{\mathfrak{m}}] \ar[u]^{\mathcal{U}^{\mathcal{MT}_{N'}}} \\
\mathbb{Q}  \ar[u]_{\mathbb{G}_{m}} \ar@/^2pc/[uuu]^{\mathcal{G}^{N}} & \mathbb{Q}  \ar[u]^{\mathbb{G}_{m}} \ar@/_2pc/[uuu]_{\mathcal{G}^{\mathcal{MT}_{N}}}
}$$
\caption{Galois descents (level $0$).\protect\footnotemark }\label{fig:paralleldescent}
\end{figure}
\footnotetext{\texttt{Nota Bene:}  For $N'=1$  or $2$, $\pi^{\mathfrak{m}}$ has to be replaced by $(\pi^{\mathfrak{m}})^{2}$ or $\zeta^{\mathfrak{m}}(2)$.} 

\paragraph{Coaction.}
The group $\mathcal{G}^{\mathcal{MT}_{N}}$ acts on the de Rham realisation $\Pi_{0,1}$ of the motivic fundamental groupoid (cf. $\cite{DG}, \S 5.12$). Since  $\mathcal{A}^{\mathcal{MT}}= \mathcal{O}(\mathcal{U}^{\mathcal{MT}})$, the action of $\mathcal{U}^{\mathcal{MT}}$ on $\Pi_{0,1}$ gives rise by duality to a coaction: $\Delta^{\mathcal{MT}}$. It factorizes through $\mathcal{A}$ since $\mathcal{U}$ is the quotient of $\mathcal{U}^{\mathcal{MT}}$ by the kernel of its action on $\Pi_{0,1}$ ($\cite{DG}$).\\
Then the combinatorial coaction (on words on $0, \eta\in\mu_{N}$) induces a coaction $\Delta$ on $\mathcal{H}$, which is explicit (by Goncharov $\cite{Go}$ and extended by Brown); the formula being given in $ § 3.2$.

$$ \label{Coaction} \xymatrix{
\mathcal{O}(\Pi_{0,1}) \ar[r]^{\Delta^{\mathcal{MT}}} & \mathcal{A}^{\mathcal{MT}} \otimes_{\mathbb{Q}} \mathcal{O} (\Pi_{0,1})\\
\mathcal{O}(\Pi_{0,1}) \ar[r]^{\Delta^{c}}\ar[u]^{\sim} \ar[d] & \mathcal{A} \otimes_{\mathbb{Q}} \mathcal{O} (\Pi_{0,1}) \ar[u] \ar[d]\\
\mathcal{H} \ar[r]^{\Delta}  & \mathcal{A} \otimes \mathcal{H}. \\
}$$

\paragraph{Motivic iterated integrals.}
Extend the previous definition of $I^{\mathfrak{m}}(a_{0}; a_{1}, \cdots a_{n}; a_{n+1}) \in \mathcal{H}_{n}$, with $a_{i}\in\mu_{N} \cup \left\{0\right\}$ defined as above if $a_{0}=0$ and $a_{n+1}=1$, and extend -in an unique way- by the following properties:
\\
\begin{itemize}
	\item[(i)] $I^{\mathfrak{m}}(a_{0}; a_{1})=1$.
	\item[(ii)] $I^{\mathfrak{m}}(a_{0}; a_{1}, \cdots a_{n}; a_{n+1})=0$ if $a_{0}=a_{n+1}$.
	\item[(iii)] Shuffle product: 
	\begin{multline}
\zeta_{k}^{\mathfrak{m}} \left( {x_{1}, \cdots , x_{p} \atop \epsilon_{1}, \cdots ,\epsilon_{p} }\right)= \\
(-1)^{k}\sum_{i_{1}+ \cdots + i_{p}=k} \binom {x_{1}+i_{1}-1} {i_{1}} \cdots \binom {x_{p}+i_{p}-1} {i_{p}} \zeta^{\mathfrak{m}} \left( {x_{1}+i_{1}, \cdots , x_{p}+i_{p} \atop \epsilon_{1}, \cdots ,\epsilon_{p} }\right).
 \end{multline}
	\item[(iv)] Path composition: 
	$$ \forall x\in \mu_{N} \cup \left\{0\right\} ,  I^{\mathfrak{m}}(a_{0}; a_{1}, \cdots, a_{n}; a_{n+1})=\sum_{i=1}^{n} I^{\mathfrak{m}}(a_{0}; a_{1}, \cdots, a_{i}; x) I^{\mathfrak{m}}(x; a_{i+1}, \cdots, a_{n}; a_{n+1}) .$$
	\item[(v)] Path reversal: $I^{\mathfrak{m}}(a_{0}; a_{1}, \cdots, a_{n}; a_{n+1})= (-1)^n I^{\mathfrak{m}}(a_{n+1}; a_{n}, \cdots, a_{1}; a_{0}).$
	\item[(vi)] Homothety: $\forall \alpha \in \mu_{N}, I^{\mathfrak{m}}(0; \alpha a_{1}, \cdots, \alpha a_{n}; \alpha a_{n+1})  = I^{\mathfrak{m}}(0; a_{1}, \cdots, a_{n}; a_{n+1})$.
\end{itemize}
\vspace{0,5cm}
\textsc{Remark}: These relations, for the multiple zeta values relative to $\mu_{N}$, and for the iterated integrals  $I(a_{0}; a_{1}, \cdots a_{n}; a_{n+1})\mathrel{\mathop:}= \int_{\gamma} \omega_{a_{1}} \cdots  \omega_{a_{n+1}}$, where $\omega_{\alpha}(t)= \frac{dt}{t-\alpha}$ and $\gamma$ the straight path from $a_{0}$ to $a_{n+1}$, are obviously all easily checked, following from the properties of iterated integrals.\\
\\
\textsc{Notation}: An overline at the end means that the corresponding $\epsilon_{i}$ are $1$, except the last one which is $\exp(\frac{2i\pi}{N})$. For instance, for $N=3$:
$$\zeta(3,6, \bar{2})= \zeta\left(3,6,2 \atop  1,1, \exp(\frac{2i\pi}{3})  \right).$$ 
In the case $N=2$, $\epsilon_{i}\in \left\{\pm 1\right\}$, we simplify the notation:
 \begin{equation}\label{eq:notation2} \zeta\left(z_{1},  \ldots, z_{p} \right) \text{ where } z_{i}\in \mathbb{Z}^{\ast} \text{ corresponds to }   \zeta\left(x_{1}, \cdots , x_{p} \atop  \epsilon_{1} , \cdots ,\epsilon_{p} \right)\text{ with  } \left( \mid z_{i} \mid \atop sign(z_{i} ) \right)= \left( x_{i}  \atop \epsilon_{i} \right) .
 \end{equation}

\section{\textsc{Overview}}

\subsection{\textsc{Depth filtration}}

The inclusion of $\mathbb{P}^{1}\diagdown \lbrace 0, \mu_{N},\infty\rbrace \subset \mathbb{P}^{1}\diagdown \lbrace 0,\infty\rbrace$ implies the surjection for the de Rham realisations of fundamental groupoid:
$$  _{0}\Pi_{1} \rightarrow  \pi_{1}^{dR}(\mathbb{G}_{m}, \overrightarrow{01}).$$
Looking at the dual, it corresponds to the inclusion of:
$$ \mathcal{O}  \left( \pi_{1}^{dR}(\mathbb{G}_{m}, \overrightarrow{01} ) \right)  \cong \mathbb{Q} \left\langle e^{0} \right\rangle \hookrightarrow \mathcal{O} \left(  _{0}\Pi_{1} \right) \cong \mathbb{Q} \left\langle e^{0}, (e^{\eta})_{\eta} \right\rangle  .$$
This leads to the definition of  an increasing \textit{depth filtration} $\mathcal{F}^{\mathfrak{D}}$ on $\mathcal{O}(_{0}\Pi_{1})$ such as:
 \begin{equation}\label{eq:filtprofw} \boldsymbol{\mathcal{F}_{p}^{\mathfrak{D}}\mathcal{O}(_{0}\Pi_{1})} \mathrel{\mathop:}= \left\langle  \text{ words } w \text{ in }e^{0},e^{\eta}, \eta\in\mu_{N} \text{ such as } \sum_{\eta\in\mu_{N}} deg _{e{\eta}}w \leq p \right\rangle _{\mathbb{Q}}.
 \end{equation}
This filtration is preserved by the coaction and thus descends to $\mathcal{H}$ (cf. $\cite{Br3}$), on which:
 \begin{equation}\label{eq:filtprofh}  \mathcal{F}_{i}^{\mathfrak{D}}\mathcal{H}\mathrel{\mathop:}= \left\langle  \zeta^{\mathfrak{m}}\left( n_{1}, \cdots, n_{r} \atop \epsilon_{1}, \cdots, \epsilon_{r} \right) , r\leq p \right\rangle _{\mathbb{Q}}.
 \end{equation}
In the same way, we define $ \mathcal{F}_{i}^{\mathfrak{D}}\mathcal{A}$ and  $\mathcal{F}_{i}^{\mathfrak{D}}\mathcal{L}$. Beware, the corresponding grading on $\mathcal{O}(_{0}\Pi_{1})$ is not motivic and the depth is not a graduation on $\mathcal{H}$\footnote{ For instance: $\zeta^{\mathfrak{m}}(3)=\zeta^{\mathfrak{m}}(1,2)$.}. The graded spaces $gr^{\mathfrak{D}}_{p}$ are defined as the quotient $\mathcal{F}_{p}^{\mathfrak{D}}/\mathcal{F}_{p-1}^{\mathfrak{D}}$. This $p$ is sometimes called the \textit{motivic depth}, as it may not coincide with -being smaller or equal- the usual depth defined in the introduction.\\

\paragraph{Depth $1$.}
In depth $1$, it is known for $\mathcal{A}$ (cf. $\cite{DG}$ Theorem $6.8$):
\begin{lemm}[Deligne, Goncharov]
The elements $\zeta^{\mathfrak{a}} \left( r; \eta \right)$ are subject only to the following relations in $\mathcal{A}$:
\begin{description}
\item[Distribution]
$$\forall d|N \text{  ,  }  \forall \eta\in\mu_{\frac{N}{d}} \text{  ,  } (\eta,r)\neq(1,1)\text{    ,    } \zeta^{\mathfrak{a}} \left({r \atop \eta}\right)= d^{r-1} \sum_{\epsilon^{d}=\eta} \zeta^{\mathfrak{a}} \left({r \atop \epsilon}\right).$$ 
\item[Conjugation]
$$\zeta^{\mathfrak{a}} \left({r \atop \eta}\right)= (-1)^{r-1} \zeta^{\mathfrak{a}} \left({r \atop \eta^{-1}}\right).$$
\end{description}
\end{lemm}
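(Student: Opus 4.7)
My proof plan splits the statement into two parts: verifying that both listed identities hold in $\mathcal{A}$, and showing that no further depth-$1$ relations can exist.

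\emph{Verification of the relations.} Both identities have a transparent geometric origin. For conjugation, I would apply the path-reversal property (v) to $I^{\mathfrak{m}}(0;\eta^{-1},0^{r-1};1)$ to obtain $(-1)^{r}I^{\mathfrak{m}}(1;0^{r-1},\eta^{-1};0)$, and then transport back to the interval $[0,1]$ using the involution $z\mapsto 1/z$ of $\mathbb{P}^{1}\setminus\{0,\mu_{N},\infty\}$ (the punctures are preserved since $\mu_{N}$ is stable under inversion). Pulling back $\omega_{\eta}=dz/(z-\eta)$ yields $\omega_{\eta^{-1}}-\omega_{0}$, and all lower-weight corrections this produces involve powers of $\zeta^{\mathfrak{m}}(2)$ or $(2\pi i)^{\mathfrak{m}}$, which vanish in $\mathcal{A}=\mathcal{O}(\mathcal{U})$. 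For distribution, I would use the finite \'etale cover $\pi_{d}:z\mapsto z^{d}$, which restricts to a map $\mathbb{P}^{1}\setminus\{0,\mu_{N},\infty\}\to \mathbb{P}^{1}\setminus\{0,\mu_{N/d},\infty\}$. The pullback $\pi_{d}^{*}\omega_{\eta}=\sum_{\epsilon^{d}=\eta}\omega_{\epsilon}$, together with the rescaling of the tangential base-point $\overrightarrow{1}_{0}$ under $\pi_{d}$, produces precisely the scalar factor $d^{r-1}$ coming from the $r-1$ factors $\omega_{0}$ in the iterated integral. The hypothesis $(r,\eta)\neq(1,1)$ ensures convergence at $z=1$, avoiding the only boundary divergence.

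\emph{Completeness of the relations.} Here I pass to the Lie coalgebra of indecomposables $\mathcal{L}=\mathcal{A}_{>0}/\mathcal{A}_{>0}\cdot\mathcal{A}_{>0}$, whose weight-$r$ component $\mathcal{L}_{r}$ is dual to $\mathfrak{u}_{r}^{\mathrm{ab}}$. By (\ref{LieCyclo}) this has dimension $\dim K_{2r-1}(\mathcal{O}_{k_{N}}[1/M])\otimes\mathbb{Q}$, explicitly tabulated in (\ref{dimensionk}) (namely $b_{N}=\varphi(N)/2$ for $r>1$, and $a_{N}=\varphi(N)/2+p(M)-1$ for $r=1$). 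On the other hand, a direct enumeration of the classes $\zeta^{\mathfrak{l}}(r;\eta)$ modulo the two listed relations yields the same count: conjugation pairs $\eta$ with $\eta^{-1}$ and kills the self-paired ones of wrong parity, and distribution then reduces the spanning set to the primitive $N$-th roots, leaving $\varphi(N)/2$ classes when $r>1$; in weight $1$ one picks up the additional $p(M)-1$ contributions corresponding to the $S$-units in $\mathcal{O}_{k_{N}}[1/M]^{*}$ via Dirichlet.

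\emph{The main obstacle.} The dimensional match above only gives an upper bound for the quotient. The delicate step is to prove \emph{surjectivity} of the natural map from the span of the $\zeta^{\mathfrak{l}}(r;\eta)$ onto $\mathcal{L}_{r}\cong \mathrm{Ext}^{1}_{\mathcal{MT}_{N}}(\mathbb{Q}(0),\mathbb{Q}(r))^{\vee}$: one must show that these motivic cyclotomic polylogarithms actually exhaust the entire degree-$r$ piece of $\mathfrak{u}^{\mathrm{ab}}$. This is where the real input from Beilinson--Borel, and Deligne--Goncharov in the cyclotomic case, enters: one identifies $\zeta(r;\eta)$ up to normalisation with the image under Beilinson's regulator of a canonical generator of $K_{2r-1}(\mathcal{O}_{k_{N}}[1/M])\otimes\mathbb{Q}$, and checks that these images span. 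Once this surjectivity is in hand, the upper bound from the relations and the lower bound from K-theory coincide, forcing the listed relations to be the only ones.
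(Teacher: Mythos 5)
The paper does not prove this lemma at all: it is quoted verbatim as a known result, with the proof deferred to Deligne--Goncharov ([DG], Theorem 6.8), so there is no internal argument to compare yours against. Your reconstruction is a faithful sketch of how that cited proof actually goes, and its architecture is sound: the two relations follow from functoriality of motivic iterated integrals under $z\mapsto z^{d}$ (pullback $\pi_d^{*}\omega_{\eta}=\sum_{\epsilon^{d}=\eta}\omega_{\epsilon}$, $\pi_d^{*}\omega_{0}=d\,\omega_{0}$) and under $z\mapsto 1/z$ (with the $(2\pi i)$-corrections dying in $\mathcal{A}$), while completeness is a two-sided dimension count against $\dim K_{2r-1}(\mathcal{O}_{k_N}[1/M])\otimes\mathbb{Q}$, the genuinely hard half being that the depth-one classes $\zeta^{\mathfrak{l}}(r;\eta)$ span $\mathrm{Ext}^{1}_{\mathcal{MT}_N}(\mathbb{Q}(0),\mathbb{Q}(r))^{\vee}$, i.e.\ the Beilinson--Soul\'e statement that cyclotomic elements generate the relevant K-groups. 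Two points you gloss over are worth flagging: the claim that distribution ``reduces the spanning set to the primitive $N$-th roots'' is not a single substitution but an induction on the level in which one must solve small linear systems whose coefficients are of the shape $1-(-1)^{r-1}d^{r-1}$ (nonzero for $r>1$, but degenerate exactly in weight one, which is why the $(\eta,r)\neq(1,1)$ exclusion and the Dirichlet $S$-unit count enter separately); and your ``main obstacle'' paragraph identifies the surjectivity input correctly but does not prove it --- it is an appeal to the same external source the paper cites, which is acceptable here precisely because the paper itself treats the lemma as imported rather than proved.
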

\textsc{Remark}: More generally, distribution relations for MZV relative to $\mu_{N}$ are:
$$\forall d| N, \quad \forall \epsilon_{i}\in\mu_{\frac{N}{d}} \text{  ,  } \quad \zeta\left( { x_{1}, \cdots , x_{p} \atop  \epsilon_{1} , \cdots ,\epsilon_{p} } \right) = d^{\sum x_{i} - p} \sum_{\eta_{1}^{d}=\epsilon_{1}} \cdots  \sum_{\eta_{p}^{d}=\epsilon_{p}} \zeta \left( {x_{1}, \cdots , x_{p} \atop  \eta_{1} , \cdots ,\eta_{p} } \right) .$$
They are deduced from the following identity:  
$$\text{ For } d|N , \epsilon\in\mu_{\frac{N}{d}} \text {   ,  } \sum_{\eta^{d}=\epsilon} \eta^{n}=  \left\{
\begin{array}{ll}
  d \epsilon ^{\frac{n}{d}}& \text{ if } d|n \\
  0 & \text{ else }.\\
\end{array}
\right. $$
Those relations are obviously the analogues of those satisfied by cyclotomic units modulo torsion.

\paragraph{For $N=2,3,4,\textquoteleft 6 \textquoteright,8$.}
Let start with depth $1$ results, deduced from the Lemma above, fundamental to initiate the recursion in the proof of Lemma $4.2$.
\begin{lemm} The basis for $gr^{\mathfrak{D}}_{1} \mathcal{A}$ is:
$$\left\{ \zeta^{\mathfrak{a}}\left(r;  \xi \right) \text{ such as } \left\{
\begin{array}{ll}  
r>1 \text{ odd  }  & \text{ if }N=1 \\
r \text{ odd  } & \text{ if }N=2 \\
r>0 & \text{ if } N=3,4 \\
r>1 & \text{  if } N=6 \\
\end{array} \right.   \right\rbrace  $$
For $N=8$, the basis for $gr^{\mathfrak{D}}_{1} \mathcal{A}_{r}$ is two dimensional, for all $r>0$:
$$\left\{ \zeta^{\mathfrak{a}}\left(r;  \xi \right), \zeta^{\mathfrak{a}}\left(r;  -\xi \right)\right\rbrace.$$
\end{lemm}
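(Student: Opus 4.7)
The plan is to combine Lemma 3.1 (which asserts that distribution and conjugation exhaust the relations among the depth-one motivic Euler sums in $\mathcal{A}$) with the K-theoretic dimension formula (3.4). So I would produce an explicit spanning family by systematically reducing via both relations and conclude by comparing with the expected dimensions $a_{N}$ in weight $r=1$ and $b_{N}=\varphi(N)/2$ in weight $r\geq 2$; since Lemma 3.1 forbids any extra relations, matching the dimension forces the generating set to be a basis.

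Conjugation $\zeta^{\mathfrak{a}}(r;\epsilon^{-1})=(-1)^{r-1}\zeta^{\mathfrak{a}}(r;\epsilon)$ eliminates one element in each non-self-inverse pair $\{\epsilon,\epsilon^{-1}\}$ and forces $\zeta^{\mathfrak{a}}(r;\pm 1)=0$ whenever $r$ is even. Next, distribution $\zeta^{\mathfrak{a}}(r;\eta)=d^{r-1}\sum_{\epsilon^{d}=\eta}\zeta^{\mathfrak{a}}(r;\epsilon)$ for $d\mid N$, $\eta\in\mu_{N/d}$, $(\eta,r)\neq(1,1)$, expresses every non-primitive root value in terms of primitive ones, and (taking $d=N$, $\eta=1$) also furnishes one relation among the primitive roots themselves in weight $r\geq 2$. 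After these reductions I would verify the count case by case.

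For $N=2$, conjugation kills $\zeta^{\mathfrak{a}}(r;-1)$ for $r\geq 2$ even, while distribution with $d=2$ relates $\zeta^{\mathfrak{a}}(r)$ and $\zeta^{\mathfrak{a}}(r;-1)$ for $r\geq 3$ odd, leaving one generator $\zeta^{\mathfrak{a}}(r;-1)$ per odd $r\geq 1$, matching $\dim=1$. For $N=3,4$, distribution with $d=N$ (and additionally $d=2$ when $N=4$) removes non-primitive contributions, and conjugation identifies $\xi^{\pm 1}$ (respectively $\pm i$) up to sign, leaving the single generator $\zeta^{\mathfrak{a}}(r;\xi)$ in each weight $r\geq 1$, matching $b_{N}=1$. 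For $N=8$, the same process reduces to the four primitive eighth roots $\{\xi,\xi^{3},\xi^{5},\xi^{7}\}$, conjugation pairs $\xi\leftrightarrow\xi^{-1}=\xi^{7}$ and $-\xi=\xi^{5}\leftrightarrow\xi^{3}$, and no further distribution relation applies among the primitives, leaving the two independent elements $\{\zeta^{\mathfrak{a}}(r;\xi),\zeta^{\mathfrak{a}}(r;-\xi)\}$, matching $b_{8}=2$. For $N=6$ in the unramified category, the $K_{1}$-vanishing $a_{6,1}=0$ forces $\zeta^{\mathfrak{a}}(1;\cdot)=0$, and for $r\geq 2$ both the $d=2$ and the $d=3$ distributions combined with conjugation collapse the six values $\zeta^{\mathfrak{a}}(r;\epsilon)$, $\epsilon\in\mu_{6}$, to a single generator, matching $b_{6}=1$.

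The main obstacle will be the unramified case $N=6$: Lemma 3.1 a priori records the same distribution and conjugation relations as in the ramified setting, so the extra vanishings in weight one must be supplied separately by appealing to the fact that the corresponding depth-one motivic periods are logarithms of elements of $\mathcal{O}_{6}[1/6]^{\ast}$ which fail to be units of $\mathcal{O}_{6}$ itself (whose unit group modulo torsion is trivial), and hence lie outside the unramified sub-Hopf-algebra. In all other cases the counting is routine bookkeeping, and the freeness is then automatic from Lemma 3.1 together with the K-theoretic upper bound.
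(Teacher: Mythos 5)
Your proposal is correct and takes essentially the same route as the paper: the lemma is there deduced directly from Lemma 3.1 (Deligne--Goncharov's depth-one result) together with precisely the distribution/conjugation reductions and dimension counts ($a_N$ in weight $1$, $b_N=\varphi(N)/2$ in weight $r>1$) that you carry out, the paper recording this reduction as the explicit depth-one relations listed immediately after the statement. Your separate remark on the unramified case $N=6$ (weight one contributing nothing because the relevant depth-one periods are logarithms that are not logarithms of units of $\mathcal{O}_6$, whose unit group is torsion) is the right way to handle the only genuinely delicate point, and is consistent with the paper's treatment.
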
 

Let explicit those relations in depth $1$ for $N=2,3,4,\textquoteleft 6 \textquoteright,8$, since we would use some $p$-adic properties of the basis elements in our proof:

\begin{description}
\item[\textsc{For $N=2$:}]
The distribution relation in depth 1 is:
$$\zeta^{\mathfrak{a}}\left( {2 r + 1 \atop  1}\right) = (2^{-2r}-1)\zeta^{\mathfrak{a}}\left( {2r+1 \atop -1}\right) .$$
\item[\textsc{For $N=3$:}]
$$  \zeta^{\mathfrak{l}} \left( {2r+1 \atop  1} \right)\left(1-3^{2r}\right)= 2\cdot 3^{2r}\zeta^{\mathfrak{l}}\left({2r+1 \atop \xi}\right) \quad \quad \zeta^{\mathfrak{l}}\left({2r \atop  1}\right)=0 \quad \quad  \zeta^{\mathfrak{l}}\left({r \atop  \xi}\right) =\left(-1\right)^{r-1}  \zeta^{\mathfrak{l}}\left({r \atop  \xi^{-1}}\right). $$
\item[\textsc{For $N=4$:}]
$$\begin{array}{lllllll}
\zeta^{\mathfrak{l}}\left({ r \atop 1} \right) (1-2^{r-1}) & = & 2^{r-1}\cdot \zeta^{\mathfrak{l}}\left( {r\atop -1} \right) \text{  for } r\neq 1 & \quad & \zeta^{\mathfrak{l}}\left({1\atop   1}\right) & = & \zeta^{\mathfrak{l}}\left( {2r\atop -1} \right)=0 \\
\zeta^{\mathfrak{l}}\left({2r+1\atop -1}\right) & = & 2^{2r+1}  \zeta^{\mathfrak{l}}\left( {2r+1\atop \xi} \right) & \quad & \zeta^{\mathfrak{l}}\left( {r \atop \xi} \right) & = & \left(-1\right)^{r-1}  \zeta^{\mathfrak{l}}\left({r\atop  \xi^{-1}}\right).
\end{array}$$
\item[\textsc{For $N=6$:}]
$$\begin{array}{lllllll}
\zeta^{\mathfrak{l}}\left({r\atop  1}\right)\left(1-2^{r-1}\right) & = & 2^{r-1}\zeta^{\mathfrak{l}}\left({ r \atop -1} \right) \text{  for } r\neq 1 & \quad & \zeta^{\mathfrak{l}}\left( {1 \atop 1} \right) & = & \zeta^{\mathfrak{l}}\left({2r\atop  -1}\right)=0\\
\zeta^{\mathfrak{l}}\left(  {2r+1 \atop -1} \right)  & = & \frac{2\cdot 3^{2r}}{1-3^{2r}}  \zeta^{\mathfrak{l}}\left( {2r+1 \atop  \xi} \right)& \quad & \zeta^{\mathfrak{l}}\left( {r \atop \xi^{2}} \right)  & = & \frac{2^{r-1}}{1-(-2)^{r-1}}  \zeta^{\mathfrak{l}}\left( {r \atop  \xi} \right).\\
\zeta^{\mathfrak{l}}\left({r\atop  \xi} \right) &=&\left(-1\right)^{r-1}  \zeta^{\mathfrak{l}}\left( {r \atop \xi^{-1}} \right) &\quad & \zeta^{\mathfrak{l}}\left({r \atop  -\xi} \right)  & = & \left(-1\right)^{r-1}  \zeta^{\mathfrak{l}}\left( {r \atop -\xi^{-1}} \right).
\end{array}$$
\item[\textsc{For $N=8$:}]
$$\begin{array}{lllllll}
  \zeta^{\mathfrak{l}}\left({ r \atop  1} \right)& =& \frac{ 2^{r-1}}{\left(1-2^{r-1}\right)}\zeta^{\mathfrak{l}}\left({r\atop  -1}\right) \text{  for } r\neq 1  &  \quad &  \zeta^{\mathfrak{l}}\left( {1 \atop  1} \right) &=&\zeta^{\mathfrak{l}}\left({2r\atop  -1}\right)=0  \\
 \zeta^{\mathfrak{l}}\left({ r \atop  -i }\right) &=& 2^{r-1}  \left( \zeta^{\mathfrak{l}}\left({r\atop  \xi}\right) + \zeta^{\mathfrak{l}}\left({r\atop  -\xi}\right) \right) &  \quad & \zeta^{\mathfrak{l}}\left( {2r+1 \atop  -1} \right) &=& 2^{2r+1}  \zeta^{\mathfrak{l}}\left({2r+1\atop  i}\right)   \\
\zeta^{\mathfrak{l}}\left({ r\atop  \pm \xi} \right) &=&\left(-1\right)^{r-1}  \zeta^{\mathfrak{l}}\left( {r \atop \pm \xi^{-1} }\right)  &  \quad & \zeta^{\mathfrak{l}}\left( {r \atop  i} \right) &=&\left(-1\right)^{r-1}  \zeta^{\mathfrak{l}}\left( {r \atop  -i}\right) \\
\end{array}$$

\end{description}

\subsection{\textsc{Coaction}}
The group $\mathcal{G}^{\mathcal{MT}_{N}}$ acts on the de Rham realisation $\Pi_{0,1}$ of the motivic fundamental groupoid (cf. $\cite{DG}, \S 5.12$). Since  $\mathcal{A}^{\mathcal{MT}}= \mathcal{O}(\mathcal{U}^{\mathcal{MT}})$, the action of $\mathcal{U}^{\mathcal{MT}}$ on $\Pi_{0,1}$ gives rise by duality to a coaction: $\Delta^{\mathcal{MT}}$. It factorizes through $\mathcal{A}$ since $\mathcal{U}$ is the quotient of $\mathcal{U}^{\mathcal{MT}}$ by the kernel of its action on $\Pi_{0,1}$ ($\cite{DG}$).\\
Then the combinatorial coaction (on words on $0, \eta\in\mu_{N}$) induces a coaction $\Delta$ on $\mathcal{H}$, which is explicit (by Goncharov $\cite{Go}$ and extended by Brown); the formula being given below.

$$ \label{Coaction} \xymatrix{
\mathcal{O}(\Pi_{0,1}) \ar[r]^{\Delta^{\mathcal{MT}}} & \mathcal{A}^{\mathcal{MT}} \otimes_{\mathbb{Q}} \mathcal{O} (\Pi_{0,1})\\
\mathcal{O}(\Pi_{0,1}) \ar[r]^{\Delta^{c}}\ar[u]^{\sim} \ar[d] & \mathcal{A} \otimes_{\mathbb{Q}} \mathcal{O} (\Pi_{0,1}) \ar[u] \ar[d]\\
\mathcal{H} \ar[r]^{\Delta}  & \mathcal{A} \otimes \mathcal{H}. \\
}$$
\\
The coaction for motivic iterated integrals is given by the following formula, due to A. B. Goncharov (cf. $\cite{Go}$) for $\mathcal{A}$ and extended by F. Brown to $\mathcal{H}$ (cf. $\cite{Br2}$):
\begin{theo} \label{eq:coaction}
The coaction $\Delta: \mathcal{H} \rightarrow \mathcal{A} \otimes_{\mathbb{Q}} \mathcal{H}$ is given by the combinatorial coaction $\Delta^{c}$:
$$\Delta I^{\mathfrak{m}}(a_{0}; a_{1}, \cdots a_{n}; a_{n+1}) =$$
$$\sum_{k ;i_{0}= 0<i_{1}< \cdots < i_{k}<i_{k+1}=n+1} \left( \prod_{p=0}^{k} I^{\mathfrak{a}}(a_{i_{p}}; a_{i_{p}+1}, \cdots a_{i_{p+1}-1}; a_{i_{p+1}}) \right) \otimes I^{\mathfrak{m}}(a_{0}; a_{i_{1}}, \cdots a_{i_{k}}; a_{n+1}) .$$
\end{theo}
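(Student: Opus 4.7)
The plan is to establish the formula in two stages, following the historical development: first prove Goncharov's formula at the level of $\mathcal{A}$, then extend it to $\mathcal{H}$ following Brown. The passage to $\mathcal{H}$ requires adjoining the class of $(2\pi i)^{\mathfrak{m}}$ (or $\zeta^{\mathfrak{m}}(2)$ when $N=1,2$) and verifying that the combinatorial formula descends compatibly through the quotient $\mathcal{O}(\Pi_{0,1}) \twoheadrightarrow \mathcal{H}$ already indicated in the diagram preceding the theorem.

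For the statement in $\mathcal{A}$, I would work with the full motivic fundamental groupoid, i.e.\ with $\pi^{dR}_1(X; a, b)$ for all pairs of tangential basepoints at the punctures of $X = \mathbb{P}^1 \setminus \{0, \mu_N, \infty\}$. The crucial input is that $\mathcal{G}^{\mathcal{MT}_N}$ acts compatibly on the entire groupoid, preserving three structures: path composition $\pi(a,b)\cdot \pi(b,c) = \pi(a,c)$, path reversal, and the shuffle Hopf algebra structure on each $\mathcal{O}(\pi(a,b))$. Dualising, $\Delta^{\mathcal{MT}}$ must respect these operations. Given any word $(a_0; a_1, \dots, a_n; a_{n+1})$, one inserts marked intermediate points among the $a_i$: via path composition, the iterated integral splits as a sum of products of iterated integrals on the subpaths, and the combinatorial formula is forced by simultaneous compatibility with all such markings. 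Concretely, the sum over $0 = i_0 < i_1 < \cdots < i_k < i_{k+1} = n+1$ runs over the possible markings; the inner factors $I^{\mathfrak{a}}(a_{i_p}; a_{i_p+1}, \dots, a_{i_{p+1}-1}; a_{i_{p+1}})$ land on the Galois side in $\mathcal{A}$, while the skeleton $I^{\mathfrak{m}}(a_0; a_{i_1}, \dots, a_{i_k}; a_{n+1})$ remains on the period side in $\mathcal{H}$ (or $\mathcal{O}(\Pi_{0,1})$ before quotienting). A convenient technical device is to verify that the right-hand side is group-like when applied to the generating series in $\Pi_{0,1}$, which isolates the formula uniquely.

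For Brown's extension to $\mathcal{H}$, I would verify that the defining relations of $\mathcal{H}$ as a quotient of $\mathcal{O}(\Pi_{0,1})$ -- shuffle, path composition, path reversal, homothety, and the vanishing $I^{\mathfrak{m}}(a;a)=0$ listed among properties (i)--(vi) -- are all respected by $\Delta^c$; this is a direct combinatorial check on the explicit formula. Separately, one must pin down the coaction on $(2\pi i)^{\mathfrak{m}}$: the normalization is dictated by the requirement $\Delta (2\pi i)^{\mathfrak{m}} = 1 \otimes (2\pi i)^{\mathfrak{m}}$, together with the compatibility check that computing $\Delta \zeta^{\mathfrak{m}}(2)$ directly from the word $I^{\mathfrak{m}}(0;1,0;1)$ via $\Delta^c$ agrees with the expected value in $\mathcal{A} \otimes \mathcal{H}$. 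This compatibility fixes the extra factor $t$ distinguishing $\mathcal{H}^{\mathcal{MT}}$ from $\mathcal{A}^{\mathcal{MT}}$ in equation $(1.4)$.

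The main obstacle is the first stage: identifying the abstractly defined Galois coaction, coming from the action of $\mathcal{G}^{\mathcal{MT}_N}$ on the tangential-basepoint fundamental groupoid, with the explicit combinatorial sum $\Delta^c$. The cleanest route is Goncharov's argument via the universal property of the motivic fundamental groupoid together with the tangential-basepoint structure at the punctures; the subtle point is tracking the monodromy and the tangential-basepoint conventions, and confirming that the groupoid-theoretic decomposition matches the index-set sum on the nose. Once this identification is in hand, the descent from $\mathcal{O}(\Pi_{0,1})$ to $\mathcal{H}$ and the compatibility with $(2\pi i)^{\mathfrak{m}}$ reduce to routine verifications on the formula.
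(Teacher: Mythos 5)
The paper itself does not prove this theorem: it is quoted as Goncharov's formula on $\mathcal{A}$ (cf.\ [Go]) extended by Brown to $\mathcal{H}$ (cf.\ [Br2]), so there is no internal proof to compare against. Judged on its own terms, your outline reproduces the correct architecture — first the statement on $\mathcal{A}$, then the extension to $\mathcal{H}$ with the normalisation $\Delta (2\pi i)^{\mathfrak{m}} = 1 \otimes (2\pi i)^{\mathfrak{m}}$ and the check that shuffle, path composition, reversal, homothety and $I^{\mathfrak{m}}(a;a)=0$ are respected by $\Delta^{c}$ — but the heart of the theorem is precisely the step you defer: identifying the abstract coaction, dual to the action of $\mathcal{U}^{\mathcal{MT}}$ on $\Pi_{0,1}$, with the explicit combinatorial sum over $0=i_{0}<i_{1}<\cdots<i_{k}<i_{k+1}=n+1$. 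Your argument for that identification — that the formula is ``forced by simultaneous compatibility with all such markings'', i.e.\ with path composition, reversal and the shuffle structure — does not work: these compatibilities constrain but do not determine the coaction (the trivial coaction $x \mapsto 1 \otimes x$ respects all of them, as does any coaction twisted by a structure-preserving automorphism of the groupoid), and group-likeness of the right-hand side on the generating series in $\Pi_{0,1}$ does not isolate the formula either. So the claim that the sum is ``forced'' is a genuine gap, not a routine verification.

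What is missing is an actual computation of the motivic (or Hodge) structure of the path torsor. In Goncharov's proof this is done by realizing $I(a_{0};a_{1},\cdots,a_{n};a_{n+1})$ as a framed mixed Tate object in the cohomology of a pair built from $X^{n}$ (the bar/cosimplicial construction), and computing the coproduct of that framed object term by term; the polygon/marking sum is exactly what this computation produces. Alternatively one can write down the cocycle describing the action of $\mathcal{U}$ on the torsor of paths $_{0}\Pi_{1}$ (Ihara-type formula, using the canonical de Rham path) and dualize it, checking that the dual is $\Delta^{c}$. Either computation is the substance of the theorem; without one of them, your first stage is a restatement of what must be proved, and only the descent to $\mathcal{H}$ and the treatment of $(2\pi i)^{\mathfrak{m}}$ in your sketch are genuinely routine. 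If your intention was to cite Goncharov for the key identification — as the paper does — you should say so explicitly rather than presenting it as a consequence of functoriality and composition-compatibility.
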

\textsc{Remark:} It has a nice geometric formulation, considering the $a_{i}$ as vertices on a half-circle: 
$$\Delta^{c} I^{\mathfrak{m}}=\sum_{\text{ polygons on circle  } \atop \text{ with vertices } (a_{i_{p}})}     \prod_{p}  I^{\mathfrak{a}}\left(  \text{ arc between consecutive vertices  } \atop \text{ from } a_{i_{p}} \text{ to } a_{i_{p+1}} \right)  \otimes I^{\mathfrak{m}}(\text{  vertices } ).$$
Define for $r\geq 1$, the \textit{derivation operators}:
\begin{equation}\label{eq:dr}
\boldsymbol{D_{r}}: \mathcal{H} \rightarrow \mathcal{L}_{r} \otimes_{\mathbb{Q}} \mathcal{H},
\end{equation}
 composite of $\Delta'= \Delta^{c}- 1\otimes id$ with $\pi_{r} \otimes id$, where $\pi_{r}$ is the projection $\mathcal{A} \rightarrow \mathcal{L} \rightarrow \mathcal{L}_{r}$.\\
 \\
\texttt{Nota Bene:} It is sufficient to consider those weight-graded derivative operators to keep track of all the information of the coaction.\\
\\
According to the previous theorem, the action of $D_{r}$ on $I^{\mathfrak{m}}(a_{0}; a_{1}, \cdots a_{n}; a_{n+1})$ is:
\begin{framed}
\begin{equation}
\label{eq:Der}
D_{r}I^{\mathfrak{m}}(a_{0}; a_{1}, \cdots a_{n}; a_{n+1})= 
\end{equation}
$$\sum_{p=0}^{n-1} I^{\mathfrak{l}}(a_{p}; a_{p+1}, \cdots a_{p+r}; a_{p+r+1}) \otimes I^{\mathfrak{m}}(a_{0}; a_{1}, \cdots a_{p}, a_{p+r+1} \cdots a_{n}; a_{n+1}) .$$
\end{framed}
\textsc{Remarks}
\begin{itemize}
\item[$\cdot$] Geometrically, it is equivalent to keep in the previous coaction only the polygons corresponding to an unique cut of (interior) length $r$ between two elements of the iterated integral.
\item[$\cdot$] These maps $D_{r}$ are derivations:
$$D_{r} (XY)= (1\otimes X) D_{r}(Y) + (1\otimes Y) D_{r}(X).$$
\item[$\cdot$] This formula is linked with the equation differential satisfied by the iterated integral $I(a_{0}; \cdots; a_{n+1})$ when the $a_{i} 's$ vary (cf. \cite{Go})\footnote{Since $I(a_{i-1};a_{i};a_{i+1})= \log(a_{i+1}-a_{i})-\log(a_{i-1}-a_{i})$.}:
$$dI(a_{0}; \cdots; a_{n+1})= \sum dI(a_{i-1};a_{i};a_{i+1})  I(a_{0}; \cdots \widehat{a_{i}} \cdots; a_{n+1}).$$
\end{itemize}

Translating ($\ref{eq:dr}$) for cyclotomic MZV:
\begin{lemm}
\label{drz}
$$D_{r}: \mathcal{H}_{n} \rightarrow  \mathcal{L}_{r} \otimes  \mathcal{H}_{n-r} $$
$$ D_{r} \left(\zeta^{\mathfrak{m}} \left({n_{1}, \cdots , n_{p} \atop \epsilon_{1}, \cdots ,\epsilon_{p}} \right)\right) =   \delta_{r=n_{1}+ \cdots +n_{i}} \zeta^{\mathfrak{l}} \left({n_{1}, \cdots n_{i} \atop  \epsilon_{1}, \cdots, \epsilon_{i}}\right) \otimes \zeta^{\mathfrak{m}} \left(  { n_{i+1},\cdots, n_{p} \atop \epsilon_{i+1}, \cdots, \epsilon_{p} }\right) $$
$$ \sum_{1 \leq i<j\leq p \atop \lbrace r \leq \sum_{k=i}^{j} n_{k} -1\rbrace}  \left[ \delta_{ \sum_{k=i+1}^{j} n_{k} \leq r }  \zeta^{\mathfrak{l}}_{0^{r-  \sum_{k=i+1}^{j}n_{k}}} \left({ n_{i+1}, \cdots , n_{j} \atop  \epsilon_{i+1}, \cdots, \epsilon_{j}}\right) +(-1)^{r} \delta_{ \sum_{k=i}^{j-1} n_{k} \leq r}  \zeta^{\mathfrak{l}}_{0^{r-  \sum_{k=i}^{j-1}n_{k}}} \left({ n_{j-1}, \cdots n_{i} ,\atop  \epsilon_{j-1}^{-1}, \cdots, \epsilon_{i}^{-1}}\right) \right]  $$
$$ \otimes \zeta^{\mathfrak{m}} \left( {\cdots, \sum_{k=i}^{j} n_{k}-r,\cdots \atop  \cdots , \prod_{k=i}^{j}\epsilon_{k}, \cdots}\right)  $$
\end{lemm}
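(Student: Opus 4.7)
The plan is to apply the general coaction formula \eqref{eq:Der} to the iterated-integral representation of $\zeta^{\mathfrak{m}}$ and then carefully enumerate the non-vanishing cuts. Setting $\eta_i := (\epsilon_i\cdots\epsilon_p)^{-1}$, the defining word $(a_0,\ldots,a_{n+1})$ of
$$\zeta^{\mathfrak{m}}\left({n_1,\ldots,n_p \atop \epsilon_1,\ldots,\epsilon_p}\right)=(-1)^p I^{\mathfrak{m}}(0;\eta_1,0^{n_1-1},\ldots,\eta_p,0^{n_p-1};1)$$
has $n=\sum n_k$, $a_0=0$, $a_{n+1}=1$, $\epsilon$-letter positions $\sigma_i=1+n_1+\cdots+n_{i-1}$ with $a_{\sigma_i}=\eta_i$, and all other interior $a_m=0$. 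Formula \eqref{eq:Der} expresses $D_r$ as a sum over cuts $(a_{q+1},\ldots,a_{q+r})$ indexed by $0\le q\le n-r$.

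First I would discard cuts whose left factor vanishes in $\mathcal{L}$. Property (ii) requires $a_q\neq a_{q+r+1}$; moreover any cut whose interior is a pure string of zeros gives $I^{\mathfrak{l}}(\alpha;0^r;\beta)=\tfrac{1}{r!}(\log^{\mathfrak{l}}(\beta/\alpha))^r$, which is either a product of two positive-weight elements (for $r\ge 2$, hence zero in $\mathcal{L}=\mathcal{A}_{>0}/\mathcal{A}_{>0}\cdot\mathcal{A}_{>0}$) or, for $r=1$, a logarithm of a ratio of roots of unity. The latter also vanishes in $\mathcal{L}_1\cong(\mathcal{O}_N[1/N])^{\times}\otimes\mathbb{Q}$ since roots of unity are torsion. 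The surviving cuts are thus exactly those whose interior contains at least one $\epsilon$-letter.

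These cuts split into three families matching the three summands of the statement. \textbf{(A) Left-boundary cuts} ($q=0$): non-vanishing forces $a_{r+1}=\eta_{i+1}$, so $r=\sum_{k=1}^i n_k$ for some $1\le i\le p$; homothety (vi) by $\epsilon_{i+1}\cdots\epsilon_p$ produces $(-1)^i\zeta^{\mathfrak{l}}(n_1,\ldots,n_i \,|\, \epsilon_1,\ldots,\epsilon_i)$ on the left and $(-1)^{p-i}\zeta^{\mathfrak{m}}(n_{i+1},\ldots,n_p \,|\, \epsilon_{i+1},\ldots,\epsilon_p)$ on the right, and the signs collapse with the prefactor $(-1)^p$ to $+1$, giving the first summand. \textbf{(B) Interior cuts with $a_{q+r+1}$ an $\epsilon$-letter or the exterior point $1$} (conventionally $\eta_{p+1}$): writing $a_{q+r+1}=\eta_{j+1}$ and letting $a_q$ lie in the zero-tail of block $i$ (or equal $\eta_i$), the cut content is $0^k,\eta_{i+1},0^{n_{i+1}-1},\ldots,\eta_j,0^{n_j-1}$ with $k=r-\sum_{m=i+1}^{j}n_m\in\{0,\ldots,n_i-1\}$; homothety normalises the left factor to $(-1)^{j-i}\zeta^{\mathfrak{l}}_{0^k}(n_{i+1},\ldots,n_j \,|\, \epsilon_{i+1},\ldots,\epsilon_j)$. \textbf{(C) Interior cuts with $a_q=\eta_i$ and $a_{q+r+1}$ a zero}: path reversal (v) on $I^{\mathfrak{l}}$ introduces $(-1)^r$ and reduces the configuration to type (B) read from right to left, producing $(-1)^r(-1)^{j-i}\zeta^{\mathfrak{l}}_{0^{k'}}(n_{j-1},\ldots,n_i \,|\, \epsilon_{j-1}^{-1},\ldots,\epsilon_i^{-1})$ with $k'=r-\sum_{m=i}^{j-1}n_m$.

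In both interior cases, after removing the cut the letters $a_q$ and $a_{q+r+1}$ become adjacent; homothety on the complementary integral then fuses them into a single $\epsilon$-letter $\prod_{k=i}^j \epsilon_k$ of weight $\sum_{k=i}^j n_k-r$, yielding the right factor $\zeta^{\mathfrak{m}}(\ldots,\sum_{k=i}^j n_k-r,\ldots \,|\, \ldots,\prod_{k=i}^j \epsilon_k,\ldots)$ as announced. The main obstacle is purely combinatorial sign and index bookkeeping: tracking the $(-1)^p,(-1)^i,(-1)^{p-i},(-1)^{j-i},(-1)^r$ contributions from the definition, the homothety reductions $I\leftrightarrow\zeta$, and the path reversal, so that everything collapses consistently; treating the boundary case $j=p$ (cut touching $1$) uniformly with strictly interior $j$; and verifying that the two $\delta$-ranges in (B) and (C) partition the admissible values of $r$ without double-counting, with the overlap at $r=\sum_{k=i}^{j-1}n_k$ (both endpoints $\epsilon$-letters) assigned consistently to one of the two families.
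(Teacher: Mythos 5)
Your overall route --- expanding the defining word of $\zeta^{\mathfrak{m}}$, applying the derivation formula \eqref{eq:Der}, discarding the cuts with equal extremities or with all-zero interior, and sorting the survivors into boundary cuts and interior cuts --- is exactly the ``straightforward translation'' the paper's one-line proof refers to, and your family (A) as well as the interior cuts having one extremity a zero and the other a root of unity are handled correctly (signs, homothety normalisation, and the fusion of blocks $i,\ldots,j$ in the right factor all check out). The genuine gap is the treatment of the cut whose two extremities are both roots of unity, $a_q=\eta_i$ and $a_{q+r+1}=\eta_{j+1}$, which occurs exactly at $r=\sum_{k=i}^{j}n_k-1$ (not at $r=\sum_{k=i}^{j-1}n_k$ as you state). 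You place this cut in family (B) and assert that homothety normalises its left factor to $(-1)^{j-i}\zeta^{\mathfrak{l}}_{0^{n_i-1}}\left({n_{i+1},\ldots,n_j\atop \epsilon_{i+1},\ldots,\epsilon_j}\right)$; this fails, because homothety rescales all points simultaneously and cannot turn the initial point $\eta_i\neq 0$ into $0$, so $I^{\mathfrak{l}}(\eta_i;0^{n_i-1},\eta_{i+1},\ldots,\eta_j,0^{n_j-1};\eta_{j+1})$ is not of the form $\zeta^{\mathfrak{l}}_{0^{k}}(\cdots)$ at all.

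The correct resolution is not to ``assign the overlap consistently to one of the two families'': this single cut must be split into two contributions, using path composition (iv) modulo products, i.e. the identity $I^{\mathfrak{l}}(a;w;b)=I^{\mathfrak{l}}(0;w;b)+I^{\mathfrak{l}}(a;w;0)$ valid in $\mathcal{L}$. Applied here it gives the sum of $(-1)^{j-i}\zeta^{\mathfrak{l}}_{0^{n_i-1}}\left({n_{i+1},\ldots,n_j\atop \epsilon_{i+1},\ldots,\epsilon_j}\right)$ and, after path reversal and homothety, $(-1)^{r}(-1)^{j-i}\zeta^{\mathfrak{l}}_{0^{n_j-1}}\left({n_{j-1},\ldots,n_i\atop \epsilon_{j-1}^{-1},\ldots,\epsilon_i^{-1}}\right)$, which are precisely the two bracket terms of the Lemma at this extreme value of $r$; the second one is missing from your enumeration, since your family (C) (right extremity a zero) only reaches $k'\leq n_j-2$. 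Note also that away from this extreme value the two $\delta$-ranges are meant to overlap rather than partition: for fixed $i<j$ and a generic admissible $r$ there are two distinct non-vanishing cuts (left extremity $\eta_i$ with right extremity a zero, and left extremity a zero with right extremity $\eta_{j+1}$), so both bracket terms occur simultaneously and there is no double-counting issue to resolve. The same phenomenon is visible in the depth-graded Lemma 3.5, where the cuts between two roots of unity (types (c) and (d')) each produce two left-factor terms.
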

\begin{proof}
Straight-forward from $\ref{eq:dr}$, passing to MZV$_{\mu_{N}}$ writing.
\end{proof}
A key point is that the Galois action and hence the coaction respects the weight grading and the depth filtration:
$$D_{r} (\mathcal{H}_{n}) \subset \mathcal{L}_{r} \otimes_{\mathbb{Q}} \mathcal{H}_{n-r}.$$
$$ D_{r} (\mathcal{F}_{p}^{\mathfrak{D}}  \mathcal{H}_{n}) \subset \mathcal{L}_{r} \otimes_{\mathbb{Q}} \mathcal{F}_{p-1}^{\mathfrak{D}} \mathcal{H}_{n-r}.$$
Passing to the depth-graded, let define:
$$gr^{\mathfrak{D}}_{p} D_{r}: gr_{p}^{\mathfrak{D}} \mathcal{H} \rightarrow \mathcal{L}_{r} \otimes gr_{p-1}^{\mathfrak{D}} \mathcal{H} \text{, as the composition } (id\otimes gr_{p-1}^{\mathfrak{D}}) \circ D_{r |gr_{p}^{\mathfrak{D}}\mathcal{H}}.$$
By Lemma $\autoref{drz}$, all the terms appearing in the left side of $gr^{\mathfrak{D}}_{p} D_{2r+1}$ have depth $1$. Hence, let consider from now the derivations $D_{r,p}$:
\begin{lemm}
\label{Drp}
$$\boldsymbol{D_{r,p}}: gr_{p}^{\mathfrak{D}} \mathcal{H} \rightarrow gr_{1}^{\mathfrak{D}} \mathcal{L}_{r} \otimes gr_{p-1}^{\mathfrak{D}} \mathcal{H} $$
$$ D_{r,p} \left(\zeta^{\mathfrak{m}} \left({x_{1}, \cdots , x_{p} \atop \epsilon_{1}, \cdots ,\epsilon_{p}} \right)\right) = \textsc{(a0)      }  \delta_{r=x_{1}}\ \zeta^{\mathfrak{l}} \left({r \atop  \epsilon_{1}}\right) \otimes \zeta^{\mathfrak{m}} \left(  { x_{2},\cdots \atop \epsilon_{2}, \cdots }\right) $$
$$\textsc{(a)      }  + \sum_{i=2}^{p-1} \delta_{x_{i}\leq r < x_{i}+ x_{i-1}-1} (-1)^{r-x_{i}} \binom {r-1}{r-x_{i}} \zeta^{\mathfrak{l}} \left({ r \atop  \epsilon_{i}}\right) \otimes \zeta^{\mathfrak{m}} \left( {\cdots, x_{i}+x_{i-1}-r,\cdots \atop  \cdots , \epsilon_{i-1}\epsilon_{i}, \cdots}\right)  $$
$$ \textsc{(b)  } -\sum_{i=1}^{p-1} \delta_{x_{i}\leq r < x_{i}+ x_{i+1}-1} (-1)^{x_{i}} \binom{r-1}{r-x_{i}} \zeta^{\mathfrak{l}}  \left( {r \atop   \epsilon_{i}^{-1}}\right) \otimes \zeta^{\mathfrak{m}} \left( {\cdots, x_{i}+x_{i+1}-r, \cdots \atop  \cdots , \epsilon_{i+1}\epsilon_{i}, \cdots}\right)  $$
$$\textsc{(c)  } +\sum_{i=2}^{p-1} \delta_{ r = x_{i}+ x_{i-1}-1 \atop \epsilon_{i-1}\epsilon_{i}\neq 1}  \left( (-1)^{x_{i}} \binom{r-1}{x_{i}-1} \zeta^{\mathfrak{l}}  \left( {r \atop \epsilon_{i-1}^{-1}}  \right) + (-1)^{x_{i-1}-1} \binom{r-1}{x_{i-1}-1} \zeta^{\mathfrak{l}}  \left( {r \atop \epsilon_{i}} \right)  \right)$$
$$\otimes \zeta^{\mathfrak{m}} \left( {\cdots, 1, \cdots \atop \cdots, \epsilon_{i-1} \epsilon_{i}, \cdots}\right) $$

$$ \textsc{(d)  } +\delta_{ x_{p} \leq r < x_{p}+ x_{p-1}-1} (-1)^{r-x_{p}} \binom{r-1}{r-x_{p}} \zeta^{\mathfrak{l}}  \left({r \atop \epsilon_{p}} \right) \otimes \zeta^{\mathfrak{m}} \left( {\cdots, x_{p-1}+x_{p}-r\atop  \cdots, \epsilon_{p-1}\epsilon_{p}}\right)  $$

$$\textsc{(d') } +\delta_{ r = x_{p}+ x_{p-1}-1 \atop \epsilon_{p-1}\epsilon_{p}\neq 1}   (-1)^{x_{p-1}}\left( \binom{r-1}{x_{p}-1} \zeta^{\mathfrak{l}}   \left( {r \atop  \epsilon_{p-1}^{-1}}  \right) - \binom{r-1}{x_{p-1}-1} \zeta^{\mathfrak{l}}  \left( {r \atop \epsilon_{p}}  \right) \right)  \otimes \zeta^{\mathfrak{m}} \left( { \cdots,  1 \atop  \cdots \epsilon_{p-1}\epsilon_{p}}\right) .$$ 
\end{lemm}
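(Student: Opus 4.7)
The plan is to specialize the general derivation formula of Lemma 3.3 (equivalently, equation (3.7) applied to the iterated-integral representation of the MZV) and project onto the depth-graded piece. Recall that $\zeta^{\mathfrak{m}}\left({x_{1},\ldots,x_{p} \atop \epsilon_{1},\ldots,\epsilon_{p}}\right) = (-1)^{p} I^{\mathfrak{m}}(0;w;1)$, where the word $w$ has non-zero letters $\eta_{j} := (\epsilon_{j}\cdots \epsilon_{p})^{-1}$ at the positions $N_{j} := 1 + x_{1} + \cdots + x_{j-1}$, and zeros elsewhere. Formula (3.7) then expresses $D_{r}$ as a sum over all length-$r$ cuts; each term has the form $I^{\mathfrak{l}}(a_{p};a_{p+1},\ldots,a_{p+r};a_{p+r+1}) \otimes I^{\mathfrak{m}}(0;a_{1},\ldots,a_{p},a_{p+r+1},\ldots,a_{n};1)$.

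For the right-hand factor to lie in $gr_{p-1}^{\mathfrak{D}}$, the cut must excise exactly one non-zero letter $\eta_{i}$. Combined with the non-vanishing condition $a_{p}\neq a_{p+r+1}$ from property (ii), this leaves exactly three admissible geometries: (I) the cut is \emph{left-anchored} at $\eta_{i-1}$ (so $a_{p}=\eta_{i-1}$) with right endpoint inside the zero block that follows $\eta_{i}$; (II) the cut is \emph{right-anchored} at $\eta_{i+1}$ (so $a_{p+r+1}=\eta_{i+1}$) with left endpoint inside the zero block preceding $\eta_{i}$; (III) both endpoints meet non-zero letters simultaneously, which forces $r=x_{i-1}+x_{i}-1$. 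With interior indices, these three configurations produce cases $(b)$, $(a)$ and $(c)$ of the statement respectively. Analogous configurations at the two ends of $w$ (cut starting at position $0$, or ending at position $n+1$) yield $(a0)$, $(d)$ and $(d')$.

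In each configuration, the left factor is normalized to the canonical form $\zeta^{\mathfrak{l}}(r;\eta)$ by applying in turn: path reversal (v) when $a_{p}\neq 0$, producing a sign $(-1)^{r}$; homothety (vi) to rescale the right endpoint to $1$, which converts the surviving non-zero letter to $\epsilon_{i}$ or $\epsilon_{i}^{-1}$ (the direction of anchoring determines which, via $\eta_{i}/\eta_{i-1}=\epsilon_{i-1}$ and $\eta_{i}/\eta_{i+1}=\epsilon_{i}^{-1}$); and the depth-$1$ specialization of the shuffle relation (iii), $\zeta^{\mathfrak{l}}_{0^{k}}(x;\eta) = (-1)^{k}\binom{x+k-1}{k}\zeta^{\mathfrak{l}}(x+k;\eta)$, which produces the binomials $\binom{r-1}{r-x_{i}}$ or $\binom{r-1}{r-x_{i-1}}$. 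On the right-hand MZV, identifying positions $p$ and $p+r+1$ automatically yields the merged argument $x_{i-1}+x_{i}-r$ (or its analogue) and the merged root $\epsilon_{i-1}\epsilon_{i}$, since $\eta_{i-1}/\eta_{i+1} = \epsilon_{i-1}\epsilon_{i}$. An overall factor $(-1)^{p}\cdot (-1)^{p-1} = -1$ arises when re-expressing both sides as motivic MZVs.

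The main obstacle is the careful bookkeeping of the intertwined signs ($(-1)^{p}$, $(-1)^{r}$, $(-1)^{k}$, homothety signs, and the normalization constant $\zeta^{\mathfrak{m}}_{k}(x;\epsilon) = -I^{\mathfrak{m}}(0;0^{k}\epsilon^{-1},0^{x-1};1)$), together with the boundary case $(c)/(d')$: when $r = x_{i-1}+x_{i}-1$ (resp.\ $r = x_{p-1}+x_{p}-1$), configurations (I) and (II) simultaneously degenerate into configuration (III), and the \emph{two} normalizations of $I^{\mathfrak{l}}$ (coming from anchoring on $\eta_{i-1}$ or on $\eta_{i+1}$, equivalently from applying path reversal or not) combine into the bracketed sum displayed in $(c)/(d')$. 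The Kronecker factor $\delta_{\epsilon_{i-1}\epsilon_{i}\neq 1}$ then expresses the vanishing of the merged MZV when $\epsilon_{i-1}\epsilon_{i}=1$: in that case the new word contains two consecutive identical non-zero letters, and the corresponding segment of the iterated integral vanishes by property (ii).
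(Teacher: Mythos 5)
Your route is the paper's own: apply the cut formula for $D_{r}$ to the iterated-integral representation, observe that only cuts excising exactly one root of unity survive in $gr_{p-1}^{\mathfrak{D}}$, classify the cut geometries into the interior cases and the two boundary cases, and normalize the length-$r$ factor with the properties of Section 2 (this is exactly how the paper disposes of it, identifying \textsc{(a)} with cuts from a $0$ to a root of unity, \textsc{(b)} from a root of unity to a $0$, \textsc{(c)} between two roots of unity, \textsc{(d,d')} with the deconcatenation cuts). Your analysis of \textsc{(a0)}, \textsc{(a)}, \textsc{(b)}, \textsc{(d)} is correct, but your handling of the degenerate cases \textsc{(c)}/\textsc{(d')} contains two genuine errors.

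First, the two-term bracket cannot come from ``applying path reversal or not'': path reversal (v) is an identity, $I^{\mathfrak{l}}(a;w;b)=(-1)^{r}I^{\mathfrak{l}}(b;\widetilde{w};a)$, so choosing to apply it or not gives the same element, not two summands to be added. The mechanism that actually produces the bracket is path composition (iv), used in the coalgebra $\mathcal{L}$ where nontrivial products vanish: splitting at the point $0$ gives $I^{\mathfrak{l}}(\eta_{i-1};0^{x_{i-1}-1}\eta_{i}0^{x_{i}-1};\eta_{i+1})=I^{\mathfrak{l}}(\eta_{i-1};\cdots;0)+I^{\mathfrak{l}}(0;\cdots;\eta_{i+1})$, and the two pieces are then normalized exactly as in cases \textsc{(b)} and \textsc{(a)}, yielding the terms with $\zeta^{\mathfrak{l}}\left(r \atop \epsilon_{i-1}^{-1}\right)$ and $\zeta^{\mathfrak{l}}\left(r \atop \epsilon_{i}\right)$; property (iv) is missing from your list of tools, and without it the bracketed sum cannot be derived. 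Second, the factor $\delta_{\epsilon_{i-1}\epsilon_{i}\neq 1}$ has nothing to do with a vanishing of the merged MZV on the right-hand side: property (ii) only concerns the two endpoints of an integral, two consecutive identical non-zero letters inside a word do not force vanishing, and $\zeta^{\mathfrak{m}}$ with an interior entry $\left(1 \atop 1\right)$ is a perfectly good (generally non-zero) element. The delta comes from the left factor of the cut: when $\epsilon_{i-1}\epsilon_{i}=1$ one has $\eta_{i-1}=\eta_{i+1}$, so the cut integral $I^{\mathfrak{l}}(\eta_{i-1};\cdots;\eta_{i+1})$ has equal endpoints and vanishes by (ii) (similarly for \textsc{(d')}, where the endpoints are $\eta_{p-1}$ and the final $1$, which coincide exactly when $\epsilon_{p-1}\epsilon_{p}=1$).
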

\textsc{Remarks}:
\begin{itemize}
\item[$\cdot$]
The terms of type \textsc{(d, d')}, corresponding to a \textit{deconcatenation}, play a particular role since modulo some congruences (using depth $1$ result for the left side of the coaction), we will get rid of the other terms in the cases $N=2,3,4,\textquoteleft 6 \textquoteright,8$ for the elements in the basis. In the dual point of view of Lie algebra, like in Deligne article \cite{De} or Wojtkowiak \cite{Wo}, this corresponds to show that the Ihara bracket $\lbrace,\rbrace$ on those elements modulo some vector space reduces to the usual bracket $[,]$. More generally, in other case of basis, as Hoffman one's for $N=1$, the idea is still to find an appropriate filtration on the conjectural basis, such as the coaction in the graded space acts on this family, modulo some space, as the deconcatenation, as for the $f_{i}$ alphabet. Indeed, on $H$ (Lemma $2.1$), the weight graded part of the coaction, $D_{r}$ is defined by:
\begin{equation}\label{eq:derf}
D_{r} : \quad H_{n} \quad \longrightarrow \quad L_{r} \otimes H_{n-r} \quad \quad\text{ such as :}
\end{equation}
$$ f^{j_{1}}_{i_{1}} \cdots f^{j_{k}}_{i_{k}} \mapsto \left\{
\begin{array}{ll}
  f^{j_{1}}_{i_{1}} \otimes f^{j_{2}}_{i_{2}}, \cdots, f^{j_{k}}_{i_{k}} & \text{ if } i_{1}=r .\\
  0 & \text{ else }.\\
\end{array}
\right.$$ 

\item[$\cdot$] One fundamental feature for a family of motivic multiple zeta values (which makes it 'natural' and simple) is the \textit{stability} under the coaction. The basis considered in the section $4$ are stable under those derivations.
\end{itemize}

\begin{proof}
Straight-forward from $\autoref{drz}$, using the properties of motivic iterated integrals previously listed, in Section $2$. Terms of type \textsc{(a)} correspond to cuts from a $0$ (possibly the very first one) to a root of unity, $\textsc{(b)}$ terms from a root of unity to a $0$, $\textsc{(c)}$ terms between two roots of unity and $\textsc{(d,d')}$ terms are the cuts ending by the last $1$ - called \textit{deconcatenation terms}.
\end{proof}

\paragraph{Derivation space.}
By depth $1$ results in $\S 3.1$, once we have chosen a basis for $gr_{1}^{\mathfrak{D}} \mathcal{L}_{r}$, composed by some $\zeta^{\mathfrak{a}}(r_{i};\eta_{i})$,  we can well define: \footnote{Without passing to the depth-graded, we could also define $D^{\eta}_{r}$ as $D_{r}: \mathcal{H}\rightarrow gr^{\mathfrak{D}}_{1}\mathcal{L}_{r} \otimes \mathcal{H}$ followed by $\pi^{\eta}_{r}\otimes id$ where $\pi^{\eta}:gr^{\mathfrak{D}}_{1}\mathcal{L}_{r} \rightarrow \zeta^{\mathfrak{m}}$ is the projection on $\zeta^{\mathfrak{m}}$, once we have fixed a basis for $gr^{\mathfrak{D}}_{1}\mathcal{L}_{r}$; and define as above $\mathscr{D}_{r}$ as the set of the $D^{\eta}_{r,p}$, for $\zeta^{\mathfrak{m}}(r,\eta)$ in the basis of $gr_{1}^{\mathfrak{D}} \mathcal{A}_{r}$.}
\begin{itemize}
\item[$(i)$] For each $(r_{i}, \eta_{i})$:
\begin{equation}
\label{eq:derivnp}
\boldsymbol{D^{\eta_{i}}_{r_{i},p}}: gr_{p}^{\mathfrak{D}}\mathcal{H} \rightarrow  gr_{p-1}^{\mathfrak{D}} \mathcal{H},
\end{equation}
 as the composition of $D_{r_{i},p}$ followed by the projection:
$$\pi^{\eta}: gr_{1}^{\mathfrak{D}} \mathcal{L}_{r}\otimes gr_{p-1}^{\mathfrak{D}} \mathcal{H}\rightarrow  gr_{p-1}^{\mathfrak{D}} \mathcal{H},  \quad \quad\zeta^{\mathfrak{m}}(r;  \epsilon) \otimes X \mapsto c_{\eta, \epsilon, r} X , $$ 
with $c_{\eta, \epsilon, r}\in \mathbb{Q}$ the coefficient of $\zeta^{\mathfrak{m}}(r;  \eta)$ in the decomposition of $\zeta^{\mathfrak{m}}(r;  \epsilon)$ in the basis.
\item[$(ii)$] \begin{equation}\label{eq:setdrp}
 \boldsymbol{\mathscr{D}_{r,p}} \text{ as the set of }  D^{\eta_{i}}_{r_{i},p}   \text{ for } \zeta^{\mathfrak{m}}(r_{i},\eta_{i}) \text{ in the chosen basis of } gr_{1}^{\mathfrak{D}} \mathcal{A}_{r}.
 \end{equation}
\item[$(iii)$] The \textit{derivation set} $\boldsymbol{\mathscr{D}}$ as the (disjoint) union:  $\boldsymbol{\mathscr{D}}  \mathrel{\mathop:}= \sqcup_{r>0} \left\lbrace \mathscr{D}_{r} \right\rbrace $.
\end{itemize}

\textsc{Remarks}: 
\begin{itemize}
\item[$\cdot$]  In the case $N=2,3,4,\textquoteleft 6 \textquoteright$, the cardinal of $\mathscr{D}_{r,p}$ is one (or $0$ if $r$ even and $N=2$, or if $(r,N)=(1,6)$), whereas for $N=8$ the space generated by those derivations is $2$-dimensional, generated by  $D^{\xi_{8}}_{r}$ and $D^{-\xi_{8}}_{r}$ for instance.
\item[$\cdot$] Doing the same procedure for the n.c. Hopf comodule $H$ defined in Lemma $2.1$, isomorphic to $\mathcal{H}^{\mathcal{MT}_{N}}$, since the coproduct on $H$ is the deconcatenation $\ref{eq:derf}$, leads to the following derivations operators:
$$\begin{array}{llll}
D^{j}_{r} : &  H_{n} &  \rightarrow & H_{n-r} \\
& f^{j_{1}}_{i_{1}} \cdots f^{j_{k}}_{i_{k}} & \mapsto & \left\{
\begin{array}{ll}
  f^{j_{2}}_{i_{2}}, \cdots, f^{j_{k}}_{i_{k}} & \text{ if } j_{1}=j \text{ and } i_{1}=r .\\
  0 & \text{ else }.\\
\end{array}
\right.
\end{array}.$$
\end{itemize}
Now, consider the following application, depth graded version of the derivations above, fundamental for linear independence results in $\S 4 $:
\begin{equation}
\label{eq:pderivnp}
\boldsymbol{\partial _{n,p}} \mathrel{\mathop:}=\oplus_{r<n\atop D\in \mathscr{D}_{r,p}} D : gr_{p}^{\mathfrak{D}}\mathcal{H}_{n}  \rightarrow \oplus_{r<n } \left( gr_{p-1}^{\mathfrak{D}}\mathcal{H}_{n-r}\right) ^{\oplus \text{ card } \mathscr{D}_{r,p}}
\end{equation}

\paragraph{Kernel of $\boldsymbol{D_{<n}}$. }

A key point for the use of those derivations is the ability to prove some relations (and possibly lift some from MZV to motivic MZV) up to rational coefficients, and comes from the following theorem, looking at primitive elements:
\begin{theo}
Let $D_{<n}\mathrel{\mathop:}= \oplus_{r<n} D_{r}$, and fix a basis $\lbrace \zeta^{\mathfrak{a}}\left( n \atop \eta_{j} \right) \rbrace$ of $gr_{1}^{\mathfrak{D}} \mathcal{A}_{n}$. Then:
$$\ker D_{<n}\cap \mathcal{H}^{N}_{n} =  \oplus  \mathbb{Q} \pi^{n} \bigoplus_{j} \mathbb{Q}  \zeta^{\mathfrak{m}}\left( n \atop \eta_{j} \right). $$
\end{theo}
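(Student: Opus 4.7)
The strategy is to transfer the question, via the non-canonical isomorphism $\phi$ of Lemma 2.1, to the free comodule $H$, where the reduced coaction is deconcatenation and the kernel admits a straightforward combinatorial description. By Brown ($N=1$) or Deligne / Corollary 1.2 (the remaining $N$), $\mathcal{H}^N$ coincides with $\mathcal{H}^{\mathcal{MT}_N}=\mathcal{A}^{\mathcal{MT}}\otimes\mathbb{Q}[t]$ (respectively $\mathbb{Q}[t^2]$ when $N=1,2$), where $t=(2i\pi)^{\mathfrak{m}}$. Since $\mathcal{G}^{\mathcal{MT}}=\mathbb{G}_m\ltimes\mathcal{U}^{\mathcal{MT}}$ and $t$ is a character of the reductive quotient, the unipotent radical acts trivially on $t$; hence $\Delta(t)=1\otimes t$ and $D_r(t^k)=0$ for every $r\geq 1$. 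Writing $x\in\mathcal{H}_n$ as $x=\sum_k a_k t^k$ with $a_k\in\mathcal{A}^{\mathcal{MT}}_{n-wk}$ ($w$ being the weight of $t$),
$$D_r(x)=\sum_k D_r(a_k)\cdot(1\otimes t^k)\in \mathcal{L}_r\otimes\mathcal{H}_{n-r},$$
and because the summands lie in distinct $t^k$-isotypic components, the hypothesis $D_{<n}(x)=0$ is equivalent to $D_r(a_k)=0$ for every $k$ and every $1\leq r<n$.

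I next transport the computation to the cofree algebra $H=\mathbb{Q}\langle f_i^j\rangle$ via $\phi$; there the reduced coaction is deconcatenation, so
$$D_r(f_{i_1}^{j_1}\cdots f_{i_\ell}^{j_\ell})=\delta_{i_1=r}\, f_{i_1}^{j_1}\otimes f_{i_2}^{j_2}\cdots f_{i_\ell}^{j_\ell}.$$
Since the $f_r^j$ form a basis of $\mathcal{L}_r$ and the monomials of weight $m-r$ form a basis of $\mathcal{A}^{\mathcal{MT}}_{m-r}$, the vanishing of $D_r$ forces every word starting with a letter of weight $r$ to have zero coefficient. If $k\geq 1$ with $wk<n$, set $m=n-wk\in(0,n)$; requiring $D_r(a_k)=0$ for all $1\leq r\leq m$ annihilates every nontrivial word, hence $a_k=0$. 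If instead $wk=n$, then $a_k\in\mathcal{A}^{\mathcal{MT}}_0=\mathbb{Q}$ is automatically killed by all $D_r$ ($r\geq 1$) and contributes exactly a multiple of $t^{n/w}=\pi^n$. Finally, for $k=0$ the constraints $D_r(a_0)=0$ for $1\leq r<n$ leave only words whose first letter has weight $\geq n$; the total weight being $n$, each surviving word is a single letter $f_n^j$, so $a_0\in\bigoplus_j\mathbb{Q}\, f_n^j$.

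To conclude, I fix the isomorphism $\phi$ so that each $f_n^j$ corresponds to the chosen lift $\zeta^{\mathfrak{m}}(n;\eta_j)$; this is legitimate because, by Lemma 3.2, the classes $\zeta^{\mathfrak{l}}(n;\eta_j)$ form a basis of $\mathcal{L}_n$ (the depth-one part equals $\mathcal{L}_n$ in these cases by comparison with the K-theoretic dimensions). Combining the contribution of $a_0$ with that of the $t^{n/w}$-term then yields
$$\ker D_{<n}\cap\mathcal{H}_n^N=\mathbb{Q}\,\pi^n\oplus\bigoplus_j\mathbb{Q}\,\zeta^{\mathfrak{m}}(n;\eta_j),$$
as claimed. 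The main subtlety is the compatibility between $\phi$ and the chosen depth-one basis: this is settled by the freedom in the non-canonical isomorphism (each cogenerator $f_n^j$ may be rescaled independently), using only that $\zeta^{\mathfrak{l}}(n;\eta_j)$ and $f_n^j$ have matching image in $\mathcal{L}_n$.
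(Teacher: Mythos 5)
Your proof is correct and follows essentially the same route as the paper: transfer the statement through the non-canonical comodule isomorphism $\phi$ of Lemma 2.1 to $H$, where the reduced coaction is deconcatenation and the kernel is visibly spanned by the degree-$n$ cogenerators $f_{n}^{j}$ together with the power of $(2i\pi)^{\mathfrak{m}}$. The additional care you take in normalizing $\phi$ against the depth-one basis $\zeta^{\mathfrak{m}}\left( n \atop \eta_{j} \right)$ (via Lemma 3.2) only spells out a step the paper leaves implicit.
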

\begin{proof} It comes from the injective morphism of graded Hopf comodule $\phi$ (Lemma $2.1$), isomorphism for $N=1,2,3,4,\textquoteleft 6 \textquoteright,8$:
$$\phi: \mathcal{H}^{N}  \xrightarrow[\sim]{n.c} H^{N} \mathrel{\mathop:}= \mathbb{Q}\left\langle \left( f^{j}_{i}\right)  \right\rangle  \otimes_{\mathbb{Q}} \mathbb{Q} \left[  g_{1}^{s}\right] .$$
Indeed, for $H^{N}$, the analogue statement is obviously true, for $\Delta'=1\otimes \Delta+ \Delta\otimes 1$:
$$\ker \Delta' \cap H_{n} = \oplus_{j} f^{j}_{n} \oplus g_{1}^{n} .$$
\end{proof}
\begin{coro}\label{kerdn}
Let $D_{<n}\mathrel{\mathop:}= \oplus_{r<n} D_{r}$.\footnote{For $N=1$, we restrict to $r$ odd $>1$; for $N=2$ we restrict to r odd; for $N=6$ we restrict to $r>1$.} Then:
$$\ker D_{<n}\cap \mathcal{H}^{N}_{n} = \left\lbrace  \begin{array}{ll}
\mathbb{Q}\zeta^{\mathfrak{m}}\left( n \atop 1 \right)   & \text{ for } N=1,2.\\
\mathbb{Q} \pi^{n} \oplus \mathbb{Q}  \zeta^{\mathfrak{m}}\left( n \atop \xi_{N} \right)  & \text{ for } N=3,4,\textquoteleft 6 \textquoteright.\\
\mathbb{Q} \pi^{n} \oplus \mathbb{Q}  \zeta^{\mathfrak{m}}\left( n \atop \xi_{8} \right) \oplus \mathbb{Q}  \zeta^{\mathfrak{m}}\left( n \atop -\xi_{8} \right) & \text{ for } N=8.\\
\end{array}\right. .$$
\end{coro}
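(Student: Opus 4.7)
The plan is to apply the preceding theorem (which gives $\ker D_{<n} \cap \mathcal{H}^N_n = \mathbb{Q}\pi^n \bigoplus_j \mathbb{Q}\zeta^{\mathfrak{m}}(n;\eta_j)$ for any choice of basis $\{\zeta^{\mathfrak{a}}(n;\eta_j)\}$ of $gr_1^{\mathfrak{D}}\mathcal{A}_n$) and then substitute the explicit depth-one bases provided by Lemma \ref{LieCyclo} (or rather, by Lemma 3.2 on depth-one generators). Most cases reduce to a bookkeeping exercise: one picks the representative singled out in the statement, and invokes the distribution/conjugation identities from Section 3.1 to confirm the $\mathbb{Q}$-line is independent of the choice.

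For $N \in \{3,4\}$, Lemma 3.2 gives a one-dimensional $gr_1^{\mathfrak{D}}\mathcal{A}_n$ spanned by $\zeta^{\mathfrak{a}}\!\left({n \atop \xi_N}\right)$ for all $n \geq 1$, so the preceding theorem immediately yields $\mathbb{Q}\pi^n \oplus \mathbb{Q}\zeta^{\mathfrak{m}}\!\left({n \atop \xi_N}\right)$. For $N={}$\textquoteleft 6\textquoteright\ the same basis is valid only for $n > 1$, which accounts for the footnote's restriction $r > 1$ in the definition of $D_{<n}$ (since $gr_1^{\mathfrak{D}}\mathcal{A}_1$ vanishes for $N={}$\textquoteleft 6\textquoteright). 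For $N=8$, Lemma 3.2 gives a two-dimensional space spanned by $\zeta^{\mathfrak{a}}\!\left({n \atop \xi_8}\right), \zeta^{\mathfrak{a}}\!\left({n \atop -\xi_8}\right)$, producing the claimed three-dimensional kernel.

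For $N \in \{1,2\}$, $gr_1^{\mathfrak{D}}\mathcal{A}_n$ is non-zero only when $n$ is odd (with the further restriction $n > 1$ if $N=1$), whence the footnote's range of $r$. In those odd weights $\pi^n$ is \emph{not} present in $\mathcal{H}^N_n$, by (\ref{eq:hn}): $\mathcal{H}^{1},\mathcal{H}^{2}$ contain only even powers of $\pi^{\mathfrak{m}}$. So only the depth-one contribution survives, giving $\mathbb{Q}\zeta^{\mathfrak{m}}\!\left({n \atop -1}\right)$ for $N=2$, which by the distribution relation $\zeta^{\mathfrak{a}}\!\left({n \atop 1}\right) = (2^{1-n}-1)\zeta^{\mathfrak{a}}\!\left({n \atop -1}\right)$ (valid for $n$ odd $\geq 3$) coincides with $\mathbb{Q}\zeta^{\mathfrak{m}}\!\left({n \atop 1}\right)$, justifying the uniform notation of the statement. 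In even weight, conversely, $gr_1^{\mathfrak{D}}\mathcal{A}_n = 0$ but $\pi^n \in \mathcal{H}^N$ is a rational multiple of $\zeta^{\mathfrak{m}}\!\left({n \atop 1}\right)$ via Euler's formula for $\zeta^{\mathfrak{m}}(2k)$, so again the kernel is $\mathbb{Q}\zeta^{\mathfrak{m}}\!\left({n \atop 1}\right)$.

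The proof is therefore entirely a matter of bookkeeping; there is no combinatorial or analytic input beyond the preceding theorem. The only points that require attention are correctly determining, for each $N$, in which weights $\pi^n$ actually lies in $\mathcal{H}^N$ (only for $n$ even when $N\in\{1,2\}$; for all $n$ when $N>2$), and verifying that the chosen representative $\zeta^{\mathfrak{m}}(n;\eta)$ does span the same $\mathbb{Q}$-line as any abstract depth-one basis vector — which is exactly the content of the depth-one distribution and conjugation relations listed at the end of Section 3.1.
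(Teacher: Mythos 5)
Your proposal is correct and follows exactly the paper's route: the corollary is obtained by specializing the preceding theorem to the explicit depth-one bases of $gr_{1}^{\mathfrak{D}}\mathcal{A}_{n}$ from Lemma 3.2, and your case-by-case bookkeeping (including the observation that $\pi^{n}$ lies in $\mathcal{H}^{N}_{n}$ only for even $n$ when $N\in\{1,2\}$, and the use of the distribution relation to identify $\mathbb{Q}\,\zeta^{\mathfrak{m}}\left({n \atop -1}\right)$ with $\mathbb{Q}\,\zeta^{\mathfrak{m}}\left({n \atop 1}\right)$) is exactly what the paper leaves implicit. No gap to report.
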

In particular, by this result (for $N=1,2$), proving an identity between motivic MZV (resp. motivic Euler sums), amounts to:
\begin{enumerate}
\item Prove that the coaction is identical on both sides, computing $D_{r}$ for $r>0$ smaller than the weight. If the families are not stable under the coaction, this step would requires other identities.
\item Use the analytic corresponding result for MZV (resp. Euler sums) to deduce the remaining rational coefficient; if the analytic equivalent is unknown, we can at least evaluate numerically this rational coefficient.
\end{enumerate}
Another important use of this corollary, is the decomposition of (motivic) multiple zeta values into a conjectured basis, which has been explained by F. Brown in \cite{Br1}.\footnote{He gave an 'exact numerical algorithm' for this decomposition, where, at each step, a rational coefficient has to be evaluated; hence, for other roots of unity, the generalization, albeit easily stated, is harder for numerical experiments.}\\ 
However, for greater $N$, several rational coefficients appear at each step, and we would need linear independence results before concluding.\\

\subsection{\textsc{General Galois descent}}

\paragraph{Change of field. }
For each $N, N'$ with $N' | N$, the Galois action on $\mathcal{H}_{N}$ and $\mathcal{H}_{N'}$ is determined by the coaction $\Delta$. More precisely, let consider the following descent\footnote{More generally, there are Galois descents $(\mathcal{d})=(k_{N}/k_{N'}, M/M')$ from $\mathcal{H}^{\mathcal{MT}\left( \mathcal{O}_{k_{N}} \left[ \frac{1}{M}\right] \right)  }$, to $\mathcal{H}^{\mathcal{MT}\left( \mathcal{O}_{k_{N'}} \left[ \frac{1}{M'}\right]\right)  }$, with $N'\mid N$, $M'\mid M$, with a set of derivations $\mathscr{D}^{\mathcal{d}} \subset \mathscr{D}^{N}$ associated.}, assuming $\phi_{N'}$ is an isomorphism of graded Hopf comodule: \footnote{Conjecturally as soon as $N'\neq p^{r}$, $p\geq 5$. Proven for  $N'=1,2,3,4,\textquoteleft 6 \textquoteright,8$.}
$$\xymatrixcolsep{5pc}\xymatrix{
\mathcal{H}^{N} \ar@{^{(}->}[r]
^{\phi_{N}}_{n.c} & \mathcal{H}^{\mathcal{MT}_{\Gamma_{N}}}  \\
\mathcal{H}^{N'}\ar[u]_{\mathcal{G}^{N/N'}} \ar@{^{(}->}[r]
_{n.c}^{\phi_{N'}\atop \sim} &\mathcal{H}^{\mathcal{MT}_{\Gamma_{N'}}} \ar[u]^{\mathcal{G}^{\mathcal{MT}}_{N/N'}} }$$
Let choose basis for $gr_{1}\mathcal{L}_{r}^{\mathcal{MT}_{N'}}$, and extend it into a basis of $gr_{1}\mathcal{L}_{r}^{\mathcal{MT}_{N}}$:
$$ \left\lbrace \zeta^{\mathfrak{m}}(r; \eta'_{i,r}) \right\rbrace_{i} \subset \left\lbrace \zeta^{\mathfrak{m}}(r; \eta'_{i,r}) \right\rbrace \cup \left\lbrace \zeta^{\mathfrak{m}}(r; \eta_{i}) \right\rbrace_{1\leq i \leq c_{r}}, $$
$$\text{where } \quad c_{r} =\left\{
\begin{array}{ll} 
a_{N}-a_{N'}= \frac{\varphi(N)-\varphi(N')}{2}+p(N)-p(N') & \text{ if } r=1\\
b_{N}-b_{N'}= \frac{\varphi(N)-\varphi(N')}{2}   & \text{ if } r>1\\
\end{array} \right. .$$
Then, once chosen this basis, let split the set of derivations $\mathscr{D}^{N}$ into two parts (cf. $\S 3.2$), one corresponding to $\mathcal{H}^{N'}$:
\begin{equation}
\label{eq:derivdescent}
\mathscr{D}^{N} =  \mathscr{D}^{\backslash \mathcal{d}} \uplus \mathscr{D}^{\mathcal{d}}  \quad  \text{ where }\quad  \left\lbrace \begin{array}{l}
\mathscr{D}^{\backslash \mathcal{d}} =\mathscr{D}^{N'}\mathrel{\mathop:}=  \cup_{r}  \left\lbrace  D_{r}^{\eta'_{i,r}} \right\rbrace_{1\leq i \leq c_{r}} \\
\mathscr{D}^{\mathcal{d}}\mathrel{\mathop:}= \cup_{r} \left\lbrace D^{\eta_{i,r}}_{r} \right\rbrace_{1\leq i \leq c_{r}} 
\end{array} \right.  .
\end{equation}

\texttt{Examples:}
\begin{itemize}
\item[$\cdot$] For the descent from $\mathcal{MT}_{3}$ to $\mathcal{MT}_{1}$: $\mathscr{D}^{(k_{3}/\mathbb{Q}, 3/1)}=\left\lbrace  D^{-1}_{1}, D^{\xi_{3}}_{2r}, r>0 \right\rbrace $.
\item[$\cdot$] For the descent from $\mathcal{MT}_{8}$ to $\mathcal{MT}_{4}$: $\mathscr{D}^{(k_{8}/k_{4}, 2/2)}=\left\lbrace  D^{\xi_{8}}_{r}-D^{-\xi_{8}}_{r}, r>0 \right\rbrace $.
\item[$\cdot$] For the descent from $\mathcal{MT}_{9}$ to $\mathcal{MT}_{3}$:  $\mathscr{D}^{(k_{9}/k_{3}, 3/3)}=\left\lbrace  D^{\xi_{9}}_{r}-D^{-\xi^{4}_{9}}_{r}, D^{\xi_{9}}_{r}-D^{-\xi^{7}_{9}}_{r} r>0 \right\rbrace $.\footnote{By the relations in depth $1$, since:
$$\zeta^{\mathfrak{a}} \left( r\atop \xi^{3}_{9}\right)= 3^{r-1} \left( \zeta^{\mathfrak{a}} \left( r\atop \xi^{1}_{9}\right) + \zeta^{\mathfrak{a}} \left( r\atop \xi^{4}_{9}\right)+ \zeta^{\mathfrak{a}} \left( r\atop \xi^{7}_{9}\right)  \right)  \quad \text{etc.}$$}
\end{itemize}

\begin{theo}
Let $N'\mid N$ and $\mathfrak{Z}\in gr^{\mathfrak{D}}_{p}\mathcal{H}_{n}^{N}$, depth graded MMZV relative to $\mu_{N}$.\\
Then $\mathfrak{Z}\in gr^{\mathfrak{D}}_{p}\mathcal{H}^{N'}$, i.e. $\mathfrak{Z}$ is a depth graded MMZV relative to $\mu_{N'}$ modulo smaller depth if and only if:
$$ \left( \forall r<n, \forall D_{r,p}\in\mathscr{D}_{r}^{\mathcal{d}},\quad   D_{r,p}(\mathfrak{Z})=0\right)  \quad \textrm{  and  }  \quad  \left( \forall r<n, \forall D_{r,p}\in \in\mathscr{D}^{\diagdown\mathcal{d}}, \quad D_{r,p}(\mathfrak{Z})\in gr^{\mathfrak{D}}_{p-1}\mathcal{H}^{N'}\right) .$$ 
\end{theo}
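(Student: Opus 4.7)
My approach is to reduce both implications to a nearly tautological statement in the combinatorial Hopf-comodule model $H^N=\mathbb{Q}\langle(f^j_r)\rangle\otimes\mathbb{Q}[t]$ supplied by Lemma 2.1. Under the (noncanonical) isomorphism $\phi_N:\mathcal{H}^N\xrightarrow{\sim}H^N$, which is bijective for $N\in\{1,2,3,4,\textquoteleft 6\textquoteright,8\}$, the sub-comodule $\mathcal{H}^{N'}$ corresponds to the sub-comodule $H^{N'}\subset H^N$ generated by the letters $f^j_r$ indexed by the chosen $N'$-basis $\{\eta'_{i,r}\}$ of $gr_1^{\mathfrak{D}}\mathcal{L}^{\mathcal{MT}_{N'}}_r$. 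On the depth-graded, where the reduced coaction on $H^N$ is deconcatenation of the leftmost letter, the derivations $D^{\eta}_{r,p}$ of \S 3.2 are precisely the operators peeling off a specified letter, and the splitting $\mathscr{D}^N=\mathscr{D}^{\backslash\mathcal{d}}\sqcup\mathscr{D}^{\mathcal{d}}$ of (3.11) matches the partition of the alphabet into \emph{old} letters (inherited from $N'$) and \emph{new} letters (added to pass to $N$).

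For the $(\Rightarrow)$ direction, I would use that $\mathcal{H}^{N'}\hookrightarrow\mathcal{H}^N$ is a morphism of Hopf comodules, so for $\mathfrak{Z}\in gr_p^{\mathfrak{D}}\mathcal{H}^{N'}_n$ the coaction lies in $\mathcal{A}^{N'}\otimes\mathcal{H}^{N'}$, and its weight-$r$, depth-$1$ projection lands in $gr_1^{\mathfrak{D}}\mathcal{L}^{N'}_r\otimes gr_{p-1}^{\mathfrak{D}}\mathcal{H}^{N'}_{n-r}$. Since the chosen basis of $gr_1^{\mathfrak{D}}\mathcal{L}^N_r$ \emph{extends} that of $gr_1^{\mathfrak{D}}\mathcal{L}^{N'}_r$, applying any $D\in\mathscr{D}^{\mathcal{d}}_r$ (projection onto a new basis vector) kills it, giving (i), while applying any $D\in\mathscr{D}^{\backslash\mathcal{d}}_r$ yields an element of $gr_{p-1}^{\mathfrak{D}}\mathcal{H}^{N'}_{n-r}$, giving (ii).

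For the $(\Leftarrow)$ direction, I would transfer $\mathfrak{Z}\in gr_p^{\mathfrak{D}}\mathcal{H}^N_n$ to $H^N$ and write it as $\sum_k\mathfrak{Z}_k\otimes t^k$ with each $\mathfrak{Z}_k$ a linear combination of $f$-words of depth exactly $p$. Since the derivations commute with the $\mathbb{Q}[t]$-factor, conditions (i)--(ii) descend to each $\mathfrak{Z}_k$. Condition (i) then forces no $f$-word of $\mathfrak{Z}_k$ to begin with a new letter, and condition (ii) forces, for every old first letter $f^{j'}_r$, the corresponding tail (at depth $p-1$, weight $n-r$) to lie in $gr_{p-1}^{\mathfrak{D}}H^{N'}$. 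Combining the two, every $f$-word appearing in $\mathfrak{Z}_k$ begins with an old letter and has its tail in $H^{N'}$, so the whole word lies in $H^{N'}$; hence $\mathfrak{Z}_k\in gr_p^{\mathfrak{D}}H^{N'}_n$ and consequently $\mathfrak{Z}\in gr_p^{\mathfrak{D}}\mathcal{H}^{N'}_n$.

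The main obstacle I anticipate is handling the degenerate depths or weights where some $\mathscr{D}_{r,p}$ are empty (for instance $N=2$ in even weight, or $N=6$ in weight $1$), so that condition (i) or (ii) is vacuous for certain $r$. This is exactly what Corollary 3.5 (joint with the depth-one Lemma 3.2) is designed to cover: the primitive part $\ker D_{<n}\cap\mathcal{H}^N_n$ is spanned by $\pi^n$ and the depth-$1$ classes $\zeta^{\mathfrak{m}}(n;\eta_j)$, and within this explicit space the surviving conditions (i) single out precisely the combinations already lying in $\mathcal{H}^{N'}_n$. With this base case in hand, the translation above yields the equivalence cleanly.
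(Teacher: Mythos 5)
Your proposal is correct and follows essentially the same route as the paper: the paper's (very terse) proof also transfers the statement via $\phi$ to the $f$-alphabet comodule $H^{N}$, where the depth-graded coaction is deconcatenation and the splitting $\mathscr{D}^{\mathcal{d}}\sqcup\mathscr{D}^{\backslash\mathcal{d}}$ corresponds to new versus old letters, and then transports the conclusion back using that $\phi_{N'}$ is an isomorphism by assumption. Your write-up simply makes explicit the deconcatenation argument that the paper leaves as "pretty obvious."
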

\begin{proof}
In the $(f_{i})$ side, the analogue of this theorem is pretty obvious, and the result can be transported via $\phi$, and back since $\phi_{N'}$ isomorphism by assumption.
\end{proof}
This is a very useful recursive criteria (derivation strictly decreasing weight and depth) to determine if a (motivic) multiple zeta value at $\mu_{N}$ is in fact a (motivic) multiple zeta value at $\mu_{N'}$, modulo smaller depth terms; applying it recursively, it could also take care of smaller depth terms. This criteria applies for motivic MZV$_{\mu_{N}}$, and by period morphism is deduced for MZV$_{\mu_{N}}$.\\

\paragraph{Change of Ramification.} If the descent has just a ramified part, the criteria can be stated in a non depth graded version. Indeed, there, since only weight $1$ matters, to define the derivation space $\mathcal{D}^{\mathcal{d}}$ as above ($\ref{eq:derivdescent}$), we need to choose a basis for $\mathcal{O}_{N}^{\ast}\otimes \mathbb{Q}$, which we complete with $\left\lbrace \xi^{\frac{N}{q_{i}}}_{N}\right\rbrace_{i\in I}$ into a basis for $\Gamma_{N}$. Then, with $N=\prod p_{i}^{\alpha_{i}}=\prod q_{i}$:
\begin{theo}\label{ramificationchange}
Let $\mathfrak{Z}\in \mathcal{H}_{n}^{N}\subset \mathcal{H}^{\mathcal{MT}_{\Gamma_{N}}}$, MMZV relative to $\mu_{N}$.\\
Then $\mathfrak{Z}\in \mathcal{H}^{\mathcal{MT}(\mathcal{O}_{N})}$ unramified if and only if:
$$ \left( \forall i\in I, D^{\xi^{\frac{N}{q_{i}}}}_{1}(\mathfrak{Z})=0\right)  \quad \textrm{  and  }  \quad  \left( \forall r<n, \forall D_{r}\in\mathscr{D}^{\diagdown\mathcal{d}}, \quad D_{r}(\mathfrak{Z})\in \mathcal{H}^{\mathcal{MT}(\mathcal{O}_{N})}\right) .$$ 
\end{theo}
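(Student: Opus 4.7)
The strategy mirrors the proof of Theorem $3.4$: I transport the statement via the non-canonical isomorphism of graded Hopf comodules $\phi_{N}:\mathcal{H}^{\mathcal{MT}_{\Gamma_{N}}} \xrightarrow{\sim} H^{N}$, prove the analogue on the $f$-alphabet side where the coaction is the deconcatenation, then transport back using that $\phi$ restricts to an isomorphism between $\mathcal{H}^{\mathcal{MT}(\mathcal{O}_{N})}$ and the sub-comodule $H^{\mathrm{ur}}\subset H^{N}$ generated by the unramified letters. The key observation is that $\mathcal{O}_{N}^{*}$ and $\mathcal{O}_{N}[1/M]^{*}$ differ only by the classes of primes above $M$, while the higher $K_{2r-1}$ are insensitive to localisation; hence $H^{\mathrm{ur}}$ is obtained from $H^{N}$ by forbidding precisely the degree-$1$ letters $f_{1}^{\xi^{N/q_{i}}}$ for $i\in I$.

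The first step is to reformulate the statement in the $f$-alphabet. Fix the basis of $gr_{1}\mathcal{L}_{1}$ adapted to the descent, so that $\phi_{N}$ sends $D^{\xi^{N/q_{i}}}_{1}$ to the deconcatenation derivation $D^{\xi^{N/q_{i}}}_{1}$ on $H^{N}$, and similarly for $\mathscr{D}^{\setminus\mathcal{d}}$. Then the theorem is equivalent to the following statement in $H^{N}$: $X\in H^{N}_{n}$ lies in $H^{\mathrm{ur}}$ if and only if
\[
D^{\xi^{N/q_{i}}}_{1}(X)=0 \ \ \forall i\in I \quad\text{and}\quad D^{j}_{r}(X)\in H^{\mathrm{ur}} \ \ \forall D^{j}_{r}\in\mathscr{D}^{\setminus\mathcal{d}},\ r<n.
\]

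The second step is to prove this equivalence in $H^{N}$ by induction on the weight $n$. The direction $(\Rightarrow)$ is immediate: if every letter appearing in $X$ is unramified, then deconcatenating by a ramified first letter gives $0$, while deconcatenating by any unramified letter leaves a tail of unramified letters. For $(\Leftarrow)$, write $X=\sum_{w}c_{w}w$. Any word $w$ whose first letter is $f_{1}^{\xi^{N/q_{i}}}$ contributes a linearly independent term to $D^{\xi^{N/q_{i}}}_{1}(X)$; the vanishing hypothesis forces $c_{w}=0$. Thus the first letter of every surviving $w$ is of the form $f^{j}_{r}$ with $(j,r)$ unramified, and the tail of $w$ appears in $D^{j}_{r}(X)$; by the second hypothesis this tail lies in $H^{\mathrm{ur}}$, and since tails have weight $n-r<n$, the induction hypothesis gives that every letter of the tail is unramified. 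Hence $w$ is unramified for every $w$ with $c_{w}\neq 0$, so $X\in H^{\mathrm{ur}}$.

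The third and final step is to pull the equivalence back through $\phi_{N}$. Because $\phi_{N}$ is an isomorphism of graded Hopf comodules, it intertwines the coaction on $\mathcal{H}^{\mathcal{MT}_{\Gamma_{N}}}$ with the deconcatenation coaction on $H^{N}$, hence also the weight-graded derivations; it restricts to an isomorphism $\mathcal{H}^{\mathcal{MT}(\mathcal{O}_{N})}\xrightarrow{\sim}H^{\mathrm{ur}}$. The main subtle point is the separation argument in the $(\Leftarrow)$ direction: a priori the hypothesis only constrains first letters, and one has to check inductively that the constraint propagates through the entire word. This is exactly where the full weight range $r<n$ in the hypothesis is used, and where the freeness of $H^{N}$ as a word algebra is indispensable; without passing to the $f$-alphabet, the argument would be considerably more delicate because of the relations satisfied by motivic iterated integrals.
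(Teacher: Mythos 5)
Your proposal is correct and follows essentially the paper's own route: the paper proves this (as for the change-of-field criterion, Theorem 3.6) by transporting the statement via the non-canonical comodule isomorphism $\phi$ to the $f$-alphabet, where the coaction is deconcatenation, the unramified subspace is the span of words avoiding the ramified degree-one letters $f_{1}^{\xi^{N/q_{i}}}$, and the equivalence is immediate, then transporting back. Your fleshing-out of the ``obvious'' deconcatenation step (linear independence of tails forcing the coefficients, plus the $K$-theoretic identification of the ramified letters) is exactly the content the paper leaves implicit.
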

\texttt{Examples}:
\begin{description}
\item[$\boldsymbol{N=2}$:]  As claimed in the introduction, the descent between $\mathcal{H}^{2}$ and $\mathcal{H}^{1}$ is precisely measured by $D_{1}$:\footnote{$\mathscr{D}^{(\mathbb{Q}/\mathbb{Q}, 2/1)}=\left\lbrace  D^{-1}_{1} \right\rbrace $ with the above notations.}
\begin{coro}\label{criterehonoraire}
Let $\mathfrak{Z}\in\mathcal{H}^{2}=\mathcal{H}^{\mathcal{MT}_{2}}$, a motivic Euler sum.\\
Then $\mathfrak{Z}\in\mathcal{H}^{1}=\mathcal{H}^{\mathcal{MT}_{1}}$, i.e. $\mathfrak{Z}$ is a motivic multiple zeta value if and only if:
$$D_{1}(\mathfrak{Z})=0 \quad \textrm{  and  } \quad D_{2r+1}(\mathfrak{Z})\in\mathcal{H}^{1}.$$ 
\end{coro}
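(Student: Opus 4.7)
The plan is to derive this corollary directly from Theorem \ref{ramificationchange} (ramification change) specialized to $N=2$, $N'=1$; the descent $(\mathcal{d}) = (\mathbb{Q}/\mathbb{Q}, 2/1)$ is indeed a pure change of ramification since $k_2 = k_1 = \mathbb{Q}$. The only real work is to match the abstract derivation sets $\mathscr{D}^{\mathcal{d}}, \mathscr{D}^{\backslash\mathcal{d}}$ used in Theorem \ref{ramificationchange} with the concrete derivations $D_1$ and $D_{2r+1}$ appearing in the corollary.

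First, I would pin down the derivation sets using the depth-$1$ results recalled in Lemma $3.2$ and the associated distribution relations. For $N=1$ the basis of $gr^{\mathfrak{D}}_{1}\mathcal{A}_r$ is $\{\zeta^{\mathfrak{a}}(r;1)\}$ for $r>1$ odd; for $N=2$ it is $\{\zeta^{\mathfrak{a}}(r;-1)\}$ for $r\geq 1$ odd. The explicit distribution relation $\zeta^{\mathfrak{a}}(2r{+}1;1) = (2^{-2r}-1)\,\zeta^{\mathfrak{a}}(2r{+}1;-1)$, with the coefficient non-zero for every $r\geq 1$, shows that in odd weight $\geq 3$ the $N=1$ basis element and the $N=2$ basis element span the same line. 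Hence extending the $N=1$ basis to an $N=2$ basis only adds the single weight-$1$ element $\zeta^{\mathfrak{a}}(1;-1) = -\log^{\mathfrak{a}}(2)$, so by (\ref{eq:derivdescent}) we get
\[
\mathscr{D}^{\mathcal{d}} = \{D_1^{-1}\}, \qquad \mathscr{D}^{\backslash\mathcal{d}} = \{D_{2r+1}^{-1} : r\geq 1\}.
\]

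Second, I would translate these sets back to the unadorned operators $D_r$ of the corollary. For $N=2$, $gr^{\mathfrak{D}}_{1}\mathcal{L}_r$ is at most one-dimensional (zero if $r$ is even, spanned by $\zeta^{\mathfrak{l}}(r;-1)$ if $r$ is odd), so the derivation $D_r$ from Lemma \ref{Drp} already lands, after projection to the depth-$1$ part of the left-hand side, in this one-dimensional space. Consequently $D_1 = D_1^{-1}$ on $\mathcal{H}^{2}$, and for $r \geq 1$ odd, $D_{2r+1}$ agrees with the generator $D^{-1}_{2r+1}$ of $\mathscr{D}^{\backslash\mathcal{d}}_{2r+1}$; moreover even-weight $D_{2r}$ vanishes in depth $1$ for $N=2$, so it imposes no condition and is absent from the statement.

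Finally, Theorem \ref{ramificationchange} then gives: $\mathfrak{Z}\in\mathcal{H}^{\mathcal{MT}(\mathbb{Z})} = \mathcal{H}^{1}$ if and only if $D_1^{-1}(\mathfrak{Z}) = 0$ and $D_r(\mathfrak{Z})\in\mathcal{H}^{1}$ for every $r$ in the odd weight range, which is precisely the asserted criterion. The only subtle point — and hence the "main obstacle", though it is quite mild — is convincing oneself that the abstract $D^{\eta}_{r,p}$ of Section $3.2$ coincides on $\mathcal{H}^{2}$ with the unrefined $D_r$: this is exactly the one-dimensionality of $gr^{\mathfrak{D}}_1\mathcal{L}_r$ in odd weight for $N=2$, ensured by the parity (conjugation) relation from Lemma $3.2$. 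Everything else is a direct specialization.
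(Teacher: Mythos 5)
Your proposal is correct and follows exactly the paper's route: Corollary 3.8 is stated there as the direct specialization of Theorem 3.7 to the pure ramification descent $(\mathbb{Q}/\mathbb{Q},2/1)$, with $\mathscr{D}^{\mathcal{d}}=\{D^{-1}_{1}\}$ and the odd-weight $D^{-1}_{2r+1}$ forming $\mathscr{D}^{\backslash\mathcal{d}}$, the identification you justify via the depth-one distribution/conjugation relations. Your added verification that the projected operators coincide (up to nonzero scalars) with the unadorned $D_{1}$, $D_{2r+1}$ and that even $r$ contributes nothing is exactly the content the paper leaves implicit.
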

\item[$\boldsymbol{N=3,4,\textquoteleft 6 \textquoteright}$:]
\begin{coro}\label{ramif346}
Let $N\in \lbrace 3,4,6\rbrace$ and $\mathfrak{Z}\in\mathcal{H}^{\mathcal{MT}(\mathcal{O}_{N} \left[ \frac{1}{N}\right] )}$, a motivic MZV$_{\mu_{N}}$.\\
Then $\mathfrak{Z}$ is unramified, $\mathfrak{Z}\in\mathcal{H}^{\mathcal{MT} (\mathcal{O}_{N})}$ if and only if:
$$D_{1}(\mathfrak{Z})=0 \textrm{  and  } \quad D_{r}(\mathfrak{Z})\in\mathcal{H}^{\mathcal{MT} (\mathcal{O}_{N})}.$$ 
\end{coro}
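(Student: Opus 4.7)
The plan is to derive this corollary as a direct specialization of Theorem~\ref{ramificationchange} applied to the descent $(k_N/k_N,\, N/1)$ in each of the three cases $N\in\{3,4,6\}$. The $K$-theoretic computations of \S 2 show that the only discrepancy between $\mathcal{MT}(\mathcal{O}_N[1/N])$ and $\mathcal{MT}(\mathcal{O}_N)$ occurs in weight~$1$: indeed, $\mathcal{O}_{k_N}^{\ast}\otimes\mathbb{Q}=0$ for $N=3,4,6$ by Dirichlet's unit theorem (signature $(0,1)$ gives rank $0$), whereas the $S$-unit group $\mathcal{O}_N[1/N]^{\ast}\otimes\mathbb{Q}$ has positive rank equal to the number of primes above those dividing $N$. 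Consequently, in the notation of Theorem~\ref{ramificationchange}, the descent derivation set $\mathscr{D}^{\mathcal{d}}$ is entirely concentrated in weight~$1$, while $\mathscr{D}^{\backslash\mathcal{d}}$ consists of the higher-weight derivations $D_r$, $r\geq 2$.

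The first step is to check that the weight-$1$ vanishing conditions prescribed by Theorem~\ref{ramificationchange} collapse to the single equation $D_1(\mathfrak{Z})=0$, using the depth-$1$ relations recalled in \S 3.1. For $N=3$, the class $\zeta^{\mathfrak{l}}(1;\xi_3)$ already spans $gr^{\mathfrak{D}}_1\mathcal{L}_1$, so $D_1=D^{\xi_3}_1$ tautologically. For $N=4$, the identity $\zeta^{\mathfrak{l}}(1;-1)=-2\,\zeta^{\mathfrak{l}}(1;\xi_4)$ shows that $D^{-1}_1$ is a nonzero scalar multiple of $D_1=D^{\xi_4}_1$, so killing $D_1$ kills the required weight-$1$ derivation $D^{\xi_4^2}_1=D^{-1}_1$. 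For $N=6$, the depth-$1$ relations listed in \S 3.1 force $gr^{\mathfrak{D}}_1\mathcal{L}_1$ to be exactly two-dimensional, with a basis that can be arranged so that each of the two required derivations $D^{\xi_6^3}_1=D^{-1}_1$ and $D^{\xi_6^2}_1=D^{\xi_3}_1$ appears as a nonzero component of $D_1$; thus $D_1(\mathfrak{Z})=0$ is equivalent to the conjunction of the two conditions demanded by Theorem~\ref{ramificationchange}.

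The second condition of Theorem~\ref{ramificationchange}, namely that $D_r(\mathfrak{Z})\in\mathcal{H}^{\mathcal{MT}(\mathcal{O}_N)}$ for every $D_r\in\mathscr{D}^{\backslash\mathcal{d}}$ and every $r<n$, then reduces by the paragraph above to the clean statement $D_r(\mathfrak{Z})\in\mathcal{H}^{\mathcal{MT}(\mathcal{O}_N)}$ for all $r$, which is precisely the hypothesis of the corollary. The only genuine verification, and the main point to be careful about, is the $N=6$ case: one must confirm that the two ramification directions at the places above $2$ and above $3$ are \emph{both} captured jointly by the single expression $D_1$ and are not collapsed by the depth-$1$ relations into one single one-dimensional constraint. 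Once this dimension count for $gr^{\mathfrak{D}}_1\mathcal{L}_1$ is checked explicitly from the $N=6$ relations in \S 3.1, the corollary follows immediately from Theorem~\ref{ramificationchange}.
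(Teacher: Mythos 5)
Your argument is correct and is essentially the paper's own: the corollary is presented there as an immediate specialization of Theorem \ref{ramificationchange}, exactly as you do, using that for $N=3,4,6$ the unit rank of $\mathcal{O}_{N}$ is zero, so the ramification directions exhaust the weight-one space and $D_{1}(\mathfrak{Z})=0$ is equivalent to all the prescribed weight-one conditions (the $N=6$ case resting on $\zeta^{\mathfrak{l}}\left(1 \atop -1\right)$ and $\zeta^{\mathfrak{l}}\left(1 \atop \xi_{3}\right)$ being independent, as you note). Two harmless slips: for $N=4$ the paper's depth-one relation gives $\zeta^{\mathfrak{l}}\left(1 \atop -1\right)=2\,\zeta^{\mathfrak{l}}\left(1 \atop \xi_{4}\right)$ (not $-2$), and the prescribed root is $\xi_{4}^{4/4}=\xi_{4}$ rather than $\xi_{4}^{2}$, but both choices span the same line so the conclusion is unaffected.
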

\item[$\boldsymbol{N=p^{r}}$:] A basis for $\mathcal{O}^{N}\otimes \mathbb{Q}$ is formed by: $\left\lbrace  \frac{1-\xi^{k}}{1-\xi}  \right\rbrace_{k\wedge p=1 \atop 0<k\leq\frac{N}{2}} $,  which corresponds to 
$$\text{ The basis for }  \mathcal{A}_{1}^{\mathcal{MT}(\mathcal{O}_{N})} \quad : \left\lbrace  \zeta^{\mathfrak{m}}\left( 1 \atop \xi^{k} \right)- \zeta^{\mathfrak{m}}\left( 1 \atop \xi \right) \right\rbrace  _{ k\wedge p=1 \atop 0<k\leq\frac{N}{2}} .$$
It can be completed in a basis of $\mathcal{A}_{1}^{N}$ with $\zeta^{\mathfrak{m}}\left( 1 \atop \xi^{1} \right)$. \footnote{With the previous theorem notations, $\mathcal{D}^{\mathcal{d}}=\lbrace D^{\xi}_{1}\rbrace$ whereas $\mathcal{D}^{\diagdown \mathcal{d}}= \lbrace D^{\xi^{k}}_{1}-D^{\xi}_{1} \rbrace_{k\wedge p=1 \atop 1<k\leq\frac{N}{2} } \cup_{r>1} \lbrace D^{\xi^{k}}_{r}\rbrace_{k\wedge p=1 \atop 0<k\leq\frac{N}{2}}$; where $D^{\xi}_{1}$ has to be understood as the projection of the left side over $\zeta^{\mathfrak{a}}\left( 1 \atop \xi \right)$ in respect to the basis above of $\mathcal{H}_{1}^{\mathcal{MT}(\mathcal{O}_{N})}$ more $\zeta^{\mathfrak{a}}\left( 1 \atop \xi \right)$. This leads to a criteria equivalent to $(\ref{ramifpr})$.} However, if we consider the basis of $\mathcal{A}_{1}^{N}$ formed by primitive roots of unity up to conjugates, the criteria for the descent could also be stated as follows: 
\begin{coro}\label{ramifpr}
Let $N=p^{r}$ and $\mathfrak{Z}\in\mathcal{H}^{\mathcal{MT}_{\Gamma_{N}}}=\mathcal{H}^{\mathcal{MT}(\mathcal{O}_{N} \left[ \frac{1}{p}\right] )}$, relative to $\mu_{N}$\footnote{For instance a MMZV relative to $\mu_{N}$. Beware, for $p>5$, there could be other periods.}.\\
Then $\mathfrak{Z}$ is unramified, $\mathfrak{Z}\in\mathcal{H}^{\mathcal{MT} (\mathcal{O}_{N})}$ if and only if:
$$\sum_{k\wedge p=1 \atop 0<k\leq\frac{N}{2}} D^{\xi^{k}_{N}}_{1}(\mathfrak{Z})=0 \quad \textrm{  and  } \quad \forall \left\lbrace  \begin{array}{l}
 r>1 \\
1<k\leq\frac{N}{2}\\
k\wedge p=1 \\
\end{array} \right.  , \quad D^{\xi^{k}_{N}}_{r}(\mathfrak{Z})\in\mathcal{H}^{\mathcal{MT} (\mathcal{O}_{N})}.$$ 
\end{coro}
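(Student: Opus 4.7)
\emph{Proof proposal.} The plan is to derive this criterion from the general Theorem~\ref{ramificationchange} by choosing a basis adapted to $N = p^{r}$. Since $N$ has a single prime factor, the index set $I$ in that theorem is a singleton, $q_{1}=N$, and the completion vector is $\xi = \xi_{N} = \xi_{N}^{N/q_{1}}$. The cyclotomic units $\{(1-\xi^{k})/(1-\xi)\}_{1<k\leq N/2,\,(k,p)=1}$ form a basis of $\mathcal{O}_{N}^{\ast}\otimes\mathbb{Q}$, and completing with $\xi$ yields a basis of $\Gamma_{N}\otimes\mathbb{Q}$. Dually this gives the basis $\{\zeta^{\mathfrak{a}}(1;\xi^{k}) - \zeta^{\mathfrak{a}}(1;\xi)\}_{1<k\leq N/2,\,(k,p)=1} \cup \{\zeta^{\mathfrak{a}}(1;\xi)\}$ of $gr_{1}^{\mathfrak{D}}\mathcal{A}_{1}^{N}$, so the split in the footnote becomes $\mathscr{D}^{\mathcal{d}} = \{D^{\xi}_{1}\}$ and $\mathscr{D}^{\diagdown\mathcal{d}} = \{D^{\xi^{k}}_{1}-D^{\xi}_{1}\}_{1<k\leq N/2,\,(k,p)=1} \cup \bigcup_{r>1}\{D^{\xi^{k}}_{r}\}_{(k,p)=1,\,0<k\leq N/2}$. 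Theorem~\ref{ramificationchange} then asserts that $\mathfrak{Z}\in\mathcal{H}^{\mathcal{MT}(\mathcal{O}_{N})}$ if and only if $D^{\xi}_{1}(\mathfrak{Z}) = 0$ and every element of $\mathscr{D}^{\diagdown\mathcal{d}}$ sends $\mathfrak{Z}$ into $\mathcal{H}^{\mathcal{MT}(\mathcal{O}_{N})}$.

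Next I would switch to the alternative primitive-root basis $\{\zeta^{\mathfrak{a}}(1;\xi^{k})\}_{(k,p)=1,\,0<k\leq N/2}$ of $gr_{1}^{\mathfrak{D}}\mathcal{A}_{1}^{N}$ and rewrite the first condition there. Because each difference $\zeta^{\mathfrak{a}}(1;\xi^{k}) - \zeta^{\mathfrak{a}}(1;\xi)$ is unramified, the quotient $gr_{1}^{\mathfrak{D}}\mathcal{A}_{1}^{N}/gr_{1}^{\mathfrak{D}}\mathcal{A}_{1}^{\mathcal{MT}(\mathcal{O}_{N})}$ is one-dimensional and every $\zeta^{\mathfrak{a}}(1;\xi^{k})$ projects to the same generator. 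Dualising, the linear form cutting out the unramified subspace is the unweighted sum $\sum_{k} D^{\xi^{k}}_{1}$, so the vanishing $D^{\xi}_{1}(\mathfrak{Z})=0$ in the cyclotomic-units basis translates into $\sum_{(k,p)=1,\,0<k\leq N/2} D^{\xi^{k}}_{1}(\mathfrak{Z})=0$, which is exactly the first criterion of the corollary.

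For $r>1$, the dimension formula (\ref{dimensionk}) gives $\dim K_{2r-1}(\mathcal{O}_{N}[1/p])\otimes\mathbb{Q} = \dim K_{2r-1}(\mathcal{O}_{N})\otimes\mathbb{Q} = \varphi(N)/2$, hence $gr_{1}^{\mathfrak{D}}\mathcal{A}_{r}^{N} = gr_{1}^{\mathfrak{D}}\mathcal{A}_{r}^{\mathcal{MT}(\mathcal{O}_{N})}$; there is no ramified direction to kill at weight $r$ and $\mathscr{D}_{r}^{\mathcal{d}} = \emptyset$. Every derivation $D^{\xi^{k}}_{r}$ then lies in $\mathscr{D}^{\diagdown\mathcal{d}}$, and Theorem~\ref{ramificationchange} imposes precisely $D^{\xi^{k}}_{r}(\mathfrak{Z}) \in \mathcal{H}^{\mathcal{MT}(\mathcal{O}_{N})}$, matching the second criterion of the corollary. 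The depth-one residual conditions $(D^{\xi^{k}}_{1} - D^{\xi}_{1})(\mathfrak{Z})\in \mathcal{H}^{\mathcal{MT}(\mathcal{O}_{N})}$ for $1<k\leq N/2$ are then absorbed by recursively applying the same criterion to the lower-weight output in $\mathcal{H}_{n-1}$.

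The main obstacle is the dualisation step in the second paragraph: one must verify that changing basis from cyclotomic units to primitive roots really turns the single "ramified coordinate" into the unweighted sum $\sum_{k} D^{\xi^{k}}_{1}$. This relies on the elementary but essential fact that every $\zeta^{\mathfrak{a}}(1;\xi^{k})$ for $(k,p)=1$, $0<k\leq N/2$, has the same nonzero image in the one-dimensional ramified quotient, which in turn amounts to the fact that $1-\xi$ is not itself a unit (its norm being a power of $p$), so that $\zeta^{\mathfrak{a}}(1;\xi)$ and the cyclotomic-unit classes are linearly independent in $gr_{1}^{\mathfrak{D}}\mathcal{A}_{1}^{N}$.
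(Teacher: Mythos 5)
Your overall route is exactly the paper's: Corollary \ref{ramifpr} is given no separate proof there, but is read off from Theorem \ref{ramificationchange} together with the basis of $\mathcal{O}_{N}^{\ast}\otimes\mathbb{Q}$ by cyclotomic units completed by $\zeta^{\mathfrak{a}}\left(1 \atop \xi\right)$, and your first steps coincide with the footnote preceding the statement: for $N=p^{r}$ the set $I$ is a singleton with completion $\xi_{N}$, so $\mathscr{D}^{\mathcal{d}}=\lbrace D^{\xi}_{1}\rbrace$; in weight $r>1$ the dimension $b_{N}=\varphi(N)/2$ is the same with or without ramification, so there is no ramified direction to kill there; and the dualisation step you single out is correct, since $\sum_{k} c_{k}\,\zeta^{\mathfrak{a}}\left(1\atop \xi^{k}\right)=\sum_{k\neq 1} c_{k}\left(\zeta^{\mathfrak{a}}\left(1\atop \xi^{k}\right)-\zeta^{\mathfrak{a}}\left(1\atop \xi\right)\right)+\left(\sum_{k}c_{k}\right)\zeta^{\mathfrak{a}}\left(1\atop \xi\right)$, so the ramified coordinate of $D_{1}(\mathfrak{Z})$ is indeed the unweighted sum $\sum_{k}D^{\xi^{k}}_{1}(\mathfrak{Z})$.

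The gap is your final sentence. The residual weight-one conditions $\bigl(D^{\xi^{k}}_{1}-D^{\xi}_{1}\bigr)(\mathfrak{Z})\in\mathcal{H}^{\mathcal{MT}(\mathcal{O}_{N})}$ for $k\neq 1$ are conditions on $\mathfrak{Z}$ itself, on the same footing as the $r>1$ ones; they are not ``absorbed by recursively applying the criterion to the lower-weight output'' and they do not follow from the two conditions you retain. Concretely, in the $f$-alphabet take $\mathfrak{Z}$ with $\phi(\mathfrak{Z})=f_{1}^{j}f_{1}^{\rho}$, where $f_{1}^{\rho}$ is the ramified degree-one generator and $f_{1}^{j}$ an unramified one (possible e.g. for $N=8$, weight $2$): the first letter is unramified, so $\sum_{k}D^{\xi^{k}}_{1}(\mathfrak{Z})=0$, and there are no derivations of weight $r>1$ to test, yet $\mathfrak{Z}$ is ramified because $D^{u}_{1}(\mathfrak{Z})=f_{1}^{\rho}$. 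So you cannot discard these conditions; the quantifier in the corollary must be understood as running over all derivations dual to unramified directions, i.e. $r=1$ with $k\neq 1$ together with $r>1$ with all $k$ (including $k=1$, which your reading, following the printed ``$1<k$'', would also wrongly omit: $\phi(\mathfrak{Z})=f_{r}^{1}f_{1}^{\rho}$ gives the analogous failure). With that reading, the equivalence with Theorem \ref{ramificationchange} is immediate from your coordinate computation and no absorption argument is needed.
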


\end{description}

\section{\textsc{Galois Descents} for $N=2,3,4,\textquoteleft 6 \textquoteright,8$}

\subsection{\textsc{Motivic Level filtration}}
Let fix a descent $(\mathcal{d})=(k_{N}/k_{N'}, M/M')$ from $\mathcal{H}^{\mathcal{MT}\left( \mathcal{O}_{k_{N}} \left[ \frac{1}{M}\right] \right)  }$, to $\mathcal{H}^{\mathcal{MT}\left( \mathcal{O}_{k_{N'}} \left[ \frac{1}{M'}\right]\right)  }$, with $N'\mid N$, $M'\mid M$, among those considered in this section, represented in Figures $\ref{fig:d248}, \ref{fig:d36}$.\\
Let define a motivic level increasing filtration $\mathcal{F}^{\mathcal{d}}$ associated, from the set of derivations associated to this descent, $\mathscr{D}^{\mathcal{d}} \subset \mathscr{D}^{N}$, defined in $(\ref{eq:derivdescent})$.

\begin{defi}
The filtration by the \textbf{motivic level} associated to a descent $(\mathcal{d})$ is defined recursively on $\mathcal{H}^{N}$ by:
\begin{itemize}
\item[$\cdot$] $\mathcal{F}^{\mathcal{d}} _{-1} \mathcal{H}^{N}=0$.
\item[$\cdot$] $\mathcal{F}^{\mathcal{d}} _{i} \mathcal{H}^{N}$ is the largest submodule of $\mathcal{H}^{N}$ such that $\mathcal{F}^{\mathcal{d}}_{i}\mathcal{H}^{N}/\mathcal{F}^{\mathcal{d}} _{i-1}\mathcal{H}^{N}$ is killed by $\mathscr{D}^{\mathcal{d}}$, $\quad$ i.e. is in the kernel of $\oplus_{D\in \mathscr{D}^{\mathcal{d}}} D$.
\end{itemize}
\end{defi}
It's a graded Hopf algebra's filtration:
$$\mathcal{F} _{i}\mathcal{H}. \mathcal{F}_{j}\mathcal{H} \subset \mathcal{F}_{i+j}\mathcal{H} \text{  ,  } \quad \Delta (\mathcal{F}_{n}\mathcal{H})\subset \sum_{i+j=n} \mathcal{F}_{i}\mathcal{A} \otimes \mathcal{F}_{j}\mathcal{H}.$$
The associated graded is denoted: $gr^{\mathcal{d}}  _{i}$ and the quotients, coalgebras compatible with $\Delta$: 
\begin{equation}
\label{eq:quotienth} \mathcal{H}^{\geq 0}  \mathrel{\mathop:}= \mathcal{H} \text{  ,  } \mathcal{H}^{\geq i}\mathrel{\mathop:}= \mathcal{H}/ \mathcal{F}_{i-1}\mathcal{H} \text{ with the projections :}\quad \quad \forall j\geq i \text{  , } \pi_{i,j}:  \mathcal{H}^{\geq i} \rightarrow  \mathcal{H}^{\geq j}.
\end{equation}
Note that, via the isomorphism $\phi$, the motivic filtration on $\mathcal{H}^{\mathcal{MT}_{N}}$ corresponds to:
\begin{equation}
\label{eq:isomfiltration}\mathcal{F}^{\mathcal{d}} _{i} \mathcal{H}^{\mathcal{MT}_{N}} \longleftrightarrow \left\langle  x\in H^{N} \mid Deg^{\mathcal{d}} (x) \leq i \right\rangle _{\mathbb{Q}} ,
\end{equation}
where $Deg^{\mathcal{d}}$ is the degree in $\left\lbrace  \lbrace f^{j}_{r} \rbrace_{b_{N'}<j\leq b_{N} \atop r>1} , \lbrace f^{j}_{1} \rbrace_{a_{N'}<j\leq a_{N}} \right\rbrace $, which are the images of the complementary part of $ gr_{1}\mathcal{L}^{\mathcal{MT}_{N'}}$ in the basis of $gr_{1}\mathcal{L}^{\mathcal{MT}_{N}}$.\\
In particular, $\dim \mathcal{F}^{\mathcal{d}} _{i} \mathcal{H}_{n}^{\mathcal{MT}_{N}}$ are known.\\
\\
\texttt{Example}: For the descent between $\mathcal{H}^{\mathcal{MT}_{2}}$ and $\mathcal{H}^{\mathcal{MT}_{1}}$, since $gr_{1}\mathcal{L}^{\mathcal{MT}_{2}}= \left\langle \zeta^{\mathfrak{m}}(-1), \left\lbrace  \zeta^{\mathfrak{m}}(2r+1)\right\rbrace  _{r>0}\right\rangle$:
$$\mathcal{F} _{i} \mathcal{H}^{\mathcal{MT}_{2}} \quad \xrightarrow[\sim]{\phi}\quad  \left\langle  x\in \mathbb{Q}\langle f_{1}, f_{3}, \cdots \rangle\otimes \mathbb{Q}[f_{2}]  \mid Deg_{f_{1}} (x) \leq i \right\rangle _{\mathbb{Q}} \text{ , where } Deg_{f_{1}}= \text{ degree in } f_{1}.$$
\\
\\
By definition of those filtrations:
\begin{equation}D_{r,p}^{\eta} \left( \mathcal{F}_{i}\mathcal{H}_{n} \right) \subset \left\lbrace  \begin{array}{ll} 
\mathcal{F}_{i-1}\mathcal{H}_{n-r} & \text{ if }D_{r,p}^{\eta}\in\mathscr{D}^{\mathcal{d}}_{r} \\
\mathcal{F}_{i}\mathcal{H}_{n-r} & \text{ if } D_{r,p}^{\eta}\in\mathscr{D}^{\backslash\mathcal{d}}_{r}
\end{array} \right. . 
\end{equation}
Similarly, looking at $\partial_{n,p}$ (cf. $\ref{eq:pderivnp}$):
\begin{equation} \partial_{n,p}(\mathcal{F}_{i-1}\mathcal{H}_{n}) \subset \oplus_{r<n} \left( gr_{p-1}^{\mathfrak{D}}  \mathcal{F}_{i-2}\mathcal{H}_{n-r}\right)^{\text{ card } \mathscr{D}^{\mathcal{d}}_{r}} \oplus_{r<n} \left( gr_{p-1}^{\mathfrak{D}} \mathcal{F}_{i-1}\mathcal{H}_{n-r}\right) ^{\text{ card } \mathscr{D}^{\backslash\mathcal{d}}_{r}}.
\end{equation}
This allows us to pass to quotients, and define $D^{\eta,i,\mathcal{d}}_{n,p}$ and $\partial^{i,\mathcal{d}}_{n,p}$:
\begin{equation}
\label{eq:derivinp}
\boldsymbol{D^{\eta,i,\mathcal{d}}_{n,p}}: gr_{p}^{\mathfrak{D}} \mathcal{H}_{n}^{\geq i} \rightarrow \left\lbrace  \begin{array}{ll} 
 gr_{p-1}^{\mathfrak{D}} \mathcal{H}_{n-r}^{\geq i-1} & \text{ if }D_{r,p}^{\eta}\in\mathscr{D}^{\mathcal{d}}_{r} \\
gr_{p-1}^{\mathfrak{D}} \mathcal{H}_{n-r}^{\geq i} & \text{ if } D_{r,p}^{\eta}\in\mathscr{D}^{\backslash\mathcal{d}}_{r}
\end{array} \right. 
\end{equation}
\begin{framed}
\begin{equation}
\label{eq:pderivinp}
\boldsymbol{\partial^{i,\mathcal{d}}_{n,p}}: gr_{p}^{\mathfrak{D}} \mathcal{H}_{n}^{\geq i} \rightarrow  \oplus_{r<n} \left( gr_{p-1}^{\mathfrak{D}} \mathcal{H}_{n-r}^{\geq i-1}\right)^{\text{ card } \mathscr{D}^{\mathcal{d}}_{r}}  \oplus_{r<n} \left( gr_{p-1}^{\mathfrak{D}} \mathcal{H}_{n-r}^{\geq i}\right)^{\text{ card } \mathscr{D}^{\backslash\mathcal{d}}_{r}} . 
\end{equation}
\end{framed}
The bijectivity of this map is essential to the results stated below.

\subsection{\textsc{Results}}

In the following results, the filtration considered $\mathcal{F}_{i}$ is the filtration by the motivic level associated to the (fixed) descent $\mathcal{d}$ (Definition $4.1$) while the index $i$, in $\mathcal{B}_{n, p, i}$ refers to the level notion associated to the descent $\mathcal{d}$.\footnote{Precisely defined, for each descent in $\S 4.4 $.}\\
We first obtain the following result on the depth graded quotients, for all $i\geq 0$, with:
$$\mathbb{Z}_{1[P]} \mathrel{\mathop:}= \frac{\mathbb{Z}}{1+ P\mathbb{Z}}=\left\{ \frac{a}{1+b P}, a,b\in\mathbb{Z} \right\} \text{ with }  \begin{array}{ll}
P=2 & \text{ for } N=2,4,8 \\
P=3 & \text{ for } N=3,6
\end{array} .$$

\begin{lemm}
\begin{itemize}
\item[$\cdot$] $$\mathcal{B}_{n, p, \geq i} \text{  is a linearly free family of  } gr_{p}^{\mathfrak{D}} \mathcal{H}_{n}^{\geq i} \text{ and defines a } \mathbb{Z}_{1[P]} \text{ structure :}$$
Each element $Z= \zeta^{\mathfrak{m}}\left( z_{1}, \cdots , z_{p} \atop \epsilon_{1}, \cdots, \epsilon_{p}\right)\in \mathcal{B}_{n,p} $ decomposes in a $\mathbb{Z}_{1[P]}$-linear combination of $\mathcal{B}_{n, p, \geq i}$ elements, denoted $cl_{n,p,\geq i}(Z)$ in $gr_{p}^{\mathfrak{D}} \mathcal{H}_{n}^{\geq i}$, which defines, in an unique way:
$$cl_{n,p,\geq i}: \langle\mathcal{B}_{n, p, \leq i-1}\rangle_{\mathbb{Q}} \rightarrow \langle\mathcal{B}_{n, p, \geq i}\rangle_{\mathbb{Q}}.$$
\item[$\cdot$]
The following map $\partial^{i,\mathcal{d}}_{n,p}$ is bijective:
$$\partial^{i,\mathcal{d}}_{n,p}: gr_{p}^{\mathfrak{D}} \langle \mathcal{B}_{n, \geq i} \rangle_{\mathbb{Q}} \rightarrow \oplus_{r<n} \left( gr_{p-1}^{\mathfrak{D}} \langle \mathcal{B}_{n-1, \geq i-1} \rangle_{\mathbb{Q}} \right) ^{\oplus \text{ card } \mathcal{D}^{\mathcal{d}}_{r}} \oplus_{r<n} \left( gr_{p-1}^{\mathfrak{D}} \langle \mathcal{B}_{n-2r-1, \geq i} \rangle_{\mathbb{Q}} \right) ^{\oplus \text{ card } \mathcal{D}^{\backslash\mathcal{d}}_{r}}.$$
\end{itemize}
\end{lemm}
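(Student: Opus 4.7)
The plan is to prove both assertions by double induction on the weight $n$ and depth $p$, using the bijectivity of $\partial^{i,\mathcal{d}}_{n,p}$ as the principal engine: under the inductive hypothesis the target of $\partial^{i,\mathcal{d}}_{n,p}$ is a direct sum whose summands $gr_{p-1}^{\mathfrak{D}}\mathcal{H}_{n-r}^{\geq i-1}$ (for derivations in $\mathscr{D}^{\mathcal{d}}_r$) and $gr_{p-1}^{\mathfrak{D}}\mathcal{H}_{n-r}^{\geq i}$ (for derivations in $\mathscr{D}^{\backslash \mathcal{d}}_r$) already admit the claimed bases $\mathcal{B}_{n-r,p-1,\geq i-1}$, respectively $\mathcal{B}_{n-r,p-1,\geq i}$. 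Showing that $\partial^{i,\mathcal{d}}_{n,p}$ sends $\mathcal{B}_{n,p,\geq i}$ to a $\mathbb{Z}_{1[P]}$-linearly independent family in this target yields both the linear independence of $\mathcal{B}_{n,p,\geq i}$ in $gr_p^{\mathfrak{D}}\mathcal{H}_n^{\geq i}$ and, after inverting the corresponding matrix, the decomposition map $cl_{n,p,\geq i}$ with coefficients in $\mathbb{Z}_{1[P]}$.

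The base case $p=1$ is supplied by the depth-one analysis of Section~3.1: the distribution and conjugation relations of Lemma~3.2 together with the case-by-case identities identify $gr_1^{\mathfrak{D}}\mathcal{H}_n^{\geq i}$ and exhibit $\mathcal{B}_{n,1,\geq i}$ as a basis, with the rewriting of a generic $\zeta^{\mathfrak{l}}\left({r\atop \eta}\right)$ in this basis involving only denominators of the form $1-2^{r-1}$ or $1-3^{2r}$, which are units of $\mathbb{Z}_{1[P]}$. For the inductive step $p\geq 2$, expand $\partial^{i,\mathcal{d}}_{n,p}(Z)$ for $Z=\zeta^{\mathfrak{m}}\left({x_1,\ldots,x_p\atop \epsilon_1,\ldots,\epsilon_p}\right)\in\mathcal{B}_{n,p,\geq i}$ via the formula for $D_{r,p}$ of Lemma~3.4, and project each left-tensor factor onto the basis of $gr_1^{\mathfrak{D}}\mathcal{L}_r$ that separates $\mathscr{D}^{\mathcal{d}}$ from $\mathscr{D}^{\backslash\mathcal{d}}$. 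Among the output, the terms of types \textsc{(a), (b), (c)} either possess a right-hand factor violating the shape constraints defining $\mathcal{B}^N$ — and are then rewritten via the induction hypothesis by $cl_{n-r,p-1,\geq \star}$ — or contribute to strictly lower motivic level and so vanish in $\mathcal{H}^{\geq i}$; the surviving contributions are the deconcatenations \textsc{(a0)} and \textsc{(d,d')}, whose right-hand iterated integrals are again elements of $\mathcal{B}_{n-r,p-1,\geq \star}$.

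After this reduction, the matrix of $\partial^{i,\mathcal{d}}_{n,p}$ in the bases $\mathcal{B}_{n,p,\geq i}$ and the inductive family at the target is block-triangular, with diagonal blocks equal to scalar multiples of identity matrices whose scalars are the depth-one coefficients appearing in Section~3.1 — of the form $1-2^{r-1}$ for $N=2,4,8$ or $1-3^{2r}$ for $N=3,6$, and analogous expressions in the remaining cases. These scalars have zero $2$-adic, respectively $3$-adic, valuation, so the matrix is invertible over $\mathbb{Z}_{1[P]}$, which simultaneously produces the bijectivity of $\partial^{i,\mathcal{d}}_{n,p}$, the $\mathbb{Q}$-linear independence of $\mathcal{B}_{n,p,\geq i}$, and the $\mathbb{Z}_{1[P]}$-structure of $cl_{n,p,\geq i}$. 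The main obstacle is the case-by-case verification that the non-deconcatenation contributions truly vanish in the correct piece of the level filtration after passing to $\mathcal{H}^{\geq i}$ — which requires a precise reading of the definition of the level for each descent $\mathcal{d}$ and an honest invocation of the induction hypothesis in lower depth — together with the $p$-adic analysis of the deconcatenation scalars, which is exactly what restricts the method to $N\in\{2,3,4,6,8\}$.
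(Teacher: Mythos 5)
Your overall strategy --- induction on weight (with depth and level), reduction of the matrix of $\partial^{i,\mathcal{d}}_{n,p}$ evaluated on $\mathcal{B}_{n,p,\geq i}$ to its deconcatenation part, then a $P$-adic invertibility argument --- is the paper's strategy, but the step in which you dispose of the terms of types \textsc{(a)},\textsc{(b)},\textsc{(c)} is wrong, and it is exactly the crux. You claim these terms either have a right-hand factor of non-basis shape (to be rewritten by $cl$) or ``contribute to strictly lower motivic level and so vanish in $\mathcal{H}^{\geq i}$''. Neither prong holds. For the families $\mathcal{B}^{N}$ the right-hand factors of types \textsc{(a)},\textsc{(b)},\textsc{(c)} are again of basis shape; what can fail is their \emph{level}, which may drop to exactly $i-1$. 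And basis elements of level $\leq i-1$ do \emph{not} vanish in the quotient $\mathcal{H}^{\geq i}=\mathcal{H}/\mathcal{F}_{i-1}\mathcal{H}$: the level is a combinatorial label on basis elements, not membership in the motivic filtration (if such elements died in the quotient, every correction $cl(x)$ in Theorem $4.3$ would be zero and, say, $\zeta^{\mathfrak{m}}(3,\overline{3})$ alone would be unramified, contradicting the appendix, where the correction $-6\,\zeta^{\mathfrak{m}}(1,\overline{5})$ is needed). In the paper nothing is discarded for level reasons: the low-level right factors are rewritten, by the induction hypothesis in strictly lower weight, as $\mathbb{Z}_{1[P]}$-combinations of $\mathcal{B}_{n-r,p-1,\geq i}$ (an operation that does not lower the $P$-adic valuation), and then \emph{all} \textsc{(a)},\textsc{(b)},\textsc{(c)} terms are killed only \emph{modulo} $P$, because their left factor is $\zeta^{\mathfrak{l}}\left(r \atop 1\right)$, whose coefficient on the chosen depth-one generator $\zeta^{\mathfrak{l}}\left(r \atop \xi\right)$ (resp. $\zeta^{\mathfrak{l}}\left(r \atop \pm\xi\right)$) --- e.g. $\frac{2^{2r}}{1-2^{2r}}$ for $N=2$, $\frac{2\cdot 6^{r-1}}{(1-2^{r-1})(1-3^{r-1})}$ for $N=6$ --- has strictly positive $P$-adic valuation. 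Note also that $cl$ is only defined on low-level \emph{basis-shaped} elements, so it could not in any case rewrite shape-violating factors within the stated induction.

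Because you remove these terms for the wrong reason, your description of the resulting matrix is also incorrect: it is not exactly block-triangular with diagonal blocks equal to $(1-2^{r-1})$- or $(1-3^{2r})$-multiples of the identity. The matrix is triangular only \emph{modulo} $P$ (for the reversed lexicographic order), and its diagonal entries are the deconcatenation binomials $\binom{r-1}{x_p-1}$ at $r=x_p$, i.e. $1$; the only exact block structure comes from $D_1$ acting as deconcatenation and vanishing on elements not ending in $1$ (case $N=2$). The $P$-adic input is therefore needed on the \emph{non}-deconcatenation coefficients, to show they are $\equiv 0 \pmod P$, not on the deconcatenation scalars; it is this congruence that gives $\det\not\equiv 0\pmod P$, hence bijectivity of $\partial^{i,\mathcal{d}}_{n,p}$, the linear independence of $\mathcal{B}_{n,p,\geq i}$, and the $\mathbb{Z}_{1[P]}$-structure. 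As written, your argument contains no mechanism forcing the \textsc{(a)},\textsc{(b)},\textsc{(c)} contributions out of the matrix, so neither the triangularity nor the invertibility follows.
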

Before giving the proof, in the next section, let present its consequences: basis for the quotient, the filtration and the graded spaces for each descent considered. More precisely:
\begin{theo}
\begin{itemize}
\item[$(i)$] $\mathcal{B}_{n,\leq p, \geq i}$ is a basis of $\mathcal{F}_{p}^{\mathfrak{D}} \mathcal{H}_{n}^{\geq i}=\mathcal{F}_{p}^{\mathfrak{D}} \mathcal{H}_{n}^{\geq i, \mathcal{MT}}$.
\item[$(ii)$] 
\begin{itemize}
\item[$\cdot$] $\mathcal{B}_{n, p, \geq i}$ is a basis of $gr_{p}^{\mathfrak{D}} \mathcal{H}_{n}^{\geq i}=gr_{p}^{\mathfrak{D}} \mathcal{H}_{n}^{\geq i, \mathcal{MT}}$ on which it defines a $\mathbb{Z}_{1[P]}$-structure:\\
Each element $Z= \zeta^{\mathfrak{m}}\left( z_{1}, \cdots , z_{p} \atop \epsilon_{1}, \cdots, \epsilon_{p}\right)$ decomposes in a $\mathbb{Z}_{1[P]}$-linear combination of $\mathcal{B}_{n, p, \geq i}$ elements, denoted $cl_{n,p,\geq i}(Z)$ in $gr_{p}^{\mathfrak{D}} \mathcal{H}_{n}^{\geq i}$, which defines in an unique way: 
$$cl_{n,p,\geq i}: \langle\mathcal{B}_{n, p, \leq i-1}\rangle_{\mathbb{Q}} \rightarrow \langle\mathcal{B}_{n, p, \geq i}\rangle_{\mathbb{Q}} \text{ such as } x+cl_{n,p,\geq i}(x)\in \mathcal{F}_{i-1}\mathcal{H}_{n}+ \mathcal{F}^{\mathfrak{D}}_{p-1}\mathcal{H}_{n}.$$
\item[$\cdot$] The following map is bijective:
$$\partial^{i, \mathcal{d}}_{n,p}: gr_{p}^{\mathfrak{D}} \mathcal{H}_{n}^{\geq i} \rightarrow  \oplus_{r<n} \left( gr_{p-1}^{\mathfrak{D}} \mathcal{H}_{n-1}^{\geq i-1}\right) ^{\oplus \text{ card } \mathcal{D}^{\mathcal{d}}_{r}} \oplus_{r<n} \left( gr_{p-1}^{\mathfrak{D}} \mathcal{H}_{n-r}^{\geq i}\right) ^{\oplus \text{ card } \mathcal{D}^{\backslash\mathcal{d}}_{r}}. $$
\item[$\cdot$] $\mathcal{B}_{n,\cdot, \geq i} $ is a basis of $\mathcal{H}^{\geq i}_{n} =\mathcal{H}^{\geq i, \mathcal{MT}}_{n}$.
\end{itemize}
	\item[$(iii)$] We have the two split exact sequences in bijection:
$$ 0\longrightarrow \mathcal{F}_{i}\mathcal{H}_{n} \longrightarrow \mathcal{H}_{n} \stackrel{\pi_{0,i+1}} {\rightarrow}\mathcal{H}_{n}^{\geq i+1} \longrightarrow 0$$
$$ 0 \rightarrow \langle \mathcal{B}_{n, \cdot, \leq i} \rangle_{\mathbb{Q}} \rightarrow \langle\mathcal{B}_{n} \rangle_{\mathbb{Q}} \rightarrow \langle \mathcal{B}_{n, \cdot, \geq i+1} \rangle_{\mathbb{Q}} \rightarrow 0 .$$
The following map, defined in a unique way:
	$$cl_{n,\leq p,\geq i}: \langle\mathcal{B}_{n, p, \leq i-1}\rangle_{\mathbb{Q}} \rightarrow \langle\mathcal{B}_{n, \leq p, \geq i}\rangle_{\mathbb{Q}} \text{ such as } x+cl_{n,\leq p,\geq i}(x)\in \mathcal{F}_{i-1}\mathcal{H}_{n}.$$
\item[$(iv)$] A basis for the filtration spaces $\mathcal{F}_{i} \mathcal{H}^{\mathcal{MT}}_{n}=\mathcal{F}_{i} \mathcal{H}_{n}$:
$$\cup_{p} \left\{ x+ cl_{n, \leq p, \geq i+1}(x), x\in \mathcal{B}_{n, p, \leq i} \right\}.$$
\item[$(v)$] A basis for the graded space $gr_{i} \mathcal{H}^{\mathcal{MT}}_{n}=gr_{i} \mathcal{H}_{n}$:
$$\cup_{p} \left\{ x+ cl_{n, \leq p, \geq i+1}(x), x\in \mathcal{B}_{n, p, i} \right\}.$$
\end{itemize}
\end{theo}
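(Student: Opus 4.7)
My plan is to deduce Theorem $4.3$ from Lemma $4.2$, from the K\nobreakdash-theoretic upper bound on $\dim\mathcal{H}_n^{\mathcal{MT}_N}$ supplied by Lemma $2.1$, and from the standard linear algebra of filtered/graded objects; the substance is to synchronize a double induction on weight $n$ and depth $p$ with the recursive structure of $\partial^{i,\mathcal{d}}_{n,p}$ and then to lift the level- and depth-graded bases to genuine bases of $\mathcal{H}_n^{\geq i}$ and of the $\mathcal{F}_i\mathcal{H}_n$.

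I would start with part $(ii)$. Lemma $4.2$ already provides linear independence of $\mathcal{B}_{n,p,\geq i}$ in $gr_p^{\mathfrak{D}}\mathcal{H}_n^{\geq i}$ and the $\mathbb{Z}_{1[P]}$-decomposition of arbitrary depth-graded generators into this family. To promote the free family to a basis I would proceed by induction on $n$. Summing $\#\mathcal{B}_{n,p,\geq 0}$ over $p$ gives, case by case for $N\in\{2,3,4,\text{`}6\text{'},8\}$, exactly the coefficient of $t^n$ in the Hilbert series $h_N(t)$ of Lemma $2.1$, so $\#\mathcal{B}_n \le \dim\mathcal{H}_n^{\mathcal{MT}_N}$. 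Combined with the non-canonical embedding $\mathcal{H}^N\hookrightarrow\mathcal{H}^{\mathcal{MT}_N}$ and with the $\mathbb{Q}$-linear independence of $\mathcal{B}_n$ obtained by summing independence across the depth-graded pieces (using Lemma $4.2$ and the $\mathbb{Z}_{1[P]}$-structure to control denominators), this forces equality of dimensions. Simultaneously this yields $\mathcal{H}^N=\mathcal{H}^{\mathcal{MT}_N}$ of Corollary $1.2$ and shows that $\mathcal{B}_{n,\cdot}$ is a basis of $\mathcal{H}_n$. On the level quotient side, the identification $(\ref{eq:isomfiltration})$ of $\mathcal{F}^{\mathcal{d}}_i$ with the $\mathrm{Deg}^{\mathcal{d}}$-filtration in the $f$-alphabet makes $\dim\mathcal{H}_n^{\geq i,\mathcal{MT}}$ explicit, and matching it against $\#\mathcal{B}_{n,\cdot,\geq i}$ gives the same conclusion. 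Bijectivity of $\partial^{i,\mathcal{d}}_{n,p}$ on the full depth-graded quotient then follows from its bijectivity on $\langle\mathcal{B}_{n,\geq i}\rangle_{\mathbb{Q}}$ (Lemma $4.2$) now that the latter fills $gr_p^{\mathfrak{D}}\mathcal{H}_n^{\geq i}$.

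For $(i)$, an induction on $p$ with $(ii)$ as the inductive step shows that $\mathcal{B}_{n,\leq p,\geq i}$ is a basis of $\mathcal{F}_p^{\mathfrak{D}}\mathcal{H}_n^{\geq i}$. For $(iii)$, the first exact sequence is definitional, and the second is just the partition of $\mathcal{B}_n$ according to level, so the splitting $\langle\mathcal{B}_{n,\cdot,\geq i+1}\rangle_{\mathbb{Q}}\hookrightarrow\mathcal{H}_n$ supplied by $(ii)$ identifies $\mathcal{H}_n^{\geq i+1}$ with a complement of $\mathcal{F}_i\mathcal{H}_n$. For $(iv)$ and $(v)$, I would construct $cl_{n,\leq p,\geq i+1}$ by chasing: given $x\in\mathcal{B}_{n,p,\leq i}$, part $(ii)$ writes $\pi_{0,i+1}(x)$ uniquely as a $\mathbb{Q}$-combination of elements of $\mathcal{B}_{n,\leq p,\geq i+1}$, and $cl_{n,\leq p,\geq i+1}(x)$ is defined to be the negative of that combination, so that $x+cl_{n,\leq p,\geq i+1}(x)\in\ker\pi_{0,i+1}=\mathcal{F}_i\mathcal{H}_n$. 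Linear independence of the corrected elements reduces modulo $\mathcal{F}_{i-1}\mathcal{H}_n$ to the known independence in the $gr$-pieces, and the cardinality again matches the K\nobreakdash-theoretic count of $\dim\mathcal{F}_i\mathcal{H}_n$, yielding $(iv)$; passing further to $gr_i\mathcal{H}_n$ and restricting to $x\in\mathcal{B}_{n,p,i}$ gives $(v)$.

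The genuine difficulty sits upstream in Lemma $4.2$; assuming it, the only delicate point in Theorem $4.3$ is to check that the recursive target of $\partial^{i,\mathcal{d}}_{n,p}$ (strictly lowering weight on every branch and additionally strictly lowering level on the $\mathscr{D}^{\mathcal{d}}$-branch, as recorded in $(\ref{eq:pderivinp})$) matches term-for-term the recursive cardinality of $\mathcal{B}_{\cdot,\cdot,\geq i}$ in each of the five cases, so that the K\nobreakdash-theoretic inequality is saturated on the nose rather than merely up to $\le$. The $\mathbb{Z}_{1[P]}$-structure is essential here: it ensures that the transition matrices inverted in passing from $\langle\mathcal{B}_{n,p}\rangle_{\mathbb{Q}}$ to $\langle\mathcal{B}_{n,p,\geq i}\rangle_{\mathbb{Q}}$ remain invertible after reduction modulo $P$, which is precisely the $p$-adic input that rules out spurious $\mathbb{Q}$-linear relations among the $\mathcal{B}$-elements.
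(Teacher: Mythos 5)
Your proposal is correct and follows essentially the same route as the paper: Lemma $4.2$ supplies the depth-graded linear independence and the $\mathbb{Z}_{1[P]}$-structure, the bijectivity of $\partial^{i,\mathcal{d}}_{n,p}$ lifts independence to mixed depths, and generation is obtained by matching cardinalities of $\mathcal{B}_{n,\cdot,\geq i}$ against the dimensions of $\mathcal{H}^{\geq i,\mathcal{MT}}_n$ computed from the $f$-alphabet description $(\ref{eq:isomfiltration})$ and the K-theoretic Hilbert series, with $(iii)$--$(v)$ deduced from the resulting splittings and the construction of $cl$ via the projection $\pi_{0,i}$. The only cosmetic difference is the order in which you treat $(i)$ and $(ii)$, which does not change the argument.
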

The proof is given in $\S 4.3$, and the notion of level resp. motivic level, some consequences and specifications for $N=2,3,4,\textquoteleft 6 \textquoteright,8$ individually are provided in  $\S 4.4$. Some examples in small depth are displayed in Appendice $A$.\\
\paragraph{Level $0$:} \footnote{Consequences of previous theorem for $i=0$}
\begin{itemize}
\item[$\cdot$] The level $0$ of the basis elements $\mathcal{B}^{N}$ forms a basis of $\mathcal{H}^{N} = \mathcal{H}^{\mathcal{MT}_{N}}$, for $N=2,3,4,\textquoteleft 6 \textquoteright, 8$. This gives a new proof (dual) of Deligne's result (cf. $\cite{De}$).\\
The level $0$ of this filtration is hence isomorphic to the following algebras:\footnote{The equalities of the kind $\mathcal{H}^{\mathcal{MT}_{N}}= \mathcal{H}^{N}$ are consequences of the previous theorem for $N=2,3,4,\textquoteleft 6 \textquoteright,8$, and by F. Brown for $N=1$ (cf. $\cite{Br2}$). Moreover, we have inclusions of the kind $\mathcal{H}^{\mathcal{MT}_{N'}} \subseteq \mathcal{F}_{0}^{k_{N}/k_{N'},M/M'}\mathcal{H}^{\mathcal{MT}_{N}}$ and we deduce the equality from dimensions at fixed weight.}
$$ \mathcal{F}_{0}^{k_{N}/k_{N'},M/M'}\mathcal{H}^{\mathcal{MT}_{N}}=\mathcal{F}_{0}^{k_{N}/k_{N'},M/M'}\mathcal{H}^{N}=\mathcal{H}^{\mathcal{MT}_{N',M'}}="\mathcal{H}^{N',M'}" .$$
Hence the inclusions in the following diagram are here isomorphisms: 
$$\xymatrix{
\mathcal{F}_{0}^{k_{N}/k_{N'},M/M'}\mathcal{H}^{\mathcal{MT}_{N}}  &  \mathcal{H}^{\mathcal{MT}_{N'}}  \ar@{^{(}->}[l]\\
\mathcal{F}_{0}^{k_{N}/k_{N'},M/M'}\mathcal{H}^{N} \ar@{^{(}->}[u]  &  \mathcal{H}^{N'}  \ar@{^{(}->}[l] \ar@{^{(}->}[u]}.$$
\item[$\cdot$] It gives, considering such a descent $(k_{N}/k_{N'},M/M')$, a basis for $\mathcal{F}^{0}\mathcal{H}^{N}= \mathcal{H}^{\mathcal{MT}_{N',M'}}$ in terms of the basis of $\mathcal{H}^{N}$. For instance, it leads to a new basis for motivic multiple zeta values in terms of motivic Euler sums, or motivic MZV$_{\mu_{3}}$.\\
Some others $0$-level such as  $\mathcal{F}_{0}^{k_{N}/k_{N},P/1}$, $N=3,4$ which should reflect the descent from $\mathcal{MT}(\mathcal{O}_{N}\left[ \frac{1}{P}\right] )$ to $\mathcal{MT}(\mathcal{O}_{N})$ are not known to be associated to a fundamental group, but the previous result enables to reach them. We obtain a basis for:
\begin{itemize}
\item[$\cdot$] $\mathcal{H}^{\mathcal{MT}(\mathbb{Z}\left[\frac{1}{3}\right])}$ in terms of the basis of $\mathcal{H}^{\mathcal{MT}(\mathcal{O}_{3}[\frac{1}{3}])}$.
\item[$\cdot$] $\mathcal{H}^{\mathcal{MT}(\mathcal{O}_{3})}$ in terms of the basis of $\mathcal{H}^{\mathcal{MT}(\mathcal{O}_{3}[\frac{1}{3}])}$.
\item[$\cdot$] $\mathcal{H}^{\mathcal{MT}(\mathcal{O}_{4})}$ in terms of the basis of $\mathcal{H}^{\mathcal{MT}(\mathcal{O}_{4}[\frac{1}{4}])}$.
\end{itemize}
\end{itemize}

\subsection{\textsc{Proofs}}

As proved below, Theorem $4.3$ boils down to the Lemma $4.2$. Remind the map $\partial^{i,\mathcal{d}}_{n,p}$:
$$\partial^{i,\mathcal{d}}_{n,p}: gr_{p}^{\mathfrak{D}} \mathcal{H}_{n}^{\geq i} \rightarrow  \oplus_{r<n} \left( gr_{p-1}^{\mathfrak{D}} \mathcal{H}_{n-r}^{\geq i-1}\right)^{\text{ card } \mathscr{D}^{\mathcal{d}}_{r}}  \oplus_{r<n} \left( gr_{p-1}^{\mathfrak{D}} \mathcal{H}_{n-r}^{\geq i}\right)^{\text{ card } \mathscr{D}^{\backslash\mathcal{d}}_{r}}.$$
We will look at its image on $\mathcal{B}_{n,p,\geq i} $  and prove both the injectivity of $\partial^{i,\mathcal{d}}_{n,p}$ as considered in Lemma $4.2$, and the linear independence of those elements $\mathcal{B}_{n,p,\geq i}$.

\paragraph{\texttt{Proof of Lemma $4.2$ for $N=2$}.}

The formula $(\autoref{Drp})$ for $D^{-1}_{2r+1,p}$ on $\mathcal{B}$ elements:\footnote{Using identity: $\zeta^{\mathfrak{a}}(\overline{2 r + 1 })= (2^{-2r}-1)\zeta^{\mathfrak{a}}(2r+1 )$. Projection on $\zeta^{\mathfrak{l}}(\overline{2r+1})$ for the left side.}
\begin{align} \label{Deriv2} D^{-1}_{2r+1,p} \left(\zeta^{\mathfrak{m}} (2x_{1}+1, \cdots , \overline{2x_{p}+1}) \right) = & \\
 \frac{2^{2r}}{1-2^{2r}}\delta_{r =x_{1}} & \cdot \zeta^{\mathfrak{m}} (2 x_{2}+1, \cdots, \overline{2x_{p}+1})  \nonumber \\
\frac{2^{2r}}{1-2^{2r}} \left\lbrace \begin{array}{l}
\sum_{i=1}^{p-2} \delta_{x_{i+1}\leq r < x_{i}+ x_{i+1} } \binom{2r}{2x_{i+1}}   \nonumber\\
-\sum_{i=1}^{p-1}  \delta_{x_{i}\leq r < x_{i}+ x_{i+1}} \binom{2r}{2x_{i}} 
\end{array} \right. & \cdot \zeta^{\mathfrak{m}} \left( \cdots ,2x_{i-1}+1, 2 (x_{i}+x_{i+1}-r) +1, 2 x_{i+2}+1, \cdots \right)   \nonumber\\
\textrm{\textsc{(d) }} +\delta_{x_{p} \leq r \leq x_{p}+ x_{p-1}} \binom{2r}{2x_{p}}  &\cdot\zeta^{\mathfrak{m}} \left( \cdots ,2x_{p-2}+1, \overline{2 (x_{p-1}+x_{p}-r) +1}\right)   \nonumber
\end{align}

Terms of type \textsc{(d)} play a particular role since they correspond to deconcatenation for the coaction, and will be the terms of minimal $2$-adic valuation.\\
$D^{-1}_{1,p}$ acts as a deconcatenation on this family:
\begin{equation} \label{Deriv21} D^{-1}_{1,p} \left(\zeta^{\mathfrak{m}} (2x_{1}+1, \cdots , \overline{2x_{p}+1}) \right) = \left\{
\begin{array}{ll}
  0 & \text{ if } x_{p}\neq 0 \\
  \zeta^{\mathfrak{m}} (2x_{1}+1, \cdots , \overline{2x_{p-1}+1}) & \text{ if } x_{p}=0 .\\
\end{array}
\right. \end{equation}
For $N=2$, $\partial^{i}_{n,p}$ ($\ref{eq:pderivinp}$) is simply:
\begin{equation}\label{eq:pderivinp2}
\partial^{i}_{n,p}: gr_{p}^{\mathfrak{D}} \mathcal{H}_{n}^{\geq i} \rightarrow  gr_{p-1}^{\mathfrak{D}} \mathcal{H}_{n-1}^{\geq i-1} \oplus_{1<2r+1\leq n-p+1} gr_{p-1}^{\mathfrak{D}} \mathcal{H}_{n-2r-1}^{\geq i}. 
\end{equation}
Let prove all statements of Lemma $4.2$, recursively on the weight, and then recursively on depth and on the level -from $i=0$.
\begin{proof}
By recursion hypothesis, weight being strictly smaller, we assume that:	
	$$\mathcal{B}_{n-1,p-1,\geq i-1} \oplus_{1<2r+1\leq n-p+1} \mathcal{B}_{n-2r-1,p-1,\geq i} \text{ is a  basis of  } $$
	$$gr_{p-1}^{\mathfrak{D}} \mathcal{H}_{n-1}^{\geq i-1,\mathcal{B}} \oplus_{1<2r+1\leq n-p+1} gr_{p-1}^{\mathfrak{D}} \mathcal{H}_{n-2r-1}^{\geq i,\mathcal{B}}. $$
	\begin{center}
\textsc{Claim:}	The matrix $M^{i}_{n,p}$ of $\left(\partial^{i,\mathcal{d}}_{n,p} (z) \right)_{z\in \mathcal{B}_{n, p, \geq i}}$ on those spaces is invertible.
	\end{center}
\texttt{Nota Bene:} Here $D^{-1}_{1}(z)$, resp. $D^{-1}_{2r+1,p}(z)$ are expressed in terms of $\mathcal{B}_{n-1,p-1,\geq i-1} $ resp. $\mathcal{B}_{n-2r-1,p-1,\geq i}$.\\
It will prove both the bijectivity of $\partial^{i,\mathcal{d}}_{n,p}$ as considered in the lemma and the linear independence of $\mathcal{B}_{n, p, \geq i}$. Let divide $M^{i}_{n,p}$ into four blocks, with the first column corresponding to elements of $\mathcal{B}_{n, p, \geq i}$ ending by $1$:
\begin{center}
  \begin{tabular}{ l || c | c ||}
     & $x_{p}=0$ &  $x_{p}>0$ \\ \hline
     $D_{1,p}$ & M$1$ & M$2$ \\
    $D_{>1,p}$ & M$3$ & M$4$ \\
    \hline
  \end{tabular}
\end{center}
According to ($\ref{Deriv21}$), $D^{-1}_{1,p}$ is zero on the elements not ending by 1, and acts as a deconcatenation on the others. Therefore, M$3=0$, so $M^{i}_{n,p}$ is lower triangular by blocks, and the left-upper-block M$1$ is diagonal invertible. It remains to prove the invertibility of the right-lower-block $\tilde{M}\mathrel{\mathop:}=M4$, corresponding to $D^{-1}_{>1,p}$ and elements of $\mathcal{B}_{n, p, \geq i}$ not ending by 1.\\
\\
Notice that in the formula $(\ref{Deriv2})$ of $D_{2r+1,p}$, applied to an element of $\mathcal{B}_{n, p, \geq i}$, most of terms appearing have a number of $1$ greater than $i$ but there are also terms in $\mathcal{B}_{n-2r-1,p-1,i-1}$, with exactly $(i-1)$ "$1$" for type $\textsc{a,b,c}$ only. We will make disappear the latter modulo $2$, since they are $2$-adically greater. \\
More precisely, using recursion hypothesis (in weight strictly smaller), we can replace them in $gr_{p-1} \mathcal{H}^{\geq i}_{n-2r-1}$ by a $\mathbb{Z}_{\text{odd}}$-linear combination of elements in $\mathcal{B}_{n-2r-1, p-1, \geq i}$, which does not lower the $2$-adic valuation. It is worth noticing that the type \textsc{d} elements considered are now always in $\mathcal{B}_{n-2r-1,p-1,\geq i}$, since we removed the case $x_{p}= 0$.\\

Once done, we can construct the matrix $\tilde{M}$ and examine its entries.\\
Order elements of $\mathcal{B}$ on both sides by lexicographic order of its \textit{reversed} elements:
\begin{center}
$(x_{p},x_{p-1},\cdots, x_{1})$ for the colums, $(r,y_{p-1},\cdots, y_{1})$ for the rows.
\end{center}
Remark that, with such an order, the diagonal corresponds to the deconcatenation terms: $r=x_{p}$ and $x_{i}=y_{i}$.\\
Referring to $(\ref{Deriv2})$, and by the previous remark, we see that $\tilde{M}$ has all its entries of 2-adic valuation positive or equal to zero, since the coefficients in $(\ref{Deriv2})$ are in $2^{2r}\mathbb{Z}_{\text{odd}}$ (for types \textsc{a,b,c}) or of the form $\mathbb{Z}_{\text{odd}}$ for types \textsc{d,d'}. If we look only at the terms with $2$-adic valuation zero, (which comes to consider $\tilde{M}$ modulo $2$), it only remains in $(\ref{Deriv2})$ the terms of type \textsc{(d,d')}, that is:
$$ D_{2r+1,p} (\zeta^{\mathfrak{m}}(2x_{1}+1, \cdots, \overline{2x_{p}+1}))  \equiv \delta_{ r = x_{p}+ x_{p-1}} \binom{2r}{2x_{p}}  \zeta^{\mathfrak{m}} (2x_{1}+1, \cdots ,2x_{p-2}+1, \overline{1})  $$
$$+ \delta_{x_{p} \leq r < x_{p}+ x_{p-1}} \binom{2r}{2x_{p}} \zeta^{\mathfrak{m}} (2x_{1}+1, \cdots ,2x_{p-2}+1, \overline{2 (x_{p-1}+x_{p}-r) +1})  \pmod{ 2}.$$ 
Therefore, modulo 2, with the order previously defined, it remains only an upper triangular matrix ($\delta_{x_{p}\leq r}$), with 1 on the diagonal ($\delta_{x_{p}= r}$, deconcatenation terms). Thus, $\det\tilde{M}$ has a 2-adic valuation equal to zero, and in particular can not be zero, that's why $\tilde{M}$ is invertible.\\

The $\mathbb{Z}_{odd}$ structure is easily deduced from the fact that the determinant of $\tilde{M}$ is odd, and the observation that if we consider $D_{2r+1,p} (\zeta^{\mathfrak{m}} (z_{1}, \cdots, z_{p}))$, all the coefficients are integers.
\end{proof}

\paragraph{\texttt{ Proof of Lemma } $4.2$ \texttt{ for other } $N$.}

Those cases can be handled in a rather similar way than the case $N=2$, except that the number of generators is different and that several descents are possible, hence there will be several notions of level and filtrations by the motivic level, one for each descent. Let fix a descent $\mathcal{d}$ and underline the differences in the proof:

\begin{proof}
In the same way, we prove by recursion on weight, depth and level, that the following map is bijective:
$$\partial^{i,\mathcal{d}}_{n,p}: gr_{p}^{\mathfrak{D}} \langle \mathcal{B}_{n, \geq i} \rangle_{\mathbb{Q}} \rightarrow \oplus_{r<n} \left( gr_{p-1}^{\mathfrak{D}} \langle \mathcal{B}_{n-1, \geq i-1} \rangle_{\mathbb{Q}} \right) ^{\oplus \text{ card } \mathscr{D}^{\mathcal{d}}_{r}} \oplus_{r<n} \left( gr_{p-1}^{\mathfrak{D}} \langle \mathcal{B}_{n-2r-1, \geq i} \rangle_{\mathbb{Q}} \right) ^{\oplus \text{ card } \mathscr{D}^{\backslash\mathcal{d}}_{r}}.$$
\begin{center}
I.e the matrix $M^{i}_{n,p}$ of $\left(\partial^{i}_{n,p} (z) \right)_{z\in \mathcal{B}_{n, p, \geq i}}$ on $\oplus_{r<n} \mathcal{B}_{n-r,p-1,\geq i-1}^{\text{ card } \mathscr{D}^{\mathcal{d}}_{r}} \oplus_{r<n} \mathcal{B}_{n-r,p-1,\geq i}^{\text{ card } \mathscr{D}^{\backslash\mathcal{d}}_{r}}$ \footnote{Elements in arrival space are linearly independent by recursion hypothesis.} is invertible.
\end{center}
 As before, by recursive hypothesis, we replace elements of level $\leq i$ appearing in $D^{i}_{r,p}$, $r\geq 1$ by $\mathbb{Z}_{1[P]}$-linear combinations of elements of level $\geq i$ in the quotient $gr_{p-1}^{\mathfrak{D}} \mathcal{H}_{n-r}^{\geq i}$, which does not decrease the $P$-adic valuation.\\
Now looking at the expression for $D_{r,p}$ in Lemma $3.5$, we see that on the elements considered, \footnote{i.e. of the form $\zeta^{\mathfrak{m}} \left({x_{1}, \cdots , x_{p} \atop \epsilon_{1}, \cdots ,\epsilon_{p-1}, \epsilon_{p}\xi_{N} }\right)$, with $\epsilon_{i}\in \pm 1$ for $N=8$, $\epsilon_{i}=1$ else.} the left side is:
\begin{center}
Either $\zeta^{\mathfrak(l)}\left(  r\atop 1 \right) $ for type $\textsc{a,b,c} \qquad    $      Or $\zeta^{\mathfrak(l)}\left(  r\atop \xi \right) $ for Deconcatenation terms.
\end{center}
Using results in depth $1$ of Deligne and Goncharov (cf. $\S 3.1$), the deconcatenation terms are $P$-adically smaller. \\
\texttt{For instance}, for $N=6$, $r$ odd:
$$\zeta^{\mathfrak{l}}(r;  1)=\frac{2\cdot 6^{r-1}}{(1-2^{r-1})(1-3^{r-1})} \zeta^{\mathfrak{l}}(r;  \xi) , \quad \text{ and } v_{3} (\frac{2\cdot 6^{r-1}}{(1-2^{r-1})(1-3^{r-1})}) >0 .$$
\texttt{Nota Bene:} For $N=8$, $D_{r}$ has two independent components, $D_{r}^{\xi}$ and $D_{r}^{-\xi}$. We have to distinguish them, but the statement remains similar since the terms appearing in the left side are either $\zeta^{\mathfrak(l)}\left( r\atop \pm 1 \right)$, or deconcatenation terms, $\zeta^{\mathfrak(l)}\left(  r\atop \pm \xi \right)$, $2$-adically smaller by $\S 4.1$.\\
Thanks to congruences modulo $P$, only the deconcatenation terms remain:\\
$$D_{r,p} \left(\zeta^{\mathfrak{m}} \left({x_{1}, \cdots , x_{p} \atop \epsilon_{1}, \cdots ,\epsilon_{p-1},\epsilon_{p} \xi }\right)\right) = $$
$$  \delta_{ x_{p} \leq r \leq x_{p}+ x_{p-1}-1} (-1)^{r-x_{p}} \binom{r-1}{x_{p}-1} \zeta ^{\mathfrak{l}} \left( r\atop  \epsilon_{p}\xi \right) \otimes \zeta^{\mathfrak{m}} \left({ x_{1}, \cdots, x_{p-2}, x_{p-1}+x_{p}-r\atop  \epsilon_{1},  \cdots, \epsilon_{p-2}, \epsilon_{p-1}\epsilon_{p}\xi} \right) \pmod{P}.$$
As in the previous case, the matrix being modulo $P$ triangular with $1$ on the diagonal, has a determinant congruent at $1$ modulo $P$, and then, in particular, is invertible.
\end{proof}


\paragraph{\texttt{Example for $N=2$}:} Let us illustrate the previous proof by an example, for weight $n=9$, depth $p=3$, level $i=0$, with the previous notations.\\
Instead of $\mathcal{B}_{9, 3, \geq 0}$, we will restrict to the subfamily (corresponding to $\mathcal{A}$):
$$\mathcal{B}_{9, 3, \geq 0}^{0}\mathrel{\mathop:}= \left\{ \zeta^{\mathfrak{m}}(2a+1,2b+1,\overline{2c+1}) \text{ of weight } 9 \right\} \subset$$
$$ \mathcal{B}_{9, 3, \geq 0}\mathrel{\mathop:}= \left\{ \zeta^{\mathfrak{m}}(2a+1,2b+1,\overline{2c+1})\zeta^{\mathfrak{m}}(2)^{s}\text{ of weight } 9 \right\}$$
Note that $\zeta^{\mathfrak{m}}(2)$ being trivial under the coaction, the matrix $M_{9,3}$ is diagonal by blocks following the different values of $s$ and we can prove the invertibility of each block separately; here we restrict to the block $s=0$. The matrix $\tilde{M}$ considered represents the coefficients of:
$$\zeta^{\mathfrak{m}}(\overline{2r+1})\otimes \zeta^{\mathfrak{m}}(2x+1,\overline{2y+1})\quad \text{ in }\quad  D_{2r+1,3}(\zeta^{\mathfrak{m}}(2a+1,2b+1,\overline{2c+1})).$$
The chosen order for the columns, resp. for the rows \footnote{I.e. for $\zeta^{\mathfrak{m}}(2a+1,2b+1,2c+1)$ resp. for $(D_{2r+1,3}, \zeta^{\mathfrak{m}}(2x+1,\overline{2y+1}))$.} is the lexicographic order applied to $(c,b,a)$ resp. to $(r,y,x)$. Modulo $2$, it only remains the terms of type \textsc{d,d'}, that is:
$$  D_{2r+1,3} (\zeta^{\mathfrak{m}}(2a+1, 2b+1, \overline{2c+1}))  \equiv \delta_{c \leq r \leq  b+c} \binom{2r}{2c} \zeta^{\mathfrak{m}} (2a+1, \overline{2 (b+c-r) +1}) \text{  }  \pmod{ 2}.$$
With the previous order, $\tilde{M}_{9,3}$ is then, modulo $2$:\footnote{Notice that the first four rows are exact: no need of congruences modulo $2$ for $D_{1}$ because it acts as a deconcatenation on the base.}\\
\\
\begin{tabular}{c|c|c|c|c|c|c|c|c|c|c}
   $D_{r}, \zeta\backslash$ $\zeta$& $7,1,\overline{1}$ & $5,3,\overline{1}$ & $3,5,\overline{1}$& $1,7,\overline{1}$& $5,1,\overline{3}$&$3,3,\overline{3}$&$1,5,\overline{3}$&$3,1,\overline{5}$&$1,3,\overline{5}$ & $1,1,\overline{7}$ \\
  \hline
  $D_{1},\zeta^{\mathfrak{m}}(7,\overline{1})$ & $1$ & $0$ &$0$ &$0$ &$0$ &$0$ &$0$ &$0$ &$0$ &$0$ \\
   $D_{1},\zeta^{\mathfrak{m}}(5,\overline{3})$ & $0$ & $1$ &$0$ &$0$ &$0$ &$0$ &$0$ &$0$ &$0$ &$0$ \\
  $D_{1},\zeta^{\mathfrak{m}}(3,\overline{5})$ & $0$ & $0$ &$1$ &$0$ &$0$ &$0$ &$0$ &$0$ &$0$ &$0$ \\
  $D_{1},\zeta^{\mathfrak{m}}(1,\overline{7})$ & $0$ & $0$ &$0$ &$1$ &$0$ &$0$ &$0$ &$0$ &$0$ &$0$ \\
  $D_{3},\zeta^{\mathfrak{m}}(5,\overline{7})$ & $0$ & $0$ &$0$ &$0$ &$1$ &$0$ &$0$ &$0$ &$0$ &$0$ \\
  $D_{3},\zeta^{\mathfrak{m}}(3,\overline{3})$ & $0$ & $0$ &$0$ &$0$ &$0$ &$1$ &$0$ &$0$ &$0$ &$0$ \\
  $D_{3},\zeta^{\mathfrak{m}}(1,\overline{5})$ & $0$ & $0$ &$0$ &$0$ &$0$ &$0$ &$1$ &$0$ &$0$ &$0$ \\
  $D_{5},\zeta^{\mathfrak{m}}(3,\overline{1})$ & $0$ & $0$ &$0$ &$0$ &$0$ &$\binom{4}{2}$ &$0$ &$1$ &$0$ &$0$ \\
  $D_{5},\zeta^{\mathfrak{m}}(1,\overline{3})$ & $0$ & $0$ &$0$ &$0$ &$0$ &$0$ &$\binom{4}{2}$ &$0$ &$1$ &$0$ \\
  $D_{7},\zeta^{\mathfrak{m}}(1,\overline{1})$ & $0$ & $0$ &$0$ &$0$ &$0$ &$0$ &$\binom{6}{2}$ &$0$ &$\binom{6}{4}$ &$1$ \\
  \\
\end{tabular}.
As announced, $\tilde{M}$ modulo $2$ is triangular with $1$ on the diagonal, thus obviously invertible.

\paragraph{\texttt{ Proof of the Theorem } $4.3$.}
\begin{proof}
This Theorem comes down to the Lemma $4.2$ proving the freedom of $\mathcal{B}_{n, p, \geq i}$ in $gr_{p}^{\mathfrak{D}} \mathcal{H}_{n}^{\geq i}$ defining a $\mathbb{Z}_{odd}$-structure:
\begin{itemize}
 \item[$(i)$] By this Lemma, $\mathcal{B}_{n, p, \geq i}$ is linearly free in the depth graded, and $\partial^{i,\mathcal{d}}_{n,p}$, which decreases strictly the depth, is bijective on $\mathcal{B}_{n, p, \geq i}$. The family $\mathcal{B}_{n, \leq p, \geq i}$, all depth mixed is then linearly independent on $\mathcal{F}_{p}^{\mathfrak{D}} \mathcal{H}_{n}^{\geq i}\subset \mathcal{F}_{p}^{\mathfrak{D}} \mathcal{H}_{n}^{\geq i, \mathcal{MT}}$: easily proved by application of $\partial^{i,\mathcal{d}}_{n,p}$.\\
 By a dimension argument, since $\dim \mathcal{F}_{p}^{\mathfrak{D}} \mathcal{H}_{n}^{\geq i, \mathcal{MT}}= \text{ card } \mathcal{B}_{n, \leq p, \geq i}$, we deduce the generating property.
	\item[$(ii)$] By the lemma, this family is linearly independent, and by $(i)$ applied to depth $p-1$, 
	$$gr_{p}^{\mathfrak{D}} \mathcal{H}_{n}^{\geq i}\subset gr_{p}^{\mathfrak{D}} \mathcal{H}_{n}^{\geq i, \mathcal{MT}}.$$
	Then, by a dimension argument, since $\dim gr_{p}^{\mathfrak{D}} \mathcal{H}_{n}^{\geq i, \mathcal{MT}} = \text{ card } \mathcal{B}_{n, p, \geq i}$ we conclude on the generating property. The $\mathbb{Z}_{odd}$ structure has been proven in the previous lemma.\\
	By the bijectivity of $\partial_{n,p}^{i,\mathcal{d}}$ (still previous lemma), which decreases the depth, and using the freedom of the elements of a same depth in the depth graded, there is no linear relation between elements of  $\mathcal{B}_{n,\cdot, \geq i}$ of different depths in $\mathcal{H}_{n}^{\geq i} \subset \mathcal{H}^{\geq i \mathcal{MT}}_{n}$. The family considered is then linearly independent in $\mathcal{H}_{n}^{\geq i}$. Since $\text{card } \mathcal{B}_{n,\cdot, \geq i} =\dim \mathcal{H}^{\geq i, \mathcal{MT}}_{n}$, we conclude on the equality of the previous inclusions.
	\item[$(iii)$] The second exact sequence is obviously split since $ \mathcal{B}_{n, \cdot,\geq i+1}$ is a subset of $\mathcal{B}_{n}$. We already know that $\mathcal{B}_{n}$ is a basis of $\mathcal{H}_{n}$ and $\mathcal{B}_{n, \cdot, \geq i+1}$ is a basis of $\mathcal{H}_{n}^{\geq i+1}$. Therefore, it gives a map $\mathcal{H}_{n} \leftarrow\mathcal{H}_{n}^{\geq i+1}$ and split the first exact sequence. \\
	The construction of $cl_{n,\leq p, \geq i}(x)$, obtained from $cl_{n,p, \geq i}(x)$ applied repeatedly, is the following: 
\begin{center}
	$x\in\mathcal{B}_{n, \cdot, \leq i-1} $ is sent on $\bar{x}\in \mathcal{H}_{n}^{\geq i} \cong \langle\mathcal{B}_{n, \leq p, \geq i}\rangle_{\mathbb{Q}} $ by the projection $\pi_{0,i}$ and so $x -\bar{x} \in \mathcal{F}_{i-1}\mathcal{H}$.
\end{center}
Notice that the problem of making $cl(x)$ explicit boils down to the problem of describing the map $\pi_{0,i}$ in the bases $\mathcal{B}$. 
	\item[$(iv)$] By the previous statements, those elements are linearly independent in $\mathcal{F}_{i} \mathcal{H}^{MT}_{n}$. Moreover, their cardinal is equal to the dimension of $\mathcal{F}_{i} \mathcal{H}^{MT}_{n}$. It gives the basis announced, composed of elements $x\in \mathcal{B}_{n, \cdot, \leq i}$, each corrected by an element denoted $cl(x)$ of $ \langle\mathcal{B}_{n, \cdot, \geq i+1}\rangle_{\mathbb{Q}}$.
	\item[$(v)$] By the previous statements, those elements are linearly independent in $gr_{i} \mathcal{H}_{n}$, and by a dimension argument, we can conclude.
\end{itemize}
\end{proof}

\subsection{\textsc{Specifications for each case}}
\subsubsection{\textsc{The case } $N=2$.}
Here, since there is only one Galois descent from $\mathcal{H}^{2}$ to $\mathcal{H}^{1}$, the previous exponents for level filtrations can be omitted, as the exponent $2$ for $\mathcal{H}$ the space of motivic Euler sums. Set $\mathbb{Z}_{\text{odd}}= \left\{ \frac{a}{b} \text{ , } a\in\mathbb{Z}, b\in 2 \mathbb{Z}+1  \right\}$, rationals having a $2$-adic valuation positive or infinite. Let define particular families of motivic Euler sums, a notion of level and of motivic level. 
\begin{defi}
\begin{itemize}
	\item[$\cdot$] $\mathcal{B}^{2}\mathrel{\mathop:}=\left\{\zeta^{\mathfrak{m}}(2x_{1}+1, \cdots, 2 x_{p-1}+1,\overline{2 x_{p}+1}) \zeta(2)^{\mathfrak{m},k}, x_{i} \geq 0, k \in \mathbb{N} \right\}.$\\
Here, the level is defined as the number of $x_{i}$ equal to zero.
	\item[$\cdot$] The filtration by the motivic ($\mathbb{Q}/\mathbb{Q},2/1$)-level, 
$$\mathcal{F}_{i}\mathcal{H}\mathrel{\mathop:}=\left\{ \xi \in \mathcal{H}, \textrm{ such that } D^{-1}_{1}\xi \in \mathcal{F}_{i-1} \mathcal{H} \text{ , } \forall r>0, D^{1}_{2r+1}\xi\in \mathcal{F}_{i}\mathcal{H} \right\}.$$
\begin{center}
I.e. $\mathcal{F}_{i}$ is the largest submodule such that $\mathcal{F}_{i} / \mathcal{F}_{i-1}$ is killed by $D_{1}$.
\end{center}
\end{itemize}
\end{defi}
This level filtration commutes with the increasing depth filtration.\\
\\
\textsc{Remarks}: 
\begin{itemize}
 \item[$\cdot$] For $N=2$, the recursion relation for dimensions $d_{n}=d_{n-1}+d_{n-2}$ of $\mathcal{H}_{n}^{2}$ suggests, \textit{in the Hoffman's way}, \footnote{The Hoffman basis: $\left\lbrace \zeta^{\mathfrak{m}} \left( \lbrace 2, 3\rbrace^{\times}\right)\right\rbrace_{\text{ weight } n}  $ is a basis of $\mathcal{H}^{1}_{n}$, whose dimensions verify $d_{n}=d_{n-2}+d_{n-3}$.} a basis composed of Euler sums with only $1$ and $2$. Indeed, for instance $\left\lbrace  \zeta^{\mathfrak{m}} \left( \boldsymbol{n}  \atop 1, \cdots, 1, -1 \right) , \boldsymbol{n}\in \lbrace 2, 1\rbrace^{\times}\right\rbrace $ is conjectured to be a basis. However, there is not a nice \textit{suitable} filtration which would correspond to the motivic depth, and would allow a recursive proof \footnote{A suitable filtration, whose level $0$ would be the Galois trivial elements, level $1$ would be linear combinations of $\zeta(odd)\cdot\zeta(2)^{\bullet}$, etc.}. Similarly for the family $\left\lbrace  \zeta^{\mathfrak{m}} \left( 1, \cdots 1, \atop \boldsymbol{s}, -1 \right)\zeta^{\mathfrak{m}} (2)^{\bullet} ,\boldsymbol{s}\in \left\lbrace \lbrace 1\rbrace, \lbrace -1,-1\rbrace\right\rbrace   ^{\ast }\right\rbrace $, conjectured to be a basis for $\mathcal{H}^{2}$. 
	\item[$\cdot$] The increasing or decreasing filtration defined from the number of 1 appearing in the motivic multiple zeta values is not preserved by the coproduct, since the number of 1 can either decrease or increase (by at the most 1) and is therefore not \textit{motivic}.
\end{itemize}

Let list some consequences of the results in $\S 4.2$, which generalizes in particular a result similar to P. Deligne's one (cf. $\cite{De}$):
\begin{coro} The map $\mathcal{G}^{\mathcal{MT}} \rightarrow \mathcal{G}^{\mathcal{MT}'}$ is an isomorphism.\\
The elements of $\mathcal{B}_{n}$, $\zeta^{\mathfrak{m}}(2x_{1}+1, \cdots, \overline{2 x_{p}+1}) \zeta(2)^{k}$ of weight $n$, form a basis of motivic Euler sums of weight $n$, $\mathcal{H}^{2}_{n}=\mathcal{H}^{\mathcal{MT}_{2}}_{n}$, and define a $\mathbb{Z}_{odd}$-structure on the motivic Euler sums.
\end{coro}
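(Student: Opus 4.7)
The plan is to deduce this corollary as the specialization of Theorem 4.3 at $i=0$ for the unique descent $\mathcal{d}=(\mathbb{Q}/\mathbb{Q},2/1)$, combined with Tannakian duality. All the nontrivial content sits in Lemma 4.2; what remains is essentially bookkeeping.

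First I would invoke Theorem 4.3(ii) at $i=0$. It asserts that $\mathcal{B}_{n,\cdot,\geq 0}=\mathcal{B}_n$ is simultaneously a basis of $\mathcal{H}^{\geq 0}_n=\mathcal{H}^2_n$ and of $\mathcal{H}^{\geq 0,\mathcal{MT}}_n=\mathcal{H}^{\mathcal{MT}_2}_n$, so these spaces coincide. To control dimensions, I would count $\lvert \mathcal{B}_n\rvert = \sum_{k\geq 0} c_{n-2k}$, where $c_m$ is the number of compositions of $m$ into positive odd parts; a standard argument shows this sum satisfies the Fibonacci recursion, matching $d_n^{\mathcal{MT}_2}$ from Lemma 2.1. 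This removes any numerical obstruction and promotes the non-canonical injection $\mathcal{H}^2\hookrightarrow\mathcal{H}^{\mathcal{MT}_2}$ of graded comodules to an isomorphism, which is exactly the statement $\mathcal{H}^2_n=\mathcal{H}^{\mathcal{MT}_2}_n$.

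For the Galois group assertion, the inclusion of full Tannakian subcategories $\mathcal{MT}'_2\hookrightarrow\mathcal{MT}_2$ yields an a priori surjection $\mathcal{G}^{\mathcal{MT}}\twoheadrightarrow\mathcal{G}^{\mathcal{MT}'}$. Its injectivity is equivalent, by Tannakian duality, to the equality $\mathcal{A}^{\mathcal{MT}'}=\mathcal{A}^{\mathcal{MT}}$, which I would extract from the equality $\mathcal{H}^2=\mathcal{H}^{\mathcal{MT}_2}$ just established by dividing out the common $\mathbb{Q}[\zeta^{\mathfrak{m}}(2)]$ tensor factor (cf. the description of $\mathcal{H}^{\mathcal{MT}}$ in equation $(\ref{eq:hmt})$ and of $\mathcal{H}^N$ in $(\ref{eq:hn})$).

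The $\mathbb{Z}_{\mathrm{odd}}$-structure is essentially the decomposition statement in Lemma 4.2: expressing any motivic Euler sum in $\mathcal{B}_n$ reduces, inductively on depth, to inverting the integer matrix $\tilde M$ of $\partial_{n,p}$ restricted to $\mathcal{B}_{n,p}$. The proof of Lemma 4.2 established that $\tilde M$ is upper-triangular with $1$'s on the diagonal modulo $2$, so $\det \tilde M$ is an odd integer and $\tilde M^{-1}$ has entries in $\mathbb{Z}_{\mathrm{odd}}$. The various depth strata are then glued via the split exact sequences of Theorem 4.3(iii), and the splittings $cl_{n,\leq p,\geq i}$ preserve the $2$-adic property stratum by stratum. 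The genuine obstacle, already cleared in Lemma 4.2, was establishing the $2$-adic triangularity after first using the depth-$1$ distribution relations to eliminate the non-deconcatenation contributions modulo $2$; no new idea is required at this stage.
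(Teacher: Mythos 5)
Your proposal is correct and follows essentially the same route as the paper: Corollary 4.5 is presented there as a direct consequence of Theorem 4.3 at level $i=0$ (itself resting on Lemma 4.2 plus the dimension count from $K$-theory in Lemma 2.1), with the identification $\mathcal{H}^{2}=\mathcal{H}^{\mathcal{MT}_{2}}$ forcing $\mathcal{A}=\mathcal{A}^{\mathcal{MT}}$ and hence the isomorphism of Tannaka groups, and the $\mathbb{Z}_{\mathrm{odd}}$-structure coming from the oddness of $\det\tilde M$. Your added bookkeeping (the Fibonacci count of $\lvert\mathcal{B}_n\rvert$ and the explicit Tannakian duality step) only makes explicit what the paper leaves implicit.
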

The period map, $per: \mathcal{H} \rightarrow \mathbb{C}$, induces the following result for the Euler sums:
\begin{center}
Each Euler sum is a $\mathbb{Z}_{odd}$-linear combination of Euler sums \\
$\zeta(2x_{1}+1, \cdots, \overline{2 x_{p}+1}) \zeta(2)^{k}, k\geq 0, x_{i} \geq 0$ of the same weight.
\end{center}
Here is the result on the $0^{th}$ level of the Galois descent from $\mathcal{H}^{1}$ to $\mathcal{H}^{2}$:
\begin{coro}
$$\mathcal{F}_{0}\mathcal{H}^{\mathcal{MT}_{2}}=\mathcal{F}_{0}\mathcal{H}^{2}=\mathcal{H}^{\mathcal{MT}_{1}}=\mathcal{H}^{1} .$$
Then, a basis of motivic multiple zeta values in weight $n$, is formed by terms of $\mathcal{B}_{n}$ with $0$-level each corrected by linear combinations of elements of $\mathcal{B}_{n}$ of level $1$: 
$$\mathcal{B}_{n}^{1}\mathrel{\mathop:}=\left\{   \zeta^{\mathfrak{m}}(2x_{1}+1, \cdots, \overline{2x_{p}+1})\zeta^{\mathfrak{m}}(2)^{s} + \sum_{y_{i} \geq 0 \atop \text{at least one } y_{i} =0} \alpha_{\textbf{x} , \textbf{y}} \zeta^{\mathfrak{m}}(2y_{1}+1, \cdots, \overline{2y_{p}+1})\zeta^{\mathfrak{m}}(2)^{s} + \right.$$
$$\left. \sum_{\text{lower depth } q<p, z_{i}\geq 0 \atop \text{ at least one } z_{i} =0} \beta_{\textbf{x}, \textbf{z}} \zeta^{\mathfrak{m}}(2 z_{1}+1, \cdots, \overline{2 z_{q}+1})\zeta^{\mathfrak{m}}(2)^{s}, x_{i}>0 , \alpha_{\textbf{x} , \textbf{y}} , \beta_{\textbf{x} , \textbf{z}} \in\mathbb{Q},\right\}_{\sum x_{i}= \sum y_{i}=\sum z_{i}= \frac{n-p}{2} -s}.$$
\end{coro}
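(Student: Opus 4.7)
}

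My plan is to deduce the corollary as a direct specialization of Theorem $4.3$ to the case $N=2$, $i=0$, for the descent $\mathcal{d}=(\mathbb{Q}/\mathbb{Q}, 2/1)$. I break the task into three steps.

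\textbf{Step 1: the chain of equalities.} I would first establish $\mathcal{H}^2=\mathcal{H}^{\mathcal{MT}_{2}}$ and $\mathcal{H}^1=\mathcal{H}^{\mathcal{MT}_{1}}$: the former is the content of Corollary $4.5$, itself obtained from Theorem $4.3$ (ii) by comparing $\mathrm{card}\,\mathcal{B}_n$ with the Fibonacci dimension $d_n^2$; the latter is Brown's theorem for $N=1$. It remains to show $\mathcal{F}_{0}\mathcal{H}^{\mathcal{MT}_{2}}=\mathcal{H}^{\mathcal{MT}_{1}}$. The inclusion $\supset$ follows from Corollary $3.9$ applied recursively: if $\mathfrak{Z}\in\mathcal{H}^{\mathcal{MT}_{1}}$ then $D_1\mathfrak{Z}=0$ and every $D_{2r+1}\mathfrak{Z}\in\mathcal{H}^{\mathcal{MT}_{1}}$, so by induction on weight $\mathfrak{Z}\in\mathcal{F}_0\mathcal{H}^{\mathcal{MT}_2}$. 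The reverse inclusion is cleanest via the isomorphism $\phi$ of Lemma $2.1$: under $\phi$, the motivic $\mathcal{d}$-level filtration corresponds to the filtration by $f_1$-degree on $H^{2}=\mathbb{Q}\langle f_1,f_3,f_5,\ldots\rangle\otimes\mathbb{Q}[f_2]$, and the piece of $f_1$-degree $0$ is exactly $H^1\cong\mathcal{H}^{\mathcal{MT}_1}$. (Equivalently, the two inclusions match in dimension at each weight by Lemma $2.1$.)

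\textbf{Step 2: the explicit basis.} I would then apply Theorem $4.3$ (iv) with $i=0$, which gives as a basis of $\mathcal{F}_{0}\mathcal{H}_n=\mathcal{H}^1_n$ the set
$$\bigcup_{p}\bigl\{x+cl_{n,\leq p,\geq 1}(x)\,:\,x\in\mathcal{B}_{n,p,\leq 0}\bigr\}.$$
With the definition of level for $N=2$ given in Definition $4.7$ (the level is the number of $x_i$ equal to zero), the subfamily $\mathcal{B}_{n,p,\leq 0}=\mathcal{B}_{n,p,0}$ is exactly the set of $\zeta^{\mathfrak{m}}(2x_1+1,\ldots,\overline{2x_p+1})\zeta^{\mathfrak{m}}(2)^s$ with all $x_i>0$. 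The correction $cl_{n,\leq p,\geq 1}(x)$ is by construction (Theorem $4.3$ (iii)) the unique element of $\langle\mathcal{B}_{n,\leq p,\geq 1}\rangle_{\mathbb{Q}}$ such that $x+cl_{n,\leq p,\geq 1}(x)\in\mathcal{F}_{0}\mathcal{H}_n$. Splitting this correction according to whether it lives in the depth-$p$ part (giving the $\alpha_{\mathbf{x},\mathbf{y}}$ contributions, indexed by $(y_i)$ with at least one $y_i=0$) or in strictly smaller depth $q<p$ (giving the $\beta_{\mathbf{x},\mathbf{z}}$ contributions) produces the shape of $\mathcal{B}_n^{1}$ stated in the corollary, with the weight constraint $\sum x_i=\sum y_i=\sum z_i=\tfrac{n-p}{2}-s$ automatic from weight preservation.

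\textbf{Step 3: the main difficulty.} There is essentially no new argument needed beyond unpacking the notation — all the real content has already been supplied: the $2$-adic triangularity of the matrix $\tilde M^{0}_{n,p}$ in Lemma $4.2$ (which gives the linear independence of $\mathcal{B}_{n,p,\geq 0}$ in $gr_{p}^{\mathfrak{D}}\mathcal{H}_n$), the dimension count that upgrades independence to a basis (Theorem $4.3$ (i)--(ii)), and the splitting of the exact sequence $0\to\mathcal{F}_0\mathcal{H}_n\to\mathcal{H}_n\to\mathcal{H}_n^{\geq 1}\to 0$ (Theorem $4.3$ (iii)) which produces $cl$. The only mildly subtle point to verify explicitly is that the correcting term $cl_{n,\leq p,\geq 1}(x)$ stays in depth $\leq p$; this is automatic because $\partial^{0,\mathcal{d}}_{n,p}$ strictly decreases the depth filtration, so the recursive construction of $cl$ (inverting $\partial^{0,\mathcal{d}}$ order by order in depth) cannot raise the depth above $p$. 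Once this observation is made, the corollary is immediate.
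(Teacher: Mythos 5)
Your proposal is correct and follows essentially the same route as the paper: the corollary is there stated as an immediate consequence of Theorem $4.3$ with $i=0$ for the descent $(\mathbb{Q}/\mathbb{Q},2/1)$, the chain of equalities coming from the inclusion $\mathcal{H}^{\mathcal{MT}_1}\subseteq\mathcal{F}_0\mathcal{H}^{\mathcal{MT}_2}$ plus a dimension count at fixed weight via $\phi$ (the $f_1$-degree $0$ part), and the explicit shape of $\mathcal{B}_n^1$ from Theorem $4.3$ (iii)--(iv) exactly as you unpack it. The only blemishes are cosmetic (e.g.\ the level for $N=2$ is Definition $4.4$, not $4.7$), not mathematical.
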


\paragraph{Honorary.}
About the first condition in $\autoref{criterehonoraire}$ to be honorary:
\begin{lemm}
Let $\zeta^{\mathfrak{m}}(n_{1},\cdots,n_{p}) \in\mathcal{H}^{2}$, a motivic Euler sum, with $n_{i}\in\mathbb{Z}^{\ast}$, $ n_{p}\neq 1$. Then:
$$\forall i \text{ ,  } n_{i}\neq -1 \Rightarrow D_{1}(\zeta^{\mathfrak{m}}(n_{1},\cdots,n_{p}))=0 $$
\end{lemm}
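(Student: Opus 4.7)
The plan is to apply the iterated-integral formula for $D_1$ directly to the word representing the motivic Euler sum, and to show that \emph{every} individual cut contributes zero to $\mathcal{L}_1^{(2)}=\mathbb{Q}\,\zeta^{\mathfrak{l}}(\bar 1)$. Writing $\zeta^{\mathfrak{m}}(n_1,\ldots,n_p)=\pm I^{\mathfrak{m}}(0;a_1,\ldots,a_n;1)$ with $a_k\in\{0,\pm 1\}$ arranged in the blocks $b_1,0^{x_1-1},b_2,\ldots,b_p,0^{x_p-1}$ where $b_i=(\epsilon_i\cdots\epsilon_p)^{-1}$, and setting $a_0=0$, $a_{n+1}=1$, formula $(\ref{eq:Der})$ for $r=1$ gives
$$D_1\, I^{\mathfrak{m}}(0;a_1,\ldots,a_n;1) \;=\; \sum_{k=0}^{n-1} I^{\mathfrak{l}}(a_k;a_{k+1};a_{k+2})\otimes I^{\mathfrak{m}}(0;\ldots,a_k,a_{k+2},\ldots;1).$$
So it suffices to show that each cut $I^{\mathfrak{l}}(a_k;a_{k+1};a_{k+2})$ vanishes in $\mathcal{L}_1^{(2)}$.

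First, I would tabulate the values of $I^{\mathfrak{l}}(a;b;c)$ in $\mathcal{L}_1^{(2)}$ for all triples $(a,b,c)\in\{0,\pm 1\}^3$, using property (ii) (vanishing when $a=c$), path reversal (v), the homothety (vi) with $\alpha=-1\in\mu_2$, shuffle regularization at the tangential basepoints $(\overrightarrow{1_0},\overrightarrow{-1_1})$, and the depth-$1$ relation $\zeta^{\mathfrak{l}}(1;1)=0$ (together with the fact that $(2i\pi)^{\mathfrak{m}}$ lies in weight $2$, so "$\log(\pm 1)$"-contributions are killed in $\mathcal{L}_1$). The outcome is that every triple with middle letter $0$ vanishes, and among those with middle letter $\pm 1$ the only non-zero ones are
$$(0,\mp 1,\pm 1),\quad (\pm 1,\mp 1,0),\quad (\pm 1,\pm 1,\mp 1),\quad (\mp 1,\pm 1,\pm 1).$$

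Second, I would translate the hypothesis $\forall i,\ n_i\neq -1$ into a structural constraint: if $x_i=1$ then $\epsilon_i=+1$, which via $b_i=(\epsilon_i\cdots\epsilon_p)^{-1}$ yields the key identities $b_{i-1}=b_i$ whenever $i\geq 2$ and $x_{i-1}=1$, and $b_i=b_{i+1}$ whenever $i<p$ and $x_i=1$. Moreover the combination of $n_p\neq 1$ with $n_p\neq -1$ forces $x_p\geq 2$, so the letter immediately preceding the final $a_{n+1}=1$ is a $0$.

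Third, case by case I would inspect every cut in our word. Cuts removing a $0$-letter already give zero by the first step. For cuts removing a letter $b_i$, the two neighbours $a_{k-1},a_{k+1}$ are either $0$ (coming from an adjacent block $0^{\bullet}$ or from $a_0$) or one of the neighbouring $b_{i\pm 1}$. The hypothesis guarantees that whenever $b_{i\pm 1}$ appears as a neighbour one has $b_{i\pm 1}=b_i$, so every triple of the form $(b_{i-1},b_i,b_{i+1})$ collapses to $(b_i,b_i,b_i)$ (endpoints equal, cut $=0$), and a mixed triple such as $(0,b_i,-b_i)$ or $(-b_i,b_i,0)$ is impossible; similarly the right boundary $a_{n+1}=1$ is never adjacent to $b_p$ because $x_p\geq 2$. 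Checking this against the short list of non-zero triples above yields that each one requires some $n_i=-1$, and the conclusion $D_1=0$ follows. The only real subtlety is the first step---correctly ruling out the middle-$0$ cuts (where the regularization and the fact that $i\pi\notin\mathcal{L}_1$ are essential) and avoiding misses among the middle-$\pm 1$ cuts---but once this table is in hand the combinatorial check in step three is routine.
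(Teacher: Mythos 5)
Your proposal is correct and follows essentially the same route as the paper's proof: tabulate the length-one motivic integrals $I^{\mathfrak{l}}(a;b;c)$, $a,b,c\in\{0,\pm 1\}$, observe that the only non-zero ones (the same eight triples the paper lists, all equal to $\pm\log^{\mathfrak{a}}2$) require a consecutive $\{1,-1\}$ pair with distinct extremities, and check that the hypothesis $n_i\neq -1$ (together with $n_p\neq 1$) excludes any such configuration in the word. Your step three is in fact spelled out more carefully than the paper's one-sentence conclusion, correctly handling the point that letters $b_i=-1$ may occur without any $n_i=-1$, as long as no two adjacent ones differ.
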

\begin{proof}
Looking at all iterated integrals of length $1$ in $\mathcal{L}$, $I^{\mathfrak{l}}(a;b;c)$, $a,b,c\in \lbrace 0,\pm 1\rbrace$: the only non zero ones are those with a consecutive $\lbrace 1,-1\rbrace$ or $\lbrace -1,1\rbrace$ sequence in the iterated integral, with the condition that extremities are different, that is:
$$I(0;1;-1), I(0;-1;1), I(1;-1;0), I(-1;+1;0), I(-1;\pm 1;1), I(1;\pm 1;-1).$$
Moreover, they are all equal to $\pm \log^{\mathfrak{a}} (2)$ in the Hopf algebra $\mathcal{A}$. Consequently, if there is no $-1$ in the Euler sums writing, it implies that $D_{1}$ would be zero.
\end{proof}
\paragraph{Comparison with Hoffman's basis. } Let compare:
\begin{itemize}
\item[$(i)$] The Hoffman basis of $\mathcal{H}^{1}$ formed by motivic MZV with only $2$ and $3$ ($\cite{Br2}$)
$$\mathcal{B}^{H}\mathrel{\mathop:}= \left\{\zeta^{\mathfrak{m}} (x_{1}, \cdots, x_{k}), \text{  where } x_{i}\in\left\{2,3\right\} \right\},$$
\item[$(ii)$] 
$\mathcal{B}^{1}$, the base of $\mathcal{H}^{1}$ previously obtained (Corollary $4.6$).
\end{itemize}
Beware, the index $p$ for $\mathcal{B}^{H}$ indicates the number of "`3"' among the $x_{i}$, whereas for $\mathcal{B}^{1}$, it still indicates the depth; in both case, it can be seen as the \textit{motivic depth} (cf. $\S 3.1$):
\begin{coro}
$\mathcal{B}^{1}_{n,p}$ is a basis of $gr_{p}^{\mathfrak{D}} \langle\mathcal{B}^{H}_{n,p}\rangle_{\mathbb{Q}}$ and defines a $\mathbb{Z}_{\text{odd}}$-structure.\\
I.e. each element of the Hoffman basis of weight $n$ and with $p$ three, $p>0$, decomposes into a $\mathbb{Z}_{\text{odd}}$-linear combination of $\mathcal{B}^{1}_{n,p}$ elements plus terms of depth strictly less than $p$.
\end{coro}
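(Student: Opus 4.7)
The plan is to reduce Corollary 4.8 to two already-established results plus a $2$-adic matrix argument. First, by Brown's theorem on the Hoffman basis (\cite{Br2}), the elements of $\mathcal{B}^{H}_{n,p}$ (with $p$ counting the $3$'s, i.e.\ the motivic depth) form a $\mathbb{Q}$-basis of $gr_{p}^{\mathfrak{D}}\mathcal{H}^{1}_{n}$; and by Theorem 4.3(ii) applied to the $(\mathbb{Q}/\mathbb{Q},2/1)$-descent at level $i=1$, the same holds for $\mathcal{B}^{1}_{n,p}$. Hence $gr_{p}^{\mathfrak{D}}\langle\mathcal{B}^{H}_{n,p}\rangle_{\mathbb{Q}}=gr_{p}^{\mathfrak{D}}\mathcal{H}^{1}_{n}$, the two families have the same cardinality, and $\mathcal{B}^{1}_{n,p}$ is automatically a $\mathbb{Q}$-basis of the left-hand side. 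The whole content of the corollary therefore lies in promoting this equality to a $\mathbb{Z}_{\text{odd}}$-statement.

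I would induct on the weight $n$, using the derivation
$$\partial_{n,p}\;:=\;\bigoplus_{r>0}D_{2r+1,p}\;:\;gr_{p}^{\mathfrak{D}}\mathcal{H}^{1}_{n}\;\longrightarrow\;\bigoplus_{r>0}gr_{p-1}^{\mathfrak{D}}\mathcal{H}^{1}_{n-2r-1},$$
where $D_{1}$ has been dropped because it vanishes on motivic multiple zeta values by Lemma 4.7. The inductive hypothesis at weights $<n$ must be the strengthened statement that $\mathcal{B}^{1}_{n',p-1}$ defines a $\mathbb{Z}_{\text{odd}}$-structure on $gr_{p-1}^{\mathfrak{D}}\mathcal{H}^{1}_{n'}$; this is proved in parallel with the corollary itself, by combining Lemma 4.2 (which furnishes the analogous $\mathbb{Z}_{\text{odd}}$-structure on $gr_{p-1}^{\mathfrak{D}}\mathcal{H}^{2}_{n'}$ via $\mathcal{B}^{2}_{n',p-1,0}$) with the $\mathbb{Z}_{\text{odd}}$-unimodularity of the correction map $x\mapsto x+cl(x)$ transferring the structure from the level-$0$ part of $\mathcal{B}^{2}$ down to $\mathcal{B}^{1}$.

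Let $M_{H}$ and $M_{1}$ denote the matrices of $\partial_{n,p}$ applied to $\mathcal{B}^{H}_{n,p}$ and to $\mathcal{B}^{1}_{n,p}$ respectively, both expressed in the $\mathbb{Z}_{\text{odd}}$-basis $\bigsqcup_{r>0}\mathcal{B}^{1}_{n-2r-1,p-1}$ of the target (available by the inductive hypothesis). From the proof of Lemma 4.2, $M_{1}$ has integer entries and, in the lexicographic order used there, reduces modulo $2$ to a unitriangular matrix; thus $M_{1}\in GL(\mathbb{Z}_{\text{odd}})$. For $M_{H}$, applying Lemma 3.5 to a Hoffman element $\zeta^{\mathfrak{m}}(\{2\}^{a_{0}},3,\{2\}^{a_{1}},\ldots,3,\{2\}^{a_{p}})$ produces binomial-integer coefficients multiplying motivic MZV of weight $n-2r-1$ and depth $p-1$; by the inductive hypothesis these last decompose in $\mathcal{B}^{1}_{n-2r-1,p-1}$ with $\mathbb{Z}_{\text{odd}}$-coefficients, so $M_{H}$ has $\mathbb{Z}_{\text{odd}}$-entries. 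The transition matrix $M_{1}^{-1}M_{H}$ from $\mathcal{B}^{H}_{n,p}$ to $\mathcal{B}^{1}_{n,p}$ therefore lies in $M(\mathbb{Z}_{\text{odd}})$, which is exactly the claimed decomposition modulo strictly lower depth.

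The main obstacle is setting up the simultaneous induction so that a $\mathbb{Z}_{\text{odd}}$-structure is genuinely available on $gr^{\mathfrak{D}}\mathcal{H}^{1}$ and not merely on $gr^{\mathfrak{D}}\mathcal{H}^{2}$: one must track the $\mathbb{Z}_{\text{odd}}$-integrality of the correction terms $cl(x)$ linking $\mathcal{B}^{2}_{\cdot,\cdot,0}$ to $\mathcal{B}^{1}$ through the weight recursion, which in turn reduces to the $2$-adic unitriangularity established in Lemma 4.2. Modulo this bookkeeping, the remainder is a routine translation of the $2$-adic machinery of Section 4.3 to the Hoffman-basis setting.
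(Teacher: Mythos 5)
Your reduction collapses at its first step. You claim that, ``by Brown's theorem'' (\cite{Br2}), the Hoffman elements with exactly $p$ threes form a $\mathbb{Q}$-basis of $gr_{p}^{\mathfrak{D}}\mathcal{H}^{1}_{n}$, so that the corollary becomes a mere change of basis between two bases of one graded space. Brown's theorem only gives a basis of $\mathcal{H}^{1}_{n}$; its compatibility with the depth filtration is not part of the statement, and it is precisely the nontrivial content hidden in the corollary. Indeed, if $gr_{p}^{\mathfrak{D}}$ is taken for the depth filtration of $\mathcal{H}^{1}$ itself, the claim is false: in weight $12$ there are ten Hoffman elements with two $3$'s, while the span of MZVs of weight $12$ and depth $\leq 2$ (double zetas, $\zeta(12)$, and $\zeta(2)^{s}$-multiples of lower-weight depth $\leq 2$ elements) has dimension strictly smaller than ten by the motivically valid parity/double-shuffle reductions, so these ten independent elements cannot all lie in $\mathcal{F}_{2}^{\mathfrak{D}}$, let alone project to a basis of $gr_{2}^{\mathfrak{D}}$. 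The statement only makes sense for the depth filtration of the ambient Euler-sum algebra $\mathcal{H}^{2}$, and then both the membership $\zeta^{\mathfrak{m}}(\{2\}^{a_{0}},3,\ldots,3,\{2\}^{a_{p}})\in\mathcal{F}_{p}^{\mathfrak{D}}\mathcal{H}^{2}$ and the fact that its class lands in the span of the classes of $\mathcal{B}^{1}_{n,p}$ are part of what must be proved; assuming them makes your induction circular. (Your appeal to Theorem $4.3$(ii) ``at level $i=1$'' is also off target: level $1$ concerns the quotient $\mathcal{H}^{\geq 1}=\mathcal{H}/\mathcal{F}_{0}$, not the subspace $\mathcal{H}^{1}=\mathcal{F}_{0}\mathcal{H}^{2}$; what the paper actually provides for $\mathcal{B}^{1}$ is Theorem $4.3$(iv)/Corollary $4.6$ at $i=0$.)

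The matrix step likewise omits the essential input. Applying the coaction to a Hoffman element does not directly produce $\zeta^{\mathfrak{l}}(\overline{2r+1})$ times binomial integers: the left-hand factors are classes of the type $\zeta^{\mathfrak{l}}(\{2\}^{a},3,\{2\}^{b})$ (and relatives coming from other cuts), and to know that your matrix $M_{H}$ has $\mathbb{Z}_{\text{odd}}$ entries, and how it compares $2$-adically with the deconcatenation terms of $M_{1}$, one needs the Zagier--Brown evaluation of $\zeta^{\mathfrak{l}}(\{2\}^{a},3,\{2\}^{b})$ as an explicit rational multiple of $\zeta^{\mathfrak{l}}(2r+1)$ together with control of its $2$-adic valuation; the phrase ``binomial-integer coefficients'' obtained from Lemma $3.5$ conflates the number of $3$'s with the genuine depth in which that lemma applies and skips exactly this point. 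For comparison, the paper disposes of the corollary in one line, as a consequence of the $\mathbb{Z}_{\text{odd}}$-structure of the Euler-sum basis; a complete argument along your lines would have to prove, by induction on the weight, that $D_{1}$ kills the Hoffman elements and that each $D_{2r+1}$ of a Hoffman element, reduced modulo lower level and lower depth, is a $\mathbb{Z}_{\text{odd}}$-combination of $\zeta^{\mathfrak{l}}(\overline{2r+1})\otimes\mathcal{B}^{1}_{n-2r-1,p-1}$, before inverting the $2$-adically unimodular matrix of Lemma $4.2$ --- neither the $\mathcal{F}_{p}^{\mathfrak{D}}$-membership nor this $2$-adic control can be taken as given.
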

\begin{proof}
Deduced from previous results, from the $\mathcal{Z}_{odd}$ structure of Euler sums basis.
\end{proof}

\subsubsection{\textsc{The cases } $N=3,4$.}

For $N=3,4$ there is a generator in each degree $\geq 1$ and two Galois descents. \\
\begin{defi}
\begin{itemize}
	\item[$\cdot$] \textbf{Family:}  $\mathcal{B}\mathrel{\mathop:}=\left\{\zeta^{\mathfrak{m}}\left({x_{1}, \cdots,x_{p}\atop  1, \cdots , 1, \xi }\right) (2i \pi)^{s,\mathfrak{m}}, x_{i} \geq 1, s \geq 0 \right\}$. 
	\item[$\cdot$] \textbf{Level:} 
$$\begin{array}{lll}
\text{ The $(k_{N}/k_{N},P/1)$-level } & \text{ is defined as } & \text{ the number of $x_{i}$ equal to 1 }\\
\text{ The $(k_{N}/\mathbb{Q},P/P)$-level } & \text{  } & \text{ the number of  $x_{i}$ even }\\
\text{ The $(k_{N}/\mathbb{Q},P/1)$-level } & \text{  } & \text{ the number of even $x_{i}$ or equal to $1$ }
\end{array}	$$
	\item[$\cdot$]  \textbf{Filtrations by the motivic level:} 
$\mathcal{F}^{\mathcal{d}} _{-1} \mathcal{H}^{N}=0$ and $\mathcal{F}^{\mathcal{d}} _{i} \mathcal{H}^{N}$ is the largest submodule of $\mathcal{H}^{N}$ such that $\mathcal{F}^{\mathcal{d}}_{i}\mathcal{H}^{N}/\mathcal{F}^{\mathcal{d}} _{i-1}\mathcal{H}^{N}$ is killed by $\mathscr{D}^{\mathcal{d}}$, where

$$\mathscr{D}^{\mathcal{d}}	= \begin{array}{ll}
\lbrace D^{\xi}_{1} \rbrace &  \text{ for } \mathcal{d}=(k_{N}/k_{N},P/1)\\
\lbrace(D^{\xi}_{2r})_{r>0} \rbrace & \text{ for } \mathcal{d}=(k_{N}/\mathbb{Q},P/P)\\
\lbrace D^{\xi}_{1},(D^{\xi}_{2r})_{r>0} \rbrace &  \text{ for } \mathcal{d}=(k_{N}/\mathbb{Q},P/1) \\
\end{array}.
$$
\end{itemize}
\end{defi}
\textsc{Remarks}: 
\begin{itemize}
\item[$\cdot$] As before, the increasing -or decreasing- filtration that we could define by the number of 1 (resp. number of even) appearing in the motivic multiple zeta values is not preserved by the coproduct, since the number of 1 can either diminish or increase (at most 1), so is not motivic. 
\item[$\cdot$] An effective way of seing those motivic level filtrations, giving a recursive criteria:
$$\mathcal{F}_{i}^{k_{N}/\mathbb{Q},P/P }\mathcal{H}= \left\{ Z \in \mathcal{H}, \textrm{ s. t. } \forall r > 0 \text{  , }  D^{\xi}_{2r}(Z) \in \mathcal{F}_{i-1}^{k_{N}/\mathbb{Q},P/P}\mathcal{H} \text{  , } \forall r \geq 0 \text{  , }  D^{\xi}_{2r+1}(Z)\in \mathcal{F}_{i}^{ k_{N}/\mathbb{Q},P/P}\mathcal{H} \right\}.$$
\end{itemize}

We deduce from results in $\S 4.2$ a result of P. Deligne ($i=0$, cf. $\cite{De}$):
\begin{coro}
The elements of $\mathcal{B}^{N}_{n,p, \geq i}$ form a basis of $gr_{p}^{\mathfrak{D}} \mathcal{H}_{n}/ \mathcal{F}_{i-1} \mathcal{H}_{n}$.\\
In particular the map $\mathcal{G}^{\mathcal{MT}_{N}} \rightarrow \mathcal{G}^{\mathcal{MT}_{N}'}$ is an isomorphism.
The elements of $\mathcal{B}_{n}^{N}$, form a basis of motivic multiple zeta value relative to $\mu_{N}$,  $\mathcal{H}_{n}^{N}$.
\end{coro}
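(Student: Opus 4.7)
The plan is to observe that Corollary 4.9 is the specialisation of Theorem 4.3 (and ultimately of Lemma 4.2) to the cases $N=3,4$, so the only thing to verify is the bijectivity of $\partial^{i,\mathcal{d}}_{n,p}$ on the family $\mathcal{B}^{N}_{n,p,\geq i}$ for the three descents $\mathcal{d}$ of Definition 4.8, with the corresponding derivation sets $\mathscr{D}^{\mathcal{d}}$. I would proceed by a triple induction on weight $n$, depth $p$, and level $i$ starting from $i=0$, exactly parallel to the argument carried out for $N=2$: assuming the statement in all strictly smaller weights (hence a basis for each target factor), one shows that the matrix $M^{i,\mathcal{d}}_{n,p}$ of $\partial^{i,\mathcal{d}}_{n,p}$ on $\mathcal{B}^{N}_{n,p,\geq i}$ has $P$-adically unit determinant, where $P=3$ for $N=3$ and $P=2$ for $N=4$.

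The matrix computation proceeds as follows. Apply Lemma 3.5 to a basis element $\zeta^{\mathfrak{m}}\!\left({x_{1},\dots,x_{p}\atop 1,\dots,1,\xi}\right)$; the left-hand side of the coaction is $\zeta^{\mathfrak{l}}(r;1)$ for the interior cuts of types \textsc{(a),(b),(c)} and $\zeta^{\mathfrak{l}}(r;\xi)$ for the deconcatenation terms \textsc{(d),(d')}. Using the depth-$1$ distribution relations of \S 3.1 (for $N=3$: $\zeta^{\mathfrak{l}}(2r+1;1)=\frac{2\cdot 3^{2r}}{1-3^{2r}}\zeta^{\mathfrak{l}}(2r+1;\xi)$ and $\zeta^{\mathfrak{l}}(2r;1)=0$; for $N=4$: $\zeta^{\mathfrak{l}}(r;1)=\frac{2^{r-1}}{1-2^{r-1}}\zeta^{\mathfrak{l}}(r;-1)$ combined with $\zeta^{\mathfrak{l}}(2r+1;-1)=2^{2r+1}\zeta^{\mathfrak{l}}(2r+1;\xi)$) I rewrite the interior cuts as multiples of $\zeta^{\mathfrak{l}}(r;\xi)$; the resulting coefficients have strictly positive $P$-adic valuation. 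By the induction hypothesis, the right-hand factors that land a priori at level $<i$ can be further rewritten as $\mathbb{Z}_{1[P]}$-combinations of $\mathcal{B}^{N}_{n-r,p-1,\geq i}$, without lowering $P$-adic valuation. Consequently, modulo $P$, only the deconcatenation contributions survive, with coefficient $(-1)^{r-x_{p}}\binom{r-1}{r-x_{p}}$.

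Ordering columns by the reversed tuple $(x_{p},x_{p-1},\dots,x_{1})$ and rows by $(r,y_{p-1},\dots,y_{1})$ in lexicographic order, the surviving matrix modulo $P$ is upper-triangular: the condition $x_{p}\leq r$ gives the triangularity, and the diagonal case $r=x_{p}$ contributes $\binom{r-1}{0}=1$. Hence $\det M^{i,\mathcal{d}}_{n,p}\equiv 1\pmod{P}$, giving bijectivity of $\partial^{i,\mathcal{d}}_{n,p}$ and the $\mathbb{Z}_{1[P]}$-structure. Having established Lemma 4.2 in these cases, Theorem 4.3 transfers verbatim; specialising to $i=0$ and comparing dimensions with the upper bound $d^{N}_{n}$ of \S 2 shows that $\mathcal{B}^{N}_{n}$ is a basis of $\mathcal{H}^{N}_{n}=\mathcal{H}^{\mathcal{MT}_{N}}_{n}$, whence by Tannakian duality the inclusion $\mathcal{MT}'_{N}\hookrightarrow \mathcal{MT}_{N}$ is an equivalence and $\mathcal{G}^{\mathcal{MT}_{N}}\to\mathcal{G}^{\mathcal{MT}'_{N}}$ is an isomorphism.

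The main obstacle is keeping the three descents in Definition 4.8 consistent with a single combinatorial argument. For $\mathcal{d}=(k_{N}/k_{N},P/1)$ only $D^{\xi}_{1}$ drops the level, for $\mathcal{d}=(k_{N}/\mathbb{Q},P/P)$ only the $D^{\xi}_{2r}$ do so, while for $\mathcal{d}=(k_{N}/\mathbb{Q},P/1)$ both families do; one must check in each case that the level count (number of $x_{i}=1$; number of even $x_{i}$; their sum) matches the $f$-alphabet degree under $\phi$ via the isomorphism \eqref{eq:isomfiltration}, and that the lexicographic ordering above simultaneously respects the filtration by level, so that the triangularity modulo $P$ and the block decomposition (as in the $N=2$ case where $D^{-1}_{1}$ acted purely by deconcatenation) survive for all three descents at once.
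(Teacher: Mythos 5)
Your proposal is correct and follows essentially the same route as the paper: the corollary is deduced from Theorem 4.3, whose underlying Lemma 4.2 is proved for $N=3,4$ exactly by the $P$-adic argument you reproduce (rewriting the interior cut terms via the depth-$1$ distribution relations so that, modulo $P$, only the deconcatenation terms survive and the matrix of $\partial^{i,\mathcal{d}}_{n,p}$ is triangular with $1$'s on the diagonal), followed by the dimension comparison with $\mathcal{H}^{\mathcal{MT}_{N}}$ at level $i=0$ to get the basis statement and the isomorphism of Galois groups.
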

The level $0$ of the filtrations considered for $N' \vert N\in \left\lbrace 3,4 \right\rbrace $ gives the Galois descents:
\begin{coro}
 A basis of $\mathcal{H}_{n}^{N'} $ is formed by elements of $\mathcal{B}_{n}^{N}$ of level $0$ each corrected by linear combination of elements  $\mathcal{B}_{n}^{N}$ of level $ \geq 1$. In particular, with $\xi$ primitive:
\begin{itemize}
	\item[$\cdot$] \textbf{Galois descent} from $N'=1$ to $N=3,4$: A basis of motivic multiple zeta values: 
$$\mathcal{B}^{1 ; N}  \mathrel{\mathop:}=  \left\{  \zeta^{\mathfrak{m}}\left({2x_{1}+1, \cdots, 2x_{p}+1\atop  1, \cdots, 1, \xi} \right)  \zeta^{\mathfrak{m}}(2)^{s}   + \sum_{y_{i} \geq 0 \atop \text{ at least one even or } = 1} \alpha_{\textbf{x},\textbf{y}} \zeta^{\mathfrak{m}} \left({y_{1}, \cdots, y_{p}\atop 1, \cdots, 1, \xi } \right)\zeta^{\mathfrak{m}}(2)^{s}  \right.$$
$$ \left. + \sum_{\text{ lower depth } q<p,  \atop \text{ at least one even or } = 1} \beta_{\textbf{x},\textbf{z}}  \zeta^{\mathfrak{m}}\left({z_{1}, \cdots, z_{q}\atop 1, \cdots, 1, \xi } \right)\zeta^{\mathfrak{m}}(2)^{s} \text{ ,  } x_{i}>0 , \alpha_{\textbf{x},\textbf{y}} , \beta_{\textbf{x},\textbf{z}} \in\mathbb{Q} \right\}. $$
	\item[$\cdot$]  \textbf{Galois descent} from $N'=2$ to $N=4$: A basis of motivic Euler sums:
$$\mathcal{B}^{2; 4}\mathrel{\mathop:}= \left\{  \zeta^{\mathfrak{m}} \left({2x_{1}+1, \cdots, 2x_{p}+1\atop  1, \cdots, 1, \xi_{4}} \right)\zeta^{\mathfrak{m}}(2)^{s} + \sum_{y_{i}>0 \atop \text{at least one even}} \alpha_{\textbf{x},\textbf{y}} \zeta^{\mathfrak{m}}\left({y_{1}, \cdots, y_{p}\atop 1, \cdots, 1, \xi_{4} } \right)\zeta^{\mathfrak{m}}(2)^{s} \right.$$
$$ \left.   +\sum_{\text{lower depth } q<p \atop z_{i}>0, \text{at least one even}} \beta_{\textbf{x},\textbf{z}} \zeta^{\mathfrak{m}}\left( z_{1}, \cdots, z_{q} \atop 1, \cdots, 1, \xi_{4} \right) \zeta^{\mathfrak{m}}(2)^{s}   \text{  ,  }  x_{i}\geq 0 , \alpha_{\textbf{x},\textbf{y}}, \beta_{\textbf{x},\textbf{z}}\in\mathbb{Q}  \right\} .$$
\item[$\cdot$] Similarly, replacing $\xi_{4}$ by $\xi_{3}$ in $\mathcal{B}^{2; 4}$, this gives a basis of:
$$\mathcal{F}^{k_{3}/\mathbb{Q},3/3}_{0} \mathcal{H}_{n}^{3}=\boldsymbol{\mathcal{H}_{n}^{\mathcal{MT}(\mathbb{Z}[\frac{1}{3}])}}.$$
\item[$\cdot$] A basis of $\mathcal{F}^{k_{N}/k_{N},P/1}_{0} \mathcal{H}_{n}^{N}=\boldsymbol{\mathcal{H}_{n}^{\mathcal{MT}(\mathcal{O}_{N})}}$, with $N= 3,4$:
$$\mathcal{B}^{N \text{ unram}}\mathrel{\mathop:}= \left\{  \zeta^{\mathfrak{m}} \left({x_{1}, \cdots, x_{p} \atop  1, \cdots, 1, \xi} \right)\zeta^{\mathfrak{m}}(2)^{s} + \sum_{y_{i}>0 \atop \text{at least one } 1} \alpha_{\textbf{x},\textbf{y}} \zeta^{\mathfrak{m}}\left({y_{1}, \cdots, y_{p}\atop 1, \cdots, 1, \xi} \right)\zeta^{\mathfrak{m}}(2)^{s} \right.$$
$$ \left.   +\sum_{\text{lower depth } q<p \atop z_{i}>0, \text{at least one } 1} \beta_{\textbf{x},\textbf{z}} \zeta^{\mathfrak{m}}\left( z_{1}, \cdots, z_{q} \atop 1, \cdots, 1, \xi \right) \zeta^{\mathfrak{m}}(2)^{s}   \text{  ,  }  x_{i} > 0 , \alpha_{\textbf{x},\textbf{y}}, \beta_{\textbf{x},\textbf{z}}\in\mathbb{Q}  \right\} .$$
\end{itemize}
\end{coro}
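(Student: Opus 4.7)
The plan is to obtain Corollary 4.10 as a specialization of Theorem 4.3 part $(iv)$ (with $i=0$) to each of the descents $(\mathcal{d})$ listed in the statement, together with an identification of the filtration level-$0$ space $\mathcal{F}_0^{\mathcal{d}}\mathcal{H}^{\mathcal{MT}_N}$ with the appropriate motivic Hopf algebra $\mathcal{H}^{\mathcal{MT}_{N'}}$. Since Theorem 4.3 has already been established, the main work is to unwind the notion of level attached to each descent and verify the identification of level-$0$ with the target of the descent.

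First, I would apply Theorem 4.3$(iv)$ directly. With $i=0$, the theorem asserts that a basis of $\mathcal{F}_0\mathcal{H}_n = \mathcal{F}_0\mathcal{H}_n^{\mathcal{MT}}$ is given by
\[
\bigcup_p \{\, x + cl_{n,\leq p, \geq 1}(x)\ :\ x \in \mathcal{B}_{n,p,0}\,\},
\]
where by part $(iii)$ the correction $cl_{n,\leq p, \geq 1}(x)$ is the unique $\mathbb{Q}$-linear combination of elements of $\mathcal{B}_{n,\leq p, \geq 1}$ such that $x + cl(x)\in \mathcal{F}_{-1}\mathcal{H}_n$... wait, with $i=0$ this would be elements of $\mathcal{B}_{n,\leq p,\geq 1}$ such that $x+cl(x)\in \mathcal{F}_0\mathcal{H}_n$. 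So the basis consists of a level-$0$ term plus corrections of level $\geq 1$, which is exactly what the statement expresses.

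Second, I would identify $\mathcal{F}_0^{\mathcal{d}}\mathcal{H}^{\mathcal{MT}_N}$ with the correct subalgebra. In one direction, any element of $\mathcal{H}^{\mathcal{MT}_{N'}}$ (or of the corresponding unramified subalgebra) is killed by every derivation in $\mathscr{D}^{\mathcal{d}}$, by the Theorem on change of field/ramification ($\S 3.3$), hence lies in $\mathcal{F}_0^{\mathcal{d}}\mathcal{H}^{\mathcal{MT}_N}$. The reverse inclusion then follows from a dimension count: the Hilbert series from the K-theoretic dimension formulas of Lemma $2.1$ match the series for $\mathcal{F}_0^{\mathcal{d}}\mathcal{H}^{\mathcal{MT}_N}$ obtained from (\ref{eq:isomfiltration}) via the isomorphism $\phi$, since $Deg^{\mathcal{d}}=0$ in $H^N$ characterizes exactly the subcomodule built from the generators $f^j_r$ lying in $\mathcal{H}^{\mathcal{MT}_{N'}}$. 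This is precisely the remark made after the statement of ``Level $0$'' in $\S 4.2$.

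Third, I would make $\mathcal{B}_{n,p,0}$ explicit for each descent using Definition 4.8:
\begin{itemize}
\item For the descent $(k_N/\mathbb{Q}, P/1)$ with $N\in\{3,4\}$, level $0$ forbids both even $x_i$ and $x_i=1$, hence the level-$0$ elements of $\mathcal{B}^N$ take the form $\zeta^{\mathfrak{m}}\left({2x_1+1,\ldots,2x_p+1 \atop 1,\ldots,1,\xi}\right)\zeta^{\mathfrak{m}}(2)^s$ with $x_i>0$, and correction terms involve at least one even $x_i$ or one $x_i=1$.
\item For the descent $(k_4/\mathbb{Q}, 2/2)$, level $0$ forbids only even arguments; thus base elements have arbitrary odd $x_i$ including $1$, and corrections involve at least one even $x_i$.
\item For the unramified descents $(k_N/k_N, P/1)$ with $N=3,4$ and the descent $(k_3/\mathbb{Q}, 3/3)$, level $0$ forbids $x_i=1$ (respectively even $x_i$), giving the explicit bases $\mathcal{B}^{N\,\text{unram}}$ and the analogue of $\mathcal{B}^{2;4}$ with $\xi_4$ replaced by $\xi_3$.
\end{itemize}

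The only potential obstacle is bookkeeping: one must check that every $\mathcal{d}$-specific notion of level in Definition 4.8 is coherent with the derivation set $\mathscr{D}^{\mathcal{d}}$, so that the abstract level-$0$ space of Theorem 4.3 really matches the explicit condition described here. This is a direct inspection of the depth-$1$ relations listed in $\S 3.1$ (which force the projections onto $\zeta^{\mathfrak{l}}(r;\xi)$ to vanish exactly on the claimed subfamilies), combined with the deconcatenation formula of Lemma 3.5. Once this matching is verified, the corollary is an immediate rephrasing of Theorem 4.3$(iv)$ applied to each descent.
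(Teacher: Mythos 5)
Your proposal is correct and follows essentially the same route as the paper: Corollary 4.10 is obtained there exactly as a specialization of Theorem 4.3 (parts (iii)--(iv) with $i=0$) to each descent, with the identification $\mathcal{F}_0^{\mathcal{d}}\mathcal{H}^{\mathcal{MT}_N}=\mathcal{H}^{\mathcal{MT}_{N'},M'}$ established, as you do, by the inclusion coming from the descent criteria together with a dimension count at fixed weight via $\phi$ and the $Deg^{\mathcal{d}}$ description. Your unwinding of the level-$0$ conditions from Definition 4.8 for each descent matches the paper's intent (including the correct reading that for the unramified descent the leading terms have no argument equal to $1$).
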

\texttt{Nota Bene:} Notice that for the last two level $0$ spaces, $\mathcal{H}_{n}^{\mathcal{MT}(\mathcal{O}_{N})}$, $N=3,4$ and $\mathcal{H}_{n}^{\mathcal{MT}(\mathbb{Z}[\frac{1}{3}])}$, we still do not have another way to reach them, since they are not known to be associated to a fundamental group.

\subsubsection{\textsc{The case } $N=8$.}

For $N=8$ there are two generators in each degree $\geq 1$ and three possible Galois descents: with $\mathcal{H}^{4}$, $\mathcal{H}^{2}$ or $\mathcal{H}^{1}$.\\

\begin{defi}
\begin{itemize}
	\item[$\cdot$] \textbf{Family:} $\mathcal{B}\mathrel{\mathop:}=\left\{\zeta^{\mathfrak{m}}\left( {x_{1}, \cdots,x_{p}\atop  \epsilon_{1}, \cdots , \epsilon_{p-1},\epsilon_{p} \xi }\right)(2i \pi)^{s,\mathfrak{m}}, x_{i} \geq 1, \epsilon_{i}\in \left\{\pm 1\right\} s \geq 0 \right\}$. 
		\item[$\cdot$] \textbf{Level}, denoted $i$: 
$$\begin{array}{lll}
 \text{ The $(k_{8}/k_{4},2/2)$-level } &  \text{ is the number of } &  \text{  $\epsilon_{j}$ equal to $-1$ } \\
  \text{ The $(k_{8}/\mathbb{Q},2/2)$-level } &  \text{  } &  \text{ $\epsilon_{j}$ equal to $-1$ $+$ even $x_{j}$ } \\
   \text{ The $(k_{8}/\mathbb{Q},2/1)$-level } &  \text{  } &  \text{ $\epsilon_{j}$ equal to $-1$, $+$ even $x_{j}$ $+$ $x_{j}$ equal to $1$. } 
\end{array}$$

\item[$\cdot$]  \textbf{Filtrations by the motivic level:} 
$\mathcal{F}^{\mathcal{d}} _{-1} \mathcal{H}^{8}=0$ and $\mathcal{F}^{\mathcal{d}} _{i} \mathcal{H}^{8}$ is the largest submodule of $\mathcal{H}^{8}$ such that $\mathcal{F}^{\mathcal{d}}_{i}\mathcal{H}^{8}/\mathcal{F}^{\mathcal{d}} _{i-1}\mathcal{H}^{8}$ is killed by $\mathscr{D}^{\mathcal{d}}$, where
$$\mathscr{D}^{\mathcal{d}}	= \begin{array}{ll}
\left\lbrace (D^{\xi}_{r}- D^{-\xi}_{r})_{r>0} \right\rbrace  &  \text{ for } \mathcal{d}=(k_{8}/k_{4},2/2)\\
\left\lbrace  (D^{\xi}_{2r+1}- D^{-\xi}_{2r+1})_{r\geq 0}, (D^{\xi}_{2r})_{r>0},( D^{-\xi}_{2r})_{r>0}  \right\rbrace  & \text{ for } \mathcal{d}=(k_{8}/\mathbb{Q},2/2)\\
\left\lbrace  (D^{\xi}_{2r+1}- D^{-\xi}_{2r+1})_{r> 0}, D^{\xi}_{1},  D^{-\xi}_{1}, (D^{\xi}_{2r})_{r>0},( D^{-\xi}_{2r})_{r>0}  \right\rbrace  &  \text{ for } \mathcal{d}=(k_{8}/\mathbb{Q},2/1) \\
\end{array}.
$$			
\end{itemize}
\end{defi}

\begin{coro}
 A basis of $\mathcal{H}_{n}^{N'} $ is formed by elements of $\mathcal{B}_{n}^{N}$ of level $0$ each corrected by linear combination of elements  $\mathcal{B}_{n}^{N}$ of level $ \geq 1$. In particular, with $\xi$ primitive:
\begin{description}
\item[$\boldsymbol{8 \rightarrow 1} $:] A basis of MMZV:
$$\mathcal{B}^{1;8}\mathrel{\mathop:}= \left\{ \zeta^{\mathfrak{m}}\left( 2x_{1}+1, \cdots, 2x_{p}+1 \atop  1, \cdots, 1, \xi \right)\zeta^{\mathfrak{m}}(2)^{s} + \sum_{y_{i} \text{at least one even or } =1 \atop { or  one } \epsilon_{i}=-1 } \alpha_{\textbf{x},\textbf{y}} \zeta^{\mathfrak{m}}\left( y_{1}, \cdots, y_{p} \atop \epsilon_{1}, \cdots, \epsilon_{p-1}, \epsilon_{p}\xi  \right)\zeta^{\mathfrak{m}}(2)^{s} \right.$$
$$\left. + \sum_{q<p \text{ lower depth, level } \geq 1} \beta_{\textbf{x},\textbf{z}} \zeta^{\mathfrak{m}}\left(z_{1}, \cdots, z_{q} \atop  \tilde{\epsilon}_{1}, \cdots, \tilde{\epsilon}_{q}\xi \right)\zeta^{\mathfrak{m}}(2)^{s}  \text{  , }x_{i}>0 , \alpha_{\textbf{x},\textbf{y}}, \beta_{\textbf{x},\textbf{z}}\in\mathbb{Q}\right\}.$$
\item[$\boldsymbol{8 \rightarrow 2 } $:] A basis of motivic Euler sums:
$$\mathcal{B}^{2;8} \mathrel{\mathop:}=  \left\{  \zeta^{\mathfrak{m}} \left(2x_{1}+1, \cdots, 2x_{p}+1 \atop  1, \cdots, 1, \xi\right)\zeta^{\mathfrak{m}}(2)^{s}  +\sum_{y_{i} \text{ at least one even} \atop \text{or one }\epsilon_{i}=-1} \alpha_{\textbf{ x},\textbf{y}} \zeta^{\mathfrak{m}}\left( y_{1}, \cdots, y_{p} \atop \epsilon_{1}, \cdots, \epsilon_{p-1}, \epsilon_{p}\xi  \right)\zeta^{\mathfrak{m}}(2)^{s}  \right.$$
$$\left. + \sum_{\text{lower depth} q<p \atop \text{with level} \geq 1 } \beta_{\textbf{x},\textbf{z}} \zeta^{\mathfrak{m}}\left(z_{1}, \cdots, z_{q} \atop \tilde{\epsilon}_{1}, \cdots, \tilde{\epsilon}_{q}\xi\right)\zeta^{\mathfrak{m}}(2)^{s} \text{  ,  }x_{i}\geq 0,\alpha_{\textbf{x},\textbf{y}}, \beta_{\textbf{x},\textbf{y}} \in\mathbb{Q} \right\}.$$
	\item[$\boldsymbol{ 8 \rightarrow 4 } $:] A basis of MMZV relative to $\mu_{4}$:
$$\mathcal{B}^{4;8}\mathrel{\mathop:}= \left\{ \zeta^{\mathfrak{m}}\left( x_{1}, \cdots, x_{p} \atop  1, \cdots, 1, \xi \right)(2i\pi)^{s}  + \sum_{\text{ at least one }\epsilon_{i}=-1} \alpha_{\textbf{x}, \textbf{y}} \zeta^{\mathfrak{m}}\left(y_{1}, \cdots, y_{p} \atop \epsilon_{1}, \cdots, \epsilon_{p-1},\epsilon_{p} \xi \right)(2 i \pi)^{s} \right.$$
$$ \left. + \sum_{\text{lower depth, level } \geq 1} \beta_{\textbf{x},\textbf{z}} \zeta^{\mathfrak{m}}\left( z_{1}, \cdots,z_{q} \atop  \tilde{\epsilon}_{1}, \cdots, \tilde{\epsilon}_{q}\xi  \right)(2i \pi)^{s} \alpha_{\textbf{x},\textbf{y}}, \beta_{\textbf{x},\textbf{z}}\in\mathbb{Q} \right\}.$$
\end{description}
\end{coro}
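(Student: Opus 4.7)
The plan is to derive this corollary as a direct specialization of Theorem 4.3(iv) with $i=0$, applied in turn to each of the three descents $\mathcal{d}\in\{(k_8/k_4,2/2),\,(k_8/\mathbb{Q},2/2),\,(k_8/\mathbb{Q},2/1)\}$ using the corresponding motivic level filtration defined in Definition 4.11. Once the general framework is in place, essentially all the content is already contained in Theorem 4.3; what remains is bookkeeping of the "level 0" subfamily of $\mathcal{B}^8$ for each descent.

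First, I would fix one descent $\mathcal{d}$ with its derivation set $\mathscr{D}^{\mathcal{d}}$ and its filtration $\mathcal{F}^{\mathcal{d}}_{\bullet}\mathcal{H}^8$, and then invoke Theorem 4.3(iv) with $i=0$. This yields a basis of $\mathcal{F}_0^{\mathcal{d}}\mathcal{H}^8_n$ of the form
$$\bigcup_{p}\bigl\{\,x+cl_{n,\leq p,\geq 1}(x)\;:\;x\in\mathcal{B}^8_{n,p,0}\bigr\},$$
where $cl_{n,\leq p,\geq 1}(x)$ is the unique $\mathbb{Q}$-linear combination of elements of $\mathcal{B}^8_{n,\leq p,\geq 1}$ (i.e.\ level $\geq 1$, depth $\leq p$) such that $x+cl(x)$ lands in $\mathcal{F}_0^{\mathcal{d}}\mathcal{H}^8_n$, as constructed in Theorem 4.3(iii).

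Next, I would identify $\mathcal{F}_0^{\mathcal{d}}\mathcal{H}^8=\mathcal{H}^{N'}$. The inclusion $\mathcal{H}^{N'}\subseteq \mathcal{F}_0^{\mathcal{d}}\mathcal{H}^8$ follows from the descent criterion (Theorem 3.7): every $\mathfrak{Z}\in\mathcal{H}^{N'}$ is killed by all derivations $D\in\mathscr{D}^{\mathcal{d}}$, because by construction (\ref{eq:derivdescent}) these derivations project onto precisely the "new" basis elements of $gr_1\mathcal{L}^{\mathcal{MT}_8}$ not already present in $gr_1\mathcal{L}^{\mathcal{MT}_{N'}}$. Equality is obtained by a dimension count on the $(f_i)$-side through the isomorphism $\phi$ of Lemma 2.1: via the identification (\ref{eq:isomfiltration}), $\dim\mathcal{F}_0^{\mathcal{d}}\mathcal{H}^{\mathcal{MT}_8}_n$ equals the number of words in $H^8$ with zero $\mathcal{d}$-degree, which is precisely the Hilbert coefficient $d^{N'}_n$ of $\mathcal{H}^{\mathcal{MT}_{N'}}$.

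Finally, I would translate the level-$0$ condition on $\mathcal{B}^8$ into the three explicit families by reading Definition 4.11. For $\mathcal{d}=(k_8/k_4,2/2)$, level $0$ means no $\epsilon_i=-1$, so the "leading" term is $\zeta^{\mathfrak{m}}(x_1,\ldots,x_p;1,\ldots,1,\xi)(2i\pi)^{s,\mathfrak{m}}$ with $x_i\geq 1$, matching $\mathcal{B}^{4;8}$. For $\mathcal{d}=(k_8/\mathbb{Q},2/2)$, level $0$ additionally forces all $x_i$ odd; writing $x_i=2x_i'+1$ with $x_i'\geq 0$ recovers $\mathcal{B}^{2;8}$. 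For $\mathcal{d}=(k_8/\mathbb{Q},2/1)$, level $0$ further excludes $x_i=1$, forcing $x_i'\geq 1$, which gives $\mathcal{B}^{1;8}$. The "correction" in each case is by construction a $\mathbb{Q}$-linear combination of elements of level $\geq 1$ (and possibly of lower depth), matching the summation ranges displayed in the statement. The only subtle step in this argument is the dimension identification in the second paragraph, but this has already been established upstream via the Hilbert series in Lemma 2.1 and the injection $\phi$; the corollary itself is then a formal transcription of Theorem 4.3(iv) in each of the three Galois descents available for $N=8$.
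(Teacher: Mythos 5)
Your proposal follows the paper's own route: the corollary is obtained by specializing Theorem $4.3$ at level $i=0$ to each of the three descents of Definition $4.11$, identifying $\mathcal{F}_0^{\mathcal{d}}\mathcal{H}^{8}$ with $\mathcal{H}^{N'}$ via the inclusion given by the descent criterion together with a dimension count on the $(f_i)$-side through $\phi$, and then translating the three level-$0$ conditions into the displayed families; this is exactly the argument of the ``Level $0$'' paragraph of $\S 4.2$ combined with $\S 4.3$. One caveat for the descents $8\rightarrow 2$ and $8\rightarrow 1$: every derivation $D_r$ kills $(2i\pi)^{\mathfrak{m}}$, so $\mathcal{F}_0^{\mathcal{d}}\mathcal{H}^{8}$ also contains odd powers of $(2i\pi)^{\mathfrak{m}}$, which do not lie in $\mathcal{H}^{2}$ or $\mathcal{H}^{1}$ (these only contain $\mathbb{Q}[\zeta^{\mathfrak{m}}(2)]$); hence your claim that the number of words of zero $\mathcal{d}$-degree equals $d_n^{N'}$, and the literal equality $\mathcal{F}_0^{\mathcal{d}}\mathcal{H}^{8}=\mathcal{H}^{N'}$, are only correct after restricting to even powers of $(2i\pi)^{\mathfrak{m}}$, which is precisely what the families $\mathcal{B}^{1;8}$ and $\mathcal{B}^{2;8}$ (written with $\zeta^{\mathfrak{m}}(2)^{s}$) do, and the paper itself glosses over this point in the same way.
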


\subsubsection{\textsc{The case } $N=6$.}

For the unramified category $\mathcal{MT}(\mathcal{O}_{6})$, there is one generator in each degree $>1$ and one Galois descent with $\mathcal{H}^{1}$.\\
\\
First, let us point out this sufficient condition for a MMZV$_{\mu_{6}}$ to be unramified:
\begin{lemm}
$$\text{Let  } \quad  \zeta^{\mathfrak{m}} \left( n_{1},\cdots,n_{p} \atop \epsilon_{1}, \cdots, \epsilon_{p} \right)  \in\mathcal{H}^{\mathcal{MT}(\mathcal{O}_{6} \left[ \frac{1}{6}\right] )} \text{ a motivic MZV} _{\mu_{6}}, \quad \text{ such as : \footnote{In the iterated integral writing, the associated roots of unity are $\eta_{i}\mathrel{\mathop:}= (\epsilon_{i}\cdots \epsilon_{p})^{-1}$.}}$$ 
$$\begin{array}{l}
 \text{ Each } \eta_{i} \in \lbrace 1, \xi_{6} \rbrace    \\
\textsc{ or }  \text{ Each } \eta_{i} \in \lbrace 1, \xi^{-1}_{6} \rbrace
\end{array}  \quad \quad \text{ Then, } \quad \zeta^{\mathfrak{m}} \left( n_{1},\cdots,n_{p} \atop \epsilon_{1}, \cdots, \epsilon_{p} \right)  \in \mathcal{H}^{\mathcal{MT}(\mathcal{O}_{6})}$$
\end{lemm}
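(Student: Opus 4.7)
The plan is to apply the ramification criterion of Corollary~\ref{ramif346} by induction on the weight $n = \sum n_i$. Recall that this criterion asserts $\mathfrak{Z} \in \mathcal{H}^{\mathcal{MT}(\mathcal{O}_6)}$ as soon as $D_1(\mathfrak{Z})=0$ and $D_r(\mathfrak{Z}) \in \mathcal{L}_r \otimes \mathcal{H}^{\mathcal{MT}(\mathcal{O}_6)}$ for every $1<r<n$. The key observation driving the induction is that the hypothesis translates, in iterated integral notation, into $\mathfrak{Z} = I^{\mathfrak{m}}(0; a_1,\ldots,a_n; 1)$ with each $a_i \in \{0, 1, \xi_6\}$ (respectively $\{0,1,\xi_6^{-1}\}$), and this alphabet is preserved by the right-hand factor of Goncharov's coaction (Theorem~\ref{eq:coaction}).

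First I would dispose of the condition on $D_r$ for $r \geq 2$. The coaction formula expresses the right-hand factor of $D_r(\mathfrak{Z})$ as a sum of motivic iterated integrals $I^{\mathfrak{m}}(0; a_{i_1}, \ldots, a_{i_k}; 1)$ obtained by deleting a consecutive block of $r$ entries from $(a_1,\ldots,a_n)$; these still satisfy the alphabet hypothesis and have weight $n-r < n$, so by the inductive hypothesis they already lie in $\mathcal{H}^{\mathcal{MT}(\mathcal{O}_6)}$.

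The main content is the vanishing $D_1(\mathfrak{Z}) = 0$. Each contribution to $D_1$ carries a left factor $I^{\mathfrak{l}}(a_p; a_{p+1}; a_{p+2})$; since in weight one $\mathcal{L}_1 = \mathcal{A}_1$, this equals $\log^{\mathfrak{l}}(a_{p+2} - a_{p+1}) - \log^{\mathfrak{l}}(a_p - a_{p+1})$ inside $\mathcal{L}_1^{\mathcal{MT}_6} \cong \mathcal{O}_6[\tfrac{1}{6}]^{\times} \otimes \mathbb{Q}$. With $a_i \in \{0, 1, \xi_6\}$, every nonzero difference $a_j - a_k$ belongs to $\{\pm 1, \pm\xi_6, \pm(1-\xi_6)\}$. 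The crux of the argument is the identity $1 - \xi_6 = \xi_6^{-1}$ (reflecting $\Phi_6(1)=1$, i.e.\ that $1-\xi_6$ is already a unit of $\mathcal{O}_6$): it shows that all these differences belong to $\mu_6 = \mathcal{O}_6^{\times}$, and so are torsion in $\mathcal{O}_6[\tfrac{1}{6}]^{\times}$. Their motivic logarithms therefore vanish in $\mathcal{L}_1^{\mathcal{MT}_6}$, so every $I^{\mathfrak{l}}(a_p;a_{p+1};a_{p+2}) = 0$ and $D_1(\mathfrak{Z}) = 0$. The case $\eta_i \in \{1, \xi_6^{-1}\}$ is handled identically, via $1 - \xi_6^{-1} = \xi_6$.

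No step is expected to be a real obstacle: the whole argument hinges on the observation that for the alphabet $\{0,1,\xi_6\}$ (or its conjugate) every depth-one entry produced by the coaction already lies in $\mathcal{O}_6^{\times}$. The same strategy would fail for a mixed alphabet such as $\{0,1,\xi_6,\xi_6^2\}$, since $1-\xi_6^2 = 1-\xi_3$ generates the prime above $3$ and yields a genuinely ramified depth-one contribution; this is precisely why the conclusion of the lemma is restricted to iterated integrals lying inside one of the two half-alphabets $\{0,1,\xi_6\}$ or $\{0,1,\xi_6^{-1}\}$.
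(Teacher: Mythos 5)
Your argument is correct and is essentially the paper's own proof, which simply invokes Corollary \ref{ramif346} together with the stability of these families under the coaction: your induction on the weight handles the $D_{r}$, $r>1$, condition, and your computation that all differences of letters in $\{0,1,\xi_{6}\}$ (resp. $\{0,1,\xi_{6}^{-1}\}$) are roots of unity, via $1-\xi_{6}=\xi_{6}^{-1}$, is exactly the depth-one vanishing the paper leaves implicit in ``the expression of the derivations''. Nothing is missing; you have only made explicit what the paper states in one line.
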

\begin{proof}
Immediate, by Corollary, $\autoref{ramif346}$, and with the expression of the derivations $(\autoref{drz})$ since those families are stable under the coaction. 
\end{proof}

\begin{defi}
\begin{itemize}
	\item[$\cdot$]  \textbf{Family}: $\mathcal{B}\mathrel{\mathop:}=\left\{\zeta^{\mathfrak{m}}\left( {x_{1}, \cdots,x_{p}\atop  1, \cdots , 1,\xi) } \right)(2i \pi)^{s,\mathfrak{m}}, x_{i} > 1, s \geq 0 \right\}$. 
			\item[$\cdot$] \textbf{Level:} The $(k_{6}/\mathbb{Q},1/1)$-level, denoted $i$, is defined as the number of even $x_{j}$.
				\item[$\cdot$] \textbf{Filtration by the motivic }   $(k_{6}/\mathbb{Q},1/1)$-\textbf{level}:
\begin{center}
	$\mathcal{F}^{(k_{6}/\mathbb{Q},1/1)} _{-1} \mathcal{H}^{6}=0$ and $\mathcal{F}^{(k_{6}/\mathbb{Q},1/1)} _{i} \mathcal{H}^{6}$ is the largest submodule of $\mathcal{H}^{6}$ such that $\mathcal{F}^{(k_{6}/\mathbb{Q},1/1)}_{i}\mathcal{H}^{6}/\mathcal{F}^{(k_{6}/\mathbb{Q},1/1)} _{i-1}\mathcal{H}^{6}$ is killed by $\mathscr{D}^{(k_{6}/\mathbb{Q},1/1)}=\left\lbrace D^{\xi}_{2r} , r>0 \right\rbrace $.
	\end{center}	
\end{itemize}
\end{defi}

\begin{coro} Galois descent from $N'=1$ to $N=6$ unramified. A basis of MMZV:
$$\mathcal{B}^{1;6} \mathrel{\mathop:}= \left\{  \zeta^{\mathfrak{m}}\left( 2x_{1}+1, \cdots, 2x_{p}+1 \atop  1, \cdots, 1, \xi \right)\zeta^{\mathfrak{m}}(2)^{s} + \sum_{y_{i} \text{ at least one even}} \alpha_{\textbf{x},\textbf{y}}\zeta^{\mathfrak{m}}\left( y_{1}, \cdots, y_{p} \atop 1, \cdots, 1, \xi \right)\zeta^{\mathfrak{m}}(2)^{s} \right.$$ 
$$\left.   +\sum_{\text{lower depth, level } \geq 1}\beta_{\textbf{x},\textbf{z}} \zeta^{\mathfrak{m}} \left( z_{1}, \cdots, z_{q} \atop 1, \cdots, 1, \xi  \right)\zeta^{\mathfrak{m}}(2)^{s} \text{  , } \alpha_{\textbf{x},\textbf{y}}, \beta_{\textbf{x},\textbf{z}}\in \mathbb{Q},  x_{i}>0 \right\}.$$
\end{coro}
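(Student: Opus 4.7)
The plan is to deduce this as the specialization of Theorem $4.3(iv)$ (with $i=0$) to the descent $\mathcal{d} = (k_{6}/\mathbb{Q},1/1)$ whose motivic level filtration was just defined, together with the identification of its level-$0$ submodule as $\mathcal{H}^{1}$.

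First I would read off from Theorem $4.3(iv)$ the basis
\[
\bigcup_{p} \bigl\{x + cl_{n,\leq p,\geq 1}(x) : x \in \mathcal{B}^{6}_{n,p,0}\bigr\}
\]
of $\mathcal{F}_{0}^{\mathcal{d}} \mathcal{H}^{6}_{n}$. Unwinding the definitions of $\S 4.4.4$: $\mathcal{B}^{6}$ consists of $\zeta^{\mathfrak{m}}(x_{1},\ldots,x_{p};1,\ldots,1,\xi)(2\pi i)^{s,\mathfrak{m}}$ with $x_{i} > 1$, and level counts the even $x_{j}$, so level-$0$ elements are exactly those with every $x_{j}$ odd and $>1$, i.e.\ $x_{j} = 2x'_{j}+1$ with $x'_{j} > 0$. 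The correction $cl_{n,\leq p,\geq 1}(x) \in \langle \mathcal{B}^{6}_{n,\leq p,\geq 1}\rangle_{\mathbb{Q}}$ is a $\mathbb{Q}$-linear combination of $\mathcal{B}^{6}$-elements of positive level (at least one even argument) in depths $\leq p$. Splitting $cl$ according to whether the depth equals $p$ or is strictly smaller produces exactly the two sums denoted $\alpha_{\mathbf{x},\mathbf{y}}$ and $\beta_{\mathbf{x},\mathbf{z}}$ in the stated family $\mathcal{B}^{1;6}$.

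The remaining step is the identification
\[
\mathcal{F}_{0}^{(k_{6}/\mathbb{Q},1/1)} \mathcal{H}^{6} \;=\; \mathcal{H}^{\mathcal{MT}(\mathbb{Z})} \;=\; \mathcal{H}^{1}.
\]
By Definition $4.1$, the left-hand side is the largest submodule killed by $\mathscr{D}^{\mathcal{d}} = \{D^{\xi}_{2r}\}_{r > 0}$. The depth-$1$ analysis of $\S 3.1$ shows that $gr_{1}^{\mathfrak{D}}\mathcal{L}^{\mathcal{MT}(\mathcal{O}_{6})}_{r}$ acquires a generator not already present in $\mathcal{MT}(\mathbb{Z})$ precisely in the even weights $r = 2r'$: in odd weight the distribution/conjugation relations listed for $N=6$ reduce $\zeta^{\mathfrak{l}}(2r'+1;\xi)$ to a multiple of $\zeta^{\mathfrak{l}}(2r'+1)$, while in even weight $\zeta^{\mathfrak{l}}(2r';\xi)$ is genuinely new. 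Hence $\mathscr{D}^{\mathcal{d}}$ is exactly the descent derivation set associated to $\mathcal{MT}(\mathcal{O}_{6}) \to \mathcal{MT}(\mathbb{Z})$. The change-of-field criterion of Theorem $3.7$, applied recursively in weight, then gives $\mathcal{H}^{1} \subseteq \mathcal{F}_{0}^{\mathcal{d}} \mathcal{H}^{6}$, and equality follows from matching dimensions: Theorem $4.3(ii)$ yields $\dim \mathcal{F}_{0}^{\mathcal{d}} \mathcal{H}^{6}_{n} = \#\mathcal{B}^{6}_{n,\cdot,0}$, whose generating series
\[
\frac{1}{1-t^{2}} \cdot \frac{1}{1-\tfrac{t^{3}}{1-t^{2}}} \;=\; \frac{1}{1-t^{2}-t^{3}}
\]
coincides with the Hilbert series $h_{1}(t) = 1/(1-t^{2}-t^{3})$ of $\mathcal{H}^{\mathcal{MT}(\mathbb{Z})}$ recalled in Section $2$.

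The main obstacle has already been absorbed into Theorem $4.3$: the bijectivity of $\partial^{i,\mathcal{d}}_{n,p}$ in Lemma $4.2$ relied on the $3$-adic argument using $v_{3}\bigl(\tfrac{2\cdot 6^{r-1}}{(1-2^{r-1})(1-3^{r-1})}\bigr) > 0$ for odd $r$, which forces deconcatenation terms to dominate modulo $3$. What remains here is pure bookkeeping: the only genuine verification is that the two \emph{a priori} distinct descriptions of $\mathcal{F}_{0}$—the one in Definition $4.1$ through $\mathscr{D}^{\mathcal{d}}$, and the one implicit in the change-of-field criterion—agree, which is what the paragraph above establishes.
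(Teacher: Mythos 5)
Your proposal is correct and follows essentially the same route as the paper: the corollary is obtained by specializing Theorem $4.3(iv)$ at level $i=0$ to the descent $(k_{6}/\mathbb{Q},1/1)$ with the family, level and derivation set $\mathscr{D}^{\mathcal{d}}=\lbrace D^{\xi}_{2r}\rbrace_{r>0}$ of $\S 4.4.4$, and by identifying $\mathcal{F}_{0}^{\mathcal{d}}\mathcal{H}^{6}$ with $\mathcal{H}^{\mathcal{MT}(\mathbb{Z})}=\mathcal{H}^{1}$ via the inclusion plus a dimension count, exactly as in the ``Level $0$'' paragraph of $\S 4.2$ (which implicitly uses Brown's theorem for $N=1$). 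Your explicit check that the generating series of $\mathcal{B}^{6}_{n,\cdot,0}$ equals $1/(1-t^{2}-t^{3})$ and that the new depth-one generators occur only in even weight merely spells out what the paper leaves implicit, so no new ingredient or gap is involved.
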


\appendix
\section{Examples in small depths}

\subsection{$N=2$: Depth $2,3$}
Here we have to consider only one Galois descent, from $\mathcal{H}^{2}$ to $\mathcal{H}^{1}$. \\
In depth $1$ all the $\zeta^{\mathfrak{m}}(\overline{s})$, $s>1$ are MMZV. Let us detail the case of depth 2 and 3 as an application of the results of $\S 4.2$. In depth 2, coefficients are explicit:
\begin{lemm} 
The depth $2$ - part of the basis of the motivic multiple zeta values:
$$\left\{ \zeta^{\mathfrak{m}}(2a+1, \overline{2b+1})- \binom {2(a+b)}{2b}  \zeta^{\mathfrak{m}}(1,\overline{2(a+b)+1}), a,b> 0 \right\}.$$
\end{lemm}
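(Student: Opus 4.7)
The plan is to verify that
\[
\mathfrak{Z}_{a,b} := \zeta^{\mathfrak{m}}(2a+1,\overline{2b+1}) - \binom{2(a+b)}{2b}\,\zeta^{\mathfrak{m}}(1,\overline{2(a+b)+1})
\]
lies in $\mathcal{H}^1$, via the descent criterion of Corollary 3.15: $\mathfrak{Z}\in\mathcal{H}^1$ iff $D_1\mathfrak{Z}=0$ and $D_{2r+1}\mathfrak{Z}\in\mathcal{H}^1$ for every $r\geq 1$. The basis statement then follows from Theorem 4.3 specialized to $N=2$, $p=2$, $i=0$; the uniqueness of the correction $cl$ provided by that theorem moreover implies a posteriori that no further corrections involving $\zeta^{\mathfrak{m}}(2a'+1,\overline{1})$, $\zeta^{\mathfrak{m}}(1,\overline{1})$, or $\zeta^{\mathfrak{m}}(\cdots)\zeta^{\mathfrak{m}}(2)^s$ with $s\geq 1$ are necessary.

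For $D_1$, I would specialize Lemma 3.5 to $r=1$, $p=2$. The non-vanishing terms require some $x_i=1$. On $\zeta^{\mathfrak{m}}(2a+1,\overline{2b+1})$ both $x_i\geq 3$, so $D_1$ is zero. On $\zeta^{\mathfrak{m}}(1,\overline{2(a+b)+1})$, the $x_1=1$ activates contributions (a0) and (b), which each produce $\pm\,\zeta^{\mathfrak{l}}(1;1)\otimes\zeta^{\mathfrak{m}}(\overline{2(a+b)+1})$ with opposite signs, and hence cancel. Thus $D_1\mathfrak{Z}_{a,b}=0$.

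For $s=2r+1\geq 3$, Lemma 3.5 shows that the right-hand side of $D_s$ on either of the two pieces of $\mathfrak{Z}_{a,b}$ is a combination of terms $\zeta^{\mathfrak{m}}(\overline{k})$ with $k=2(a+b)+2-s\geq 1$. By distribution, $\zeta^{\mathfrak{m}}(\overline{k})=(2^{1-k}-1)\zeta^{\mathfrak{m}}(k)\in\mathcal{H}^1$ whenever $k\geq 2$, so the single obstruction is the value $\zeta^{\mathfrak{m}}(\overline{1})\notin\mathcal{H}^1$, which appears only at $s=2(a+b)+1$ and only through the deconcatenation term (d'). A direct evaluation of (d') on each piece, using $\binom{2(a+b)}{2a}=\binom{2(a+b)}{2b}$ together with $\zeta^{\mathfrak{l}}(s;1)=(2^{1-s}-1)\zeta^{\mathfrak{l}}(\overline{s})$, gives
\[
D^{-1}_{2(a+b)+1}\,\zeta^{\mathfrak{m}}(2a+1,\overline{2b+1})=\binom{2(a+b)}{2b}\bigl(2-2^{-2(a+b)}\bigr)\,\zeta^{\mathfrak{m}}(\overline{1}),
\]
and the analogous coefficient for $\zeta^{\mathfrak{m}}(1,\overline{2(a+b)+1})$ equals $2-2^{-2(a+b)}$. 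Subtracting $\binom{2(a+b)}{2b}$ times the second from the first annihilates the $\zeta^{\mathfrak{m}}(\overline{1})$ obstruction exactly, so $D_{2r+1}\mathfrak{Z}_{a,b}\in\mathcal{H}^1$ for all $r\geq 1$.

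The main obstacle is the combinatorial bookkeeping of the (d') coefficient: the sign $(-1)^{x_{p-1}}$, the two binomials arising from the two admissible cut positions adjacent to the last interior $-1$, and the distribution factor $(2^{1-s}-1)$ must all line up to produce the clean identity above. Once this is in place, the remaining values of $D_{2r+1}\mathfrak{Z}_{a,b}$ land automatically in $\mathcal{H}^1$, and Theorem 4.3 closes the argument.
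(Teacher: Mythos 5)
Your proof is correct and follows essentially the same route as the paper's: the paper likewise computes $D_{2r+1,2}$ on the two terms, works in the quotient $\mathcal{H}^{\geq 1}$ to see that the only obstruction is the $\zeta^{\mathfrak{m}}(\overline{1})$ deconcatenation term at $2r+1=2(a+b)+1$ with relative coefficient $\binom{2(a+b)}{2b}$, and concludes membership in $\mathcal{F}_{0}\mathcal{H}=\mathcal{H}^{1}$ via the same descent criterion, invoking Theorem 4.3 for the basis statement. The only cosmetic remark is that the vanishing of $D_{1}$ on $\zeta^{\mathfrak{m}}(1,\overline{2(a+b)+1})$ is most safely seen from the fact that every weight-one left factor produced is proportional to $\zeta^{\mathfrak{l}}\left({1 \atop 1}\right)=0$ (equivalently, the two cuts adjacent to the repeated $-1$ cancel by path reversal), so your conclusion holds regardless of the precise signs in the (a0) and (b) terms.
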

\begin{proof}
Indeed, we have if $a,b>0$, $D_{1}(\zeta^{\mathfrak{m}}(2a+1, \overline{2b+1}))=0$ and for $r>0$:
$$D_{2r+1,2}(\zeta^{\mathfrak{m}}(2a+1, \overline{2b+1}))= \zeta^{\mathfrak{l}}(2r+1)\otimes \zeta^{\mathfrak{m}}(\overline{2(a+b-r)+1})$$ $$\left(-\delta_{a \leq r < a+b} \binom{2r}{2a} + \delta_{r=a} + \delta_{b\leq r < a+b} \binom{2r}{2b}(2^{-2r}-1)+ \delta_{r=a+b}(2^{-2r}-2)\binom{2(a+b)}{2b}\right). $$
There is only the case $r=a+b$ where a term ($\zeta^{\mathfrak{m}}(\overline{1}))$ which does not belong to $\mathcal{F}_{0}\mathcal{H}$ appears:
$$D_{2r+1,2}(\zeta^{\mathfrak{m}}(2a+1, \overline{2b+1}))\equiv \delta_{r=a+b}(2^{-2r}-2)\binom{2(a+b)}{2b}  \zeta^{\mathfrak{l}}(2r+1)\otimes \zeta^{\mathfrak{m}}(\overline{1}) \text{ in the quotient } \mathcal{H}^{\geq 1}.$$
Referring to the previous results, we can correct $\zeta^{\mathfrak{m}}(2a+1, \overline{2b+1})$ with terms of the same weight, same depth, and with at least one $1$ -not at the end-, which here corresponds only to $\zeta^{\mathfrak{m}}(1,\overline{2(a+b)+1})$.\\
Besides, the last equality being true in the quotient $\mathcal{H}^{\geq 1}$:
$$D_{2r+1,2}(\zeta^{\mathfrak{m}}(1, \overline{2(a+b)+1}))= \zeta^{\mathfrak{l}}(2r+1)\otimes (-\delta_{r < a+b}+ \delta_{r=a+b}(2^{-2r}-2)) \zeta^{\mathfrak{m}}(\overline{2(a+b-r)+1})$$
$$\equiv \delta_{r=a+b}(2^{-2r}-2) \zeta^{\mathfrak{l}}(2r+1)\otimes \zeta^{\mathfrak{m}}(\overline{1}) . $$
According to these calculations of infinitesimal coactions:
$$\zeta^{\mathfrak{m}}(2a+1, \overline{2b+1})- \binom {2(a+b)}{2b}  \zeta^{\mathfrak{m}}(1,\overline{2(a+b)+1}) \text{ belongs to } \mathcal{F}_{0}\mathcal{H} \text{ , i.e. is a MMZV.}$$
\end{proof}
\texttt{Examples:} Here are some motivic multiple zeta values:
$$\zeta^{\mathfrak{m}}(3, \overline{3})-6 \zeta^{\mathfrak{m}}(1,\overline{5}) \text{ , } \zeta^{\mathfrak{m}}(3, \overline{5})-15 \zeta^{\mathfrak{m}}(1,\overline{7}) \text{ , }\zeta^{\mathfrak{m}}(5, \overline{3})-15 \zeta^{\mathfrak{m}}(1,\overline{7}) \text{ ,  } \zeta^{\mathfrak{m}}(5, \overline{7})-210 \zeta^{\mathfrak{m}}(1,\overline{11}) .$$
\\
\textsc{Remarks:}
\begin{itemize}
	\item[$\cdot$] The corresponding Euler sums $\left\{ \zeta(2a+1, \overline{2b+1})- \binom {2(a+b)}{2b}  \zeta(1,\overline{2(a+b)+1}), a,b> 0 \right\}$ are a generating family of MZV in depth $2$.
	\item[$\cdot$] We can similarly prove that the following elements are (resp. motivic) MZV, if no $\overline{1}$:
	$$\begin{array}{ll}
	\zeta(\overline{A}, \overline{B}) & \zeta(A, \overline{B}) \text{ and } \zeta(\overline{A}, B),  A+B \text{ odd },  B\neq 1 \\
	\zeta(A,\overline{B}) +\zeta(\overline{A},B) \text{ , } A,B \text{ odd } & \zeta(A, \overline{B}) + (-1)^{A} \binom{A+B-2}{A-1} \zeta(1,\overline{A+B-1}), A+B \text{ even} \\
\zeta(\overline{1},\overline{1}) -\frac{1}{2}\zeta(\overline{1})^{2} \text{  ;  }  \zeta(1,\overline{1}) -\frac{1}{2}\zeta(\overline{1})^{2} & 	\zeta(\overline{A}, B)- (-1)^{A} \binom{A+B-2}{A-1} \zeta(1,\overline{A+B-1}) \text{ , } A+B \text{ even, }  A,B\neq 1 
	\end{array}$$
\end{itemize}

\begin{lemm} 
The depth $2$ part of the basis of $\mathcal{F}_{1}\mathcal{H}$:
$$\left\{ \zeta^{\mathfrak{m}}(2a+1, \overline{2b+1}) , (a,b)\neq(0, 0) \right\}.$$
\end{lemm}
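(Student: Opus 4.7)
The lemma is a direct consequence of Theorem~$4.3$(iv) applied with $i=1$, $N=2$, restricted to the depth $p=2$ contribution. That theorem produces the depth-$2$ piece of a basis of $\mathcal{F}_1\mathcal{H}_n$ as $\{x+cl_{n,\leq 2,\geq 2}(x):x\in\mathcal{B}_{n,2,\leq 1}\}$. Since the $(\mathbb{Q}/\mathbb{Q},2/1)$-level of $\zeta^{\mathfrak{m}}(2a+1,\overline{2b+1})$ is the number of $x_i$ equal to $0$, the condition level $\leq 1$ cuts out exactly $(a,b)\neq(0,0)$; and, as in the preceding lemma, I would restrict throughout to the part without $\zeta^{\mathfrak{m}}(2)^{s}$ factors. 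So the claim reduces to showing $cl\equiv 0$ on this family, equivalently that each $\zeta^{\mathfrak{m}}(2a+1,\overline{2b+1})$ with $(a,b)\neq(0,0)$ already lies in $\mathcal{F}_1\mathcal{H}$.

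Since $\mathscr{D}^{\mathcal{d}}=\{D^{-1}_1\}$ for $N=2$ while the remaining derivations $\{D^{-1}_{2r+1}\}_{r>0}$ automatically preserve $\mathcal{F}_1$, this reduces further to checking $D^{-1}_1(\zeta^{\mathfrak{m}}(2a+1,\overline{2b+1}))\in\mathcal{F}_0\mathcal{H}=\mathcal{H}^{1}$. Formula~$(\ref{Deriv21})$ shows that $D^{-1}_1$ acts here as a pure deconcatenation: it gives $0$ when $b>0$, and $\zeta^{\mathfrak{m}}(\overline{2a+1})$ when $b=0$. For the latter, Corollary~\ref{criterehonoraire} reduces membership of $\zeta^{\mathfrak{m}}(\overline{2a+1})$ in $\mathcal{H}^1$ to the checks $D_1(\zeta^{\mathfrak{m}}(\overline{2a+1}))=0$ and $D_{2s+1}(\zeta^{\mathfrak{m}}(\overline{2a+1}))\in\mathcal{L}\otimes\mathcal{H}^1$; both follow from a direct iterated-integral calculation on $-I^{\mathfrak{m}}(0;-1,0^{2a};1)$: all length-$(2s+1)$ cuts vanish when $2s+1<2a+1$ (each triple is of the form $(0,-1,0)$, $(-1,0,0)$, $(0,0,0)$, or $(0,0,1)$, all zero by property (ii) or tangential regularization), while for $2s+1=2a+1$ one obtains $\zeta^{\mathfrak{l}}(\overline{2a+1})\otimes 1$. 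The excluded case $(a,b)=(0,0)$ is genuine: $D^{-1}_1(\zeta^{\mathfrak{m}}(1,\overline{1}))=\zeta^{\mathfrak{m}}(\overline{1})=-\log^{\mathfrak{m}}(2)\notin\mathcal{H}^1$, so $\zeta^{\mathfrak{m}}(1,\overline{1})$ lies in $\mathcal{F}_2\setminus\mathcal{F}_1$.

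Linear independence of $\{\zeta^{\mathfrak{m}}(2a+1,\overline{2b+1}):(a,b)\neq(0,0)\}$ is then inherited from the basis $\mathcal{B}^{2}$ of $\mathcal{H}^{2}$. The generating property follows from Theorem~$4.3$(iv), once one observes that, in the restricted (non-$\zeta^{\mathfrak{m}}(2)^{s}$) setting, the candidate correction space $\langle\mathcal{B}_{n,\leq 2,\geq 2}\rangle$ is trivial for $n>2$: depth $1$ admits no level-$\geq 2$ element, and the unique depth-$2$ level-$\geq 2$ element $\zeta^{\mathfrak{m}}(1,\overline{1})$ has weight exactly $2$. The main, if minor, step will be the honorary-type verification placing $\zeta^{\mathfrak{m}}(\overline{2a+1})$ in $\mathcal{H}^{1}$ for $a\geq 1$, which is precisely what forces the exclusion of $(0,0)$.
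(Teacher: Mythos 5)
Your proposal is correct and follows essentially the paper's own argument: the paper's proof is precisely the observation that no correction term is needed because $\mathcal{B}_{n,2,\geq 2}$ (in the $s=0$ part) is empty for $n\neq 2$, so the level~$\leq 1$ elements themselves give the depth~$2$ part of the basis of $\mathcal{F}_{1}\mathcal{H}$. Your additional direct verification that each $\zeta^{\mathfrak{m}}(2a+1,\overline{2b+1})$, $(a,b)\neq(0,0)$, lies in $\mathcal{F}_{1}\mathcal{H}$ (via $D_{1}$ and the honorary depth-one computation) is redundant given Theorem~$4.3$(iv) but accurate and consistent with the paper.
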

\begin{proof}
No need of correction ($\mathcal{B}_{n,2,\geq 2}$ is empty for $n\neq 2$), these elements belong to $\mathcal{F}_{1}\mathcal{H}$.
\end{proof}

\begin{lemm} The depth $3$ part of the basis of motivic multiple zeta values:
\begin{multline}
 \left\{\zeta^{\mathfrak{m}}(2a+1,2b+1,\overline{2c+1})-\sum_{k=1}^{a+b+c}\alpha_{k}^{a,b,c}\zeta^{\mathfrak{m}}(1,2(a+b+c-k)+1, \overline{2k+1}) \right. \\ 
\left. -\binom {2(b+c)}{2c}\zeta^{\mathfrak{m}}(2a+1,1,\overline{2(b+c)+1}),a,b,c>0 \right\}.
 \end{multline}
 where $\alpha_{k}^{a,b,c} \in\mathbb{Z}_{\text{odd}}$ are solutions of $M_{3}X=A^{a,b,c}$. With 
$A^{a,b,c}$ such as $r^{\text{th}}-$coefficient is:
$$\delta_{b \leq r < a+b} \binom{2(n-r)}{2c}\binom{2r}{2b} - \delta_{a < r< a+b} \binom{2(n-r)}{2c}\binom{2r}{2a}-\delta_{b\leq r < b+c}\binom{2(n-r)}{2a}\binom{2r}{2b} $$
$$- \delta_{r\leq a}\binom{2(n-r)}{2(b+c)}\binom{2(b+c)}{2c} + \delta_{r<b+c}\binom{2(n-r)}{2a}\binom{2(b+c)}{2c} + \delta_{c\leq r < b+c}\binom{2r}{2c}\binom{2(n-r)}{2a}(2^{-2r}-1).$$ 
$M_{3}$ the matrix whose $(r,k)^{\text{th}}$ coefficient is:
$$\delta_{r=a+b+c}(2^{-2r}-2)\binom{2n}{2k}+ \delta_{k \leq r < n} \binom{2r}{2k}(2^{-2r}-1) - \delta_{r<n-k} \binom{2(n-r)}{2k} - \delta_{n-k \leq r<n} \binom{2r}{2(n-k)}. $$
\end{lemm}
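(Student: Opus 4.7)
The proof mirrors the preceding depth-$2$ argument, invoking the honorary criterion (Corollary $3.11$): the corrected element
$$Z = \zeta^{\mathfrak{m}}(2a+1, 2b+1, \overline{2c+1}) - \sum_{k=1}^{n} \alpha_{k}^{a,b,c}\, \zeta^{\mathfrak{m}}(1, 2(n-k)+1, \overline{2k+1}) - \lambda\, \zeta^{\mathfrak{m}}(2a+1, 1, \overline{2(b+c)+1}),$$
with $n = a+b+c$ and $\lambda$ to be determined, lies in $\mathcal{H}^{1} = \mathcal{F}_{0} \mathcal{H}^{2}$ iff $D_{1}(Z) = 0$ and $D_{2r+1}(Z) \in \mathcal{L}_{2r+1} \otimes \mathcal{H}^{1}$ for every $r \geq 1$. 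The vanishing of $D_{1}(Z)$ is automatic: $D^{-1}_{1}$ acts as a deconcatenation that kills any $\zeta^{\mathfrak{m}}(\ldots, \overline{2m+1})$ with $m \geq 1$, which is the case for all three families of terms in $Z$ since $c,\ b+c,\ k \geq 1$.

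For each $r = 1, \ldots, n$, expand $D_{2r+1, 3}(Z)$ in $gr_{3}^{\mathfrak{D}}$ by Lemma $3.5$, project the left tensor factor onto $\zeta^{\mathfrak{l}}(2r+1)$ using the depth-$1$ identity $\zeta^{\mathfrak{l}}(\overline{2r+1}) = (2^{-2r}-1)\zeta^{\mathfrak{l}}(2r+1)$, and reduce the depth-$2$ right factor modulo $\mathcal{F}_{0} \mathcal{H}$. By the preceding depth-$2$ lemma, the image in $gr_{2}^{\mathfrak{D}} \mathcal{H}^{\geq 1}$ is expressed in the basis spanned by the level-$\geq 1$ elements $\zeta^{\mathfrak{m}}(1, \overline{2m+1})$, $\zeta^{\mathfrak{m}}(2m+1, \overline{1})$ and $\zeta^{\mathfrak{m}}(1, \overline{1})$. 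Requiring the total contribution to vanish in each of these directions produces the defining linear system for the unknowns $\alpha_{k}^{a,b,c}$ and $\lambda$.

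The system decouples cleanly. Contributions to the $\zeta^{\mathfrak{m}}(2m+1, \overline{1})$-direction arise only from type-$\textsc{(d)}$ or $\textsc{(d')}$ cuts of the uncorrected term and of the middle-$1$ correction $\zeta^{\mathfrak{m}}(2a+1, 1, \overline{2(b+c)+1})$, forcing $\lambda = \binom{2(b+c)}{2c}$ via a single equation, after which the $\zeta^{\mathfrak{m}}(1, \overline{1})$-direction is automatically absorbed. The remaining conditions along the directions $\zeta^{\mathfrak{m}}(1, \overline{2m+1})$ for $m \geq 1$ constitute the $n \times n$ system $M_{3} X = A^{a,b,c}$ of the statement: enumeration of the six cut types $\textsc{(a)}$--$\textsc{(d')}$ in Lemma $3.5$ applied to $\zeta^{\mathfrak{m}}(2a+1, 2b+1, \overline{2c+1})$ produces the six indicator-binomial terms of the $r$-th entry of $A^{a,b,c}$, while the same enumeration applied to $\zeta^{\mathfrak{m}}(1, 2(n-k)+1, \overline{2k+1})$, together with a deconcatenation row at $r = n$, gives the four terms of the $(r,k)$-entry of $M_{3}$.

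Finally, $\mathbb{Z}_{\text{odd}}$-solvability follows exactly as in the proof of Lemma $4.2$: modulo $2$, the entries coming from cuts of types $\textsc{(a)}$, $\textsc{(b)}$, $\textsc{(c)}$ vanish by virtue of the factor $2^{-2r}-1$, while the deconcatenation entries $\textsc{(d)}$, $\textsc{(d')}$, ordered lexicographically in $k$, form an upper-triangular matrix with $1$'s on the diagonal. The main obstacle is therefore not invertibility — automatic from Lemma $4.2$ — but the careful bookkeeping of signs, indicator functions, and binomial coefficients produced by the six cut types, needed to match the explicit closed-form expressions for $M_{3}$ and $A^{a,b,c}$ as stated.
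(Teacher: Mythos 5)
Your proposal follows essentially the same route as the paper's proof: first subtract $\binom{2(b+c)}{2c}\zeta^{\mathfrak{m}}(2a+1,1,\overline{2(b+c)+1})$ to kill the $\zeta^{\mathfrak{m}}(2a+1,\overline{1})$-component (equivalently $D^{-1}_{1,2}\circ D_{2r+1,3}$), then fix the $\alpha_k$ by demanding that the remaining components of $D_{2r+1,3}$ vanish in $\mathcal{H}^{\geq 1}$ after reducing the depth-two right factors by the depth-two lemma, with solvability and $\mathbb{Z}_{\text{odd}}$-integrality coming from the $2$-adic dominance of the deconcatenation entries, exactly as in Lemma~4.2.

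Two points, however, are stated loosely enough to matter. First, the $\zeta^{\mathfrak{m}}(1,\overline{1})$-direction is not ``automatically absorbed'': it is precisely the row $r=a+b+c$ of the system (the term $\delta_{r=a+b+c}(2^{-2r}-2)\binom{2n}{2k}$ in $M_3$), whose right-hand side $A^{a,b,c}_{n}$ indeed vanishes for $a,b,c>0$, but which still imposes the nontrivial condition $\sum_k\binom{2n}{2k}\alpha_k=0$ on the unknowns. Your count is otherwise off by one: the directions $\zeta^{\mathfrak{m}}(1,\overline{2m+1})$ with $m\geq 1$ only supply $n-1$ equations, so without the $r=n$ row the $n$ coefficients $\alpha_k$ would not be determined; the paper includes this row in $M_3$, and so must you. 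Second, the $2$-adic bookkeeping is inverted: with the normalization of the statement (left factor projected on $\zeta^{\mathfrak{l}}(2r+1)$), the type \textsc{(a)},\textsc{(b)},\textsc{(c)} entries are integers and it is the deconcatenation entries that carry $2^{-2r}-1$ or $2^{-2r}-2$ (of negative valuation); triangularity ``mod $2$'' is obtained after renormalizing each row, or, as in the proof of Lemma~4.2, by projecting instead on $\zeta^{\mathfrak{l}}(\overline{2r+1})$ so that types \textsc{(a)},\textsc{(b)},\textsc{(c)} acquire the $2$-adically small factor $\tfrac{2^{2r}}{1-2^{2r}}$ --- not because of a factor $2^{-2r}-1$ on those terms. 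Finally, you defer the entry-by-entry verification of $M_3$ and $A^{a,b,c}$, which is the actual content of the statement; the paper at least records the intermediate computation of $D_{2r+1,3}\bigl(\zeta^{\mathfrak{m}}(2a+1,2b+1,\overline{2c+1})\bigr)$ in $\mathcal{H}^{\geq 1}$ from which these explicit formulas are read off, and a complete write-up should do the same.
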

\begin{proof}
Let $\zeta^{\mathfrak{m}}(2a+1,2b+1,\overline{2c+1})$, $a,b,c >0$ fixed, and substract elements of the same weight, of depth 3 until it belongs to $gr_{3} \mathcal{F}_{0}\mathcal{H}$.\\
Let calculate infinitesimal coproducts referring to the formula ($\ref{Deriv2}$) in the quotient $\mathcal{H}^{\geq 1}$ and use previous results for depth 2, with $n=a+b+c$:
$$D_{2r+1,3}(\zeta^{\mathfrak{m}}(2a+1,2b+1,\overline{2c+1}))\equiv \zeta^{\mathfrak{l}}(2r+1)\otimes  \left[ \delta_{r=b+c} \binom{2(b+c)}{2c}(2^{-2r}-2) \zeta^{\mathfrak{m}}(2a+1, \overline{1}) \right.$$ 
$$\left. + \zeta^{\mathfrak{m}}(1, \overline{2(n-r)+1}) \left( \delta_{a = r} \binom{2(n-r)}{2c}+  \delta_{b \leq r < a+b} \binom{2r}{2b}\binom{2(n-r)}{2c} - \delta_{a\leq r<a+b} \binom{2r}{2a}\binom{2(n-r)}{2c} \right. \right.$$
$$\left. \left. - \delta_{b\leq r<b+c} \binom{2r}{2b}\binom{2(n-r)}{2a}  +\delta_{c \leq r <b+c} \binom{2r}{2c} \binom{2(n-r)}{2a}(2^{-2r}-1)\right) \right].$$
At first, let substract $\binom {2(b+c)}{2c}\zeta(2a+1,1,\overline{2(b+c)+1})$ such that the $D^{-1}_{1,2}  D^{1}_{2r+1,3} $ are equal to zero, which comes to eliminate the term $\zeta^{\mathfrak{m}}(2a+1, \overline{1})$ appearing (case $r=b+c$).\\
So, we are left to substract a linear combination 
$$\sum_{k=1}^{a+b+c} \alpha_{k}^{a,b,c}  \zeta^{\mathfrak{m}}(1,2(a+b+c-k)+1, \overline{2k+1})$$
 such that the coefficients $\alpha_{k}^{a,b,c}$ are solutions of the system $M_{3}X=A^{a,b,c}$ where $A^{a,b,c}= (A^{a,b,c}_{r})_{r}$ satisfying in $\mathcal{H}^{\geq 1}$:
$$D_{2r+1,3} \left( \zeta^{\mathfrak{m}}(2a+1,2b+1,\overline{2c+1})- \binom {2(b+c)}{2c}\zeta(2a+1,1,\overline{2(b+c)+1})\right) \equiv$$
$$ A^{a,b,c}_{r}\zeta^{\mathfrak{l}}(2r+1)\otimes \zeta^{\mathfrak{m}}(1, \overline{2(n-r)+1}),$$
and $M_{3}= (m_{r,k})_{r,k}$ matrix such that: 
$$D_{2r+1,3}( \zeta^{\mathfrak{m}}(1,2(a+b+c-k)+1, \overline{2k+1}))= m_{r, k}   \zeta^{\mathfrak{l}}(2r+1)\otimes \zeta^{\mathfrak{m}}(1, \overline{2(n-r)+1}).$$
This system has solutions since, according to $\S 4.2$ results, the matrix $M_{3}$ is invertible.\footnote{Indeed, modulo 2, $M_{3}$ is an upper triangular matrix with $1$ on diagonal.}\\
Then, the following linear combination will be in $\mathcal{F}_{0}\mathcal{H}$:
$$\zeta^{\mathfrak{m}}(2a+1,2b+1,\overline{2c+1})-\sum_{k=1}^{a+b+c} \alpha_{k}^{a,b,c}  \zeta^{\mathfrak{m}}(1,2(a+b+c-k)+1, \overline{2k+1})-\binom {2(b+c)}{2c}\zeta(2a+1,1,\overline{2(b+c)+1}).$$ 
The coefficients $\alpha_{k}^{a,b,c}$ belong to $\mathbb{Z}_{\text{odd}}$ since coefficients are integers, and $\det(M_{3})$ is odd. Referring to the calculus of infinitesimal coactions, $A^{a,b,c}$ and $M_{3}$ are as claimed in lemma.
\end{proof}
\texttt{Examples:}
\begin{itemize}
\item[$\cdot$] By applying this lemma, with $a=b=c=1$ we obtain the following MMZV:
$$\zeta^{\mathfrak{m}}(3,3,\overline{3})+ \frac{774}{191} \zeta^{\mathfrak{m}}(1,5, \overline{3})  - \frac{804}{191} \zeta^{\mathfrak{m}}(1,3, \overline{5})  + \frac{450}{191}\zeta^{\mathfrak{m}}(1,1, \overline{7})  -6 \zeta^{\mathfrak{m}}(3,1,\overline{5}).$$
Indeed, in this case, with the previous notations:
$$M_{3}=\begin{pmatrix}
\frac{27}{4}&-1&-1 \\
-\frac{53}{8}&-\frac{111}{16}&-1\\
-\frac{1905}{64}&-\frac{1905}{64}&-\frac{127}{64}
\end{pmatrix} \text{ , } \quad A^{1,1,1}=\begin{pmatrix}
\frac{51}{2} \\
0\\
0
\end{pmatrix} .$$
\item[$\cdot$] Similarly, we obtain the following motivic multiple zeta value:
$$\zeta^{\mathfrak{m}}(3,3,\overline{5})+ \frac{850920}{203117}\zeta^{\mathfrak{m}}(1,7, \overline{3}) +\frac{838338}{203117}\zeta^{\mathfrak{m}}(1,5, \overline{5}) -\frac{3673590}{203117}\zeta^{\mathfrak{m}}(1,3, \overline{7})+ \frac{20351100}{203117} \zeta^{\mathfrak{m}}(1,1, \overline{9}) -15\zeta^{\mathfrak{m}}(3,1,\overline{7}).$$
$$\text{ There: } \quad \quad M_{3}=\begin{pmatrix}
-\frac{63}{4}& 15 & -1& -1\\
-\frac{93}{8}&-\frac{31}{16}&-6&-1 \\
-\frac{1009}{64}&-\frac{1905}{64}&-\frac{1023}{64}&-1\\
-\frac{3577}{64}&-\frac{17885}{128}&-\frac{3577}{64}&-\frac{511}{256}
\end{pmatrix} \text{ , } \quad \quad A^{1,1,2}=\begin{pmatrix}
210\\
\frac{387}{8} \\
0\\
0
\end{pmatrix} .$$
\end{itemize}

\begin{lemm} 
The depth $3$ part of the basis of $\mathcal{F}_{1}\mathcal{H}$:
$$\left\{\zeta^{\mathfrak{m}}(2a+1,2b+1,\overline{2c+1})-\delta_{a=0 \atop\text{ or } c=0} (-1)^{\delta_{c=0}} \binom{2(a+b+c)}{2b} \zeta^{\mathfrak{m}}(1,1, \overline{2(a+b+c)+1}) \right.$$
$$ \left. - \delta_{c=0} \binom{2(a+b)}{2b} \zeta(1,2(a+b)+1,\overline{1}), \text{ at most one of } a,b,c \text{ equals zero }\right\}.$$
\end{lemm}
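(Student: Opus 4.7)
I apply Theorem~4.3(iv) with $i=1$, $p=3$, $N=2$: the depth-$3$ part of a basis of $\mathcal{F}_{1}\mathcal{H}_{n}$ is $\{x+cl_{n,\leq 3,\geq 2}(x) : x\in\mathcal{B}_{n,3,\leq 1}\}$, where $\mathcal{B}_{n,3,\leq 1}$ consists of the $\zeta^{\mathfrak{m}}(2a+1,2b+1,\overline{2c+1})$ with at most one of $a,b,c$ equal to zero, and $cl(x)$ is the unique element of $\langle\mathcal{B}_{n,\leq 3,\geq 2}\rangle_{\mathbb{Q}}$ making $x+cl(x)\in\mathcal{F}_{1}\mathcal{H}_{n}$. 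Four sub-cases arise: (A) $a,b,c>0$; (B) $a=0$, $b,c>0$; (C) $c=0$, $a,b>0$; (D) $b=0$, $a,c>0$, and I determine the depth-$3$ part of $cl(x)$ in each.

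The criterion I use is that $x+cl(x)\in\mathcal{F}_{1}$ iff $D_{1}^{-1}(x+cl(x))\in\mathcal{L}_{1}\otimes\mathcal{F}_{0}$ and $D_{2r+1}^{-1}(x+cl(x))\in\mathcal{L}_{2r+1}\otimes\mathcal{F}_{1}$ for $r\geq 1$. By formula~(\ref{Deriv21}), $D_{1}^{-1}$ is a deconcatenation, nonzero only when the last index equals $0$. By formula~(\ref{Deriv2}), $D_{2r+1}^{-1}$ produces depth-$2$ right-hand factors $\zeta^{\mathfrak{m}}(2a'+1,\overline{2b'+1})$; by Lemma~A.2 these lie in $\mathcal{F}_{1}$ except when $(a',b')=(0,0)$, i.e.\ for the exceptional level-$2$ element $\zeta^{\mathfrak{m}}(1,\overline{1})$, which must be cancelled. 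Lemma~A.1 gives $\zeta^{\mathfrak{m}}(2a+1,\overline{2b+1})\equiv\binom{2(a+b)}{2b}\zeta^{\mathfrak{m}}(1,\overline{2(a+b)+1})\pmod{\mathcal{F}_{0}}$, needed to analyse $D_{1}^{-1}$ modulo $\mathcal{L}_{1}\otimes\mathcal{F}_{0}$.

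In cases (A) and (D), $c>0$ forces $D_{1}^{-1}(x)=0$; inspecting each term of~(\ref{Deriv2}) shows that the only way to produce $\zeta^{\mathfrak{m}}(1,\overline{1})$ on the right would be either $a=0$ with $r=b+c$ (via the middle $i=2$ or the $(d)$ term) or $c=0$ with $r=a+b$ (via middle $i=1$, blocked anyway by strict inequality), both excluded here; hence $x\in\mathcal{F}_{1}$ and the depth-$3$ part of $cl(x)$ vanishes. In case (B), again $D_{1}^{-1}(x)=0$, but the $(d)$-term of $D_{2(b+c)+1}^{-1}(x)$ contributes $\binom{2(b+c)}{2c}\zeta^{\mathfrak{l}}(\overline{2(b+c)+1})\otimes\zeta^{\mathfrak{m}}(1,\overline{1})$, which must be eliminated. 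Using $\binom{2(b+c)}{2c}=\binom{2(b+c)}{2b}$, subtracting $\binom{2(b+c)}{2b}\zeta^{\mathfrak{m}}(1,1,\overline{2(b+c)+1})$ achieves this (its only $\zeta^{\mathfrak{m}}(1,\overline{1})$ contribution comes from its own $(d)$-term at $r=b+c$, with matching coefficient); the remaining right-hand factors of the correction are $\zeta^{\mathfrak{m}}(1,\overline{2k+1})$ with $k\geq 1$, in $\mathcal{F}_{1}$, and $D_{1}^{-1}$ of the correction is zero.

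Case (C) is the main difficulty. Here $D_{1}^{-1}(x)=\zeta^{\mathfrak{l}}(\overline{1})\otimes\zeta^{\mathfrak{m}}(2a+1,\overline{2b+1})$ is nonzero modulo $\mathcal{L}_{1}\otimes\mathcal{F}_{0}$; by Lemma~A.1 it reduces to $\binom{2(a+b)}{2b}\zeta^{\mathfrak{l}}(\overline{1})\otimes\zeta^{\mathfrak{m}}(1,\overline{2(a+b)+1})\pmod{\mathcal{L}_{1}\otimes\mathcal{F}_{0}}$. Subtracting $\binom{2(a+b)}{2b}\zeta^{\mathfrak{m}}(1,2(a+b)+1,\overline{1})$, whose $D_{1}^{-1}$ (deconcatenation, last index zero) is precisely $\zeta^{\mathfrak{l}}(\overline{1})\otimes\zeta^{\mathfrak{m}}(1,\overline{2(a+b)+1})$, kills $D_{1}^{-1}$ modulo $\mathcal{L}_{1}\otimes\mathcal{F}_{0}$. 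However this correction reintroduces, via its own $(d)$-term at $r=a+b$, the unwanted $-\binom{2(a+b)}{2b}\zeta^{\mathfrak{l}}(\overline{2(a+b)+1})\otimes\zeta^{\mathfrak{m}}(1,\overline{1})$; cancelling it exactly as in case (B) by adding $\binom{2(a+b)}{2b}\zeta^{\mathfrak{m}}(1,1,\overline{2(a+b)+1})$ resolves the issue without affecting $D_{1}^{-1}$ (the last index is nonzero). Unfolding the $\delta$-symbols and signs, the two corrections assemble exactly into the stated formula; uniqueness of $cl$ from Theorem~4.3(ii) rules out further depth-$3$ corrections. The main obstacle is precisely this nested cancellation: a first correction introduced to fix $D_{1}^{-1}$ creates a forbidden $\zeta^{\mathfrak{m}}(1,\overline{1})$ under $D_{2(a+b)+1}^{-1}$, which only a secondary, carefully chosen correction by $\zeta^{\mathfrak{m}}(1,1,\overline{2(a+b)+1})$ removes.
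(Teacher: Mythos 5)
Your proof is correct and follows essentially the same route as the paper's: you use the recursive criterion of Definition 4.5, kill $D_{1}$ modulo $\mathcal{F}_{0}$ via Lemma A.1 (subtracting $\binom{2(a+b)}{2b}\zeta^{\mathfrak{m}}(1,2(a+b)+1,\overline{1})$ when $c=0$), and then cancel the only obstruction in $D^{-1}_{1}\circ D_{2r+1}$, namely the $\zeta^{\mathfrak{m}}(1,\overline{1})$ term arising at $r=b+c$ when $a=0$ or reintroduced by the first correction when $c=0$, exactly as in the paper's computation. The case split (A)--(D) and the appeal to Theorem 4.3(iv) for the basis statement are just a more explicit organization of the same argument, with the same coefficients.
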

\begin{proof} Let $\zeta^{\mathfrak{m}}(2a+1,2b+1,\overline{2c+1})$ with at most one $1$.
\begin{center}
 Our goal is to annihilate $D^{-1}_{1,3}$ and $\lbrace D^{-1}_{1,3} \circ D^{1}_{2r+1}\rbrace_{r>0}$, in the quotient $\mathcal{H}^{\geq 1}$.
 \end{center}
Let first cancel $D^{-1}_{1,3}$: if $c\neq 0$, it is already zero; else, for $c=0$, in $\mathcal{H}^{\geq 1}$, according to the results in depth $2$ for $\mathcal{F}_{0}$, we can substract $\binom{2(a+b)}{2a} \zeta(1,2(a+b)+1,\overline{1})$ since:
$$D_{1,3} (\zeta^{\mathfrak{m}}(2a+1,2b+1,\overline{1}))\equiv \binom{2(a+b)}{2a}\zeta^{\mathfrak{m}}(1,\overline{2(a+b)+1})\equiv \binom{2(a+b)}{2a} D_{1,3} (\zeta^{\mathfrak{m}}(1,2(a+b)+1,\overline{1})).$$
Besides, with $\equiv$ standing for an equality in $\mathcal{H}^{\geq 1}$:
$$ D^{-1}_{1,2} D^{1}_{2r+1,3} (\zeta^{\mathfrak{m}}(2a+1,2b+1,\overline{2c+1}))= \delta_{r=b+c} \binom{2r}{2c}(2^{-2r}-2)\zeta^{\mathfrak{m}}(\overline{2a+1})\equiv \delta_{r=b+c \atop a=0} \binom{2(b+c)}{2c}(2^{-2(b+c)}-2)\zeta^{\mathfrak{m}}(\overline{1}).$$ 
$$ D^{-1}_{1,2} D^{1}_{2r+1,3} (\zeta^{\mathfrak{m}}(1,1,\overline{2(a+b+c)+1}))= \delta_{r=a+b+c} (2^{-2(a+b+c)}-2)\zeta^{\mathfrak{m}}(\overline{1}).$$ 
$$ D^{-1}_{1,2} D^{1}_{2r+1,3} (\zeta^{\mathfrak{m}}(1,2(a+b+c)+1, \overline{1}))= \delta_{r=a+b+c} (2^{-2(a+b+c)}-2)\zeta^{\mathfrak{m}}(\overline{1}).$$ 
Therefore, to cancel $D^{-1}_{1,2}\circ D^{1}_{2r+1,3}$:
\begin{itemize}
\item[$\cdot$] If $a=0$ we substract $\binom{2(b+c)}{2c}\zeta^{\mathfrak{m}}(1,1,\overline{2(b+c)+1}) $.
\item[$\cdot$] If $c=0$, we add $\binom{2(b+c)}{2c}\zeta^{\mathfrak{m}}(1,1,\overline{2(a+b)+1})$.
\end{itemize}
\end{proof}

\paragraph{Depth $4$.} The simplest example in depth $4$ of MMZV obtained by this way, with $\alpha_{i}\in\mathbb{Q}$:
$$-\zeta^{\mathfrak{m}}(3, 3, 3, \overline{3})-\frac{3678667587000}{4605143289541}\zeta^{\mathfrak{m}}(1, 1, 1, \overline{9})+\frac{9187768536750}{4605143289541}\zeta^{\mathfrak{m}}(1, 1, 3, \overline{7})+\frac{41712466500}{4605143289541}\zeta^{\mathfrak{m}}(1, 1, 5, \overline{5})$$
$$-\frac{9160668717750}{4605143289541} \zeta^{\mathfrak{m}}(1, 1, 7, \overline{3})+\frac{11861255103300}{4605143289541}\zeta^{\mathfrak{m}}(1, 3, 1, \overline{7})+\frac{202283196216}{4605143289541}\zeta^{\mathfrak{m}}(1, 3, 3, \overline{5})$$
$$-\frac{993033536436}{4605143289541}\zeta^{\mathfrak{m}}(1, 3, 5, \overline{3})+\frac{8928106562124}{4605143289541}\zeta^{\mathfrak{m}}(1, 5, 1, \overline{5})-\frac{1488017760354}{4605143289541}\zeta^{\mathfrak{m}}(1, 5, 3, \overline{3})$$
$$-\frac{450}{191}\zeta^{\mathfrak{m}}(3, 1, 1, \overline{7})+\frac{804}{191}\zeta^{\mathfrak{m}}(3, 1, 3, \overline{5})-\frac{774}{191}\zeta^{\mathfrak{m}}(3, 1, 5, \overline{3})+6\zeta^{\mathfrak{m}}(3, 3, 1, \overline{5})$$ 
$$+ \alpha_{1} \zeta^{\mathfrak{m}}(1,-11)+ \alpha_{2} \zeta^{\mathfrak{m}}(1,-9)\zeta^{\mathfrak{m}}(2)+ \alpha_{3} \zeta^{\mathfrak{m}}(1,-7)\zeta^{\mathfrak{m}}(2)^{2}+ \alpha_{4} \zeta^{\mathfrak{m}}(1,-5)\zeta^{\mathfrak{m}}(2)^{3}+ \alpha_{5} \zeta^{\mathfrak{m}}(1,-3)\zeta^{\mathfrak{m}}(2)^{4}.$$

\subsection{ $N=3,4$: Depth $2$}

Let us detail the case of depth 2 as an application of the results in $\S 4.2$ and start by defining some coefficients appearing in the next examples:
\begin{defi}
Set $\alpha^{a,b}_{k}\in\mathbb{Z}$ such that $M(\alpha^{a,b}_{k})_{b+1 \leq k \leq \frac{n}{2}-1 }= A^{a,b}$ with $n=2(a+b+1)$:
$$M\mathrel{\mathop:}= \left( \binom{2r-1}{2k-1} \right)_{b+1 \leq r,k \leq \frac{n}{2}-1} \quad A^{a,b}\mathrel{\mathop:}=\left(-\binom{2r-1}{2b}\right)_{b+1 \leq r \leq \frac{n}{2}-1} \quad \beta^{a,b}\mathrel{\mathop:}= \binom{n-2}{2b} + \sum_{k=b+1}^{a+b} \alpha_{k} \binom{n-2}{2k-1}.$$
\end{defi}
\texttt{Nota Bene}: The matrix $M$ having integers as entries and determinant equal to $1$, and $A$ having integer components, the coefficients $\alpha^{a,b}_{k}$ are obviously integers; the matrix $M$ and its inverse are lower triangular with $1$ on the diagonal. Remark also that:\\
$$\begin{array}{llll}
\alpha^{a,b}_{b+i}= (-1)^{i} \binom{2b+2i-1}{2i-1} c_{i} & \text{ where } c_{i}\in\mathbb{N} & \text{ does not depend } & \text{ neither on $b$ nor on $a$}  \\
\alpha^{a,b}_{b+1}=-(2b+1) & \alpha^{a,b}_{b+2}=2\binom{2b+3}{3} & \alpha^{a,b}_{b+3}=-16\binom{2b+5}{5} & \alpha^{a,b}_{b+4}=272\binom{2b+7}{7}
\end{array}$$

\begin{lemm} 
The depth $2$ part of the basis of MMZV, for even weight $n=2(a+b+1)$:
$$\left\{ \zeta^{\mathfrak{m}}\left( 2a+1, 2b+1 \atop 1, \xi \right)- \beta^{a,b} \zeta^{\mathfrak{m}}\left(1,n-1 \atop 1, \xi \right) - \sum_{k=b+1}^{\frac{n}{2}-1} \alpha^{a,b}_{k} \zeta^{\mathfrak{m}}\left( n-2k, 2k \atop 1, \xi \right), a,b> 0 \right\}.$$
\end{lemm}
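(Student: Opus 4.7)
The plan is to exhibit, for each level-$0$ depth-$2$ element $Z=\zeta^{\mathfrak{m}}\left({2a+1,2b+1\atop 1,\xi}\right)$, a correction $C\in\langle\mathcal{B}_{n,2,\geq 1}\rangle_{\mathbb{Q}}$ such that $Z+C\in\mathcal{F}_0\mathcal{H}=\mathcal{H}^1$. By the descent criterion (Theorems $3.10$ and $3.11$ applied to the descent $(k_N/\mathbb{Q},P/1)$) this amounts to enforcing $D^\xi_1(Z+C)=0$ and $D^\xi_{2r}(Z+C)=0$ for all $r>0$, together with the stability condition $D^\xi_{2r+1}(Z+C)\in\mathcal{L}_{2r+1}\otimes\mathcal{F}_0\mathcal{H}$ for $r\geq 1$. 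The only admissible summands in $C$ are the other depth-$2$ basis elements of positive level, namely $\zeta^{\mathfrak{m}}(n-2k,2k;1,\xi)$ (level two) and $\zeta^{\mathfrak{m}}(1,n-1;1,\xi)$ (level one).

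First I would compute $D^\xi_{2r,2}(Z)$ using Lemma $3.5$. Since $\zeta^{\mathfrak{l}}(2r;1)=0$ for $N=3,4$ by the depth-$1$ distribution relations of Section $3.1$, every interior cut of types $\textsc{(a)}$, $\textsc{(b)}$, $\textsc{(c)}$ drops out, and only the deconcatenation $\textsc{(d)}$ survives, yielding
\[ D^\xi_{2r,2}(Z)=-\binom{2r-1}{2b}\,\zeta^{\mathfrak{l}}\!\left({2r\atop\xi}\right)\otimes\zeta^{\mathfrak{m}}\!\left({n-2r\atop\xi}\right),\qquad b+1\leq r\leq a+b. \]
The same analysis gives $D^\xi_{2r,2}(\zeta^{\mathfrak{m}}(n-2k,2k;1,\xi))=\binom{2r-1}{2k-1}\zeta^{\mathfrak{l}}(2r;\xi)\otimes\zeta^{\mathfrak{m}}(n-2r;\xi)$ for $k\leq r$, while $D^\xi_{2r,2}(\zeta^{\mathfrak{m}}(1,n-1;1,\xi))=0$. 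Equating $D^\xi_{2r,2}(Z+C)=0$ for each $r\in\{b+1,\dots,a+b\}$ reduces to the system $M(\alpha^{a,b}_k)=A^{a,b}$ with $M$ integer, lower triangular, and unit diagonal, whose unique integer solution is precisely the $\alpha^{a,b}_k$ of the statement. Independently, $D^\xi_1(Z)=0$ because all entries of $Z$ are $\geq 3$, and $D^\xi_1$ vanishes on each correction term as well (the only a priori contribution from $\zeta^{\mathfrak{m}}(1,n-1;1,\xi)$ carries the factor $\zeta^{\mathfrak{l}}(1;1)=0$).

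Next I would analyze $D^\xi_{2r+1,2}$. For $r\geq 1$ the right-hand factors produced take the form $\zeta^{\mathfrak{m}}(s;\xi)$ with either $s=n-2r-1$ or $s=2b+1$, and whenever $s\geq 2$ the distribution relation of Section $3.1$ makes $\zeta^{\mathfrak{m}}(s;\xi)$ a $\mathbb{Q}^\times$-multiple of $\zeta^{\mathfrak{m}}(s;1)\in\mathcal{F}_0\mathcal{H}$, so these contributions are harmless. The sole genuine obstruction occurs at $r=a+b$, where the $\textsc{(d')}$ deconcatenation produces a right factor $\zeta^{\mathfrak{m}}(1;\xi)\notin\mathcal{F}_0\mathcal{H}$. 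Using the depth-$1$ identity $\zeta^{\mathfrak{l}}(n-1;1)\in\mathbb{Q}^\times\cdot\zeta^{\mathfrak{l}}(n-1;\xi)$ to collapse the left factor onto a single basis element, and summing the $\textsc{(d')}$-contributions of $Z$, of $\zeta^{\mathfrak{m}}(n-2k,2k;1,\xi)$, and of $\zeta^{\mathfrak{m}}(1,n-1;1,\xi)$, the coefficient of $\zeta^{\mathfrak{l}}(n-1;\xi)\otimes\zeta^{\mathfrak{m}}(1;\xi)$ in $D^\xi_{n-1,2}(Z+C)$ vanishes precisely when $\beta^{a,b}=\binom{n-2}{2b}+\sum_k\alpha^{a,b}_k\binom{n-2}{2k-1}$, matching the given definition. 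Linear independence and the basis property then follow from Theorem $4.3\,(i)$.

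The main technical hurdle is this last step: $\beta^{a,b}$ must satisfy \emph{simultaneously} the two scalar equations arising from the two left factors $\zeta^{\mathfrak{l}}(n-1;1)$ and $\zeta^{\mathfrak{l}}(n-1;\xi)$ that a priori appear in $\textsc{(d')}$. Their compatibility reduces to the binomial palindromicities $\binom{n-2}{2a}=\binom{n-2}{2b}$ (using $n-2=2a+2b$) and $\binom{n-2}{n-2k-1}=\binom{n-2}{2k-1}$, which reflect the symmetry of the deconcatenation at the two endpoints of the iterated-integral representation; without this combinatorial coincidence one would need more correction terms than are available in $\mathcal{B}_{n,2,\geq 1}$.
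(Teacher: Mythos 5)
Your proposal is correct and follows essentially the same route as the paper's proof: kill the even derivations $D^{\xi}_{2r}$ by the $\alpha^{a,b}_{k}$-correction (using $\zeta^{\mathfrak{l}}\left(2r \atop 1\right)=0$ so only deconcatenation survives, giving the system $M\alpha^{a,b}=A^{a,b}$), then kill the weight-one obstruction $\zeta^{\mathfrak{m}}\left(1 \atop \xi\right)$ in $D_{n-1}$ (equivalently $D_{1}\circ D_{n-1}$) by the $\beta^{a,b}$-correction. Your final paragraph merely carries out explicitly the computation the paper leaves to the reader, and the binomial symmetries $\binom{n-2}{2a}=\binom{n-2}{2b}$, $\binom{n-2}{n-2k-1}=\binom{n-2}{2k-1}$ you invoke are exactly what makes the single coefficient $\beta^{a,b}$ of Definition A.5 suffice.
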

\begin{proof}\footnote{We omit the exponent $\xi$ indicating the projection on the second factor of the derivations $D_{r}$, to lighten the notations.} Let $Z=\zeta^{\mathfrak{m}}(2a+1, \overline{2b+1})$ fixed, with $a,b>0$.\\
First we substract a linear combination of $\zeta^{\mathfrak{m}}\left(n-2k, 2k \atop 1, \xi \right)$ in order to cancel $\lbrace D_{2r}\rbrace$. It is possible since in depth 2, because $\zeta^{\mathfrak{l}}\left( 2r \atop  1\right) =0$:
$$ D_{2r} (\zeta^{\mathfrak{m}}(x_{1}, \overline{x_{2}}))= \delta_{x_{2} \leq 2r \leq x_{1}+x_{2}-1} (-1)^{x_{2}} \binom{2r-1}{x_{2}-1} \zeta^{\mathfrak{l}}\left( 2 r \atop  \xi\right)\otimes \zeta^{\mathfrak{m}}\left( x_{1}+x_{2}-r\atop  \xi \right).$$
Hence it is sufficient to choose $\alpha_{k}$ such that $M\alpha^{a,b}= A^{a,b}$ as in Definition $A.5$.\\
Now, it remains to satisfy $D_{1}\circ D_{2r+1}(\cdot)=0$ (for $r=n-1$ only) in order to have an element of $\mathcal{F}^{k_{N}/\mathbb{Q},P/1}_{0}\mathcal{H}_{n}$. In that purpose, let substract $\beta^{a,b} \zeta^{\mathfrak{m}}(1,n-1 ;  1, \xi)$ with $\beta^{a,b}$ as in Definition $A.5$) according to the calculation of $D_{1}\circ D_{2r+1}(\cdot)$, left to the reader.\\
\end{proof}

\texttt{Examples}: 
\begin{itemize}
\item[$\cdot$] $\zeta^{\mathfrak{m}}\left(5,3 \atop 1, \xi \right)  -75 \zeta^{\mathfrak{m}}\left( 1,7 \atop 1, \xi \right) + 3 \zeta^{\mathfrak{m}}\left(4, 4 \atop 1, \xi \right) - 20 \zeta^{\mathfrak{m}}\left( 2, 6 \atop 1, \xi \right).$
\item[$\cdot$] $\zeta^{\mathfrak{m}}\left(3,5 \atop 1, \xi \right) +15 \zeta^{\mathfrak{m}}\left( 1, 7 \atop 1, \xi \right) + 5 \zeta^{\mathfrak{m}}\left(6, 2 \atop 1, \xi \right) .$
\item[$\cdot$] $\zeta^{\mathfrak{m}}\left(5,5 \atop 1, \xi \right) -350 \zeta^{\mathfrak{m}}\left( 1, 9 \atop 1, \xi \right) + 5 \zeta^{\mathfrak{m}}\left( 4, 6 \atop 1, \xi \right) -70 \zeta^{\mathfrak{m}}\left( 2, 8 \atop 1, \xi \right).$
\item[$\cdot$] $\zeta^{\mathfrak{m}}\left( 7, 5 \atop 1, \xi \right) +12810 \zeta^{\mathfrak{m}}\left( 1, 11 \atop 1, \xi \right) + 5 \zeta^{\mathfrak{m}}\left( 6, 6 \atop 1, \xi \right) -70 \zeta^{\mathfrak{m}}\left( 4,8 \atop 1, \xi \right)+ 2016 \zeta^{\mathfrak{m}}\left( 2, 10 \atop 1, \xi \right).$

\item[$\cdot$]$\zeta^{\mathfrak{m}}\left(9, 5 \atop 1, \xi \right) -685575 \zeta^{\mathfrak{m}}\left( 1, 13 \atop 1, \xi \right) + 5 \zeta^{\mathfrak{m}}\left( 8, 6 \atop 1, \xi \right) -70 \zeta^{\mathfrak{m}}\left( 6, 8 \atop 1, \xi \right)+ 2016 \zeta^{\mathfrak{m}}\left( 4, 10 \atop 1, \xi \right)- 89760 \zeta^{\mathfrak{m}}\left( 2, 12 \atop 1, \xi \right).$
\end{itemize}

\begin{lemm} 
The depth $2$ part of the basis of $\mathcal{F}^{k_{N}/\mathbb{Q},P/1}_{1}\mathcal{H}_{n}$ is for even $n$:
$$\left\{ \zeta^{\mathfrak{m}}\left(2a+1, 2b+1 \atop  1, \xi\right)- \sum_{k=b+1}^{\frac{n}{2}-1} \alpha^{a,b}_{k} \zeta^{\mathfrak{m}}\left(n-2k, 2k \atop 1, \xi \right), a,b\geq 0, (a,b)\neq(0,0) \right\},$$
For odd $n$, the part in depth $2$ of the basis of $\mathcal{F}^{k_{N}/\mathbb{Q},P/1}_{1}\mathcal{H}_{n}$ is:
$$\left\{ \zeta^{\mathfrak{m}}\left(x_{1}, x_{2} \atop 1, \xi \right)+ (-1)^{x_{2}+1}  \binom{n-2}{x_{2}-1} \zeta^{\mathfrak{m}}\left(1, n-1 \atop 1, \xi \right), x_{1},x_{2} >1, \text{ one even, the other odd } \right\}.$$
\end{lemm}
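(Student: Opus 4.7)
The plan is to invoke Theorem 4.3(iv) for the descent $\mathcal{d}=(k_N/\mathbb{Q}, P/1)$, whose derivation set is $\mathscr{D}^{\mathcal{d}}=\{D^{\xi}_1, (D^{\xi}_{2r})_{r>0}\}$. That theorem describes the depth-$2$ part of a basis of $\mathcal{F}_1\mathcal{H}_n$ as elements of $\mathcal{B}_{n,2,\leq 1}$ corrected by combinations of elements of $\mathcal{B}_{n,\leq 2,\geq 2}$; since in depth $1$ no basis element has level $\geq 2$ for $N=3,4$, the correction is itself of depth $2$. The task therefore reduces to writing down the correction and verifying that the corrected $Z$ satisfies $\mathscr{D}^{\mathcal{d}}(Z)\subset\mathcal{F}_0\mathcal{H}$, i.e.\ $Z\in\mathcal{F}_1\mathcal{H}_n$; linear independence and generation then follow at once since the principal parts exhaust $\mathcal{B}_{n,2,\leq 1}$ and remain distinct modulo level $\geq 2$ in $gr_2^{\mathfrak{D}}\mathcal{H}_n$.

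For the \textbf{even weight case}, set
$$Z:=\zeta^{\mathfrak{m}}\left({2a+1,2b+1\atop 1,\xi}\right)-\sum_{k=b+1}^{n/2-1}\alpha^{a,b}_k\,\zeta^{\mathfrak{m}}\left({n-2k,2k\atop 1,\xi}\right).$$
The $\alpha^{a,b}_k$ are the coefficients already introduced in Lemma A.6, chosen so that the depth-graded operator $D^{\xi}_{2r,2}$ annihilates $Z$. Since Lemma 3.5 shows that $D_{2r}$ on a depth-$2$ MMZV lands in depth $1$, and since $\mathcal{F}_0^{\mathfrak{D}}\mathcal{H}_m=0$ for $m>0$, this depth-graded vanishing lifts to $D^{\xi}_{2r}(Z)=0$ for all $r>0$. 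For $D^{\xi}_1(Z)$: the correction terms have both entries $\geq 2$, so cases (a0), (b), (d) of Lemma 3.5 do not fire on them; on the principal term, (a0) and (b) carry a factor $\zeta^{\mathfrak{l}}(1;1)=0$, so only the deconcatenation (d) survives, and only when $b=0$, producing $\zeta^{\mathfrak{l}}(1;\xi)\otimes\zeta^{\mathfrak{m}}(2a+1;\xi)$; since $(a,b)\neq(0,0)$ forces $a>0$, the right factor is a level-$0$ depth-$1$ element and lies in $\mathcal{F}_0\mathcal{H}_{2a+1}$.

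For the \textbf{odd weight case}, set
$$Z:=\zeta^{\mathfrak{m}}\left({x_1,x_2\atop 1,\xi}\right)+(-1)^{x_2+1}\binom{n-2}{x_2-1}\zeta^{\mathfrak{m}}\left({1,n-1\atop 1,\xi}\right),$$
with $x_1,x_2>1$ of opposite parity. Running through Lemma 3.5: for $2r<n-1$, cases (a0) and (b) vanish via $\zeta^{\mathfrak{l}}(2r;1)=0$; only (d) survives on the principal term, producing $\zeta^{\mathfrak{l}}(2r;\xi)\otimes\zeta^{\mathfrak{m}}(n-2r;\xi)$ with $n-2r$ odd and $>1$, hence level $0$; on the correction it vanishes outright. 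For $2r>n-1$ everything is zero. At the critical value $r=(n-1)/2$ only case (d') fires on both summands, and after using $\zeta^{\mathfrak{l}}(n-1;1)=0$ the cancellation reduces to the binomial symmetry $\binom{n-2}{x_1-1}=\binom{n-2}{x_2-1}$ (a direct consequence of $x_1+x_2=n$), which precisely dictates the coefficient $(-1)^{x_2+1}\binom{n-2}{x_2-1}$. An analogous, easier check yields $D^{\xi}_1(Z)=0$.

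The main technical obstacle is the systematic bookkeeping in Lemma 3.5 combined with the depth-$1$ vanishings of $\S 3.1$ (in particular $\zeta^{\mathfrak{l}}(1;1)=0$, $\zeta^{\mathfrak{l}}(2r;1)=0$, and $\zeta^{\mathfrak{l}}(n-1;1)=0$ for $n-1$ even): once the surviving contributions are isolated, the correction is essentially forced, and the required binomial identity drops out of the cancellation constraint at $r=(n-1)/2$.
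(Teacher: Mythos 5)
Your proposal is correct and follows essentially the same route as the paper: for even $n$ you reuse the $\alpha^{a,b}_{k}$ of Lemma A.6 to kill the even derivations $D^{\xi}_{2r}$ (the only obstruction, since the right factors would be even-weight depth-one elements not in $\mathcal{F}_{0}$), and for odd $n$ you cancel the single obstruction coming from the $\zeta^{\mathfrak{l}}(n-1;\xi)\otimes\zeta^{\mathfrak{m}}\left(1 \atop \xi\right)$ term of $D_{n-1}$ --- i.e.\ what the paper phrases as $D_{1}\circ D_{2r}$ --- via the $(-1)^{x_{2}+1}\binom{n-2}{x_{2}-1}\zeta^{\mathfrak{m}}\left(1,n-1 \atop 1,\xi\right)$ correction, using $\zeta^{\mathfrak{l}}(2k;1)=0$ and the symmetry $\binom{n-2}{x_{1}-1}=\binom{n-2}{x_{2}-1}$ exactly as in the paper's computation. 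Your additional explicit checks ($D^{\xi}_{1}$ on both cases, and the appeal to Theorem 4.3(iv) for the basis property) only spell out what the paper leaves implicit.
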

\begin{proof}
\begin{itemize}
\item[$\cdot$] For even $n$, we need to cancel $D_{2r}$ (else $D_{2s}\circ D_{2r}(\cdot) \neq 0$), so we substract the same linear combination than in the previous lemma.
\item[$\cdot$] For odd $n$, we need to cancel $D_{1}\circ D_{2r}$. Since $D_{1}\circ D_{2r}(Z)= (-1)^{x_{2}} \binom{n-2}{x_{2}-1}$, we substract $(-1)^{x_{2}+1}  \binom{n-2}{x_{2}-1} \zeta^{\mathfrak{m}}(1, \overline{n-1})$.
\end{itemize}
\end{proof}

\begin{lemm} 
The depth $2$ part of the basis of $\mathcal{F}^{k_{N}/\mathbb{Q},P/P}_{0}\mathcal{H}_{n}$ ($=\mathcal{H}_{n}^{\mathcal{MT}_{2}}$ if $N=4$) is:
$$\left\{ \zeta^{\mathfrak{m}}\left(2a+1, 2b+1 \atop 1, \xi \right)- \sum_{k=b+1}^{\frac{n}{2}-1} \alpha^{a,b}_{k} \zeta^{\mathfrak{m}}\left(n-2k, 2k \atop 1, \xi \right), a,b \geq 0 \right\}.$$
\end{lemm}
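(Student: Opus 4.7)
The plan is to mimic the proof of Lemma~A.6, simplified by the fact that the descent $(k_N/\mathbb{Q},P/P)$ carries the derivation set $\mathscr{D}^{\mathcal{d}}=\{D^{\xi}_{2r}\}_{r>0}$ with no $D^{\xi}_1$ to handle; in particular no correction by $\zeta^{\mathfrak{m}}(1,n-1;1,\xi)$ will be needed in the final formula.

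We fix $Z=\zeta^{\mathfrak{m}}(2a+1,2b+1;1,\xi)$ of weight $n=2(a+b+1)$ and compute $D^{\xi}_{2r}(Z)$ using Lemma~3.5. Since $\epsilon_1=1$, the terms of type \textsc{(a0)} and \textsc{(b)} produce $\zeta^{\mathfrak{l}}(2r;1)$, which vanishes by the depth-one results of $\S 3.1$; the term \textsc{(d')} would require $2r=x_1+x_2-1=2(a+b)+1$, odd, so it never triggers. Only the deconcatenation term \textsc{(d)} survives:
\[
D^{\xi}_{2r}(Z)=-\binom{2r-1}{2b}\,\zeta^{\mathfrak{l}}(2r;\xi)\otimes\zeta^{\mathfrak{m}}(n-2r;\xi),\qquad b+1\leq r\leq n/2-1.
\]
The same computation applied to the candidate corrections $\zeta^{\mathfrak{m}}(n-2k,2k;1,\xi)$ (all of level $\geq 1$ for this descent since $2k$ is even) returns $\binom{2r-1}{2k-1}\,\zeta^{\mathfrak{l}}(2r;\xi)\otimes\zeta^{\mathfrak{m}}(n-2r;\xi)$. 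Forcing $D^{\xi}_{2r}$ to vanish for every $r>0$ thus reduces to exactly the linear system $M(\alpha^{a,b}_k)=A^{a,b}$ of Definition~A.5; since $M$ is lower triangular with $1$'s on the diagonal, the integer coefficients $\alpha^{a,b}_k$ are uniquely determined and the corrected element lies in $\mathcal{F}_0^{(k_N/\mathbb{Q},P/P)}\mathcal{H}_n$.

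Linear independence and generation will then follow from Theorem~4.3 applied with $i=0$ and the motivic $(k_N/\mathbb{Q},P/P)$-level. For this descent $\mathcal{B}^{N}_{n,2,0}$ consists precisely of the $\zeta^{\mathfrak{m}}(2a+1,2b+1;1,\xi)$, and the construction above is exactly $x+cl_{n,2,\geq 1}(x)$ for each $x\in\mathcal{B}^{N}_{n,2,0}$; part~(iv) of Theorem~4.3 then identifies the resulting family as a basis of the depth-$2$ part of $\mathcal{F}^{(k_N/\mathbb{Q},P/P)}_0\mathcal{H}_n=\mathcal{H}^{\mathcal{MT}(\mathbb{Z}[1/N])}_n$ (equal to $\mathcal{H}^{\mathcal{MT}_2}_n$ when $N=4$). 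No serious obstacle is expected; the only book-keeping delicacies are verifying that the \textsc{(d')} contribution is silent on two odd entries, which is immediate from parity, and that the depth-one vanishing $\zeta^{\mathfrak{l}}(2r;1)=0$ indeed holds for both $N=3$ and $N=4$, as is explicit from the formulas of $\S 3.1$.
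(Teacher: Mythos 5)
Your argument is correct and is essentially the paper's own proof: the paper likewise observes that for the $(k_N/\mathbb{Q},P/P)$ descent one only needs to kill the even derivations $D^{\xi}_{2r}$, and subtracts exactly the same linear combination $\sum_k \alpha^{a,b}_k\,\zeta^{\mathfrak{m}}\bigl(n-2k,2k \atop 1,\ \xi\bigr)$ as in the proof of Lemma A.6, with no $\beta^{a,b}\zeta^{\mathfrak{m}}\bigl(1,n-1 \atop 1,\ \xi\bigr)$ correction. Your extra bookkeeping (the vanishing of the $\zeta^{\mathfrak{l}}(2r;1)$ terms, the parity argument excluding type \textsc{(d')}, and the appeal to Theorem 4.3 for freeness and generation) only makes explicit what the paper leaves implicit.
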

\begin{proof}
To cancel $D_{2r}$, we substract the same linear combination than above. 
\end{proof}

\begin{lemm} 
The depth $2$ part of the basis of $\mathcal{F}^{k_{N}/\mathbb{Q},P/P}_{1}\mathcal{H}_{n}$ is for even $n$:
$$\left\{ \zeta^{\mathfrak{m}}\left(2a+1, 2b+1 \atop 1, \xi \right)- \sum_{k=b+1}^{\frac{n}{2}-1} \alpha^{a,b}_{k} \zeta^{\mathfrak{m}}\left(n-2k, 2k \atop 1, \xi \right), a,b\geq 0, \right\},$$
And for odd $n$, the part in depth $2$ of the basis of $\mathcal{F}^{k_{N}/\mathbb{Q},P/P}_{1}\mathcal{H}_{n}$ is:
$$\left\{ \zeta^{\mathfrak{m}}\left( x_{1}, x_{2} \atop 1, \xi \right), x_{1},x_{2} \geq 1 \text{, one even, the other odd } \right\}.$$
\end{lemm}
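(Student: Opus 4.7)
The plan is to invoke Theorem $4.3$(iv) with $i = 1$ and $p = 2$: the depth-$2$ part of a basis for $\mathcal{F}_1 \mathcal{H}_n$ consists of $\{x + cl_{n, \leq 2, \geq 2}(x) : x \in \mathcal{B}_{n, 2, \leq 1}\}$, where $cl_{n, \leq 2, \geq 2}$ is the correction map taking values in $\langle \mathcal{B}_{n, \leq 2, \geq 2}\rangle_{\mathbb{Q}}$. Recall that for the descent $\mathcal{d} = (k_N/\mathbb{Q}, P/P)$ the level of a depth-$p$ tuple is the number of even entries and $\mathscr{D}^{\mathcal{d}} = \{D^{\xi}_{2r}\}_{r > 0}$. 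A quick parity analysis shows that at depth $2$ a pair $(x_1, x_2)$ has level $0$ or $2$ when $n$ is even and level $1$ when $n$ is odd, while at depth $1$ the level is $0$ or $1$, so $\mathcal{B}_{n, 1, \geq 2}$ is empty. Consequently, for even $n$, $\mathcal{B}_{n, 2, \leq 1} = \{\zeta^{\mathfrak{m}}(2a+1, 2b+1; 1, \xi) : a, b \geq 0\}$ and $\mathcal{B}_{n, \leq 2, \geq 2} = \{\zeta^{\mathfrak{m}}(n-2k, 2k; 1, \xi) : 1 \leq k \leq n/2 - 1\}$, whereas for odd $n$, $\mathcal{B}_{n, 2, \leq 1}$ consists of pairs with one even and one odd entry and $\mathcal{B}_{n, \leq 2, \geq 2}$ is empty.

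For odd $n$ the correction $cl_{n, \leq 2, \geq 2}$ vanishes by vacuity, and it suffices to verify directly that $\zeta^{\mathfrak{m}}(x_1, x_2; 1, \xi) \in \mathcal{F}_1$ whenever one of $x_1, x_2$ is even; equivalently, that $D^{\xi}_{2r}\bigl(\zeta^{\mathfrak{m}}(x_1, x_2; 1, \xi)\bigr) \in \mathcal{F}_0$ for every $r > 0$. Applying Lemma $3.5$ in depth $p = 2$ with $(\epsilon_1, \epsilon_2) = (1, \xi)$ and weight $2r$, the vanishing $\zeta^{\mathfrak{l}}(\text{even}; 1) = 0$ in $\mathcal{L}$ kills all type (a), (b) contributions as well as the $\zeta^{\mathfrak{l}}(2r; 1)$ piece of type (d'). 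What survives is a $\mathbb{Q}$-linear combination of terms $\zeta^{\mathfrak{l}}(2r; \xi) \otimes \zeta^{\mathfrak{m}}(m; \xi)$ with $m = n - 2r$ (type (d)) or, when $2r = n-1$, $m = 1$ (type (d')). Since $n$ is odd, in both cases $m$ is odd, and $D^{\xi}_{2s}\bigl(\zeta^{\mathfrak{m}}(m; \xi)\bigr) = 0$ for all $s > 0$ because the deconcatenation condition $2s = m$ forces $m$ even. Thus $D^{\xi}_{2r}(x) \in \mathcal{F}_0$, as required.

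For even $n$, the listed elements $\zeta^{\mathfrak{m}}(2a+1, 2b+1; 1, \xi) - \sum_{k=b+1}^{n/2 - 1} \alpha^{a,b}_k \zeta^{\mathfrak{m}}(n-2k, 2k; 1, \xi)$ are precisely those produced by the preceding lemma as a basis of the depth-$2$ part of $\mathcal{F}_0^{k_N/\mathbb{Q}, P/P}\mathcal{H}_n$. Since $\mathcal{F}_0 \subset \mathcal{F}_1$ they belong to $\mathcal{F}_1$, and they are linearly independent since their leading $\mathcal{B}$-representatives $\zeta^{\mathfrak{m}}(2a+1, 2b+1; 1, \xi)$ are. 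By Theorem $4.3$(iv) the depth-$2$ component of $\mathcal{F}_1 \mathcal{H}_n$ has dimension $|\mathcal{B}_{n, 2, \leq 1}|$, which equals the number of these elements, so they form a basis.

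The only delicate point is the parity bookkeeping in Lemma $3.5$ for the odd $n$ case: one must verify that every $\zeta^{\mathfrak{l}}(2r; 1)$-type contribution vanishes and that the remaining arguments $m$ in the depth-$1$ factors are odd, which places the output in the level-$0$ subspace of $gr_1^{\mathfrak{D}} \mathcal{H}^{\geq 1}_{n-2r}$. Granted this, both cases of the lemma follow formally from the general machinery of Theorem $4.3$.
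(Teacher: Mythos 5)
Your proof is correct and takes essentially the same route as the paper's (much terser) argument: for even $n$ one reuses the correcting linear combination from the preceding $\mathcal{F}^{k_{N}/\mathbb{Q},P/P}_{0}$ lemma, and for odd $n$ one checks via the depth-$2$ coaction formula and the vanishing $\zeta^{\mathfrak{l}}\left(2r \atop 1\right)=0$ that the uncorrected elements $\zeta^{\mathfrak{m}}\left(x_{1},x_{2} \atop 1,\xi\right)$ already lie in $\mathcal{F}_{1}$. The only slight imprecision is the vacuity claim for $\mathcal{B}_{n,\leq 2,\geq 2}$ in odd weight, which tacitly ignores the $(2i\pi)^{s,\mathfrak{m}}$-multiples in $\mathcal{B}$; this is harmless, both because the appendix works within the $s=0$ block and because your direct verification of membership in $\mathcal{F}_{1}$ carries the argument on its own.
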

\begin{proof}
If $n$ is even, to cancel $\lbrace D_{2r}\rbrace$, we use the same linear combination than above.\\
If $n$ is odd, we already have $\zeta^{\mathfrak{m}}(x_{1}, x_{2};  1, \xi)\in \mathcal{F}^{k_{N}/\mathbb{Q},P/P}_{1}\mathcal{H}_{n}$.
\end{proof}

\subsection{$N=8$: Depth $2$}

Let us explicit the results for the depth 2; proofs being similar -albeit longer- as in the previous sections are left to the reader; same notations than the previous case. 

\begin{lemm} 
\begin{itemize}
	\item[$\cdot$] The depth $2$ part of the basis of MMZV$_{\mu_{4}}$:
$$\left\{ \zeta^{\mathfrak{m}}\left(x_{1}, x_{2} \atop  1, \xi\right)+ \zeta^{\mathfrak{m}}\left(x_{1},x_{2} \atop -1, -\xi\right)+ \zeta^{\mathfrak{m}}\left(x_{1},x_{2} \atop 1, -\xi\right)+ \zeta^{\mathfrak{m}}\left(x_{1},x_{2} \atop -1, \xi\right), x_{i} \geq 1 \right\}.$$
	\item[$\cdot$] The depth $2$ part of the basis of motivic Euler sums:
$$\left\{ \zeta^{\mathfrak{m}}\left(2a+1, 2b+1 \atop  1, \xi\right)+ \zeta^{\mathfrak{m}}\left(2a+1, 2b+1 \atop  -1, -\xi\right) + \zeta^{\mathfrak{m}}\left(2a+1, 2b+1 \atop  1, -\xi\right)+ \zeta^{\mathfrak{m}}\left(2a+1, 2b+1 \atop  -1, \xi\right) \right. $$
$$\left. - \sum_{k=b+1}^{\frac{n}{2}-1} \alpha^{a,b}_{k} \left( \zeta^{\mathfrak{m}}\left(n-2k, 2k \atop 1, \xi\right) + \zeta^{\mathfrak{m}}\left(n-2k, 2k \atop -1, -\xi\right) + \zeta^{\mathfrak{m}}\left(n-2k, 2k \atop 1, -\xi\right) + \zeta^{\mathfrak{m}}\left(n-2k, 2k \atop -1, \xi\right)\right)\right\}_{a,b \geq 0}$$
	\item[$\cdot$] The depth $2$ part of the basis of MMZV:
$$\left\{ \zeta^{\mathfrak{m}}\left(2a+1, 2b+1\atop  1, \xi\right) + \zeta^{\mathfrak{m}}\left(2a+1, 2b+1\atop  -1, -\xi\right)  + \zeta^{\mathfrak{m}}\left(2a+1, 2b+1\atop  1, -\xi\right)  + \zeta^{\mathfrak{m}}\left(2a+1, 2b+1\atop  -1, \xi\right) \right.$$
$$ \left. - \sum_{k=b+1}^{\frac{n}{2}-1} \alpha^{a,b}_{k} \left( \zeta^{\mathfrak{m}}\left(n-2k, 2k\atop 1, \xi\right)+ \zeta^{\mathfrak{m}}\left(n-2k, 2k\atop -1,-\xi\right)  + \zeta^{\mathfrak{m}}\left(n-2k, 2k\atop 1, -\xi\right)  + \zeta^{\mathfrak{m}}\left(n-2k, 2k\atop -1, \xi\right)   \right) \right.$$
$$\left. - \beta^{a,b} \left( \zeta^{\mathfrak{m}}\left(1,n-1\atop  1, \xi\right)+ \zeta^{\mathfrak{m}}\left(1,n-1\atop  -1, \xi\right)+ \zeta^{\mathfrak{m}}\left(1,n-1\atop  1, -\xi\right)+ \zeta^{\mathfrak{m}}\left(1,n-1\atop  -1, -\xi\right) \right),  a,b> 0 \right\} $$ 
\end{itemize}
\end{lemm}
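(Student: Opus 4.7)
By Theorem $4.3$, the depth-$2$ part of a basis of the level-$0$ space $\mathcal{F}^{\mathcal{d}}_{0}\mathcal{H}^{8}$ for a descent $\mathcal{d}\in\{(k_{8}/k_{4},2/2),(k_{8}/\mathbb{Q},2/2),(k_{8}/\mathbb{Q},2/1)\}$ is obtained by taking the elements of $\mathcal{B}^{8}_{n,2,0}$ and correcting each by a $\mathbb{Q}$-linear combination of elements of $\mathcal{B}^{8}_{n,2,\geq 1}$ so that the result is killed by $\mathscr{D}^{\mathcal{d}}$. The plan is therefore to verify, for each of the three candidate combinations displayed in the lemma, that it lies in $\ker(\oplus_{D\in\mathscr{D}^{\mathcal{d}}_{r},\,r<n}D)$ in $gr^{\mathfrak{D}}_{2}\mathcal{H}^{8}$, and to invoke Lemma $4.2$ (bijectivity of $\partial^{0,\mathcal{d}}_{n,2}$) for the freedom/generation statement by counting.

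First I would compute, using the depth-$2$ specialization of Lemma $3.5$, the action of each $D^{\eta}_{r,2}$ on the elementary basis element $\zeta^{\mathfrak{m}}\left({x_{1},x_{2}\atop \epsilon_{1},\epsilon_{2}\xi}\right)$ with $\epsilon_{i}\in\{\pm 1\}$. Only the terms of types \textsc{(a0),(b),(d),(d')} survive in depth $2$, and each yields a tensor $\zeta^{\mathfrak{l}}(r;\alpha)\otimes\zeta^{\mathfrak{m}}(m;\beta)$ whose left-hand root $\alpha$ is one of $1,-1,\xi,-\xi$ (or its inverse). Using the depth-$1$ relations of $\S 3.1$ for $N=8$ — in particular $\zeta^{\mathfrak{l}}(2r;\pm 1)=0$, $\zeta^{\mathfrak{l}}(2r+1;-1)=2^{2r+1}\zeta^{\mathfrak{l}}(2r+1;i)$, and the expressions of $\zeta^{\mathfrak{l}}(r;\pm 1),\zeta^{\mathfrak{l}}(r;\pm i)$ in terms of the basis $\{\zeta^{\mathfrak{l}}(r;\xi),\zeta^{\mathfrak{l}}(r;-\xi)\}$ — rewrite each tensor in this canonical basis so that $D^{\xi}_{r}$ and $D^{-\xi}_{r}$ can be read off separately.

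The three cases now proceed in cascade. For $(k_{8}/k_{4},2/2)$, exploit the obvious symmetry of the full sum $S(x_{1},x_{2})=\sum_{\epsilon_{1},\epsilon_{2}\in\{\pm 1\}}\zeta^{\mathfrak{m}}\left({x_{1},x_{2}\atop\epsilon_{1},\epsilon_{2}\xi}\right)$ under $\xi\leftrightarrow-\xi$: the operator $D^{\xi}_{r}-D^{-\xi}_{r}$ is antisymmetric for this involution, so $(D^{\xi}_{r}-D^{-\xi}_{r})S=0$, which is precisely the criterion of Theorem $3.13$ for the $8\to 4$ descent. For $(k_{8}/\mathbb{Q},2/2)$, the combination $S$ is still annihilated by the odd operators $D^{\xi}_{2r+1}-D^{-\xi}_{2r+1}$; one must additionally cancel $D^{\pm\xi}_{2r}$. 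Because $\zeta^{\mathfrak{l}}(2r;\pm 1)=0$, only the deconcatenation type \textsc{(d,d')} contributions to these even derivations remain on $S$, and these reduce formally — summing over the sign pattern — to the same expression computed in Lemma $A.6$ for $N=3,4$. Consequently the correction $-\sum_{k}\alpha^{a,b}_{k}S(n-2k,2k)$ with the same coefficients $\alpha^{a,b}_{k}$ of Definition $A.5$ kills $D^{\pm\xi}_{2r}$, exactly as in the $N=3,4$ case. For $(k_{8}/\mathbb{Q},2/1)$, we must further cancel $D^{\pm\xi}_{1}$, which in depth $2$ acts as a partial deconcatenation on the last slot; the subtraction $-\beta^{a,b}\sum_{\epsilon_{1},\epsilon_{2}}\zeta^{\mathfrak{m}}\left({1,n-1\atop\epsilon_{1},\epsilon_{2}\xi}\right)$ with $\beta^{a,b}$ as in Definition $A.5$ achieves this, again by reduction to the analogous computation of Lemma $A.6$.

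The main obstacle I anticipate is the bookkeeping required to pass from the natural tensor terms produced by Lemma $3.5$ (which involve $\zeta^{\mathfrak{l}}(r;\eta)$ for $\eta\in\{1,-1,\pm i,\pm\xi\}$) to the chosen basis $\{\zeta^{\mathfrak{l}}(r;\xi),\zeta^{\mathfrak{l}}(r;-\xi)\}$ of $gr_{1}^{\mathfrak{D}}\mathcal{L}_{r}^{\mathcal{MT}_{8}}$, because in contrast to $N=3,4$ the derivation space at each weight is two-dimensional. Once this rewriting is done systematically, the cancellations are forced by the same symmetries and binomial identities underlying the proofs in $\S A.2$, and the freedom of the resulting family follows from the $\mathbb{Z}_{1[2]}$-structure in Lemma $4.2$, while the generating property in each $\mathcal{F}^{\mathcal{d}}_{0}\mathcal{H}_{n}^{8}$ follows from the dimension equality $\dim\mathcal{F}^{\mathcal{d}}_{0}gr_{2}^{\mathfrak{D}}\mathcal{H}^{8}_{n}=\#\mathcal{B}^{8}_{n,2,0}$.
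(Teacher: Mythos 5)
Your overall skeleton (compute the $D^{\eta}_{r,2}$ via Lemma $3.5$, rewrite the left factors in the basis $\{\zeta^{\mathfrak{l}}(r;\xi),\zeta^{\mathfrak{l}}(r;-\xi)\}$ using the depth-one relations, reuse the coefficients $\alpha^{a,b}_{k},\beta^{a,b}$ of Definition $A.5$, and conclude by Lemma $4.2$ and dimensions) is indeed the route the paper intends, since it declares the proof ``similar, albeit longer'' to $\S A.1$--$A.2$. But your verification plan checks only half of the membership criterion. Belonging to $\mathcal{F}^{\mathcal{d}}_{0}$ is \emph{not} the condition of lying in $\ker(\oplus_{D\in\mathscr{D}^{\mathcal{d}}}D)$: by Definition $4.1$ and Theorem $3.13$ one must also check, recursively, that the images under the complementary derivations $\mathscr{D}^{\backslash\mathcal{d}}$ land in $\mathcal{F}^{\mathcal{d}}_{0}$. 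This second half is exactly what the $\beta^{a,b}$-correction is for: for $a,b>0$ the operators $D^{\pm\xi}_{1}$ vanish termwise on every displayed depth-two element (none of the $\delta$-conditions of Lemma $3.5$ can be met, and on the symmetrized $\zeta^{\mathfrak{m}}\left(1,n-1\atop \epsilon_{1},\epsilon_{2}\xi\right)$ block the two weight-one cuts cancel after summing over $\epsilon_{2}$), so your stated mechanism ``subtract $\beta^{a,b}(\cdots)$ to cancel $D^{\pm\xi}_{1}$'' cancels nothing; following your plan literally one would ``prove'' that the element \emph{without} the $\beta$-term is already a motivic MZV, which is false. The correct role of $\beta^{a,b}$, as in the paper's proof of Lemma $A.6$, is to kill the composition $D_{1}\circ D_{n-2}$, i.e.\ to repair the deconcatenation cut at weight $n-1$ whose right factor $\zeta^{\mathfrak{m}}(1;\cdot)$ has level $\geq 1$; the same recursive check (that the depth-one right factors of the surviving odd cuts are themselves of level $0$) is also missing from your treatment of the first two bullets.

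A second, smaller, gap: the inference ``$D^{\xi}_{r}-D^{-\xi}_{r}$ is antisymmetric for $\xi\leftrightarrow-\xi$, hence kills the symmetric sum $S$'' is not valid. The intertwining relation $D^{\xi}_{r}\circ\tau=\tau\circ D^{-\xi}_{r}$ only gives $(D^{\xi}_{r}-D^{-\xi}_{r})(S)=-\tau\bigl((D^{\xi}_{r}-D^{-\xi}_{r})(S)\bigr)$, i.e.\ anti-invariance of the image, and the $(-1)$-eigenspace of $\tau$ in $gr^{\mathfrak{D}}_{1}\mathcal{H}$ is nonzero for $N=8$. To conclude one must use the specific shape of Lemma $3.5$: the coefficients are independent of $\epsilon_{1},\epsilon_{2}$, so the $\zeta^{\mathfrak{l}}(r;\pm1)$-type left factors (which decompose $\tau$-symmetrically by $\S 3.1$) drop out of the difference, and the $\pm\xi$ deconcatenation terms selected by $D^{\xi}_{r}$ (resp.\ $D^{-\xi}_{r}$) coincide after summing over $\epsilon_{1}$. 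Alternatively, the cleanest repair of both points is the distribution relation: each symmetrized block equals $2^{2-n}\zeta^{\mathfrak{m}}\left(x_{1},x_{2}\atop 1,\xi_{4}\right)$, so all three bullets reduce to the $N=4$ statements (Lemmas $A.6$, $A.8$) together with the inclusion $\mathcal{H}^{N'}\subseteq\mathcal{F}^{\mathcal{d}}_{0}\mathcal{H}^{8}$, after which freedom and generation follow from Theorem $4.3$ exactly as you say.
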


\begin{lemm} 
\begin{itemize}
	\item[$\cdot$] The depth $2$ part of the basis of $\mathcal{F}^{k_{8}/k_{4},2/2}_{1}\mathcal{H}_{n}$ is, for even $n$:
$$\left\{ \zeta^{\mathfrak{m}}\left( x_{1}, x_{2}\atop   1, \xi\right)+ \zeta^{\mathfrak{m}}\left( x_{1},x_{2} \atop  -1, -\xi\right), \zeta^{\mathfrak{m}}\left( x_{1},x_{2} \atop  1, -\xi\right)- \zeta^{\mathfrak{m}}\left( x_{1},x_{2} \atop  -1, -\xi\right), \zeta^{\mathfrak{m}}\left( x_{1},x_{2} \atop  -1, \xi\right)+ \zeta^{\mathfrak{m}}\left( x_{1},x_{2} \atop  -1, -\xi\right),  x_{i} \geq 1 \right\}.$$ 
	\item[$\cdot$] The depth $2$ part of the basis of $\mathcal{F}^{k_{8}/\mathbb{Q},2/2}_{1}\mathcal{H}_{n}$ is for odd $n$:
$$\left\{ \zeta^{\mathfrak{m}}\left( x_{1}, x_{2}\atop   1, \xi\right)+ \zeta^{\mathfrak{m}}\left( x_{1}, x_{2}\atop   -1, -\xi\right) + \zeta^{\mathfrak{m}}\left( x_{1}, x_{2}\atop   1, -\xi\right)+ \zeta^{\mathfrak{m}}\left( x_{1}, x_{2}\atop   -1, \xi\right) , \text{ exactly one even } x_{i} \right\}.$$ 
The depth $2$ part of the basis of $\mathcal{F}^{k_{8}/\mathbb{Q},2/2}_{1}\mathcal{H}_{n}$ is for even $n$:
$$\left\{ \zeta^{\mathfrak{m}}\left( 2a+1, 2b+1\atop   -1, \xi\right)+ \zeta^{\mathfrak{m}}\left( 2a+1, 2b+1\atop   -1, -\xi\right) - \sum_{k=b+1}^{\frac{n}{2}-1} \alpha^{a,b}_{k} \left( \zeta^{\mathfrak{m}}\left( n-2k, 2k\atop  -1, \xi\right) + \zeta^{\mathfrak{m}}\left( n-2k, 2k\atop  -1, -\xi\right) \right)\right\}_{a,b\geq 0}$$
$$\cup \left\{ \zeta^{\mathfrak{m}}\left( 2a+1, 2b+1\atop   1, -\xi\right)- \zeta^{\mathfrak{m}}\left( 2a+1, 2b+1\atop   -1, -\xi\right)- \sum_{k=b+1}^{\frac{n}{2}-1} \alpha^{a,b}_{k} \left( \zeta^{\mathfrak{m}}\left( n-2k, 2k\atop  1, -\xi\right) - \zeta^{\mathfrak{m}}\left( n-2k, 2k\atop  -1, -\xi\right) \right)\right\}_{ a,b\geq 0}.$$
	\item[$\cdot$]  The depth $2$ part of the basis of $\mathcal{F}^{k_{8}/\mathbb{Q},2/1}_{1}\mathcal{H}_{n}$ is for odd $n$:
$$\left\{ \zeta^{\mathfrak{m}}\left( x_{1}, x_{2}\atop   1, \xi\right)+ \zeta^{\mathfrak{m}}\left( x_{1}, x_{2}\atop   -1, -\xi\right)+ \zeta^{\mathfrak{m}}\left( x_{1}, x_{2}\atop   1, -\xi\right) + \zeta^{\mathfrak{m}}\left( x_{1}, x_{2}\atop   -1, \xi\right) \right. $$
$$ \left. - \gamma^{x_{1},x_{2}} \left( \zeta^{\mathfrak{m}}\left( 1, n-1\atop   1, \xi\right) + \zeta^{\mathfrak{m}}\left( 1, n-1\atop   -1, -\xi\right)+ \zeta^{\mathfrak{m}}\left( 1, n-1\atop   -1, \xi\right)+ \zeta^{\mathfrak{m}}\left( 1, n-1\atop   1, -\xi\right) \right), \text{ exactly one even } x_{i} \right\}.$$
In even weight $n$, depth $2$ part of the basis of $\mathcal{F}^{k_{8}/\mathbb{Q},2/1}_{1}\mathcal{H}_{n}$ is:
$$\left\{ \zeta^{\mathfrak{m}}\left( 1, n-1\atop   1, \xi\right)+ \zeta^{\mathfrak{m}}\left( 1, n-1\atop   -1,-\xi \right)+ \zeta^{\mathfrak{m}}\left( 1, n-1\atop   1,-\xi \right) + \zeta^{\mathfrak{m}}\left( 1, n-1\atop   -1,\xi \right) \right\}  $$
$$\cup \left\{ \zeta^{\mathfrak{m}}\left(  n-1,1\atop   1, \xi\right)+ \zeta^{\mathfrak{m}}\left(  n-1,1\atop   -1,-\xi \right)+ \zeta^{\mathfrak{m}}\left( n-1,1\atop   1,-\xi \right) + \zeta^{\mathfrak{m}}\left( n-1,1\atop   -1,\xi \right) + \right.$$
$$\left. -\sum_{k=1}^{\frac{n}{2}-1} \alpha^{0,\frac{n}{2}-1}_{k} \left( \zeta^{\mathfrak{m}}\left(  n-2k, 2k\atop   1, \xi\right)  + \zeta^{\mathfrak{m}}\left(  n-2k, 2k\atop   -1, -\xi\right) + \zeta^{\mathfrak{m}}\left(  n-2k, 2k\atop   1, -\xi\right)+ \zeta^{\mathfrak{m}}\left(  n-2k, 2k\atop   -1, \xi\right)  \right)\right\}$$
$$\cup \left\{ \zeta^{\mathfrak{m}}\left( 2a+1, 2b+1\atop   \epsilon_{1}, \epsilon_{2}\xi\right)+ \epsilon_{2} \zeta^{\mathfrak{m}}\left( 2a+1,2b+1\atop   -1, -\xi\right) \right. - \beta^{a,b} \left( \zeta^{\mathfrak{m}}\left( 1, n-1\atop   \epsilon_{1}, \epsilon_{2} \xi\right) + \epsilon_{2} \zeta^{\mathfrak{m}}\left( 1,n-1\atop   -1, -\xi\right)  \right)$$
$$\left. -\sum_{k=b+1}^{\frac{n}{2}-1} \alpha^{a,b}_{k} \left( \zeta^{\mathfrak{m}}\left( n-2k, 2k\atop   \epsilon_{1}, \epsilon_{2}\xi \right) + \epsilon_{2} \zeta^{\mathfrak{m}}\left( n-2k, 2k\atop   -1, -\xi \right)  \right), a,b >0 , \epsilon_{i}\in\left\{\pm 1\right\}, \epsilon_{1}=- \epsilon_{2}  \right\} .$$
Where $\gamma^{x_{1},x_{2}}=(-1)^{x_{2}} \binom{2r-1}{2r-x_{2}}$.
\end{itemize}
\end{lemm}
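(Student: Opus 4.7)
The plan is to apply Theorem~$4.3$~(iv): a basis of $\mathcal{F}^{\mathcal{d}}_{1}\mathcal{H}_{n}$ in depth $2$ consists of elements $x+cl(x)$, with $x\in\mathcal{B}_{n,2,\leq 1}$ and $cl(x)\in\langle\mathcal{B}_{n,\leq 2,\geq 2}\rangle_{\mathbb{Q}}$ chosen so that $D(x+cl(x))\in\mathcal{F}_{0}\mathcal{H}$ for every $D\in\mathscr{D}^{\mathcal{d}}$. For depth $2$, this requirement reduces, via the explicit formula of Lemma~$3.5$ and the depth-$1$ results of $\S 3.1$, to an elementary finite-dimensional linear problem whose matrix is always upper triangular with $1$'s on the diagonal modulo $2$ (by the argument of Lemma~$4.2$). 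One solves it case by case, descent by descent, as was done for $N=3,4$ in Lemmas~A.$6$--A.$9$.

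For the descent $(k_{8}/k_{4},2/2)$, the set $\mathscr{D}^{\mathcal{d}}=\{D_{r}^{\xi}-D_{r}^{-\xi}\}_{r>0}$ is the antisymmetric part under the Galois involution $\xi\mapsto-\xi$ generating $\mathrm{Gal}(k_{8}/k_{4})$. Among the four depth-$2$ MMZV$_{\mu_{8}}$ with fixed $(x_{1},x_{2})$ and signature $(\epsilon_{1},\epsilon_{2}\xi)$, the level counts the number of $\epsilon_{j}=-1$, and the three level-$\leq 1$ combinations displayed are precisely those invariant (respectively skew-invariant paired with the all-$(-1)$ term) under the above involution. I would verify, directly from Lemma~$3.5$, that on these combinations the deconcatenation terms $\zeta^{\mathfrak{l}}\binom{r}{\epsilon_{2}\xi}$ and the type (a)/(b)/(c) terms $\zeta^{\mathfrak{l}}\binom{r}{\epsilon_{2}^{\pm 1}}$ pair up so that $D_{r}^{\xi}-D_{r}^{-\xi}$ vanishes identically in $\mathcal{A}_{r}\otimes\mathcal{H}$; no further corrector is then needed for this descent.

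For the two descents down to $\mathbb{Q}$, one must additionally annihilate $\{D_{2r}^{\pm\xi}\}$ (for $2/2$) and moreover $\{D_{1}^{\pm\xi}\}$ (for $2/1$) modulo $\mathcal{F}_{0}\mathcal{H}$. After applying one of these derivations to a symmetrized combination, the resulting linear system for the unknown correction coefficients is structurally identical to the one in Definition~A.$5$, because the matrix $M$ and right-hand side $A^{a,b}$ depend only on binomial data (the coefficients $\binom{2r-1}{2k-1}$ and $\binom{2r-1}{2b}$) that are insensitive to the sign $\epsilon_{2}$. Hence the coefficients $\alpha^{a,b}_{k}$, $\beta^{a,b}$, and $\gamma^{x_{1},x_{2}}$ carry over verbatim from the $N=3,4$ analysis, applied independently to each symmetric sum. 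Parity of $n$ determines which derivations act nontrivially on the depth-$2$ quotient $gr^{\mathfrak{D}}_{2}$, and hence which presentation appears.

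The main obstacle will be the $(k_{8}/\mathbb{Q},2/1)$ case in even weight, where cancellation of $D_{1}^{\xi}$ and $D_{1}^{-\xi}$ together with all the $D_{2r}^{\pm\xi}$ requires subtracting both the symmetrized $\zeta^{\mathfrak{m}}\binom{1,n-1}{\cdot,\cdot}$ and $\zeta^{\mathfrak{m}}\binom{n-1,1}{\cdot,\cdot}$ terms, which accounts for the two isolated ``head'' elements plus the parameterized family in the statement. Once the correctors are assembled, I would close the argument by dimension counting: Theorem~$4.3$~(iv) and the Hilbert series of Lemma~$2.1$ yield $\dim gr_{2}^{\mathfrak{D}}\mathcal{F}^{\mathcal{d}}_{1}\mathcal{H}_{n}$, which matches the cardinality of each displayed family, confirming that these indeed form bases.
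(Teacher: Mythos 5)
Your overall strategy (run the computation of Lemma $3.5$ descent by descent, use the depth-$1$ relations of $\S 3.1$, recycle the coefficients of Definition A.5, and let Theorem $4.3$ do the bookkeeping) is exactly what the paper intends here -- it gives no proof, referring to the $N=3,4$ computations -- but your execution of the $(k_{8}/k_{4},2/2)$ case contains a genuine error. You claim that $D^{\xi}_{r}-D^{-\xi}_{r}$ vanishes identically on the three displayed combinations, so that no corrector is needed. This is true only for the first one, $\zeta^{\mathfrak{m}}(x_{1},x_{2};1,\xi)+\zeta^{\mathfrak{m}}(x_{1},x_{2};-1,-\xi)$: there both summands have the same product $\epsilon_{1}\cdot\epsilon_{2}\xi=\xi$, so their deconcatenation terms (d),(d') carry the \emph{same} right factor $\zeta^{\mathfrak{m}}(x_{1}+x_{2}-r;\xi)$ and cancel in the antisymmetric projection, while the (a0),(b) terms have left factor $\zeta^{\mathfrak{l}}(r;\pm 1)$, which by the depth-$1$ relations is either zero or proportional to $\zeta^{\mathfrak{l}}(r;\xi)+\zeta^{\mathfrak{l}}(r;-\xi)$ and is therefore killed by $D^{\xi}_{r}-D^{-\xi}_{r}$. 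For the other two combinations the two summands have \emph{different} products $\epsilon_{1}\epsilon_{2}$, the deconcatenation terms do not pair up, and $(D^{\xi}_{r}-D^{-\xi}_{r})$ produces a nonzero depth-$1$ element in which $\zeta^{\mathfrak{m}}(s;\xi)$ and $\zeta^{\mathfrak{m}}(s;-\xi)$ occur separately. The correct criterion for level $1$ is not vanishing but membership of this image in $\mathcal{F}_{0}$ -- that is precisely the difference between level $0$ and level $1$ -- and by the distribution relation only the symmetric combination $\zeta^{\mathfrak{m}}(s;\xi)+\zeta^{\mathfrak{m}}(s;-\xi)=2^{1-s}\zeta^{\mathfrak{m}}(s;i)$ lies in $\mathcal{F}_{0}$. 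Hence the entire content of this bullet is the determination of the relative sign of the level-$2$ corrector $\zeta^{\mathfrak{m}}(x_{1},x_{2};-1,-\xi)$, which is forced by this requirement; your sketch skips exactly that step (and if you carry it out, e.g. on $\zeta^{\mathfrak{m}}(x_{1},x_{2};-1,\xi)+c\,\zeta^{\mathfrak{m}}(x_{1},x_{2};-1,-\xi)$, the image is a multiple of $\zeta^{\mathfrak{m}}(s;-\xi)-c\,\zeta^{\mathfrak{m}}(s;\xi)$, so the sign is pinned down and should be checked against the statement rather than assumed). Note also that your structural justification misfires: the involution of $\mathrm{Gal}(k_{8}/k_{4})$ sends $\xi\mapsto-\xi$ but fixes $\epsilon_{1}$, so it maps $\zeta^{\mathfrak{m}}(x_{1},x_{2};1,\xi)$ to $\zeta^{\mathfrak{m}}(x_{1},x_{2};1,-\xi)$, and none of the displayed elements is its (skew-)symmetrization.

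A second, smaller gap: for the two descents down to $\mathbb{Q}$ the sets $\mathscr{D}^{\mathcal{d}}$ also contain the odd antisymmetric operators $D^{\xi}_{2r+1}-D^{-\xi}_{2r+1}$, and these must be handled by the same ``image in $\mathcal{F}_{0}$, not zero'' analysis as above; it is this analysis -- not the transfer of the binomial systems $M,A^{a,b}$ for the even derivations -- that produces the sign-dependent pairings $\zeta^{\mathfrak{m}}(2a+1,2b+1;\epsilon_{1},\epsilon_{2}\xi)+\epsilon_{2}\,\zeta^{\mathfrak{m}}(2a+1,2b+1;-1,-\xi)$ in the $(k_{8}/\mathbb{Q},2/1)$ family and the analogous pairs in the $(k_{8}/\mathbb{Q},2/2)$ family. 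Reusing $\alpha^{a,b}_{k}$, $\beta^{a,b}$, $\gamma^{x_{1},x_{2}}$ verbatim, as you propose, only accounts for the corrections coming from $D^{\pm\xi}_{2r}$ and $D^{\pm\xi}_{1}$; as written your argument does not explain, and cannot verify, the level-$2$ pairing terms, which are the genuinely new feature of the $N=8$ statement.
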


\subsection{$N=6$: Depth $2$}

In depth 2, coefficients are explicit as previously:
      
\begin{lemm} 
The depth $2$ part of the basis of MMZV, for even weight $n$:
$$\left\{ \zeta^{\mathfrak{m}}\left(2a+1, 2b+1 \atop 1, \xi \right)-  \sum_{k=b+1}^{\frac{n}{2}-1} \alpha^{a,b}_{k} \zeta^{\mathfrak{m}}\left(n-2k, 2k\atop 1, \xi\right), a,b> 0 \right\},$$
\end{lemm}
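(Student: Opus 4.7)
By the descent criterion (Theorem in \S3.3 applied to the descent $\mathcal{d}=(k_6/\mathbb{Q},1/1)$) and the characterization of $\mathcal{F}_0\mathcal{H}^6=\mathcal{H}^1$ as the kernel of $\mathscr{D}^{\mathcal{d}}=\{D^{\xi}_{2r}\}_{r>0}$ on the graded quotient, combined with the requirement $D^{\xi}_{2r+1,2}(Z)\in gr^{\mathfrak{D}}_1\mathcal{H}^1$ for $r\geq 1$, it suffices to prove, for the candidate
$$Z\mathrel{\mathop:}=\zeta^{\mathfrak{m}}\left({2a+1,2b+1\atop 1,\xi}\right)-\sum_{k=b+1}^{n/2-1}\alpha^{a,b}_{k}\,\zeta^{\mathfrak{m}}\left({n-2k,2k\atop 1,\xi}\right),$$
that (i)~$D^{\xi}_{2r,2}(Z)=0$ in $gr^{\mathfrak{D}}_1\mathcal{L}_{2r}\otimes gr^{\mathfrak{D}}_1\mathcal{H}^{\geq 0}$ for every $r>0$, and (ii)~$D^{\xi}_{2r+1,2}(Z)\in gr^{\mathfrak{D}}_1\mathcal{L}_{2r+1}\otimes gr^{\mathfrak{D}}_1\mathcal{H}^1$ for every $r\geq 1$.

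The first step is to compute $D^{\xi}_{s,2}\bigl(\zeta^{\mathfrak{m}}({x_1,x_2\atop 1,\xi})\bigr)$ via Lemma~3.5 in the relevant depth-graded quotient. Using the depth~$1$ relations of \S3.1 for $N=6$, one has $\zeta^{\mathfrak{l}}\binom{s}{1}=0$ whenever $s$ is even (since $\zeta^{\mathfrak{l}}\binom{s}{-1}=0$ there), so the contributions of types \textsc{(a0)}, \textsc{(b)} and the $\zeta^{\mathfrak{l}}\binom{s}{1}$-part of \textsc{(d')} vanish for $s=2r$. The remaining \textsc{(d')} contribution outputs $\zeta^{\mathfrak{m}}\binom{1}{\xi}$, which is a rational multiple of $(2i\pi)^{\mathfrak{m}}$, hence lies in motivic depth $0$ and vanishes in $gr^{\mathfrak{D}}_1$. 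Only the type \textsc{(d)} term survives:
$$D^{\xi}_{s,2}\left(\zeta^{\mathfrak{m}}\left({x_1,x_2\atop 1,\xi}\right)\right)\equiv \delta_{x_2\leq s<x_1+x_2-1}(-1)^{s-x_2}\binom{s-1}{s-x_2}\zeta^{\mathfrak{l}}\left({s\atop \xi}\right)\otimes \zeta^{\mathfrak{m}}\left({x_1+x_2-s\atop \xi}\right).$$

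Next, substituting $(x_1,x_2)=(2a+1,2b+1)$ for the leading term and $(n-2k,2k)$ for the correction terms, and using $\binom{2r-1}{2r-2k}=\binom{2r-1}{2k-1}$, condition~(i) with $s=2r$ reduces, for each $r\in\{b+1,\dots,n/2-1\}$, to
$$-\binom{2r-1}{2b}=\sum_{k=b+1}^{r}\alpha^{a,b}_{k}\binom{2r-1}{2k-1},$$
which is precisely the linear system $M\,\alpha^{a,b}=A^{a,b}$ of Definition~A.5. Since $M$ is lower-triangular with $1$'s on the diagonal, the system admits a unique integer solution, giving the advertised coefficients. For condition~(ii), applying the same computation with $s=2r+1$ yields outputs of the form $\zeta^{\mathfrak{m}}\binom{n-2r-1}{\xi}$ with $n-2r-1$ odd (as $n$ is even), which by the depth~$1$ relations is a rational multiple of $\zeta^{\mathfrak{m}}(n-2r-1)\in\mathcal{H}^1$; the remaining \textsc{(d')} contribution is a multiple of $\zeta^{\mathfrak{m}}\binom{1}{\xi}$, hence a multiple of $(2i\pi)^{\mathfrak{m}}\in\mathcal{H}^1$. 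So (ii) is automatic.

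The main subtlety — really the only one beyond bookkeeping — is to verify that no ramified residue survives in $gr^{\mathfrak{D}}_1$ after applying $D^{\xi}_{2r,2}$: namely that the \textsc{(d')} term's $\zeta^{\mathfrak{m}}\binom{1}{\xi}$ truly vanishes in the depth-graded (via the equality $\zeta^{\mathfrak{m}}\binom{1}{\xi}\in\mathbb{Q}(2i\pi)^{\mathfrak{m}}$, which follows from $1-\xi_6^{-1}=\xi_6$), and that the hidden $\zeta^{\mathfrak{l}}\binom{2r}{1}$ factors from types \textsc{(a0), (b)} really give zero rather than a nonzero multiple of $\zeta^{\mathfrak{l}}\binom{2r}{\xi}$. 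Both reductions rest on the $N=6$ depth-$1$ relations listed in \S3.1. Once these are in place, Theorem~4.3 (or directly Lemma~4.2) combined with the bijectivity of the matrix $M$ yields the claimed basis of $\mathcal{F}_0 gr^{\mathfrak{D}}_2\mathcal{H}_n^6$ in even weight.
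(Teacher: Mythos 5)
Your argument is correct and is essentially the paper's intended proof: the paper defers this lemma to the $N=3,4$ computation (Lemma A.6), and you reproduce exactly that argument --- $\zeta^{\mathfrak{l}}\left(2r \atop 1\right)=0$ reduces $D_{2r}$ in depth $2$ to the deconcatenation term, the vanishing conditions are precisely the system $M\,\alpha^{a,b}=A^{a,b}$ of Definition A.5, and the odd derivations require no further correction. You also correctly isolate the one feature specific to $N=6$, namely that $\zeta^{\mathfrak{m}}\left(1 \atop \xi_{6}\right)$ is a rational multiple of $(2i\pi)^{\mathfrak{m}}$ (since $1-\xi_{6}=\xi_{6}^{-1}$ is a root of unity), which is exactly why the correction term $\beta^{a,b}\,\zeta^{\mathfrak{m}}\left(1,\,n-1 \atop 1,\,\xi\right)$ needed for $N=3,4$ is absent here.
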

Proof being similar than the cases $N=3,4$ is left to the reader.

\end{document}